 \newtheorem{thm}{Theorem}[section]
 \newtheorem{cor}[thm]{Corollary}
 \newtheorem{lem}[thm]{Lemma}
 \newtheorem{prop}[thm]{Proposition}
 \newtheorem{defn}[thm]{Definition}
 \newtheorem{ex}[thm]{Example}
 \newtheorem{rem}[thm]{Remark}
 \newtheorem{quest}[thm]{Question}
 \newcommand{\Hom}{\mathrm{Hom}}
\title{DG Algebra structures on the quantum affine $n$-space $\mathcal{O}_{-1}(k^n)$}
\author{X.-F. Mao}
\address{Department of Mathematics, Shanghai University, Shanghai 200444, China}
\email{xuefengmao@shu.edu.cn}
\author{X.-T. Wang}
\address{Department of Mathematics, Howard University, Washington DC, 20059, USA}
\email{xingting.wang@Howard.edu}
\author{M.-Y.Zhang}
\address{Department of Mathematics, Shanghai University, Shanghai 200444, China}
\email{zmy1023@shu.edu.cn}
\date{}
\subjclass[2010]{Primary 16E45, 16E65, 16W20,16W50}
\keywords{Calabi-Yau DG algebra, cochain DG algebra, Ext-algebra, Koszul DG algebra, quantum affine space}
\begin{document}

\maketitle \def\abstactname{abstact}
\begin{abstract}
Let $\mathcal{A}$ be a connected cochain DG algebra, whose underlying graded algebra $\mathcal{A}^{\#}$ is the quantum affine $n$-space $\mathcal{O}_{-1}(k^n)$. We compute all possible differential structures of $\mathcal{A}$ and show that there exists a one-to-one correspondence between
 $$\{\text{cochain DG algebra}\,\,\mathcal{A}\,|\,\mathcal{A}^{\#}=\mathcal{O}_{-1}(k^n)\}$$ and the $n\times n$ matrices $M_n(k)$.
 For any $M\in M_n(k)$, we write $\mathcal{A}_{\mathcal{O}_{-1}(k^3)}(M)$ for the DG algebra corresponding to it.
  We also study the isomorphism problems of these non-commutative DG algebras.
 For the cases $n\le 3$, we check their homological properties. Unlike the case of $n=2$, we discover that not all of them are Calabi-Yau when $n=3$.
 In spite of this, we recognize those Calabi-Yau ones case by case. In brief, we solve the problem on how to judge whether a given such DG algebra $\mathcal{A}_{\mathcal{O}_{-1}(k^3)}(M)$ is Calabi-Yau.
\end{abstract}

\maketitle

\section*{introduction}
Along this paper,  $k$ will denote an algebraically closed field of characteristic zero. Recall that a cochain DG $k$-algebra is a graded $k$-algebra together with a differential of degree $1$, which satisfies the Leibniz rule. Algebras with additional differential structures provide convenient models for intrinsic and homological information from diverse array of areas ranging from representation theory to symplectic and algebraic geometry. For example, a Gorenstein topological space $X$ in algebraic topology is characterized by the Gorensteinness of the cochain algebra $C^*(X;k)$ of
normalized singular cochains on $X$ (cf. \cite{FHT1,Gam}). And it is well known that the rational homotopy type of a simply connected space of finite type is encoded in its Sullivan model.

In the derived algebraic geometry, a fundamental fact discovered by A. Bondal and M. Van den Bergh is that any quasi-compact and quasi-separated scheme $X$ is affine in the derived sense, i.e. $\mathrm{D}_{Qcoh}(X)$  is equivalent to $\mathrm{D}(\mathcal{A})$ for a suitable DG algebra $\mathcal{A}$ (cf. \cite{BV}).
By \cite[Proposition 3.13]{Lun} and \cite[$\S$ 6.2]{Rou},
the regular property of a quasi-compact and quasi-separated scheme $X$ under some mild conditions is equivalent to the homologically smoothness of the corresponding $\mathcal{A}$. In the smooth case, the triviality of the canonical bundle for the scheme is equivalent to the Calabi-Yau properties of the DG algebra $\mathcal{A}$ (cf.\cite{Kon2}).  Calabi-Yau DG algebras are introduced by Ginzburg in \cite{Gin},  and
have a multitude of connections to  representation theory, mathematical physics and non-commutative algebraic geometry.
Therefore the constructions and studies of Calabi-Yau DG algebras have become tremendous helpful to people working in different areas of mathematics.

 In \cite{HM}, the first  author and J.-W. He give a criterion for a connected cochain DG algebra to be $0$-Calabi-Yau, and prove that a locally finite connected cochain DG algebra is $0$-Calabi-Yau if and only if it is defined by a potential. For a $n$-Calabi-Yau connected cochain DG algebra $\mathcal{A}$, one sees that the full triangulated subcategory $\mathrm{D^b_{lf}}(\mathcal{A})$ of $\mathrm{D}(\mathcal{A})$ containing DG $\mathcal{A}$-modules with finite dimensional total cohomology is a $n$-Calabi-Yau triangulated category (cf. \cite{CV}). The notion of Calabi-Yau triangulated category was introduced by Kontsevich \cite{Kon1} in the late 1990s. Calabi-Yau triangulated  categories appear in string theory,  conformal field theory, Mirror symmetry, integrable system and
 representation theory of quivers and finite-dimensional algebras. Due to the applications of triangulated Calabi-Yau categories in the categorification of Fomin-Zelevinsky's cluster
algebras, they have become popular in representation theory.

Although it is meaningful to
discover some families of Calabi-Yau DG algebras,
it is generally quite complicated to tell whether a given DG algebra is Calabi-Yau, because the properties of a DG algebra are determined by the joint effects of its underlying graded algebra structure and differential structure.
If one considers a DG algebra $\mathcal{A}$ as a living thing, then the underlying graded algebra $\mathcal{A}^{\#}$  and the differential $\partial_{\mathcal{A}}$ are its body and soul, respectively. It is
an efficient way to create meaningful Calabi-Yau DG algebras on some well known regular graded algebras.
In \cite{MHLX}, \cite{MGYC} and \cite{MXYA}, DG down-up algebras, DG polynomial algebras and DG free algebras are introduced and  systematically studied, respectively. Moreover, it is interesting that non-trivial DG down-up algebras, non-trivial DG polynomial algebras and DG free algebras with $2$ degree $1$ variables are all Calabi-Yau DG algebras. These interesting results encourage us to continue the project.

This paper deals with a special family of cochain DG algebras whose underlying graded algebras are the quantum affine $n$-space $\mathcal{O}_{-1}(k^n)$, $n\ge 2$. There are several reasons for us to consider this particular class of DG algebras. Firstly, they are special DG skew polynomial algebras, which are parallel to the case of DG polynomial algebras in \cite{MGYC}. Secondly,
 they can be considered as an intermediate transition family of DG algebras between graded commutative DG algebras and DG free algebras generated in degree $1$ elements.  Thirdly,
the case of $n=3$ coincides with
a family of $3$-dimensional DG Sklyanin algebras in \cite{MWYZ}, where a connected cochain DG algebra $\mathcal{A}$ is called a $3$-dimensional Sklyanin algebra if its underlying graded algebra $\mathcal{A}^{\#}$ is the algebra
\begin{align*}
& S_{a,b,c}=\frac{k\langle x_1,x_2,x_3\rangle}{(f_1,f_2,f_3)},
\begin{cases}
f_1=ax_2x_3+bx_3x_2+cx_1^2\\
f_2=ax_3x_1+bx_1x_3+cx_2^2\\
f_3=ax_1x_2+bx_2x_1+cx_3^2,
\end{cases} (a,b,c)\in \Bbb{P}_k^2-\mathcal{D}, \\
& \text{and}\quad \mathcal{D}:=\{(1,0,0),(0,1,0),(0,0,1)\}\sqcup \{(a,b,c)|a^3=b^3=c^3=1\}.
\end{align*}
By \cite{MWYZ}, we have
$\partial_{\mathcal{A}}=0$ if either $a^2\neq b^2$ or $c\neq 0$. And it is possible for a $3$-dimensional Sklyanin algebra to be non-trivial if $a=-b, c=0$ or $a=b, c=0$. When  $a=-b, c=0$, $\mathcal{A}$ is actually a DG polynomial algebra, which is systematically studied in \cite{MGYC}. For the case $a=b, c=0$, it is proved in \cite{MWYZ} that $\mathcal{A}$ is uniquely determined by a $3\times 3$ matrix $M$ such that
\begin{align*}
\left(
                         \begin{array}{c}
                           \partial_{\mathcal{A}}(x_1)\\
                           \partial_{\mathcal{A}}(x_2)\\
                           \partial_{\mathcal{A}}(x_3)
                         \end{array}
                       \right)=M\left(
                         \begin{array}{c}
                           x_1^2\\
                           x_2^2\\
                           x_3^2
                         \end{array}
                       \right).
\end{align*}
In this paper, we will generalize this result. Beside these, it is proved in \cite{MH} that a connected cochain DG algebra $\mathcal{A}$ is a $0$-Calabi-Yau DG algebra if $\mathcal{A}^{\#}=k\langle x_1,x_2\rangle/(x_1x_2+x_2x_1)$ with $|x_1|=|x_2|=1$. The proof there relies on the classification of the differential of $\mathcal{A}$. Note that $\mathcal{A}^{\#}$ in this case is just the quantum plane $\mathcal{O}_{-1}(k^2)$. This motivates us to consider more general case. For the
  quantum affine $n$-space $\mathcal{O}_{-1}(k^n), n\ge 3$, we want to see what kind cochain DG algebras can be constructed over it.
  We describe all possible cochain DG algebra structures over $\mathcal{O}_{-1}(k^n)$ by the following theorem (see Theorem \ref{diffstr}). \\

\begin{bfseries}
Theorem \ A.
\end{bfseries}
Let $\mathcal{A}$ be a connected cochain DG algebra such that $\mathcal{A}^{\#}$ is the $k$-algebra with degree one generators $x_1,\cdots, x_n$ and relations $x_ix_j=-x_jx_i$, for all $1\le i<j\le n$.  Then  $\partial_{\mathcal{A}}$ is determined by a matrix $M=(m_{ij})_{n\times n}$ such that
\begin{align*}
\left(
                         \begin{array}{c}
                           \partial_{\mathcal{A}}(x_1)\\
                           \partial_{\mathcal{A}}(x_2)\\
                           \vdots  \\
                           \partial_{\mathcal{A}}(x_n)
                         \end{array}
                       \right)=M\left(
                         \begin{array}{c}
                           x_1^2\\
                           x_2^2\\
                           \vdots \\
                           x_n^2
                         \end{array}
                       \right).
\end{align*}
For any $M=(m_{ij})\in M_n(k)$, it is reasonable to define a connected cochain DG algebra $\mathcal{A}_{\mathcal{O}_{-1}(k^n)}(M)$ such that $$[\mathcal{A}_{\mathcal{O}_{-1}(k^n)}(M)]^{\#}=\mathcal{O}_{-1}(k^n)$$  and its differential $\partial_{\mathcal{A}}$ is defined by
\begin{align*}
\left(
                         \begin{array}{c}
                           \partial_{\mathcal{A}}(x_1)\\
                           \partial_{\mathcal{A}}(x_2)\\
                           \vdots  \\
                           \partial_{\mathcal{A}}(x_n)
                         \end{array}
                       \right)=M\left(
                         \begin{array}{c}
                           x_1^2\\
                           x_2^2\\
                           \vdots \\
                           x_n^2
                         \end{array}
                       \right).
\end{align*}
To consider the homological properties of $\mathcal{A}_{\mathcal{O}_{-1}(k^n)}(M)$, it is necessary to study the isomorphism problem.  We have the following theorem (see Theorem \ref{iso}).\\
\begin{bfseries}
Theorem \ B.
\end{bfseries}
Let $M$ and $M'$ be two matrixes in $M_n(k)$. Then $$\mathcal{A}_{\mathcal{O}_{-1}(k^n)}(M)\cong \mathcal{A}_{\mathcal{O}_{-1}(k^n)}(M')$$ if and only if there exists $C=(c_{ij})_{n\times n}\in \mathrm{QPL}_n(k)$ such that
$$M'=C^{-1}M(c_{ij}^2)_{n\times n}.$$

Here $\mathrm{QPL}_n(k)$ is the set of quasi-permutation matrixes in $\mathrm{GL}_n(k)$. One sees that $\mathrm{QPL}_n(k)$ is a subgroup of $\mathrm{GL}_n(k)$ (see Proposition \ref{subgroup}). By Theorem B, one sees that any DG algebra automorphism group of $\mathcal{A}_{\mathcal{O}_{-1}(k^n)}(M)$ is $\mathrm{QPL}_n(k)$'s  subgroup
$$ \{C=(c_{ij})_{n\times n}\in \mathrm{QPL}_n(k)\,|\,M=C^{-1}M(c_{ij}^2)_{n\times n}\}$$
for any $M\in M_n(k)$. Theorem B also indicates that one can define a right group action
  $$\chi: M_n(k) \times \mathrm{QPL}_n(k)\to M_n(k)$$
   of $\mathrm{QPL}_n(k)$ on $M_n(k)$  such that
$\chi[(M, C=(c_{ij})_{n\times n})]=C^{-1}M((c_{ij})^2)_{n\times n}$.
The set of all orbits of this group action is one to one correspondence with
the set of isomorphism classes of DG algebras in
$\{\mathcal{A}_{\mathcal{O}_{-1}(k^n)}(M)|M\in M_{n}(k)\}$.

We want to study various homological properties of $\{\mathcal{A}_{\mathcal{O}_{-1}(k^n)}(M)|M\in M_{n}(k)\}$. For the case $n=2$, we know that each $\mathcal{A}_{\mathcal{O}_{-1}(k^2)}(M)$ is a Koszul and Calabi-Yau DG algebra  by
\cite{MH}.  It is natural for one to ask wether each $\mathcal{A}_{\mathcal{O}_{-1}(k^n)}(M)$ is a Koszul and Calabi-Yau connected cochain DG algebra when $n\ge 3$.

It is worth noting that as $n$ grows large, the classifications, cohomology and homological properties of $\mathcal{A}_{\mathcal{O}_{-1}(k^n)}(M)$ become increasingly difficult to compute and study. This increased complexity is, in large part, due to the irregular increase of the number of cases one needs
to study separately. In this paper, we focus our attentions on the case that $n=3$. It involves further classifications and complicated matrix analysis.

In general, the cohomology graded algebra $H(\mathcal{A})$ of a cochain DG algebra $\mathcal{A}$ usually contains some homological information.
One sees that $\mathcal{A}$ is a Calabi-Yau DG algebra if the trivial DG algebra $(H(\mathcal{A}),0)$ is Calabi-Yau by \cite{MYY}, and it is proved in
 \cite{MH} that
 a connected cochain DG algebra $\mathcal{A}$ is a Koszul Calabi-Yau DG algebra if $H(\mathcal{A})$ belongs to one of the following cases:
\begin{align*}
& (a) H(\mathcal{A})\cong k;  \quad \quad (b) H(\mathcal{A})= k[\lceil z\rceil], z\in \mathrm{ker}(\partial_{\mathcal{A}}^1); \\
& (c) H(\mathcal{A})= \frac{k\langle \lceil z_1\rceil, \lceil z_2\rceil\rangle}{(\lceil z_1\rceil\lceil z_2\rceil +\lceil z_2\rceil \lceil z_1\rceil)}, z_1,z_2\in \mathrm{ker}(\partial_{\mathcal{A}}^1).
\end{align*}
Recently, it is proved in \cite[Proposition 6.5]{MHLX} that a connected cochain DG algebra $\mathcal{A}$ is Calabi-Yau if $H(\mathcal{A})=k[\lceil z_1\rceil, \lceil z_2\rceil]$ where $z_1\in \mathrm{ker}(\partial_{\mathcal{A}}^1)$ and $z_2\in \mathrm{ker}(\partial_{\mathcal{A}}^2)$. In this paper, we show
the following proposition (see Proposition \ref{impcri}).
\\
\begin{bfseries}
Proposition \ A.
\end{bfseries}
Let $\mathcal{A}$ be a connected cochain DG algebra
such that $$H(\mathcal{A})=k\langle \lceil y_1\rceil,\lceil y_2\rceil \rangle/(t_1\lceil y_1\rceil^2+t_2\lceil y_2\rceil^2+t_3(\lceil y_1\rceil \lceil y_2\rceil +\lceil y_2\rceil \lceil y_1\rceil))$$ with $y_1,y_2\in Z^1(\mathcal{A})$ and $(t_1,t_2,t_3)\in \Bbb{P}_k^2-\{(t_1,t_2,t_3)|t_1t_2-t_3^2\neq 0\}$. Then
$\mathcal{A}$ is a Koszul and Calabi-Yau DG algebra.

By the proposition above and the computational results of $H[\mathcal{A}_{\mathcal{O}_{-1}(k^3)}(M)]$ in \cite{MR}, we
get the following two propositions (see Propositions \ref{noncycase} and Proposition \ref{nonregsec}).
\\
\begin{bfseries}
Proposition \ B.
\end{bfseries}
The DG algebra $\mathcal{A}_{\mathcal{O}_{-1}(k^3)}(M)$ is  not homologically smooth but Koszul when
 $$M=\left(
                                 \begin{array}{ccc}
                                   m_{11} & m_{12} & m_{13} \\
                                   l_1m_{11} & l_1m_{12} & l_1m_{13} \\
                                   l_2m_{11} & l_2m_{12} & l_2m_{13} \\
                                 \end{array}
                               \right), m_{12}l_1^2+m_{13}l_2^2\neq m_{11}, l_1l_2\neq 0$$ and $4m_{12}m_{13}l_1^2l_2^2= (m_{12}l_1^2+m_{13}l_2^2-m_{11})^2$. In this case,
                               neither $m_{12}m_{11}< 0$ nor $m_{13}m_{11}< 0$ will occur.
 Furthermore,
\begin{enumerate}
\item if $m_{11}=0$, then $m_{12}l_1=m_{13}l_2$ and
$\mathcal{A}_{\mathcal{O}_{-1}(k^3)}(M)$ is isomorphic to $\mathcal{A}_{\mathcal{O}_{-1}(k^3)}(N)$, where $N=\left(
                                 \begin{array}{ccc}
                                   0 & m_{12} & m_{12} \\
                                   0 & l_1m_{12} & l_1m_{12} \\
                                   0 & l_2\sqrt{m_{12}m_{13}} & l_2\sqrt{m_{12}m_{13}} \\
                                 \end{array}
                               \right);$
\item if $m_{11}m_{12}>0, m_{11}m_{13}$ then $\mathcal{A}_{\mathcal{O}_{-1}(k^3)}(M)$  is isomorphic to $\mathcal{A}_{\mathcal{O}_{-1}(k^3)}(Q)$, where $$Q=\left(
                                 \begin{array}{ccc}
                                   m_{11}\sqrt{m_{12}m_{13}} & m_{11}\sqrt{m_{12}m_{13}} & m_{11}\sqrt{m_{12}m_{13}} \\
                                   l_1m_{12}\sqrt{m_{11}m_{13}} & l_1m_{12}\sqrt{m_{11}m_{13}} & l_1m_{12}\sqrt{m_{11}m_{13}} \\
                                   l_2m_{13}\sqrt{m_{11}m_{12}} & l_2m_{13}\sqrt{m_{11}m_{12}} & l_2m_{13}\sqrt{m_{11}m_{12}} \\
                                 \end{array}
                               \right).$$
\end{enumerate}

Beside this, we have the following interesting proposition.\\
\begin{bfseries}
Proposition \ C.
\end{bfseries}
The DG algebra $\mathcal{A}_{\mathcal{O}_{-1}(k^3)}(M)$ is  not homologically smooth but Koszul when
 $$M=\left(
                                 \begin{array}{ccc}
                                   m_{11} & m_{12} & m_{13} \\
                                   l_1m_{11} & l_1m_{12} & l_1m_{13} \\
                                   l_2m_{11} & l_2m_{12} & l_2m_{13} \\
                                 \end{array}
                               \right), \,\, (m_{11},m_{12},m_{13})\neq 0,\,\, l_1l_2\neq 0,$$
 $m_{12}m_{13}=0$ and $m_{12}l_1^2+m_{13}l_2^2= m_{11}$. Furthermore, $\mathcal{A}_{\mathcal{O}_{-1}(k^3)}(M)\cong \mathcal{A}_{\mathcal{O}_{-1}(k^3)}(N)$, where $$N=\left(
                                 \begin{array}{ccc}
                                   1 & 1 & 0 \\
                                   1 & 1 & 0 \\
                                   1 & 1 & 0\\
                                 \end{array}
                               \right).$$

In this paper, we show each $\mathcal{A}_{\mathcal{O}_{-1}(k^3)}(M)$ is Calabi-Yau but those described in Proposition B and Proposition C.
There are cases of corresponding DG algebras whose Calabi-Yau properties one can't judge from their cohomology.
 For such kind of $\mathcal{A}_{\mathcal{O}_{-1}(k^3)}(M)$,
 we construct the minimal semi-free resolution of $k$ in each case and compute the corresponding Ext-algebras. It involves further classifications and complicated matrix analysis. In our proof, we rely heavily on a result proved in \cite{HM} that
a Koszul connected cochain DG algebra $\mathcal{A}$ is Calabi-Yau if and only if its Ext-algebra is a symmetric Frobenius algebra.
Finally, we reach the following conclusion.
\\
\begin{bfseries}
Theorem \ C.
\end{bfseries}
For any $N\in M_3(k)$, the DG algebra $\mathcal{A}_{\mathcal{O}_{-1}(k^3)}(N)$ is Koszul.
It is not Calabi-Yau if and only if there exists some  $C=(c_{ij})_{3\times 3}\in \mathrm{QPL}_3(k)$ satisfying $N=C^{-1}M(c_{ij}^2)_{3\times 3}$,
where
$$M=\left(
                                 \begin{array}{ccc}
                                   1 & 1 & 0 \\
                                   1 & 1 & 0 \\
                                   1 & 1 & 0 \\
                                 \end{array}
                               \right)
\,\,\text{or}\,\,M=\left(
                                 \begin{array}{ccc}
                                   m_{11} & m_{12} & m_{13} \\
                                   l_1m_{11} & l_1m_{12} & l_1m_{13} \\
                                   l_2m_{11} & l_2m_{12} & l_2m_{13} \\
                                 \end{array}
                               \right)$$ with $m_{12}l_1^2+m_{13}l_2^2\neq m_{11}, l_1l_2\neq 0$ and $4m_{12}m_{13}l_1^2l_2^2= (m_{12}l_1^2+m_{13}l_2^2-m_{11})^2$.

A graded $H(\mathcal{A})$-module $X$ is called realizable if there exists some DG $\mathcal{A}$-module $M$ such that $X=H(M)$.
In DG context, the corresponding realizability problem is worthy of deep research. We refer the reader to see this in \cite{BKS,Hub}.
Now, let us consider a similar problem on quasi-isomorphism of DG algebras.
\begin{quest}\label{biproduct}
Let $\mathcal{A}$ and $\mathcal{A}'$ be two connected cochain DG algebra with $\mathcal{A}^{\#}=\mathcal{A}'^{\#}$. Assume that the graded algebras
$H(\mathcal{A})$ and $H(\mathcal{A}')$ are isomorphic to each other.  Can we conclude that $\mathcal{A}$ is quasi-isomorphic to $\mathcal{A}'$ ?
\end{quest}
From the classifications in Section \ref{caseone}, we can see many counter-examples for Question \ref{biproduct} (See Remark \ref{discover}).
This can be consider as a bi-product of our main results.
\section{preliminaries}
 We assume
that the reader is familiar with basic definitions concerning DG
homologically algebra.  If this is not the case, we refer to \cite{AFH, FHT2, MW1,MW2} for more details on them. We begin by fixing some notations and terminology.
There are some overlaps here in \cite{MHLX,MGYC}.

\subsection{Some conventions}For any $k$-vector space $V$, we write $V^*=\Hom_{k}(V,k)$. Let $\{e_i|i\in I\}$ be a basis of a finite dimensional $k$-vector space $V$.  We denote the dual basis of $V$ by $\{e_i^*|i\in I\}$, i.e., $\{e_i^*|i\in I\}$ is a basis of $V^*$ such that $e_i^*(e_j)=\delta_{i,j}$.
For any graded vector space $W$ and $j\in\Bbb{Z}$,  the $j$-th suspension $\Sigma^j W$ of $W$ is a graded vector space defined by $(\Sigma^j W)^i=W^{i+j}$.

\subsection{Notations on DG algebras}
For any cochain DG algebra $\mathcal{A}$,  we denote $\mathcal{A}\!^{op}$ as its opposite DG
algebra, whose multiplication is defined as
 $a \cdot b = (-1)^{|a|\cdot|b|}ba$ for all
graded elements $a$ and $b$ in $\mathcal{A}$.  A cochain DG algebra $\mathcal{A}$ is called
non-trivial if $\partial_{\mathcal{A}}\neq 0$, and $\mathcal{A}$ is said to be connected if its underlying graded algebra $\mathcal{A}^{\#}$ is a connected  graded algebra.

Given a cochain DG algebra $\mathcal{A}$, we denote by $\mathcal{A}^i$ its $i$-th homogeneous component.  The differential $\partial_{\mathcal{A}}$ is a family of linear maps $\partial_{\mathcal{A}}^i: \mathcal{A}^i\to \mathcal{A}^{i+1}$ with $\partial_{\mathcal{A}}^{i+1}\circ \partial_{\mathcal{A}}^i=0$, for all $i\in \Bbb{Z}$. The cohomology graded algebra of $\mathcal{A}$ is the graded algebra $$H(\mathcal{A})=\bigoplus_{i\in \Bbb{Z}}\frac{Z^i(\mathcal{A})}{B^i(\mathcal{A})},$$
where
$Z^i(\mathcal{A})=\mathrm{ker}(\partial_{\mathcal{A}}^i)$ and $B^i(\mathcal{A})=\mathrm{im}(\partial_{\mathcal{A}}^{i-1})$.
For any cocycle element $z\in Z^i(\mathcal{A})$, we write $\lceil z \rceil$ as the cohomology class in $H(\mathcal{A})$ represented by $z$. One sees that $H^i(\mathcal{A})$ is a connected graded algebra if $\mathcal{A}$ is a connected cochain DG algebra.  For the rest of this paper, we write $\mathcal{A}$ for a
connected cochain DG algebra over a field $k$ if no special assumption is
emphasized.
We denote by $\frak{m}_{\mathcal{A}}$ its maximal DG
ideal $$ \cdots\to 0\to \mathcal{A}^1\stackrel{\partial^1_{\mathcal{A}}}{\to}
\mathcal{A}^2\stackrel{\partial^2_{\mathcal{A}}}{\to} \cdots \stackrel{\partial^{n-1}_{\mathcal{A}}}{\to}
\mathcal{A}^n\stackrel{\partial^n_{\mathcal{A}}}{\to}
 \cdots .$$

 A morphism $f:\mathcal{A}\to \mathcal{A}'$ of DG algebras is a chain map of complexes which respects multiplication and unit; $f$ is said to be a DG algebra isomorphism (resp. quasi-isomorphism) if $f$ (resp. $H(f)$) is an isomorphism.  A DG algebra isomorphism $f$ is called a DG automorphism when $\mathcal{A}'=\mathcal{A}$. The set of all DG algebra automorphisms of $\mathcal{A}$ is a group, denoted by $\mathrm{Aut}_{dg}(\mathcal{A})$.
\subsection{Notations on DG modules} A
left DG module over $\mathcal{A}$ (DG $\mathcal{A}$-module for
short) is a complex $(M,\partial_{M})$ together with a left
multiplication $\mathcal{A}\otimes M \to M$ such that $M$ is a left graded
module over $\mathcal{A}$ and the differential $\partial_{M}$ of $M$ satisfies
the Leibniz rule
\begin{align*}
\partial_{M}(am)=\partial_{\mathcal{A}}(a)m + (-1)^{|a|}a\partial_{M}(m)
\end{align*}
for all graded elements $a \in \mathcal{A}, \, m \in M$. A right DG module
over $\mathcal{A}$ is defined similarly. Right DG modules over $\mathcal{A}$ can be
identified with DG $\mathcal{A}\!^{op}$-modules.
 Clearly,  $k$ has a structure of DG $\mathcal{A}$-module via the augmentation map $$\varepsilon: \mathcal{A}\to \mathcal{A}/\frak{m}_{\mathcal{A}}=k.$$
One sees that the enveloping DG algebra $\mathcal{A}^e = \mathcal{A}\otimes \mathcal{A}\!^{op}$ of $\mathcal{A}$
is also a connected cochain DG algebra with $H(\mathcal{A}^e)\cong H(\mathcal{A})^e$, and $$\frak{m}_{\mathcal{A}^e}=\frak{m}_{\mathcal{A}}\otimes \mathcal{A}^{op} + \mathcal{A}\otimes
\frak{m}_{\mathcal{A}^{op}}.$$

A DG $\mathcal{A}$-module is called DG free, if it is isomorphic to a
direct sum of suspensions of $\mathcal{A}$ (note it is not a free object in
the category of DG modules). Let $Y$ be a graded set, we denote
$\mathcal{A}^{(Y)}$ as the DG free DG module $\oplus_{y\in Y}\mathcal{A} e_y$, where
$|e_y|=|y|$ and $\partial(e_y)=0$. Let $M$ be a DG $\mathcal{A}$-module. A
subset $E$ of $M$ is called a \emph{semibasis} if it is a basis of
$M^{\#}$ over $\mathcal{A}^{\#}$ and has a decomposition $E =
\bigsqcup_{i\ge0}E^i$ as a union of disjoint graded subsets $E^i$
such that
\begin{align*}
\partial(E^0)=0 \,\,\, \textrm {and} \,\,\,\partial(E^u)\subseteq \mathcal{A}
(\bigsqcup_{i<u}E^i)\, \,\,\textrm{for all}\,\, \,u >0.
\end{align*}
A DG $\mathcal{A}$-module $M$ is called semifree if there is a
sequence of DG submodules
\begin{align*}
0=M_{-1}\subset M_{0}\subset\cdots\subset M_{n}\subset\cdots
\end{align*}
such that $M = \cup_{n \ge 0} \,M_{n}$ and that each $M_{n}/M_{n-1}=\mathcal{A}^{(Y)}$
is free on a basis $\{e_y|y\in Y\}$ of cocycles. We usually say that $M_n$ is an extension of $M_{n-1}$ since $M_{n-1}$ is a DG submodule of $M_{n-1}$, $M^{\#}_n=M^{\#}_{n-1}\oplus \mathcal{A}^{(Y)}$ and $\partial_{M_{n}}(e_y)\subseteq M_{n-1}$ for any $y\in Y$. Note that we have the following linearly split short exact sequence of DG $\mathcal{A}$-modules $$0\to M_{n-1}\to M_n\to M_n/M_{n-1}\to 0.$$
It is easy to see that a DG
$A$-module is semifree if and only if it has a semibasis. A semifree resolution of a DG $\mathcal{A}$-module $M$ is a
quasi-isomorphism $\varepsilon:F \to M$, where $F$ is a semifree DG
$\mathcal{A}$-module. Sometimes we call $F$ itself a semifree resolution of
$M$.  A semifree resolution $F$ is called minimal if $\partial_F(F)\subseteq F$.
\subsection{Derived categories}
We write $\mathrm{D}(\mathcal{A})$ for
the derived category of left DG modules over $\mathcal{A}$ (DG $\mathcal{A}$-modules for short).  A DG $\mathcal{A}$-module  $M$ is compact if the functor $\Hom_{\mathrm{D}(A)}(M,-)$ preserves
all coproducts in $\mathrm{D}(\mathcal{A})$. It is worth noticing that a DG $\mathcal{A}$-module is compact if and only if it admits a minimal semi-free resolution with a finite semi-basis (see \cite[Proposition 3.3]{MW1}).
The full subcategory of $\mathrm{D}(\mathcal{A})$ consisting of compact DG $\mathcal{A}$-modules is denoted by $\mathrm{D^c}(\mathcal{A})$.

We write
$\mathrm{D^{b}}(\mathcal{A})$ for the full subcategories of $\mathrm{D}(\mathcal{A})$,
whose objects are cohomologically
bounded.  We say a graded vector
space $M = \oplus_{i\in \Bbb{Z}}M^i$ is locally finite, if
each $M^i$ is finite dimensional. The full subcategory of
$\mathrm{D}(A)$ consisting of DG modules with locally finite
cohomology is denoted by $\mathrm{D_{lf}}(A)$.

\subsection{Definitions of some homological properties}
Let $\mathcal{A}$ be a connected cochain DG algebra.
\begin{enumerate}
\item  If $\dim_{k}H(R\Hom_{\mathcal{A}}(k,\mathcal{A}))=1$, then $\mathcal{A}$ is called Gorenstein (cf. \cite{FHT1});
\item  If ${}_{\mathcal{A}}k$, or equivalently ${}_{\mathcal{A}^e}\mathcal{A}$, has a minimal semi-free resolution with a semi-basis concentrated in degree $0$, then $\mathcal{A}$ is called Koszul (cf. \cite{HW});
\item If ${}_{\mathcal{A}}k$, or equivalently the DG $\mathcal{A}^e$-module $\mathcal{A}$ is compact, then $\mathcal{A}$ is called homologically smooth (cf. \cite[Corollary 2.7]{MW3});
\item If $\mathcal{A}$ is homologically smooth and $R\Hom_{\mathcal{A}^e}(\mathcal{A}, \mathcal{A}^e)\cong
\Sigma^{-n}\mathcal{A}$ in  the derived category $\mathrm{D}((\mathcal{A}^e)^{op})$ of right DG $\mathcal{A}^e$-modules, then $\mathcal{A}$ is called an $n$-Calabi-Yau DG algebra  (cf. \cite{Gin,VdB}).
\end{enumerate}

\begin{rem}\label{methods}
Note that a connected cochain DG algebra $\mathcal{A}$ is Koszul if and only if $H(k\otimes_{\mathcal{A}}^Lk)$ is concentrated in degree $0$ (cf. \cite{HM}). And $\mathcal{A}$ is homologically smooth if and only if $\dim_kH(k\otimes_{\mathcal{A}}^Lk)<\infty$ (cf. \cite{MW3}). By \cite[Theorem 4.2]{HM}, $\mathcal{A}$ is a $0$-Calabi-Yau DG algebra if and only if $H(k\otimes_{\mathcal{A}}^Lk)$ is a symmetric co-Frobenius coalgebra concentrated in degree $0$, or equivalently the Ext-algebra $H(R\Hom_{\mathcal{A}}(k,k))$ is a symmetric Frobenius algebra concentrated in degree $0$.
\end{rem}

Now, let us recall the definition of symmetric Frobenius algebra as follows.
\begin{defn}
 Let $\mathcal{E}$ be a finite dimensional graded
$k$-algebra. We call $\mathcal{E}$ a Frobenius algebra, if there is an
isomorphism of left $\mathcal{E}$-modules: $\Sigma^{u}\mathcal{E}\to \mathcal{E}^*$, or
equivalently there is an isomorphism of right $\mathcal{E}$-modules:
$\Sigma^{u}\mathcal{E}\to \mathcal{E}^*$, where $u=\sup\{i|\mathcal{E}^i\neq 0\}$. A Frobenius algebra $\mathcal{E}$ is called symmetric if
$\Sigma^{u}\mathcal{E}\cong \mathcal{E}^*$ as graded $\mathcal{E}$-bimodules.
\end{defn}

\begin{defn}
Let $\mathcal{C}$ be a finite dimensional graded
coalgebra over a field $k$ and let $\mathcal{C}^*$ be the dual graded algebra.
The graded coalgebra $\mathcal{C}$ is called right (resp. left) co-Frobenius
if there is a monomorphism of right (resp. left) graded $\mathcal{C}^*$-module
from $\Sigma^{u}\mathcal{C}$ into $\mathcal{C}^*$, where $u=\sup\{i|\mathcal{C}^i\neq 0\}$. If $\mathcal{C}$
is both left and right co-Frobenius, then we say $\mathcal{C}$ is
co-Frobenius.
\end{defn}

\begin{rem} Assume that $\mathcal{C}$ is a finite
dimensional graded coalgebra. By \cite[Remark 3.3.12]{NNR}, $\mathcal{C}$ is left co-Frobenius if and
only if the dual algebra $\mathcal{C}^*$ is Frobenius, which is also
equivalent to the condition that $\mathcal{C}$ is right co-Frobenius. We say that $\mathcal{C}$ is symmetric if $\mathcal{C}^*$ is a symmetric Frobenius algebra.
\end{rem}

\section {dg algebra structures}\label{second}
In this section, we study all possible differential structures of a connected
cochain DG algebra, whose underlying graded algebra is the
  quantum affine $n$-space $\mathcal{O}_{-1}(k^n)$.
\begin{thm}\label{diffstr}
 Let $\mathcal{A}$ be a connected cochain DG algebra such that $\mathcal{A}^{\#}$ is the $k$-algebra with degree one generators $x_1,\cdots, x_n$ and relations $x_ix_j=-x_jx_i$, for all $1\le i<j\le n$.  Then  $\partial_{\mathcal{A}}$ is determined by a matrix $M=(m_{ij})_{n\times n}$ such that\\
$$
\left(
                         \begin{array}{c}
                           \partial_{\mathcal{A}}(x_1)\\
                           \partial_{\mathcal{A}}(x_2)\\
                           \vdots  \\
                           \partial_{\mathcal{A}}(x_n)
                         \end{array}
                       \right)=M\left(
                         \begin{array}{c}
                           x_1^2\\
                           x_2^2\\
                           \vdots \\
                           x_n^2
                         \end{array}
                       \right).
$$
\end{thm}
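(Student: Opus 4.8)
The plan is to exploit the fact that the differential $\partial_{\mathcal{A}}$ is a degree $+1$ map satisfying the Leibniz rule, so it is completely determined by its values on the algebra generators $x_1,\dots,x_n$. Since each $x_i$ lives in degree $1$, each $\partial_{\mathcal{A}}(x_i)$ must lie in the degree $2$ component $\mathcal{A}^2$ of $\mathcal{O}_{-1}(k^n)$. First I would write down an explicit $k$-basis for $\mathcal{A}^2 = [\mathcal{O}_{-1}(k^n)]^2$: using the relations $x_ix_j=-x_jx_i$ for $i<j$, the degree $2$ monomials are spanned by the squares $x_1^2,\dots,x_n^2$ together with the products $x_ix_j$ for $i<j$. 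These are linearly independent in the quantum affine space, so $\dim_k \mathcal{A}^2 = n + \binom{n}{2}$. Hence a priori each $\partial_{\mathcal{A}}(x_i)$ is an arbitrary $k$-linear combination of these basis elements, and the content of the theorem is that the mixed terms $x_ix_j$ ($i<j$) cannot actually appear: only the square terms survive, giving the matrix formula.

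The key step is therefore to show that the coefficients of the mixed monomials $x_ix_j$ in each $\partial_{\mathcal{A}}(x_i)$ are forced to vanish. The constraint I would use is $\partial_{\mathcal{A}}^2 = 0$ combined with the Leibniz rule, and crucially the requirement that $\partial_{\mathcal{A}}$ must respect the defining relations of $\mathcal{A}^{\#}$: applying $\partial_{\mathcal{A}}$ to the relation $x_ix_j + x_jx_i = 0$ must yield $0$ in $\mathcal{A}^3$. Writing $\partial_{\mathcal{A}}(x_i) = \sum_{a} m_{ia}x_a^2 + \sum_{a<b} c^{(i)}_{ab}x_ax_b$ for unknown scalars, I would expand
$$\partial_{\mathcal{A}}(x_ix_j + x_jx_i) = \partial_{\mathcal{A}}(x_i)x_j - x_i\partial_{\mathcal{A}}(x_j) + \partial_{\mathcal{A}}(x_j)x_i - x_j\partial_{\mathcal{A}}(x_i),$$
remembering the sign $(-1)^{|x_i|} = -1$ from the Leibniz rule. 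I would then push everything into a normal form using the commutation rules $x_ax_b = -x_bx_a$ (and $x_a^2$ central up to sign considerations in degree $3$), collect the coefficient of each degree $3$ basis monomial, and set each coefficient to zero. This yields a system of linear equations on the $c^{(i)}_{ab}$ which I expect to force all of them to vanish, leaving only the square coefficients $m_{ij}$ free.

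The main obstacle will be the bookkeeping of signs and the careful identification of a normal-form basis for $\mathcal{A}^3$, so that the vanishing of each coefficient can be read off unambiguously. In the $(-1)$-skew setting one must be attentive to the fact that monomials like $x_a^2x_b$, $x_ax_b^2$, $x_ax_bx_c$ behave differently under the commutation relations, and that some naively distinct products coincide or cancel. I would organize the computation by separating the analysis according to how many distinct indices appear among $\{i,j,a,b\}$, treating the cases of overlapping and disjoint indices. Once the linear system is shown to have only the trivial solution for the mixed coefficients, the converse direction is immediate: for any matrix $M = (m_{ij})$, defining $\partial_{\mathcal{A}}(x_i) = \sum_j m_{ij}x_j^2$ and extending by the Leibniz rule gives a well-defined derivation of degree $1$, and one checks $\partial_{\mathcal{A}}^2 = 0$ holds automatically because $\partial_{\mathcal{A}}(x_j^2) = \partial_{\mathcal{A}}(x_j)x_j - x_j\partial_{\mathcal{A}}(x_j) = \sum_l m_{jl}(x_l^2x_j - x_jx_l^2) = 0$, since $x_l^2$ is central in $\mathcal{O}_{-1}(k^n)$. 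This establishes the claimed one-to-one correspondence with $M_n(k)$.
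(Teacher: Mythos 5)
Your proposal is correct and takes essentially the same route as the paper's own proof: expand $\partial_{\mathcal{A}}(x_i)$ in the monomial basis of $\mathcal{A}^2$, apply $\partial_{\mathcal{A}}$ to each relation $x_ix_j+x_jx_i=0$ via the graded Leibniz rule, reduce to a normal form in $\mathcal{A}^3$, and use linear independence of the resulting degree-$3$ monomials to force the mixed coefficients $c^{(i)}_{ab}$ (the paper's $m^i_{s,t}$, $s<t$) to vanish. Your concluding observation that $\partial_{\mathcal{A}}^2=0$ holds automatically because each $x_l^2$ is central is precisely the paper's final verification as well.
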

\begin{proof}
Since the differential $\partial_{\mathcal{A}}$ of $\mathcal{A}$ is a $k$-linear map of degree $1$, we may let
$\partial_{\mathcal{A}}(x_i) = \sum\limits_{s=1}^n\sum\limits_{t=s}^nm^i_{s,t}x_sx_t$,
where
$m^i_{s,t}\in k$, for any $1\le s\le t\le n$ and $1\le i\le  n$. We have $0=x_ix_j+x_jx_i\in \mathcal{A}$, for any $1\le i<j\le n$.
Hence
\begin{align*}
0&=\partial_{\mathcal{A}}(x_ix_j+x_jx_i)= \partial_{\mathcal{A}}(x_i)x_j-x_i\partial_{\mathcal{A}}(x_j)+\partial_{\mathcal{A}}(x_j)x_i-x_j\partial_{\mathcal{A}}(x_i)         \\
&=\partial_{\mathcal{A}}(x_i)x_j-x_j\partial_{\mathcal{A}}(x_i) +\partial_{\mathcal{A}}(x_j)x_i-x_i\partial_{\mathcal{A}}(x_j) \\
&=[\sum\limits_{s=1}^n\sum\limits_{t=s}^nm^i_{s,t}x_sx_t]x_j-x_j[\sum\limits_{s=1}^n\sum\limits_{t=s}^nm^i_{s,t}x_sx_t] \\
&+[\sum\limits_{s=1}^n\sum\limits_{t=s}^nm^j_{s,t}x_sx_t]x_i-x_i[\sum\limits_{s=1}^n\sum\limits_{t=s}^nm^j_{s,t}x_sx_t] \\
&\stackrel{(1)}{=}[\sum\limits_{s=1}^n\sum\limits_{t=s+1}^nm^i_{s,t}x_sx_t]x_j-x_j[\sum\limits_{s=1}^n\sum\limits_{t=s+1}^nm^i_{s,t}x_sx_t]\\
&+[\sum\limits_{s=1}^n\sum\limits_{t=s+1}^nm^j_{s,t}x_sx_t]x_i-x_i[\sum\limits_{s=1}^n\sum\limits_{t=s+1}^nm^j_{s,t}x_sx_t]\\
&\stackrel{(2)}{=} [\sum\limits_{s=1}^{j-1}\sum\limits_{t=s+1}^nm^i_{s,t}x_sx_t]x_j+\sum\limits_{t=j+1}^nm^i_{j,t}x_jx_tx_j+[\sum\limits_{s=j+1}^n\sum\limits_{t=s+1}^nm^i_{s,t}x_sx_t]x_j\\
&-x_j[\sum\limits_{s=1}^{j-1}\sum\limits_{t=s+1}^nm^i_{s,t}x_sx_t]-\sum\limits_{t=j+1}^nm^i_{j,t}x_j^2x_t-x_j[\sum\limits_{s=j+1}^{n}\sum\limits_{t=s+1}^nm^i_{s,t}x_sx_t]\\
&+[\sum\limits_{s=1}^{i-1}\sum\limits_{t=s+1}^nm^j_{s,t}x_sx_t]x_i+\sum\limits_{t=i+1}^nm^j_{i,t}x_ix_tx_i+[\sum\limits_{s=i+1}^{n}\sum\limits_{t=s+1}^nm^j_{s,t}x_sx_t]x_i \\
&-x_i[\sum\limits_{s=1}^{i-1}\sum\limits_{t=s+1}^nm^j_{s,t}x_sx_t]-\sum\limits_{t=i+1}^nm^j_{i,t}x_i^2x_t-x_i[\sum\limits_{s=i+1}^{n}\sum\limits_{t=s+1}^nm^j_{s,t}x_sx_t]\\
&\stackrel{(3)}{=} [\sum\limits_{s=1}^{j-1}\sum\limits_{t=s+1}^nm^i_{s,t}x_sx_t]x_j+[\sum\limits_{s=j+1}^n\sum\limits_{t=s+1}^nm^i_{s,t}x_sx_t]x_j\\
&-x_j[\sum\limits_{s=1}^{j-1}\sum\limits_{t=s+1}^nm^i_{s,t}x_sx_t]-x_j[\sum\limits_{s=j+1}^{n}\sum\limits_{t=s+1}^nm^i_{s,t}x_sx_t]-2\sum\limits_{t=j+1}^nm^i_{j,t}x_j^2x_t\\
&+[\sum\limits_{s=1}^{i-1}\sum\limits_{t=s+1}^nm^j_{s,t}x_sx_t]x_i+[\sum\limits_{s=i+1}^{n}\sum\limits_{t=s+1}^nm^j_{s,t}x_sx_t]x_i \\
&-x_i[\sum\limits_{s=1}^{i-1}\sum\limits_{t=s+1}^nm^j_{s,t}x_sx_t]-x_i[\sum\limits_{s=i+1}^{n}\sum\limits_{t=s+1}^nm^j_{s,t}x_sx_t]-2\sum\limits_{t=i+1}^nm^j_{i,t}x_i^2x_t\\
&\stackrel{(4)}{=}[\sum\limits_{s=1\atop s\neq j}^n\sum\limits_{t=s+1\atop t\neq j}^nm^i_{s,t}x_sx_t]x_j+\sum\limits_{s=1}^{j-1}m^i_{s,j}x_sx_j^2 -x_j[\sum\limits_{s=1\atop s\neq j}^n\sum\limits_{t=s+1\atop t\neq j}^nm^i_{s,t}x_sx_t]\\
&-x_j\sum\limits_{s=1}^{j-1}m^i_{s,j}x_sx_j -2\sum\limits_{t=j+1}^nm^i_{j,t}x_j^2x_t + [\sum\limits_{s=1\atop s\neq i}^n\sum\limits_{t=s+1\atop t\neq i}^nm^j_{s,t}x_sx_t]x_i +\sum\limits_{s=1}^{i-1}m^j_{s,i}x_sx_i^2 \\                                            &-x_i[\sum\limits_{s=1\atop s\neq i}^n\sum\limits_{t=s+1\atop t\neq i}^nm^j_{s,t}x_sx_t]-\sum\limits_{s=1}^{i-1}m_{s,i}^jx_ix_sx_i-2\sum\limits_{t=i+1}^nm^j_{i,t}x_i^2x_t\\
&\stackrel{(5)}{=}2\sum\limits_{s=1}^{j-1}m^i_{s,j}x_sx_j^2-2\sum\limits_{t=j+1}^nm^i_{j,t}x_j^2x_t+2\sum\limits_{s=1}^{i-1}m^j_{s,i}x_sx_i^2 -2\sum\limits_{t=i+1}^nm^j_{i,t}x_i^2x_t,
\end{align*}
where the labeled equations are obtained respectively by the following facts
\begin{enumerate}
\item $x_s^2x_j=-x_s x_j x_s=x_j x_s^2$, $x_s^2x_i=-x_s x_i x_s=x_ix_s^2$; \\
\item $\sum\limits_{s=1}^n\sum\limits_{t=s+1}^n=\sum\limits_{s=1}^{j-1}\sum\limits_{t=s+1}^n+ \sum\limits_{s=j}\sum\limits_{t=j+1}^n+\sum\limits_{s=j+1}^n\sum\limits_{t=s+1}^n$,\\
    $\sum\limits_{s=1}^n\sum\limits_{t=s+1}^n=\sum\limits_{s=1}^{i-1}\sum\limits_{t=s+1}^n+ \sum\limits_{s=i}\sum\limits_{t=i+1}^n+\sum\limits_{s=i+1}^n\sum\limits_{t=s+1}^n$;\\
\item $x_jx_tx_j=-x_j^2x_t$, $x_ix_tx_i=-x_i^2x_t$;\\
\item $\sum\limits_{s=1}^{j-1}\sum\limits_{t=s+1}^n +\sum\limits_{s=j+1}^n\sum\limits_{t=s+1}^n=\sum\limits_{s=1\atop s\neq j}^n\sum\limits_{t=s+1\atop t\neq j}^n+\sum\limits_{s=1}^{j-1}\sum\limits_{t=j}$,\\
    $\sum\limits_{s=1}^{i-1}\sum\limits_{t=s+1}^n+\sum\limits_{s=i+1}^{n}\sum\limits_{t=s+1}^n=\sum\limits_{s=1\atop s\neq i}^n\sum\limits_{t=s+1\atop t\neq i}^n+\sum\limits_{s=1}^{i-1}\sum\limits_{t=i}$;\\
\item $x_sx_tx_j=x_jx_sx_t$, when $s,t,j$ are pairwise different.
\end{enumerate}
Since $i\neq j$, the elements \begin{align*}
&x_sx_j^2, s=1,\cdots, j-1,\\
&x_j^2x_t, t=j+1,\cdots n,\\
&x_sx_i^2, s=1,\cdots i-1,\\
&x_i^2x_t, t=i+1,\cdots, n
\end{align*}
in $\mathcal{A}^3$ are linearly independent. Therefore,
\begin{align*}
\begin{cases}
m^i_{s,j}=0, \text{for all}\,\,s\in \{1,2,\cdots, j-1\}\\
m^i_{j,t}=0, \text{for all}\,\,t\in \{j+1,j+2,\cdots, n\} \\
m^j_{s,i}=0, \text{for all}\,\, s\in \{1,2,\cdots, i-1\}\\
m^j_{i,t}=0, \text{for all}\,\, t\in \{i+1,i+2, \cdots, n\}.
\end{cases}
\end{align*}
Hence $\partial_{\mathcal{A}}(x_i)=\sum\limits_{s=1}^nm^i_{s,s}x_s^2$. One sees that
\begin{align*}
\partial_{A}^2(x_i)&=\partial_{\mathcal{A}}(\sum\limits_{s=1}^nm^i_{s,s}x_s^2) \\
                   &=\sum\limits_{s=1}^nm^i_{s,s}[\partial_{A}(x_i)x_i-x_i\partial_{A}(x_i)]\\
                   &=\sum\limits_{s=1}^nm^i_{s,s}[\sum\limits_{t=1}^nm^i_{t,t}x_t^2x_i-x_i\sum\limits_{r=1}^nm^i_{r,r}x_r^2]\\
                   &=\sum\limits_{s=1}^nm^i_{s,s}\sum\limits_{q=1}^nm^i_{q,q}(x_q^2x_i-x_ix_q^2)=0,
\end{align*}
for any $i\in \Bbb{Z}$.
Therefore, $\partial_{\mathcal{A}}$ is uniquely determined by the $n\times n$ matrix $M=(m_{ij})$, where $m_{ij}=m^i_{j,j}, i,j\in \{1,2,\cdots, n\}$.
\end{proof}

By Theorem \ref{diffstr}, one sees that the following definition is reasonable.
\begin{defn}
For any $M=(m_{ij})\in M_n(k)$, we define a connected cochain DG algebra $\mathcal{A}_{\mathcal{O}_{-1}(k^n)}(M)$ such that $$[\mathcal{A}_{\mathcal{O}_{-1}(k^n)}(M)]^{\#}=\mathcal{O}_{-1}(k^n)$$  and its differential $\partial_{\mathcal{A}}$ is defined by
\begin{align*}
\left(
                         \begin{array}{c}
                           \partial_{\mathcal{A}}(x_1)\\
                           \partial_{\mathcal{A}}(x_2)\\
                           \vdots  \\
                           \partial_{\mathcal{A}}(x_n)
                         \end{array}
                       \right)=M\left(
                         \begin{array}{c}
                           x_1^2\\
                           x_2^2\\
                           \vdots \\
                           x_n^2
                         \end{array}
                       \right).
\end{align*}
\end{defn}
\begin{lem}\label{centcocy}
For any $M=(m_{ij})\in M_n(k)$ and $t\in \Bbb{N}$, each $x_i^{2t}$ is a cocycle central element of  $\mathcal{A}_{\mathcal{O}_{-1}(k^n)}(M)$.
\end{lem}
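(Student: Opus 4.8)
The plan is to establish two facts separately: that each $x_i^{2t}$ is central in $\mathcal{A}_{\mathcal{O}_{-1}(k^n)}(M)$, and that it is a cocycle. The centrality is the observation that drives both parts, so I would prove it first and then feed it into the differential computation.

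For centrality, I would use the relation $x_ix_j=-x_jx_i$ for $i\neq j$ in $\mathcal{O}_{-1}(k^n)$ to compute, for any $j$,
$$x_i^2x_j = x_i(x_ix_j) = -x_ix_jx_i = x_jx_i^2,$$
while $x_i^2$ trivially commutes with $x_i$. Thus $x_i^2$ commutes with every generator and hence lies in the center, so $x_i^{2t}=(x_i^2)^t$ is a product of central elements and is itself central.

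For the cocycle property I would first treat $t=1$. By Theorem \ref{diffstr} the differential satisfies $\partial_{\mathcal{A}}(x_i)=\sum_{s=1}^n m_{is}x_s^2$, a $k$-linear combination of the central elements $x_s^2$, hence itself central. Applying the Leibniz rule and using $|x_i|=1$,
$$\partial_{\mathcal{A}}(x_i^2)=\partial_{\mathcal{A}}(x_i)x_i-x_i\partial_{\mathcal{A}}(x_i)=0,$$
the last equality holding because $\partial_{\mathcal{A}}(x_i)$ commutes with $x_i$. For general $t$ I would induct: since $x_i^2$ is an even-degree cocycle, the Leibniz rule gives
$$\partial_{\mathcal{A}}((x_i^2)^t)=\partial_{\mathcal{A}}(x_i^2)(x_i^2)^{t-1}+x_i^2\partial_{\mathcal{A}}((x_i^2)^{t-1}),$$
and both summands vanish by the base case and the induction hypothesis. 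Hence $x_i^{2t}$ is a cocycle.

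The argument is essentially routine once the centrality of $x_i^2$ is in hand; the only points requiring care are the Leibniz sign (harmless here since $|x_i^2|$ is even) and the observation that $\partial_{\mathcal{A}}(x_i)$ lands in the central subalgebra generated by the squares, which is exactly what forces $\partial_{\mathcal{A}}(x_i^2)$ to vanish. There is no genuine obstacle; the main conceptual step is recognizing that the squares of the generators span a central, differential-closed subalgebra.
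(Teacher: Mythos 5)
Your proposal is correct and follows essentially the same route as the paper: the same computation $x_i^2x_j=-x_ix_jx_i=x_jx_i^2$ for centrality, the same Leibniz computation $\partial_{\mathcal{A}}(x_i^2)=\partial_{\mathcal{A}}(x_i)x_i-x_i\partial_{\mathcal{A}}(x_i)=0$, and the same induction for general $t$. The only cosmetic difference is that you summarize the vanishing by noting $\partial_{\mathcal{A}}(x_i)$ is central, where the paper expands $\sum_{j}m_{ij}(x_j^2x_i-x_ix_j^2)=0$ term by term --- the identical argument.
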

\begin{proof}
For the sake of convenience, we let $\mathcal{A}=\mathcal{A}_{\mathcal{O}_{-1}(k^n)}(M)$.
Since $$x_i^2x_j=x_ix_ix_j=-x_ix_jx_i=x_jx_i^2$$ when $i\neq j$, one sees that $x_i^2$ is a central element of $\mathcal{A}$. This implies that each
$x_i^{2t}$ is a central element of  $\mathcal{A}$.
By Proposition \ref{diffstr}, we have \begin{align*}
\partial_{\mathcal{A}}(x_i^2)&=\partial_{\mathcal{A}}(x_i)x_i-x_i\partial_{\mathcal{A}}(x_i)\\
                             &=\sum\limits_{j=1}^nm_{ij}x_j^2x_i-x_i\sum\limits_{j=1}^nm_{ij}x_j^2\\
                             &=\sum\limits_{j=1}^nm_{ij}(x_j^2x_i-x_ix_j^2)=0.
\end{align*}
Using this, we can inductively prove $\partial_{\mathcal{A}}(x_i^{2t})=0$.
\end{proof}

By Lemma \ref{centcocy}, one sees that the graded ideal $I=(x_1^2,x_2^2,\cdots, x_n^2)$ is a DG ideal of $\mathcal{A}_{\mathcal{O}_{-1}(k^n)}(M)$, and the quotient DG algebra $$\mathcal{A}_{\mathcal{O}_{-1}(k^n)}(M)/I=\bigwedge(x_1,x_2,\cdots, x_n)$$ is the exterior algebra with zero differential. We have the following short exact sequence
$$0\to I \hookrightarrow \mathcal{A}_{\mathcal{O}_{-1}(k^n)}(M) \to \bigwedge(x_1,x_2,\cdots, x_n) \to 0.$$
To some extent,  the DG algebras $\mathcal{A}_{\mathcal{O}_{-1}(k^n)}(M)$ can be considered as an intermediate transition family of DG algebras between the  graded commutative DG algebra $\bigwedge(x_1,x_2,\cdots, x_n)$ and the DG free algebras studied in \cite{MXYA}.

\section{Isomorphism problem}
It is well known in linear algebra that a square matrix is called a permutation matrix if its each row and each column  have only one non-zero element $1$.
In \cite{AJL}, a more general notion is introduced. This is the following definition.
\begin{defn}\label{quasi-perm}
A square matrix is called a quasi-permutation matrix if each row and each column has at most one non-zero element.
\end{defn}
\begin{rem}
By the definition above, a quasi-permutation matrix can be singular. Furthermore, a quasi-permutation matrix is non-singular if and only if each row and each column  has exactly one non-zero element.
\end{rem}

\begin{lem}\label{onlyone}
Let $M=(m_{ij})_{n\times n}$ be a matrix in $\mathrm{GL}_n(k)$.  Then each row and each column of $M$ has only one non-zero element, or equivalently $M$ is a quasi-permutation matrix,  if and only if $m_{ir}m_{jr}=0$, for any  $1\le i<j\le n$ and $r\in \{1,2,\cdots, n\}$.
\end{lem}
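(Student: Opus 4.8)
The plan is to reduce the algebraic hypothesis to a combinatorial one and then prove the two implications separately. For a fixed column index $r$, the requirement that $m_{ir}m_{jr}=0$ for every pair $1\le i<j\le n$ says exactly that no two distinct rows can both carry a non-zero entry in column $r$; equivalently, the $r$-th column of $M$ has \emph{at most} one non-zero element. Hence the full hypothesis ``$m_{ir}m_{jr}=0$ for all $i<j$ and all $r$'' is equivalent to the statement that every column of $M$ contains at most one non-zero entry. With this dictionary, the forward implication is immediate: if $M$ is a quasi-permutation matrix, then each column has at most one non-zero element by Definition \ref{quasi-perm}, so $m_{ir}m_{jr}=0$ for all $i<j$ and all $r$.

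For the converse I would exploit $M\in\mathrm{GL}_n(k)$ twice. First, an invertible matrix has no zero column, so combined with the hypothesis ``at most one non-zero entry per column'' we conclude that every column has \emph{exactly} one non-zero entry. This defines a function $f$ from the set of column indices to the set of row indices, sending $r$ to the unique row $f(r)$ in which the non-zero entry of column $r$ lies; by construction, row $i$ meets column $r$ in a non-zero entry if and only if $i=f(r)$. Second, I claim $f$ is injective. If it were not, its image would omit some row index $i$; but then, since the only candidate non-zero entries of row $i$ sit in columns $r$ with $f(r)=i$, the entire $i$-th row of $M$ would vanish, forcing $\det M=0$ and contradicting $M\in\mathrm{GL}_n(k)$. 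Thus $f$ is an injection between two $n$-element sets, hence a bijection. Bijectivity says each row index equals $f(r)$ for exactly one $r$, i.e. each row has exactly one non-zero entry; together with the already-established ``exactly one per column'', this shows $M$ is a quasi-permutation matrix.

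The argument is elementary, and I expect no serious obstacle; the only point requiring care is the bookkeeping in the converse, namely using invertibility on both sides of the matrix — once to upgrade ``at most one'' to ``exactly one'' per column (ruling out a zero column) and once, through the injectivity of $f$, to rule out a zero row. As an alternative packaging one could appeal directly to the Leibniz expansion $\det M=\sum_{\sigma}\mathrm{sgn}(\sigma)\prod_i m_{i\sigma(i)}$: since $\det M\neq 0$, some permutation $\sigma$ must satisfy $m_{i\sigma(i)}\neq 0$ for all $i$, and in the presence of at most one non-zero entry per column this single permutation exhausts all non-zero entries, simultaneously forcing exactly one in each row and each column. I would mention this as a remark but carry out the direct pigeonhole version above, as it keeps the logical dependence on Definition \ref{quasi-perm} most transparent.
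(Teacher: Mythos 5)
Your proof is correct and takes essentially the same approach as the paper's: both reduce the hypothesis $m_{ir}m_{jr}=0$ to ``at most one non-zero entry per column,'' use invertibility to upgrade this to ``exactly one'' (no zero column), and then invoke non-singularity again to rule out a zero row, forcing exactly one non-zero entry per row. Your map $f$ and its injectivity merely make explicit the counting step that the paper states briefly (``$M$ has $n$ non-zero elements; by non-singularity each row has exactly one'').
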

\begin{proof}
Obviously, we only need to show the 'if' part.
Since $M\in \mathrm{GL}_n(k)$, each column of $M$ has at least one non-zero element. If the $r$-th column of $M$ has two non-zero elements $m_{i_1r}$ and $m_{i_2r}$, then $m_{i_1r}m_{i_2r}\neq 0$, which contradicts with the assumption. Thus each column of $M$ has only one non-zero element.
Then we conclude that $M$ has $n$ non-zero elements. By the non-singularity of $M$,  we show that each row of $M$ has exactly one non-zero element.
\end{proof}

\begin{prop}\label{subgroup}
The set of quasi-permutation matrixes in $\mathrm{GL}_n(k)$ is a subgroup of the general linear group $\mathrm{GL}_n(k)$.
\end{prop}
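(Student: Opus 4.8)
The plan is to apply the standard subgroup criterion: since $\mathrm{QPL}_n(k)$ is a subset of the group $\mathrm{GL}_n(k)$, it suffices to show that it is nonempty and closed under both multiplication and inversion. Nonemptiness is immediate, as the identity matrix $I_n$ is a nonsingular quasi-permutation matrix. The heart of the argument is a normal-form description of the elements of $\mathrm{QPL}_n(k)$, which I would extract from the Remark following Definition \ref{quasi-perm} together with Lemma \ref{onlyone}: a matrix $M \in \mathrm{GL}_n(k)$ lies in $\mathrm{QPL}_n(k)$ precisely when each of its rows and each of its columns contains exactly one nonzero entry.

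From this description I would record the decomposition $M = D_a P_\sigma$, where $\sigma \in S_n$ is the permutation determined by $M_{i,\sigma(i)} \neq 0$ (a genuine permutation because columns, too, carry exactly one nonzero entry), $D_a = \mathrm{diag}(a_1,\dots,a_n)$ collects the nonzero entries $a_i = M_{i,\sigma(i)}$, and $P_\sigma$ is the associated permutation matrix. Equivalently, the elements of $\mathrm{QPL}_n(k)$ are exactly the monomial (generalized permutation) matrices, that is, products of an invertible diagonal matrix with a permutation matrix. Conversely, every such product is nonsingular and has a single nonzero entry in each row and column, so it lies in $\mathrm{QPL}_n(k)$; this yields a clean characterization to test closure against.

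Closure under inversion then follows at once: from $M = D_a P_\sigma$ we get $M^{-1} = P_\sigma^{-1} D_a^{-1} = P_{\sigma^{-1}} D_a^{-1}$, which is again a product of a permutation matrix and an invertible diagonal matrix, hence monomial and thus in $\mathrm{QPL}_n(k)$. For closure under multiplication, given $M = D_a P_\sigma$ and $M' = D_b P_\tau$ I would use the commutation relation $P_\sigma D_b = D_c P_\sigma$, where $D_c = P_\sigma D_b P_\sigma^{-1}$ is the diagonal matrix obtained by permuting the entries of $D_b$; then $MM' = D_a D_c\, P_\sigma P_\tau$ with $D_a D_c$ invertible diagonal and $P_\sigma P_\tau$ a permutation matrix, so $MM'$ is again monomial and lies in $\mathrm{QPL}_n(k)$.

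The verification is essentially routine, so I do not expect a genuine obstacle; the only points requiring care are bookkeeping with the index conventions, specifically checking that $\sigma$ is bijective (which is where the nonsingularity hypothesis and the ``exactly one nonzero entry per column'' half of the characterization are used) and that conjugating a diagonal matrix by a permutation matrix stays diagonal. An alternative, slightly more computational route would bypass the normal form and verify the column condition $(MM')_{ir}(MM')_{jr} = 0$ of Lemma \ref{onlyone} directly for products and inverses; I would prefer the monomial-matrix formulation, since it makes the underlying group structure (the wreath product $k^\times \wr S_n$) transparent.
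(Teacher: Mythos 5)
Your proof is correct and follows essentially the same route as the paper: both arguments rest on the normal-form decomposition of a nonsingular quasi-permutation matrix into a permutation part times an invertible diagonal part (the paper writes the permutation part as a product of elementary row-swap matrices $E_r\cdots E_1$, you write it as $P_\sigma$) and then verify closure, the paper via the one-step criterion on $BD^{-1}$ and you via separate closure under multiplication and inversion using the conjugation relation $P_\sigma D_b P_\sigma^{-1} = D_c$. The difference is purely one of bookkeeping, though your remark identifying the group as the wreath product $k^\times \wr S_n$ is a nice touch the paper does not make.
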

\begin{proof}
For any quasi-permutation matrixes $B=(b_{ij})_{n\times n}$ and $D=(d_{ij})_{n\times n}$ in $\mathrm{GL}_n(k)$, there exist
$\sigma, \tau\in \Bbb{S}_{n}$ such that
\begin{align*}
b_{i\sigma(i)}\neq 0, b_{ij}=0,\,\, \text{if}\,\,j\neq \sigma(i) \\
d_{i\tau(i)}\neq 0, d_{ij}=0, \,\,  \text{if}\,\,j\neq \tau(i),
\end{align*}
for any $i\in \{1,2,\cdots,n\}$.
One sees that $B$ and $D$ can be written by
\begin{align*}
B=(E_rE_{r-1}\cdots E_1)\mathrm{diag}(b_{1\sigma(1)},b_{2\sigma(2)},\cdots, b_{n\sigma(n)}),\\
D=(E_s'E_{s-1}'\cdots E_1')\mathrm{diag}(d_{1\tau(1)},d_{2\tau(2)}\cdots, d_{n\tau(n)}),
\end{align*}
where $E_i$ and $E'_j$ are elementary matrixes obtained by swapping two rows of the identity matrix $I_n$.
Hence \begin{align*}
BD^{-1}&=(E_r\cdots E_1)\mathrm{diag}(b_{1\sigma(1)},\cdots, b_{n\sigma(n)})\mathrm{diag}(\frac{1}{d_{1\tau(1)}},\cdots, \frac{1}{d_{n\tau(n)}})(E_1'\cdots E_s')\\
&=(E_r\cdots E_1)\mathrm{diag}(\frac{b_{1\sigma(1)}}{d_{1\tau(1)}},\cdots, \frac{b_{n\sigma(n)}}{d_{n\tau(n)}})(E_1'\cdots E_s').
\end{align*}
So $BD^{-1}$ is obtained from the diagonal matrix $\mathrm{diag}(\frac{b_{1\sigma(1)}}{d_{1\tau(1)}},\cdots, \frac{b_{n\sigma(n)}}{d_{n\tau(n)}})$ by swapping two rows or two columns several times. Then  each row and each column of $BD^{-1}$ has only one non-zero element. It implies that the set of quasi-permutation matrixes in $\mathrm{GL}_n(k)$ is closed under multiplication and taking the inverse, hence it is indeed a subgroup of $\mathrm{GL}_n(k)$.
\end{proof}
By Proposition \ref{subgroup}, we can introduce the following definition.
\begin{defn}\label{qpl}
We use $\mathrm{QPL}_n(k)$ to denote the set of non-singular quasi-permutation $n\times n$ matrixes. By Proposition \ref{subgroup}, $\mathrm{QPL}_n(k)$ is a subgroup of $\mathrm{GL}_n(k)$.
\end{defn}

\begin{thm}\label{iso}
Let $M$ and $M'$ be two matrixes in $M_n(k)$. Then $$\mathcal{A}_{\mathcal{O}_{-1}(k^n)}(M)\cong \mathcal{A}_{\mathcal{O}_{-1}(k^n)}(M')$$ if and only if there exists $C=(c_{ij})_{n\times n}\in \mathrm{QPL}_n(k)$ such that
$$M'=C^{-1}M(c_{ij}^2)_{n\times n}.$$
\end{thm}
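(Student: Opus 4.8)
The plan is to reduce the statement to linear algebra: first classify the graded algebra automorphisms of $\mathcal{O}_{-1}(k^n)$, then impose compatibility with the differentials. Throughout, abbreviate $\mathcal{A}=\mathcal{A}_{\mathcal{O}_{-1}(k^n)}(M)$ and $\mathcal{A}'=\mathcal{A}_{\mathcal{O}_{-1}(k^n)}(M')$. Any DG algebra isomorphism $\phi\colon \mathcal{A}\to \mathcal{A}'$ is in particular a graded algebra isomorphism of $\mathcal{O}_{-1}(k^n)$, hence preserves the grading and is determined by its restriction to the degree one part $\mathcal{A}^1=\bigoplus_i kx_i$. So I would write $\phi(x_i)=\sum_{j=1}^n c_{ij}x_j$ for a matrix $C=(c_{ij})\in \mathrm{GL}_n(k)$, and the whole proof becomes a matter of translating the two defining properties of $\phi$ (being an algebra map and being a chain map) into matrix identities relating $C$, $M$ and $M'$.

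First I would determine which $C$ give a graded algebra automorphism. Expanding $\phi(x_i)\phi(x_j)$ and reducing with the relations $x_sx_t=-x_tx_s$ for $s\neq t$, the off-diagonal terms combine to cancel and one is left with
$$\phi(x_i)\phi(x_j)+\phi(x_j)\phi(x_i)=2\sum_{s=1}^n c_{is}c_{js}\,x_s^2.$$
Since $\{x_s^2\}_{s=1}^n$ is linearly independent in degree two, $\phi$ respects the relation $x_ix_j+x_jx_i=0$ if and only if $c_{is}c_{js}=0$ for every index $s$ and every pair $i\neq j$. This is exactly the hypothesis of Lemma \ref{onlyone}, so combined with $C\in \mathrm{GL}_n(k)$ it forces $C\in \mathrm{QPL}_n(k)$.

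Next I would impose the chain map condition $\phi\circ\partial_{\mathcal{A}}=\partial_{\mathcal{A}'}\circ\phi$. Using anticommutativity again, the cross terms in $\phi(x_j)^2$ cancel, leaving $\phi(x_j)^2=\sum_{s}c_{js}^2x_s^2$. Hence $\phi(\partial_{\mathcal{A}}(x_i))=\sum_s\big(\sum_j m_{ij}c_{js}^2\big)x_s^2$, whereas $\partial_{\mathcal{A}'}(\phi(x_i))=\sum_s\big(\sum_j c_{ij}m'_{js}\big)x_s^2$. Equating the coefficient of each $x_s^2$ gives the single matrix identity $M(c_{ij}^2)_{n\times n}=CM'$, equivalently $M'=C^{-1}M(c_{ij}^2)_{n\times n}$; this completes the ``only if'' direction. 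For the converse, given $C\in \mathrm{QPL}_n(k)$ with $M'=C^{-1}M(c_{ij}^2)_{n\times n}$, I would define $\phi$ on generators by $\phi(x_i)=\sum_j c_{ij}x_j$. The quasi-permutation condition ($c_{is}c_{js}=0$ for $i\neq j$) guarantees via the first computation that $\phi$ extends to a graded algebra automorphism, and the assumed identity $M(c_{ij}^2)_{n\times n}=CM'$ says exactly that $\phi$ commutes with the differentials on the generators.

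The only remaining point is a routine verification that a graded algebra map which commutes with the differentials on $x_1,\dots,x_n$ does so on all of $\mathcal{A}$. This holds because $D:=\phi\circ\partial_{\mathcal{A}}-\partial_{\mathcal{A}'}\circ\phi$ obeys the twisted Leibniz rule $D(ab)=D(a)\phi(b)+(-1)^{|a|}\phi(a)D(b)$, so vanishing of $D$ on the generators (and on $k$) forces $D=0$. I expect the main obstacle to be the first step: recognizing that preserving the relations imposes the entrywise vanishing $c_{is}c_{js}=0$ — a column-by-column condition rather than a single scalar equation — and then invoking Lemma \ref{onlyone} to identify the graded automorphism group as $\mathrm{QPL}_n(k)$. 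Once that is established, both implications collapse to the bookkeeping identity $M(c_{ij}^2)_{n\times n}=CM'$.
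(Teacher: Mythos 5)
Your proposal is correct and follows essentially the same route as the paper's proof: express $f$ on degree one by a matrix $C$, derive $c_{is}c_{js}=0$ from preservation of the relations $x_ix_j+x_jx_i=0$, invoke Lemma \ref{onlyone} to conclude $C\in \mathrm{QPL}_n(k)$, and translate the chain map condition into the identity $CM'=M(c_{ij}^2)_{n\times n}$, with the converse obtained by running the same computations backwards. Your closing observation that $D=\phi\circ\partial_{\mathcal{A}}-\partial_{\mathcal{A}'}\circ\phi$ satisfies a twisted Leibniz rule, so that vanishing on generators forces $D=0$, is a small point the paper leaves implicit, and it is a welcome addition.
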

\begin{proof}
We write
$\mathcal{A}=\mathcal{A}_{\mathcal{O}_{-1}(k^n)}(M)$ and
$\mathcal{A}'= \mathcal{A}_{\mathcal{O}_{-1}(k^n)}(M')$ for
simplicity. In order to distinguish, we assume that $\mathcal{A}'^{\#}$ is the $k$-algebra with degree one generators $x_1',\cdots, x_n'$ and relations $x_i'x_j'=-x_j'x_i'$ for all $1\le i<j\le n$.

If the DG
algebras $\mathcal{A}\cong \mathcal{A}'$, then there exists an
isomorphism $f: \mathcal{A}\to \mathcal{A}'$ of DG algebras. Since
$f^1: \mathcal{A}^1 \to \mathcal{A}'^1$ is a $k$-linear
isomorphism, we may let $$\left(\begin{array}{c}
                           f(x_1) \\
                           f(x_2) \\
                           \vdots \\
                           f(x_n)
                         \end{array}
                       \right)= C\left(
                         \begin{array}{c}
                           x_1' \\
                           x_2' \\
                           \vdots \\
                           x_n'
                         \end{array}
                       \right)$$
for some  $C=(c_{ij})_{n\times n} \in \mathrm{GL}_n(k)$.
We have
\begin{align*}
0&=f(x_ix_j+x_jx_i) \\
 &=f(x_i)f(x_j)+f(x_j)f(x_i)\\
 &=(\sum\limits_{s=1}^nc_{is}x_s')(\sum\limits_{t=1}^nc_{jt}x_t')+(\sum\limits_{t=1}^nc_{jt}x_t')(\sum\limits_{s=1}^nc_{is}x_s')\\
 &=2\sum\limits_{r=1}^nc_{ir}c_{jr}(x_r')^2,
\end{align*}
for any $1\le i<j\le n$. Since $\mathrm{char}k\neq 2$, we get $c_{ir}c_{jr}=0$ for any  $1\le i<j\le n$ and $r\in \{1,2,\cdots, n\}$.
By Lemma \ref{onlyone}, each row and each column of $C$ have only one non-zero element. Hence $C$ is a quasi-permutation non-singular matrix.
Since $f$ is a chain map, we have $f\circ \partial_{\mathcal{A}}=\partial_{\mathcal{A}'}\circ f$.
For any $i\in \{1,2,\cdots,n\}$, we have
\begin{align*}\label{Eq1}\tag{Eq1}
\partial_{\mathcal{A}'}\circ f(x_i)&=\partial_{\mathcal{A}'}(\sum\limits_{j=1}^nc_{ij}x_j')\\
&=\sum\limits_{j=1}^nc_{ij}(\sum\limits_{l=1}^nm'_{jl}(x_l')^2)\\
&=\sum\limits_{l=1}^n[\sum\limits_{j=1}^nc_{ij}m'_{jl}](x_l')^2
\end{align*}
and
\begin{align*}\label{Eq2}\tag{Eq2}
 f\circ \partial_{\mathcal{A}}(x_i)&=f(\sum\limits_{j=1}^{n}m_{ij}(x_j)^2)\\
&=\sum\limits_{j=1}^{n}m_{ij}[f(x_j)]^2\\
&=\sum\limits_{j=1}^{n}m_{ij}[\sum\limits_{l=1}^nc_{jl}x_l']^2\\
&=\sum\limits_{j=1}^{n}m_{ij}\sum\limits_{l=1}^n(c_{jl})^2(x_l')^2\\
&=\sum\limits_{l=1}^n[\sum\limits_{j=1}^{n}m_{ij}(c_{jl})^2](x_l')^2.
\end{align*}
Hence $\sum\limits_{j=1}^nc_{ij}m'_{jl}=\sum\limits_{j=1}^{n}m_{ij}(c_{jl})^2$ for any $i, l\in \{1,2,\cdots, n\}$.
Then we get $$
CM'=M((c_{ij})^2)_{n\times n}.$$
Since $C\in\mathrm{GL}_n(k)$, we have $M'=C^{-1}M((c_{ij})^2)_{n\times n}$.

 Conversely, if there exists a quasi-permutation matrix $C=(c_{ij})_{n\times n}\in \mathrm{GL}_n(k)$ such that $$M'=C^{-1}M((c_{ij})^2)_{n\times n}.$$ Then we have $$
CM'=M((c_{ij})^2)_{n\times n},$$ which implies that $\sum\limits_{j=1}^nc_{ij}m'_{jl}=\sum\limits_{j=1}^{n}m_{ij}(c_{jl})^2$ for any $i, l\in \{1,2,\cdots, n\}$. Define a linear map $f: \mathcal{A}^1 \to \mathcal{A}'^1$
                        by
 $$\left(\begin{array}{c}
                           f(x_1) \\
                           f(x_2) \\
                           \vdots \\
                           f(x_n)
                         \end{array}
                       \right)= C\left(
                         \begin{array}{c}
                           x_1' \\
                           x_2' \\
                           \vdots \\
                           x_n'
                         \end{array}
                       \right).$$
 Obviously, $f$ is invertible since $C\in \mathrm{GL}_n(k)$. Since $C$ is a quasi-permutation matrix, we have
 \begin{align*}
f(x_i)f(x_j)+f(x_j)f(x_i) &=(\sum\limits_{s=1}^nc_{is}x_s')(\sum\limits_{t=1}^nc_{jt}x_t')+(\sum\limits_{t=1}^nc_{jt}x_t')(\sum\limits_{s=1}^nc_{is}x_s')\\
 &=2\sum\limits_{r=1}^nc_{ir}c_{jr}(x_r')^2=0,
\end{align*}
for any $1\le i<j\le n$. Hence $f: \mathcal{A}^1\to \mathcal{A}'^1$ can be extended to a morphism of graded algebras between $\mathcal{A}^{\#}$ and $\mathcal{A}'^{\#}$. We still denote it by $f$. For any $i\in \{1,2,\cdots,n\}$, we still have (\ref{Eq1}) and (\ref{Eq2}). Since $$CM'=M(c_{ij}^2)_{n\times n},$$ we have  $\sum\limits_{j=1}^nc_{ij}m'_{jl}=\sum\limits_{j=1}^{n}m_{ij}(c_{jl})^2$ for any $i, l\in \{1,2,\cdots, n\}$. This implies $$\partial_{\mathcal{A}'}\circ f(x_i)=f\circ \partial_{\mathcal{A}}(x_i),$$ for any $i\in \{1,2,\cdots, n\}$. Hence, $f:A\to A'$ is an isomorphism of DG algebras.
\end{proof}

\begin{cor}\label{autgroup}
For any $M\in M_n(k)$, we have
$$\mathrm{Aut}_{dg}\mathcal{A}_{\mathcal{O}_{-1}(k^n)}(M)=\{C=(c_{ij})_{n\times n}\in \mathrm{QPL}_n(k)|M=C^{-1}M(c_{ij}^2)_{n\times n}\}. $$
\end{cor}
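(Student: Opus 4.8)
The plan is to specialize Theorem \ref{iso} to the case $\mathcal{A}'=\mathcal{A}$, equivalently $M'=M$, and to observe that the proof of that theorem already records the exact dictionary between a DG isomorphism and its defining matrix. First I would recall that any DG automorphism $f$ of $\mathcal{A}=\mathcal{A}_{\mathcal{O}_{-1}(k^n)}(M)$ is in particular a graded algebra automorphism of $\mathcal{A}^{\#}=\mathcal{O}_{-1}(k^n)$. Since $\mathcal{A}^{\#}$ is generated in degree one by $x_1,\cdots,x_n$, such an $f$ is completely determined by the images $f(x_i)\in \mathcal{A}^1$, and the restriction $f^1:\mathcal{A}^1\to \mathcal{A}^1$ is a $k$-linear isomorphism encoded by a unique matrix $C=(c_{ij})\in \mathrm{GL}_n(k)$ via $f(x_i)=\sum_{j}c_{ij}x_j$. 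This sets up a well-defined injective assignment $f\mapsto C$, the injectivity being immediate because $C$ reconstructs $f$ on the generators and hence on all of $\mathcal{A}$.

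Next I would invoke the two halves of the proof of Theorem \ref{iso} with $\mathcal{A}'=\mathcal{A}$ and $M'=M$. The computation of $f(x_ix_j+x_jx_i)$ forces $c_{ir}c_{jr}=0$ for all $1\le i<j\le n$ and all $r$, so by Lemma \ref{onlyone} the matrix $C$ lies in $\mathrm{QPL}_n(k)$. The chain map condition $f\circ \partial_{\mathcal{A}}=\partial_{\mathcal{A}}\circ f$, through equations (\ref{Eq1}) and (\ref{Eq2}), is equivalent to $CM=M(c_{ij}^2)_{n\times n}$, that is, $M=C^{-1}M(c_{ij}^2)_{n\times n}$. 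Hence every $f\in \mathrm{Aut}_{dg}\mathcal{A}$ produces a matrix $C$ belonging to the right hand set.

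For the reverse inclusion I would use the converse direction of Theorem \ref{iso}: given any $C\in \mathrm{QPL}_n(k)$ with $M=C^{-1}M(c_{ij}^2)_{n\times n}$, the linear map defined by $f(x_i)=\sum_{j}c_{ij}x_j$ extends to a graded algebra endomorphism, the quasi-permutation property guaranteeing compatibility with the relations $x_ix_j=-x_jx_i$, which commutes with $\partial_{\mathcal{A}}$ and is invertible because $C\in \mathrm{GL}_n(k)$; thus $f\in \mathrm{Aut}_{dg}\mathcal{A}$ with associated matrix precisely $C$. This makes $f\mapsto C$ a bijection onto the stated set and yields the claimed equality. There is no genuine obstacle beyond bookkeeping here, since all the computational content is already contained in Theorem \ref{iso}; the only point requiring care is that the correspondence between automorphisms and their matrices is a true bijection, and this rests entirely on the fact that $\mathcal{O}_{-1}(k^n)$ is generated in degree one, so that an automorphism is determined by its action on the degree one generators $x_1,\cdots,x_n$.
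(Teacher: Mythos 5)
Your proposal is correct and is essentially the paper's own argument: the paper proves this corollary simply by noting it is immediate from Theorem \ref{iso}, and your write-up just makes explicit the specialization $\mathcal{A}'=\mathcal{A}$, $M'=M$ and the bijection $f\mapsto C$ that this entails. The extra care you take about the assignment $f\mapsto C$ being a bijection (resting on $\mathcal{O}_{-1}(k^n)$ being generated in degree one) is exactly the bookkeeping the paper leaves implicit, so nothing is missing and nothing differs in substance.
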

\begin{proof}
This is immediate from Theorem \ref{iso}.
\end{proof}

\begin{defn}\label{leftaction}{\rm  Theorem \ref{iso} indicates that  we can define a map $$\chi: M_n(k) \times \mathrm{QPL}_n(k)\to M_n(k)$$
       such that
$\chi[(M, C=(c_{ij})_{n\times n})]=C^{-1}M((c_{ij})^2)_{n\times n}$.}
\end{defn}

\begin{prop}
The map $\chi$ defined in Definition \ref{leftaction} is a right
group action of $\mathrm{QPL}_n(k)$ on $M_n(k)$.
\end{prop}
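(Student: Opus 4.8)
The plan is to check the two defining axioms of a right group action: compatibility with the identity of $\mathrm{QPL}_n(k)$ and with group multiplication. For bookkeeping I will write $C^{[2]}=(c_{ij}^2)_{n\times n}$ for the entrywise square of a matrix $C=(c_{ij})_{n\times n}$, so that $\chi[(M,C)]=C^{-1}MC^{[2]}$; this visibly lies in $M_n(k)$, so $\chi$ is well defined. The identity axiom is then immediate: for $I_n=(\delta_{ij})_{n\times n}$ one has $\delta_{ij}^2=\delta_{ij}$, hence $I_n^{[2]}=I_n$ and $\chi[(M,I_n)]=I_n^{-1}MI_n=M$.

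The heart of the argument is a multiplicativity property of entrywise squaring that is valid precisely because we restrict to quasi-permutation matrices: I claim that $(CD)^{[2]}=C^{[2]}D^{[2]}$ for all $C,D\in\mathrm{QPL}_n(k)$. This identity fails for arbitrary matrices, so the proof must exploit the structure recorded in Proposition \ref{subgroup}, namely that there is a permutation $\sigma$ with $c_{i\sigma(i)}\neq 0$ and $c_{ij}=0$ for $j\neq\sigma(i)$. Consequently the $(i,l)$ entry of $CD$ reduces to the single surviving term $c_{i\sigma(i)}d_{\sigma(i)l}$, whose square is $c_{i\sigma(i)}^2d_{\sigma(i)l}^2$; meanwhile the $(i,l)$ entry of $C^{[2]}D^{[2]}$ is $\sum_j c_{ij}^2d_{jl}^2=c_{i\sigma(i)}^2d_{\sigma(i)l}^2$, since only $j=\sigma(i)$ contributes. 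Hence the two matrices agree entrywise.

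Granting this lemma, the compatibility axiom follows by direct substitution. Since $\mathrm{QPL}_n(k)$ is a group by Proposition \ref{subgroup}, the product $CD$ again lies in $\mathrm{QPL}_n(k)$, and
\begin{align*}
\chi[(\chi[(M,C)],D)]&=D^{-1}\bigl(C^{-1}MC^{[2]}\bigr)D^{[2]}\\
&=(CD)^{-1}M\bigl(C^{[2]}D^{[2]}\bigr)\\
&=(CD)^{-1}M(CD)^{[2]}=\chi[(M,CD)],
\end{align*}
which is exactly the right-action identity $(M\cdot C)\cdot D=M\cdot(CD)$. The only step requiring genuine care is the multiplicativity lemma, which I expect to be the main---if mild---obstacle, resting entirely on the single-nonzero-entry-per-row structure of quasi-permutation matrices.
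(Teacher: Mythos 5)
Your proof is correct, and it isolates exactly the same key identity as the paper---that entrywise squaring is multiplicative on $\mathrm{QPL}_n(k)$, i.e. $(CD)^{[2]}=C^{[2]}D^{[2]}$---but you verify it by a genuinely different and more elementary route. The paper proves this identity by factoring $C=(E_r\cdots E_1)\mathrm{diag}(c_{1\sigma(1)},\dots,c_{n\sigma(n)})$ and $C'=\mathrm{diag}(c'_{1\tau(1)},\dots,c'_{n\tau(n)})(E'_1\cdots E'_s)$ into row-swap elementary matrices and diagonals, then computing the product and its entrywise square through these factorizations. You instead compute the $(i,l)$ entry directly: since row $i$ of $C$ has the single nonzero entry $c_{i\sigma(i)}$, the sum $\sum_j c_{ij}d_{jl}$ collapses to one term, so squaring the sum agrees with summing the squares. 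This avoids the factorization bookkeeping entirely, and it also exposes something the paper's argument obscures: your computation uses the quasi-permutation structure of $C$ only, so the lemma $(CD)^{[2]}=C^{[2]}D^{[2]}$ actually holds for arbitrary $D\in M_n(k)$, a mild strengthening that comes for free. The remaining steps (the identity axiom via $I_n^{[2]}=I_n$, closure of $\mathrm{QPL}_n(k)$ under multiplication via Proposition \ref{subgroup}, and the substitution $D^{-1}C^{-1}=(CD)^{-1}$) match the paper's, and all are handled correctly.
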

\begin{proof}
Obviously, $I_n$ is the identity element in $\mathrm{QPL}_n(k)$. For any
$M$ in $M_n(k)$, we have  $\chi[(M, I_n)]=I^{-1}_nMI_n=M$.
For any $C=(c_{ij})_{n \times n}$ and  $C'=(c'_{ij})_{n \times n}$ in $\mathrm{QPL}_n(k)$,
we have
\begin{align*}
\chi\{(\chi[(M, C)], C')\}&=\chi[(C^{-1}M((c_{ij})^2)_{n\times n}, C')]\\
                          &=(C')^{-1}C^{-1}M((c_{ij})^2)_{n\times n}((c'_{ij})^2)_{n\times n}
\end{align*}
and
$$\chi[(M, CC')]=(C')^{-1}C^{-1}M((\sum\limits_{l=1}^nc_{il}c'_{lj})^2)_{n\times n}.$$
It remains to show that $((c_{ij})^2)_{n\times n}((c'_{ij})^2)_{n\times n}=((\sum\limits_{l=1}^nc_{il}c'_{lj})^2)_{n\times n}$.
Since $C$, $C'$ $\in$ $\mathrm{QPL}_n(k)$, there exist
$\sigma, \tau\in \Bbb{S}_{n}$ such that
\begin{align*}
c_{i\sigma(i)}\neq 0, c_{ij}=0,\,\, \text{if}\,\,j\neq \sigma(i) \\
c_{i\tau(i)}'\neq 0, c_{ij}'=0, \,\,  \text{if}\,\,j\neq \tau(i),
\end{align*}
for any $i\in \{1,2,\cdots,n\}$.
One sees that $C$ and $C'$ can be written by
\begin{align*}
C=(E_rE_{r-1}\cdots E_1)\mathrm{diag}(c_{1\sigma(1)},c_{2\sigma(2)},\cdots, c_{n\sigma(n)}),\\
C'=\mathrm{diag}(c_{1\tau(1)}',c_{2\tau(2)}'\cdots, c_{n\tau(n)}')(E_1'E_2'\cdots E_s'),
\end{align*}
where $E_i$ and $E'_j$ are elementary matrixes obtained by swapping two rows of the identity matrix $I_n$.
 Then
\begin{align*}
CC'=E_sE_{s-1}\cdots E_1\mathrm{diag}(c_{1\sigma(1)}c_{1\tau(1)}',c_{2\sigma(2)}c_{2\tau(2)}',\cdots, c_{n\sigma(n)}c_{n\tau(n)}')E'_1E'_2\cdots E'_t,
\end{align*}
and hence
\begin{align*}
&((\sum\limits_{l=1}^nc_{il}c_{lj}')^2)_{n\times n}=E_r\cdots E_1\mathrm{diag}((c_{1\sigma(1)})^2(c_{1\tau(1)}')^2,\cdots, (c_{n\sigma(n)})^2(c_{n\tau(n)}')^2)E'_1\cdots E'_s\\
&=E_r\cdots E_1\mathrm{diag}((c_{1\sigma(1)})^2,\cdots, (c_{n\sigma(n)})^2)\mathrm{diag}((c_{1\tau(1)}')^2,\cdots,(c_{n\tau(n)}')^2)E'_1\cdots E'_s\\
&=((c_{ij})^2)_{n\times n}((c'_{ij})^2)_{n\times n},
\end{align*}
which implies $\chi[(M, CC')]=\chi\{(\chi[(M, C)], C')\}$.
Therefore,  the map $\chi$ defined in Definition \ref{leftaction} is a right group action of $\mathrm{QPL}_n(k)$ on $M_n(k)$.
\end{proof}

\begin{cor}\label{orbit}{\rm
In $M_n(k)$, there is a natural equivalence relation $\sim_R$
defined by
$$M\sim_R M' \Leftrightarrow \exists\, C=(c_{ij})_{n\times n}\in \mathrm{QPL}_n(k) \quad\text{such that}\quad M'=\chi(M,C).$$ Hence the set of isomorphism classes of DG algebras in
$\{\mathcal{A}_{\mathcal{O}_{-1}(k^n)}(M)|M\in M_{n}(k)\}$
is the quotient set
$$\{\mathcal{A}_{\mathcal{O}_{-1}(k^n)}(M)|M\in M_{n}(k)\}/\mathrm{QPL}_n(k).$$ }
\end{cor}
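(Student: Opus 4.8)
The plan is to read off this corollary from Theorem \ref{iso} together with the preceding proposition, which already establishes that $\chi$ is a right group action of $\mathrm{QPL}_n(k)$ on $M_n(k)$. My first step would be to recall the general fact that any group action on a set induces an equivalence relation whose classes are exactly the orbits, and to check that $\sim_R$ is this orbit relation using the two defining identities of the action, $\chi(M,I_n)=M$ and $\chi(\chi(M,C),C')=\chi(M,CC')$.

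Concretely, I would verify reflexivity from $M=\chi(M,I_n)$ with $I_n\in\mathrm{QPL}_n(k)$; symmetry by noting that if $M'=\chi(M,C)$ then
$$\chi(M',C^{-1})=\chi(\chi(M,C),C^{-1})=\chi(M,CC^{-1})=\chi(M,I_n)=M,$$
where $C^{-1}\in\mathrm{QPL}_n(k)$ because $\mathrm{QPL}_n(k)$ is a subgroup of $\mathrm{GL}_n(k)$ by Proposition \ref{subgroup}; and transitivity from $\chi(\chi(M,C),C')=\chi(M,CC')$ with $CC'\in\mathrm{QPL}_n(k)$. This confirms that $\sim_R$ is a genuine equivalence relation and that its classes are the orbits comprising $M_n(k)/\mathrm{QPL}_n(k)$.

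The key step is then to match this orbit relation with DG-algebra isomorphism via Theorem \ref{iso}: for $M,M'\in M_n(k)$, the condition $M\sim_R M'$ says precisely that $M'=C^{-1}M(c_{ij}^2)_{n\times n}=\chi(M,C)$ for some $C\in\mathrm{QPL}_n(k)$, which by Theorem \ref{iso} is equivalent to $\mathcal{A}_{\mathcal{O}_{-1}(k^n)}(M)\cong\mathcal{A}_{\mathcal{O}_{-1}(k^n)}(M')$. I would then conclude that the assignment $M\mapsto\mathcal{A}_{\mathcal{O}_{-1}(k^n)}(M)$ sends $\sim_R$-equivalent matrices to isomorphic DG algebras and inequivalent matrices to non-isomorphic ones, so it descends to a well-defined bijection from the orbit set $M_n(k)/\mathrm{QPL}_n(k)$ onto the set of isomorphism classes of the family $\{\mathcal{A}_{\mathcal{O}_{-1}(k^n)}(M)\,|\,M\in M_n(k)\}$. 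This is exactly the asserted identification of the set of isomorphism classes with the quotient set.

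I do not anticipate a real obstacle, since the substantive content lies entirely in Theorem \ref{iso} and in the proof that $\chi$ is a group action. The one point I would be careful about is that symmetry of $\sim_R$ uses closure of $\mathrm{QPL}_n(k)$ under inverses, i.e. the subgroup property from Proposition \ref{subgroup}, and not merely closure under products; granting this, the remainder is a routine translation between the orbit relation and the isomorphism relation on the DG algebras.
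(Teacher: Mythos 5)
Your proposal is correct and follows exactly the route the paper intends: the corollary is stated without a written proof precisely because it is the immediate combination of the proposition that $\chi$ is a right group action of $\mathrm{QPL}_n(k)$ on $M_n(k)$ (giving the orbit equivalence relation, with symmetry using the subgroup property of Proposition \ref{subgroup}) and Theorem \ref{iso} (identifying that orbit relation with DG algebra isomorphism). Your write-up simply makes these routine verifications explicit, with no divergence from the paper's argument.
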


\section{some useful lemmas}
The cohomology graded algebra $H(\mathcal{A})$ of a cochain DG algebra $\mathcal{A}$ usually contains a lot of homological informations.
In some cases, it is  possible to detect the  Calabi-Yau properties of $\mathcal{A}$ from  $H(\mathcal{A})$. For example,
It is proved in \cite{MYY}, that $\mathcal{A}$ is a Calabi-Yau DG algebra if the trivial DG algebra $(H(\mathcal{A}),0)$ is Calabi-Yau.
And we have the following lemma.
\begin{lem}\cite[Theorem A]{MH}\label{MH}
Let $\mathcal{A}$ be a connected cochain DG algebra. Then $\mathcal{A}$ is a Koszul Calabi-Yau DG algebra if $H(\mathcal{A})$ belongs to one of the following cases:
\begin{align*}
& (a) H(A)\cong k;  \quad \quad (b) H(A)= k[\lceil z\rceil], z\in \mathrm{ker}(\partial_{A}^1); \\
& (c) H(A)= \frac{k\langle \lceil z_1\rceil, \lceil z_2\rceil\rangle}{(\lceil z_1\rceil\lceil z_2\rceil +\lceil z_2\rceil \lceil z_1\rceil)}, z_1,z_2\in \mathrm{ker}(\partial_{A}^1).
\end{align*}
\end{lem}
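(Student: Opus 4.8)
The plan is to verify the two defining conditions separately, using the cohomological criteria of Remark \ref{methods} together with the principle of \cite{MYY} that $\mathcal{A}$ is Calabi-Yau whenever the trivial DG algebra $(H(\mathcal{A}),0)$ is. For the Calabi-Yau part I would replace $\mathcal{A}$ by $(H(\mathcal{A}),0)$, which is formal and hence directly computable; for the Koszul part, which genuinely concerns $\mathcal{A}$ and not just its cohomology, I would compare $\mathrm{Tor}$ over $\mathcal{A}$ with $\mathrm{Tor}$ over $H(\mathcal{A})$ through the Eilenberg--Moore spectral sequence $\mathrm{Tor}^{H(\mathcal{A})}(k,k)\Rightarrow H(k\otimes_{\mathcal{A}}^{L}k)$.

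For the Calabi-Yau property, case (a) is immediate: $H(\mathcal{A})\cong k$ forces the augmentation $\mathcal{A}\to k$ to be a quasi-isomorphism, so $\mathcal{A}$ inherits Koszulity, homological smoothness and $0$-Calabi-Yauness from the field $k$. In cases (b) and (c) the algebra $H(\mathcal{A})$ is the polynomial algebra $k[\lceil z\rceil]$ on one cohomological-degree-$1$ generator, respectively the quantum plane $\mathcal{O}_{-1}(k^2)$; both are classically Koszul and generated in cohomological degree $1$. Viewing them as trivial DG algebras, I would write down the minimal free resolution of $k$---the two-term complex $H(\mathcal{A})e_0\oplus H(\mathcal{A})e_1$ with $\partial e_1=\lceil z\rceil e_0$ in case (b), and the length-two Koszul complex in case (c)---and observe that the semibasis lies in cohomological degree $0$, so $(H(\mathcal{A}),0)$ is Koszul. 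Dualizing computes the Ext-algebra as $\bigwedge(\xi)$ in case (b) and as a $4$-dimensional (quantum) exterior algebra in case (c); a short check, once the Koszul signs are accounted for, shows each is Frobenius and symmetric as a graded bimodule. The Koszul criterion of \cite{HM} then makes $(H(\mathcal{A}),0)$ Calabi-Yau, and \cite{MYY} transports this to $\mathcal{A}$.

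It remains to show that $\mathcal{A}$ itself is Koszul. Since $\mathcal{A}$ need not be formal I cannot read this off from $(H(\mathcal{A}),0)$; instead I would run the spectral sequence above, whose $E_2$-page is $\mathrm{Tor}^{H(\mathcal{A})}_{p,q}(k,k)$ with $p$ the homological and $q$ the internal degree. Because $H(\mathcal{A})$ is generated in cohomological degree $1$ and is a classical Koszul algebra, this $\mathrm{Tor}$ is concentrated on the diagonal $q=p$, which corresponds exactly to cohomological degree $q-p=0$ in $H(k\otimes_{\mathcal{A}}^{L}k)$. Every higher differential $d_r$ moves off this diagonal and so must vanish, leaving $H(k\otimes_{\mathcal{A}}^{L}k)$ concentrated in cohomological degree $0$ and finite dimensional. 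By Remark \ref{methods} this gives at once that $\mathcal{A}$ is Koszul and homologically smooth, finishing the proof.

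The main obstacle is exactly this passage from $H(\mathcal{A})$ back to $\mathcal{A}$: a priori the higher Massey-type operations carried by $\mathcal{A}$ could create cohomology of $k\otimes_{\mathcal{A}}^{L}k$ outside degree $0$ or deform the Ext-algebra away from symmetric Frobenius. Everything hinges on the numerical coincidence that, for cohomology generated in degree $1$ and classically Koszul, the bigrading confines the spectral sequence to a single diagonal, so that no differentials and no multiplicative extension problems can survive; verifying this degeneration carefully, and in case (c) checking the symmetry of the $4$-dimensional Ext-algebra, is where the real work lies.
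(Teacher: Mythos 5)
Your proposal is correct, but it takes a genuinely different route from the proof behind this statement. The lemma is quoted from \cite[Theorem A]{MH} without proof in the present paper; there, and in this paper's own generalization of case (c) (Proposition \ref{impcri}, which recovers your case (c) at $(t_1,t_2,t_3)=(0,0,1)$), the method is explicit: one lifts the minimal graded free resolution of $k$ over $H(\mathcal{A})$ to a minimal semi-free Eilenberg--Moore resolution $F$ of ${}_{\mathcal{A}}k$ whose semibasis sits in degree $0$ (giving Koszulity and homological smoothness directly), then identifies the Ext-algebra with $Z^0(\Hom_{\mathcal{A}}(F,F))$ as a concrete matrix algebra and checks the symmetric Frobenius property by hand (as in Lemma \ref{symmetry}), so that Remark \ref{methods} yields the Calabi--Yau property with no appeal to \cite{MYY}. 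You replace both steps by softer tools: the Eilenberg--Moore spectral sequence for Koszulity and smoothness, and the formality-transport theorem of \cite{MYY} for the Calabi--Yau property, which neatly sidesteps the multiplicative extension problem you correctly identify as the danger. What you lose is the explicit Ext-algebra, which the paper exploits elsewhere (e.g.\ Remark \ref{discover}); what you gain is an argument that visibly applies to any connected cochain DG algebra whose cohomology is classically Koszul, generated in degree $1$, of finite global dimension, and with symmetric Frobenius Koszul dual. Your case-by-case verifications for the trivial DG algebras $(k[\lceil z\rceil],0)$ and $(\mathcal{O}_{-1}(k^2),0)$ are right: the Ext-algebras are $k[\xi]/(\xi^2)$ and $k[\xi_1,\xi_2]/(\xi_1^2,\xi_2^2)$, both commutative Frobenius, matching $\mathscr{E}_{0,0,1}$ in the paper's notation.

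One point you should justify rather than assume is the existence and convergence of the spectral sequence $\mathrm{Tor}^{H(\mathcal{A})}(k,k)\Rightarrow H(k\otimes^{L}_{\mathcal{A}}k)$: since $H^1(\mathcal{A})\neq 0$ in cases (b) and (c), this is exactly the territory where Eilenberg--Moore-type arguments are classically delicate. It does work here: for a connected cochain DG algebra the normalized bar construction $B(\mathcal{A},\mathcal{A},k)$ is a semi-free resolution of $k$, so $B(k,\mathcal{A},k)$ computes $k\otimes^{L}_{\mathcal{A}}k$, and its word-length filtration is increasing, exhaustive, and bounded below in each total degree, so the classical convergence theorem applies. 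Combined with the degeneration you observe (the $E_2$-page concentrated in total degree $0$ by classical Koszulity of $H(\mathcal{A})$, while every $d_r$ shifts total degree by one), exhaustiveness and boundedness below of the induced filtration on each $H^n$ then genuinely force $H^n(k\otimes^{L}_{\mathcal{A}}k)=0$ for $n\neq 0$ and finite dimension in degree $0$, whence Remark \ref{methods} gives Koszulity and smoothness. With a sentence to this effect added, your proof is complete.
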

In the rest of section, we will give another useful criterion to detect the Calabi-Yau properties of $\mathcal{A}$. For this, we need  the following lemma.

\begin{lem}\label{symmetry}
Let $\mathscr{E}_{\lambda,\mu,\nu}$ be the parameterized commutative algebra
$$\mathscr{E}_{\lambda,\mu,\nu}=\Bigg\{\left(
      \begin{array}{cccc}
        a & 0 & 0 & 0 \\
        b & a & 0 & 0 \\
        c & 0 & a & 0 \\
        d & \lambda b+\nu c & \nu b+\mu c & a \\
      \end{array}
    \right)| a,b,c,d\in k\Bigg\}$$
 under the usual multiplication of matrices.
 Then  $\mathscr{E}_{\lambda,\mu,\nu}$ is a symmetric Frobenius $k$-algebra if and only if $\lambda\mu-\nu^2\neq 0$.
\end{lem}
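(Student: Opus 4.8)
The statement asks us to characterize when the $4$-dimensional commutative algebra $\mathscr{E}_{\lambda,\mu,\nu}$ is a symmetric Frobenius algebra, and the claim is that this happens exactly when $\lambda\mu - \nu^2 \neq 0$. My first step is to understand the algebra structure concretely. A general element is determined by the four scalars $a,b,c,d$, so I would fix a basis: the identity matrix $1$ (corresponding to $a=1$, the rest zero) together with three nilpotent generators, say $u,v,w$, corresponding to the coordinates $b$, $c$, and $d$ respectively. The first task is to compute the multiplication table among $u,v,w$. By squaring and cross-multiplying the given matrices, I expect to find that $u,v,w$ all square into the $d$-slot, that $w$ is in the socle (it multiplies everything else to zero), and that the products $u^2, v^2, uv=vu$ land in $kw$ with coefficients governed precisely by the parameters $\lambda,\mu,\nu$ sitting in the bottom row. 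Concretely I anticipate relations of the shape $u^2 = \lambda w$, $uv = vu = \nu w$, $v^2 = \mu w$, and $uw=vw=w^2=0$, making $\mathscr{E}_{\lambda,\mu,\nu}$ a local graded algebra with $\mathscr{E}^0 = k\cdot 1$, $\mathscr{E}^1 = ku\oplus kv$, $\mathscr{E}^2 = kw$, and top degree $u=2$.

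\textbf{Frobenius via a bilinear form.}
Since $\mathscr{E}_{\lambda,\mu,\nu}$ is finite dimensional, local, and graded with one-dimensional top degree $\mathscr{E}^2 = kw$, the natural approach is through a symmetric associative nondegenerate bilinear form. I would define a linear functional $\phi:\mathscr{E}\to k$ by reading off the coefficient of $w$ (equivalently, reading off the $d$-entry), and set $\langle x,y\rangle := \phi(xy)$. Because the algebra is commutative, this form is automatically symmetric, and associativity $\langle xy,z\rangle = \langle x,yz\rangle$ is immediate from $\phi((xy)z)=\phi(x(yz))$. The key point is then that $\mathscr{E}_{\lambda,\mu,\nu}$ is a symmetric Frobenius algebra if and only if this $\phi$ is nondegenerate, i.e. its associated Gram matrix on the basis $\{1,u,v,w\}$ is invertible. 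So the whole problem reduces to computing that Gram matrix and finding when its determinant is nonzero.

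\textbf{The determinant computation.}
Evaluating $\phi$ on all products, the pairings $\langle 1,w\rangle$ and $\langle w,1\rangle$ give $1$ (since $w$ contributes to the $d$-slot), while $\langle u,u\rangle = \lambda$, $\langle u,v\rangle=\langle v,u\rangle = \nu$, and $\langle v,v\rangle = \mu$; all pairings involving a degree-mismatch that cannot reach $w$ vanish. Ordering the basis as $1,u,v,w$, the Gram matrix block-decomposes into an anti-diagonal $\langle 1,w\rangle$ pairing and the central $2\times2$ block $\begin{pmatrix}\lambda & \nu\\ \nu & \mu\end{pmatrix}$. Its determinant is then (up to sign) the product of the off-diagonal $1$'s with $\det\begin{pmatrix}\lambda & \nu\\ \nu & \mu\end{pmatrix} = \lambda\mu-\nu^2$. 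Hence nondegeneracy holds if and only if $\lambda\mu-\nu^2\neq 0$, which is exactly the claimed criterion, and in that case the form is symmetric by construction, giving the symmetric Frobenius property. The main obstacle is purely bookkeeping: I must verify the multiplication table carefully from the matrix entries — in particular confirming that the bottom row $\lambda b + \nu c$ and $\nu b + \mu c$ really produces the symmetric coefficient matrix $\begin{pmatrix}\lambda & \nu\\ \nu & \mu\end{pmatrix}$ and that no extraneous products contribute to $\phi$ — since an error there would change the determinant and break the equivalence.
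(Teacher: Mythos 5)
Your multiplication table is right: writing $u,v,w$ for the elements with $b=1$, $c=1$, $d=1$ respectively, one checks from $E_{ij}E_{kl}=\delta_{jk}E_{il}$ that $u^2=\lambda w$, $uv=vu=\nu w$, $v^2=\mu w$ and $uw=vw=w^2=0$, which is exactly the table in the paper (there $u,v,w$ are called $e_1,e_2,e_3$). Your \emph{if} direction is then sound and genuinely slicker than the paper's: the functional $\phi$ reading off the $d$-coordinate gives a symmetric associative form whose Gram matrix on $\{1,u,v,w\}$ has determinant $\pm(\lambda\mu-\nu^2)$, and a nondegenerate such form is precisely a symmetric Frobenius structure. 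The paper instead writes down the explicit left-module isomorphism $\theta:\mathscr{E}_{\lambda,\mu,\nu}\to\Hom_k(\mathscr{E}_{\lambda,\mu,\nu},k)$ ($1\mapsto e_3^*$, $e_1\mapsto\lambda e_1^*+\nu e_2^*$, etc.) and verifies $\mathscr{E}$-linearity case by case; your form encodes the same data more efficiently.

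The gap is in the \emph{only if} direction. You claim the algebra is symmetric Frobenius if and only if \emph{this particular} $\phi$ is nondegenerate, but being Frobenius means \emph{some} linear functional $\psi$ has nondegenerate associated form $\psi(xy)$; a priori another functional could succeed where yours fails, so your Gram computation as written proves only half the lemma. Two short patches are available. (i) Run the same computation for a general functional $\psi=\alpha 1^*+\beta u^*+\gamma v^*+\delta w^*$: its Gram matrix is
\[
\begin{pmatrix} \alpha & \beta & \gamma & \delta \\ \beta & \delta\lambda & \delta\nu & 0 \\ \gamma & \delta\nu & \delta\mu & 0 \\ \delta & 0 & 0 & 0 \end{pmatrix},
\]
with determinant $-\delta^4(\lambda\mu-\nu^2)$, so when $\lambda\mu=\nu^2$ no functional whatsoever works. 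This is essentially the computation the paper performs in its converse half, where an arbitrary module isomorphism with matrix $M$ is forced into a shape with $|M|=d^4(\nu^2-\lambda\mu)$. (ii) Alternatively, argue via the socle: if $\lambda\mu=\nu^2$, the singular symmetric matrix $\left(\begin{smallmatrix}\lambda&\nu\\ \nu&\mu\end{smallmatrix}\right)$ has a kernel vector $(\alpha,\beta)\neq(0,0)$, and then $x=\alpha u+\beta v$ annihilates the maximal ideal, so the socle contains the two-dimensional space $kx\oplus kw$; a finite-dimensional local algebra with socle of dimension at least $2$ is never Frobenius, since Frobenius local algebras have simple socle. Either repair is routine, but without one of them your argument does not rule out the Frobenius property when $\lambda\mu-\nu^2=0$.
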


\begin{proof}
We claim that $\mathscr{E}_{\lambda,\mu,\nu}$ is closed  under the usual multiplication of matrices and the multiplication in $\mathscr{E}_{\lambda,\mu,\nu}$ is commutative.
Indeed, it is straightforward to check
\begin{align*}
& \left(
      \begin{array}{cccc}
        a & 0 & 0 & 0 \\
        b & a & 0 & 0 \\
        c & 0 & a & 0 \\
        d & \lambda b+\nu c & \nu b+\mu c & a \\
      \end{array}
    \right)\left(
      \begin{array}{cccc}
        a' & 0 & 0 & 0 \\
        b' & a' & 0 & 0 \\
        c' & 0 & a' & 0 \\
        d' & \lambda b'+\nu c' & \nu b'+\mu c' & a' \\
      \end{array}
    \right)\\
    =&
\left(
      \begin{array}{cccc}
        a' & 0 & 0 & 0 \\
        b' & a' & 0 & 0 \\
        c' & 0 & a' & 0 \\
        d' & \lambda b'+\nu c' & \nu b'+\mu c' & a' \\
      \end{array}
    \right) \left(
      \begin{array}{cccc}
        a & 0 & 0 & 0 \\
        b & a & 0 & 0 \\
        c & 0 & a & 0 \\
        d & \lambda b+\nu c & \nu b+\mu c & a \\
      \end{array}
    \right)\in \mathscr{E}_{\lambda,\mu,\nu}.
\end{align*}
Clearly, $1_{\mathscr{E}}=\sum\limits_{i=1}^4E_{ii}$. Let $e_1=E_{21}+\nu E_{43}+\lambda E_{42}$, $e_2=E_{31}+\nu E_{42}+\mu E_{43}$ and
$e_3=E_{41}$. Then $\mathscr{E}_{\lambda,\mu,\nu}=k1_{\mathscr{E}}\oplus ke_1\oplus ke_2\oplus ke_3$ as a $k$-vector space and
we have the following multiplication table:
\begin{center}
\begin{tabular}{l|llll}

                                                                & $1_{\mathscr{E}}$ & $e_1$ & $e_2$ & $e_3$ \\
                                                                 \hline
                                              $1_{\mathscr{E}}$ & $1_{\mathscr{E}}$  & $e_1$ & $e_2$  & $e_3$ \\
                                              $e_1$             & $e_1$  & $\lambda e_3$ & $\nu e_3$ & 0 \\
                                              $e_2$             & $e_2$  & $\nu e_3$ & $\mu e_3$ & 0 \\
                                              $e_3$             & $e_3$  & 0 & 0 & 0 \\
                                            \end{tabular}.
\end{center}

If $\lambda \mu-\nu^2\neq 0$, we
define a linear map
 \begin{align*}
 \theta: \mathscr{E}_{\lambda,\mu,\nu}\to \Hom_k(\mathscr{E}_{\lambda,\mu,\nu},k)
 \end{align*}
  by
 $$\left\{   \begin{array}{c}
                                                    1_{\mathscr{E}}   \\
                                                    e_1                \\
                                                     e_2                  \\
                                                     e_3    \\
                                                   \end{array}\right \}  \begin{array}{c}
                                                   \longrightarrow\\
                                                   \longrightarrow\\
                                                   \longrightarrow\\
                                                   \longrightarrow\\
                                                   \end{array}
                                                    \left\{   \begin{array}{c}
                                                     e_3^* \,\, \\
                                                     \lambda e_1^*+\nu e_2^*  \,\,      \\
                                                     \nu e_1^*+\mu e_2^* \,\,  \\
                                                     1_{\mathscr{E}}^* \,\, \\
                                                   \end{array}\right \}.
                                                     $$
 We want to show that $\theta$ is an isomorphism of left $\mathscr{E}_{\lambda,\mu,\nu}$-modules. One sees that $\theta$ is bijective since $\left|
       \begin{array}{cc}
         \lambda & \nu \\
         \nu & \mu \\
       \end{array}
     \right|\neq 0$.  It suffices to prove
 that $\theta$ is $\mathscr{E}_{\lambda,\mu,\nu}$-linear. Since
 \begin{align*}
 \theta(e_1\cdot 1_{\mathscr{E}})=\theta(e_1)=\lambda e_1^*+\nu e_2^* : \left\{   \begin{array}{c}
                                                    1_{\mathscr{E}}   \\
                                                    e_1                \\
                                                     e_2                  \\
                                                     e_3    \\
                                                   \end{array}\right \}  \begin{array}{c}
                                                   \longrightarrow\\
                                                   \longrightarrow\\
                                                   \longrightarrow\\
                                                   \longrightarrow\\
                                                   \end{array}
                                                    \left\{   \begin{array}{c}
                                                     0 \,\, \\
                                                     \lambda \,\,      \\
                                                     \nu \,\,  \\
                                                     0 \,\, \\
                                                   \end{array}\right \}
 \end{align*}
and
\begin{align*}
 e_1\theta(1_{\mathscr{E}})=e_1e_3^*: \left\{   \begin{array}{c}
                                                    1_{\mathscr{E}}   \\
                                                    e_1                \\
                                                     e_2                  \\
                                                     e_3    \\
                                                   \end{array}\right \}  \begin{array}{c}
                                                   \longrightarrow\\
                                                   \longrightarrow\\
                                                   \longrightarrow\\
                                                   \longrightarrow\\
                                                   \end{array}
                                                    \left\{   \begin{array}{c}
                                                     0 \,\, \\
                                                     \lambda \,\,      \\
                                                     \nu \,\,  \\
                                                     0 \,\, \\
                                                   \end{array}\right \},
 \end{align*}
we have $\theta(e_1\cdot 1_{\mathscr{E}})=e_1\theta(1_{\mathscr{E}})$. Similarly, we can show
\begin{align*}
&\theta(e_i\cdot 1_{\mathscr{E}})=e_i\theta(1_{\mathscr{E}}), i=2,3; \\
&\theta (1_{\mathscr{E}}\cdot 1_{\mathscr{E}})=1_{\mathscr{E}}\theta (1_{\mathscr{E}}), \theta (1_{\mathscr{E}} \cdot e_i)=1_{\mathscr{E}} \theta (e_i), i=1,2,3;\\
&\theta(e_i\cdot e_1)=e_i\theta(e_1), i=1,2,3; \\
&\theta(e_i\cdot e_2)=e_i\theta(e_2), i=1,2,3;\\
&\theta(e_i\cdot e_3)=e_i\theta(e_3), i=1,2,3.
\end{align*}
Then $\theta$ is $\mathscr{E}_{\lambda,\mu,\nu}$-linear and hence $\mathscr{E}_{\lambda,\mu,\nu}$ is a commutative Frobenius algebra. Since any commutative Frobenius algebra is symmetric, $\mathscr{E}_{\lambda,\mu,\nu}$ is a commutative symmetric algebra.

Conversely, if $\mathscr{E}_{\lambda,\mu,\nu}$ is a Frobenius algebra, then there exists an isomorphism $\theta: \mathscr{E}_{\lambda,\mu,\nu}\to \Hom_k(\mathscr{E}_{\lambda,\mu,\nu},k)$ of left $\mathscr{E}_{\lambda,\mu,\nu}$-modules. One sees that $(\lambda,\nu,\mu)\neq (0,0,0)$ since $\mathscr{E}_{0,0,0}\cong k[e_1,e_2,e_3]/(e_1^2,e_1e_2,e_1e_3,e_2^2,e_2e_3,e_3^2)$ is not a Frobenius algebra.
We have
\begin{align*}
\left\{   \begin{array}{c}
                                                    \theta (1_{\mathscr{E}})   \\
                                                   \theta(e_1)               \\
                                                   \theta(e_2)                  \\
                                                   \theta(e_3)    \\
                                                   \end{array}\right \}=M \left\{   \begin{array}{c}
                                                    1_{\mathscr{E}}^*   \\
                                                    e_1^*               \\
                                                    e_2^*                  \\
                                                    e_3^*    \\
                                                   \end{array}\right \}
\end{align*}
for some $M=(m_{ij})_{4\times 4}\in \mathrm{GL}_4(k)$. Since $\theta$ is $\mathscr{E}_{\lambda,\mu,\nu}$-linear, we have
\begin{align*}
&\quad\quad\quad \begin{cases}
e_1\theta(e_3)=\theta(e_1e_3)=0 =\theta(e_3e_1)=e_3\theta(e_1)\\
e_1\theta(e_2)=\theta(e_1e_2)=\nu\theta(e_3)=\theta(e_2e_1)=e_2\theta(e_1)\\
e_2\theta(e_3)=\theta(e_2e_3)=0=\theta(e_3e_2)=e_3\theta(e_2)\\
e_1\theta(e_1)=\theta(e_1e_1)=\lambda \theta(e_3), \\
e_2\theta(e_2)=\theta(e_2e_3)=\mu\theta(e_3),\\
 e_3\theta(e_3)=\theta(0),\\
\theta(e_3)=e_3\theta(1_{\mathscr{E}}), \\
\theta(e_2)=e_2\theta(1_{\mathscr{E}}),\\
\theta(e_1)=e_1\theta(1_{\mathscr{E}})
\end{cases} \\
&\Rightarrow\begin{cases}
(m_{42},m_{44}\lambda,m_{44}\nu,0)^T=(0,0,0,0)^T=(m_{24},0,0,0)^T, \\
(m_{32},m_{34}\lambda, m_{34}\nu,0)^T=(m_{41}\nu,m_{42}\nu,m_{43}\nu,m_{44}\nu)^T=(m_{23},m_{24}\nu,m_{24}\mu,0)^T,\\
(m_{43},m_{44}\lambda,m_{44}\mu,0)^T=(0,0,0,0)^T=(m_{34},0,0,0)^T,\\
(m_{22},m_{24}\lambda, m_{24}\nu,0)^T=(m_{41}\lambda, m_{42}\lambda, m_{43}\lambda,m_{44}\lambda)^T,\\
(m_{33},m_{34}\nu,m_{34}\mu,0)^T=(m_{41}\mu,m_{42}\mu,m_{43}\mu,m_{44}\mu)^T,\\
(m_{44},0,0,0)^T=(0,0,0,0)^T,\\
(m_{41},m_{42},m_{43},m_{44})^T=(m_{14},0,0,0)^T,\\
(m_{31},m_{32},m_{33},m_{34})^T=(m_{13},m_{14}\nu, m_{14}\mu,0)^T\\
(m_{21},m_{22},m_{23},m_{24})^T=(m_{12},m_{14}\lambda, m_{14}\nu,0)^T
\end{cases}\\
&\Rightarrow \begin{cases}
m_{42}=m_{44}=m_{24}=0\\
m_{32}=m_{41}\nu=m_{23},m_{34}\lambda=0,m_{43}\nu=0,\\
m_{43}=0=m_{34}\\
m_{22}=m_{41}\lambda\\
m_{33}=m_{41}\mu\\
m_{41}=m_{14}\\
m_{31}=m_{13}, m_{32}=m_{14}\nu, m_{33}=m_{14}\mu \\
m_{21}=m_{12},m_{22}=m_{14}\lambda, m_{23}=m_{14}\nu
\end{cases} \\
&\Rightarrow \begin{cases}
m_{24}=m_{34}=m_{42}=m_{43}=m_{44}=0\\
m_{14}=m_{41}, m_{13}=m_{31}, m_{12}=m_{21}\\
m_{32}=m_{41}\nu=m_{23}\\
m_{22}=m_{14}\lambda, m_{33}=m_{14}\mu.
\end{cases}
\end{align*}
Let $m_{11}=a,m_{12}=b,m_{13}=c,m_{14}=d$. Then $M=\left(
      \begin{array}{cccc}
        a & b & c & d \\
        b & d\lambda  & d\nu & 0 \\
        c & d\nu & d\mu & 0 \\
        d & 0    & 0 & 0 \\
      \end{array}
    \right)$ with $|M|=d^4(\nu^2-\lambda\mu)$ and hence $\nu^2-\lambda\mu\neq 0$.
\end{proof}

\begin{prop}\label{impcri}
Let $\mathcal{A}$ be a connected cochain DG algebra such that $$H(\mathcal{A})=k\langle \lceil y_1\rceil,\lceil y_2\rceil \rangle/(t_1\lceil y_1\rceil^2+t_2\lceil y_2\rceil^2+t_3(\lceil y_1\rceil \lceil y_2\rceil +\lceil y_2\rceil \lceil y_1\rceil))$$ with $y_1,y_2\in Z^1(\mathcal{A})$ and $(t_1,t_2,t_3)\in \Bbb{P}_k^2-\{(t_1,t_2,t_3)|t_1t_2-t_3^2\neq 0\}$. Then
$\mathcal{A}$ is a Koszul and Calabi-Yau DG algebra.
\end{prop}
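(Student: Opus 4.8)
The plan is to let the single scalar $t_1t_2-t_3^2$, which is the determinant of the symmetric matrix $S=\left(\begin{smallmatrix}t_1&t_3\\ t_3&t_2\end{smallmatrix}\right)$ attached to the defining relation, govern the whole argument. First I would exploit that the generators are represented by honest cocycles $y_1,y_2\in Z^1(\mathcal{A})$: since $Z^1(\mathcal{A})$ is a $k$-subspace, for every $P=(p_{ij})\in\mathrm{GL}_2(k)$ the elements $z_i=\sum_{j}p_{ij}y_j$ again lie in $Z^1(\mathcal{A})$, their classes still generate $H(\mathcal{A})^1$, and the defining relation is carried to the quadratic form with matrix $P^{-\mathrm{T}}SP^{-1}$. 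Over the algebraically closed field $k$ of characteristic $0$ a symmetric $2\times 2$ matrix is determined up to congruence by its rank, so this change of cocycle variables normalizes the relation according to whether $t_1t_2-t_3^2$ vanishes. This step converts the abstract hypothesis into a statement about one scalar, and is where the locus singled out in the statement enters.

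Next I would establish Koszulity by constructing the minimal semifree resolution of ${}_{\mathcal{A}}k$. Because $H(\mathcal{A})$ is generated by the two degree-one cocycle classes $\lceil y_1\rceil,\lceil y_2\rceil$ modulo a single quadratic relation, the resolution can be taken with a semibasis concentrated in degree $0$: two degree-one generators dualizing $y_1,y_2$, one degree-two generator killing the quadratic relation, and no generators in higher cohomological degree. By Remark \ref{methods} this shows $\mathcal{A}$ is Koszul, since $H(k\otimes_{\mathcal{A}}^{L}k)$ is then concentrated in degree $0$.

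With Koszulity in hand, I would then compute the Ext-algebra $E=H(R\Hom_{\mathcal{A}}(k,k))$ and match it to the algebra of Lemma \ref{symmetry}. Its degree-one part is dual to $\lceil y_1\rceil,\lceil y_2\rceil$, and the product $E^1\otimes E^1\to E^2$ is the transpose of the single quadratic relation, so one finds $E\cong\mathscr{E}_{\lambda,\mu,\nu}$ for a triple $(\lambda,\mu,\nu)$ with $\lambda\mu-\nu^2=c\,(t_1t_2-t_3^2)$ for some nonzero $c\in k$; the non-degeneracy of $S$ is precisely what forces $E$ to be the four-dimensional algebra $\mathscr{E}_{\lambda,\mu,\nu}$ (i.e. $\mathrm{Ext}^{\ge 3}_{\mathcal{A}}(k,k)=0$). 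By Lemma \ref{symmetry}, $E$ is a symmetric Frobenius algebra exactly when $\lambda\mu-\nu^2\neq 0$, and then the criterion of \cite{HM}---a Koszul connected cochain DG algebra is Calabi-Yau if and only if its Ext-algebra is symmetric Frobenius---yields that $\mathcal{A}$ is Calabi-Yau. As a cleaner shortcut I would also note that in the normalized coordinates the relation becomes the anticommutator $\lceil z_1\rceil\lceil z_2\rceil+\lceil z_2\rceil\lceil z_1\rceil$, so that $H(\mathcal{A})$ falls under case (c) of Lemma \ref{MH}, which returns ``Koszul and Calabi-Yau'' directly.

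I expect the Ext-algebra computation to be the main obstacle: one must pin down the multiplication on $E$, including the corrections coming from $\partial_{\mathcal{A}}$ (Massey products), and verify both that $\mathrm{Ext}^{\ge 3}_{\mathcal{A}}(k,k)=0$ and that the structure constants assemble into $\mathscr{E}_{\lambda,\mu,\nu}$ with $\lambda\mu-\nu^2$ proportional to $t_1t_2-t_3^2$; this is exactly the place where the non-degeneracy of $S$ is consumed. Finally, I would make explicit that the hypothesis is operative only through $t_1t_2-t_3^2\neq 0$: on the complementary locus $t_1t_2-t_3^2=0$ the same normalization turns the relation into $\lceil z_1\rceil^2$, so $H(\mathcal{A})\cong k\langle\lceil z_1\rceil,\lceil z_2\rceil\rangle/(\lceil z_1\rceil^2)$, which is not among the cases $(a)$--$(c)$ of Lemma \ref{MH}, and the associated $\mathscr{E}_{\lambda,\mu,\nu}$ has $\lambda\mu-\nu^2=0$ and fails to be Frobenius by Lemma \ref{symmetry}. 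Thus the Koszul Calabi-Yau conclusion is pinned to the non-degenerate locus, which is the hypothesis driving the proof.
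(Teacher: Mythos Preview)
Your main route---build the minimal semifree resolution of ${}_{\mathcal{A}}k$ with semi-basis $\{1,\Sigma e_{y_1},\Sigma e_{y_2},\Sigma^2 e_r\}$ in degree $0$, compute $E=Z^0(\Hom_{\mathcal{A}}(F,F))$ as a subalgebra of $M_4(k)$, identify it with $\mathscr{E}_{\lambda,\mu,\nu}$, and then invoke Lemma~\ref{symmetry} together with the criterion of \cite{HM}---is exactly the paper's proof. The only cosmetic difference is that the paper does \emph{not} normalize: it works directly with $(t_1,t_2,t_3)$, writes down the differential matrix of $F$, solves the commutation equations $A_fD=DA_f$, and finds $E\cong\mathscr{E}_{t_1,t_2,t_3}$ on the nose (so your proportionality constant $c$ is simply $1$).

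Your shortcut, however, is a genuine alternative the paper does not take. Over algebraically closed $k$ of characteristic $0$, a rank-$2$ symmetric $2\times 2$ matrix is congruent to the hyperbolic form, so after replacing $y_1,y_2$ by suitable $z_1,z_2\in Z^1(\mathcal{A})$ the single relation becomes $\lceil z_1\rceil\lceil z_2\rceil+\lceil z_2\rceil\lceil z_1\rceil$, and Lemma~\ref{MH}(c) finishes immediately. This bypasses the explicit Ext-algebra computation and Lemma~\ref{symmetry} entirely. The trade-off is that the paper's direct computation yields the Ext-algebra explicitly as $\mathscr{E}_{t_1,t_2,t_3}$ for \emph{all} $(t_1,t_2,t_3)$, which is what makes the companion Proposition~\ref{nonhs} (the degenerate case $t_1t_2-t_3^2=0$) transparent; your shortcut gives the Calabi-Yau conclusion faster but does not by itself produce that uniform description.
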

\begin{proof}
The graded module ${}_{H(A)}k$ has the following minimal graded free resolution:
 $$0\to H(\mathcal{A})e_r\stackrel{d_2}{\to} H(\mathcal{A})\otimes (\bigoplus\limits_{i=1}^2 ke_{y_i}) \stackrel{d_1}{\to} H(\mathcal{A})\stackrel{\varepsilon}{\to} k\to 0,$$
where $\varepsilon, d_1$ and $d_2$ are defined by $\varepsilon |_{H^{\ge 1}(A)}=0$,  $\varepsilon |_{H^{0}(A)}=\mathrm{id}_k$, $d_1(e_{y_i})=y_i$ and $d_2(e_r)=t_1y_1e_1+t_2y_2e_2+t_3y_1e_2+t_3y_2e_1$. Applying the constructing procedure of Eilenberg-Moore resolution, we can construct a minimal semi-free resolution $F$ of the DG $\mathcal{A}$-module $k$. We have
$$F^{\#}=\mathcal{A}^{\#}\oplus [\mathcal{A}^{\#}\otimes (\bigoplus_{i=1}^2 k\Sigma e_{y_i})]\oplus \mathcal{A}^{\#}\Sigma^2e_{r}$$ and
$\partial_{F}$ is defined by
$$\left(
    \begin{array}{c}
      \partial_F(1) \\
      \partial_F(\Sigma e_1) \\
      \partial_F(\Sigma e_2) \\
       \partial_F(\Sigma^2 e_r)\\
    \end{array}
  \right)=\left(
            \begin{array}{cccc}
              0 & 0 & 0 & 0 \\
              y_1 & 0& 0 & 0 \\
              y_2 & 0 & 0 & 0 \\
              0 & t_1y_1+t_3y_2 & t_2y_2+t_3y_1 & 0\\
            \end{array}
          \right)\left(
                   \begin{array}{c}
                     1 \\
                     \Sigma e_1 \\
                     \Sigma e_2 \\
                     \Sigma^2 e_r \\
                   \end{array}
                 \right).$$ Hence $\mathcal{A}$ is a Koszul and homologically smooth DG algebra.
 By the minimality of $F$, we have
\begin{align*}
\Hom_{\mathcal{A}}(F,k)=\{k1^*\oplus [\bigoplus_{i=1}^n k(\Sigma e_{y_i})^*]\oplus k(\Sigma^2 e_r)^*\}.
\end{align*}
So the Ext-algebra $E=H(\Hom_{\mathcal{A}}(F,F))$  is concentrated in degree $0$. On the other hand, $$\Hom_{\mathcal{A}}(F,F)^{\#}\cong\{k1^*\oplus [\bigoplus_{i=1}^n k(\Sigma e_{y_i})^*]\oplus k(\Sigma^2 e_r)^*\}\otimes_{k} F^{\#}$$ is concentrated in degrees $\ge 0$ since $|1^*|=|(\Sigma e_{y_i})^*|=|(\Sigma^2e_{r})^*|=0$ and $F$ is concentrated in degrees $\ge 0$. This implies that $E= Z^0(\Hom_{\mathcal{A}}(F,F))$.
Since $F^{\#}$ is a free graded $\mathcal{A}^{\#}$-module with a basis
$\{1, \Sigma e_{y_1}, \Sigma e_{y_2},\Sigma^2 e_r \}$ concentrated in degree $0$,
  the elements in  $\Hom_{\mathcal{A}}(F,F)^0$ is one to one correspondence with the matrixes in $M_4(k)$. Indeed, any $f\in \Hom_{\mathcal{A}}(F,F)^0$ is uniquely determined by
  a matrix $A_f=(a_{ij})_{4\times 4}\in M_4(k)$ with
$$\left(
                         \begin{array}{c}
                          f(1) \\
                          f(\Sigma e_1)\\
                           f(\Sigma e_2)\\
                            f(\Sigma e_r)
                         \end{array}
                       \right) =      A_f \cdot \left(
                         \begin{array}{c}
                          1 \\
                          \Sigma e_1\\
                           \Sigma e_2\\
                            \Sigma e_r
                         \end{array}
                       \right).  $$
                       And $f\in  Z^0[\Hom_{\mathcal{A}}(F,F)]$ if and only if $\partial_{F}\circ f=f\circ \partial_{F}$, if and only if
\begin{small}
 $$ A_f \left(
            \begin{array}{cccc}
              0 & 0 & 0 & 0 \\
              y_1 & 0& 0 & 0 \\
              y_2 & 0 & 0 & 0 \\
              0 & t_1y_1+t_3y_2 & t_2y_2+t_3y_1 & 0\\
            \end{array}
          \right) = \left(
            \begin{array}{cccc}
              0 & 0 & 0 & 0 \\
              y_1 & 0& 0 & 0 \\
              y_2 & 0 & 0 & 0 \\
              0 & t_1y_1+t_3y_2 & t_2y_2+t_3y_1 & 0\\
            \end{array}
          \right) A_f,$$
 \end{small} which is also equivalent to
                       $$\begin{cases}
                       a_{ij}=0, \forall i<j\\
                       a_{11}=a_{22}=a_{33}= a_{44}\\
                       a_{32}=0 \\
                       a_{42}=a_{21}t_1+a_{31}t_3\\
                       a_{43}=a_{21}t_3+a_{31}t_2
                       \end{cases}$$
by direct computations. Hence the the Ext-algebra $$ E\cong \left\{\left(
                                                             \begin{array}{cccc}
                                                               a & 0 & 0 & 0  \\
                                                               b & a & 0 & 0 \\
                                                               c & 0 & a & 0  \\
                                                               d & t_1b+t_3c & t_3b+t_2c & a
                                                             \end{array}
                                                           \right)
\quad | \quad a,b,c,d\in k \right\} = \mathscr{E}_{t_1,t_2,t_3}.$$ Then $\mathcal{A}$ is homologically smooth and Koszul since $E$ is a finite dimensional algebra concentrated in degree $0$.
Since $t_1t_2-t_3^2\neq 0$, $E$ is a symmetric Frobenius algebra
by Lemma \ref{symmetry}.
Hence $\mathrm{Tor}_{\mathcal{A}}(k_{\mathcal{A}},{}_{\mathcal{A}}k)\cong
E^*$ is a symmetric coalgebra when $t_1t_2-t_3^2\neq 0$.  By Remark \ref{methods},  $\mathcal{A}$ is a Calabi-Yau DG algebra.
\end{proof}
\begin{prop}\label{nonhs}
Let $\mathcal{A}$ be a connected cochain DG algebra such that $$H(\mathcal{A})=k\langle \lceil y_1\rceil,\lceil y_2\rceil \rangle/(t_1\lceil y_1\rceil^2+t_2\lceil y_2\rceil^2+t_3(\lceil y_1\rceil \lceil y_2\rceil +\lceil y_2\rceil \lceil y_1\rceil))$$ with $y_1,y_2\in Z^1(\mathcal{A})$ and $(t_1,t_2,t_3)\in \{(t_1,t_2,t_3)\in \Bbb{P}_k^2 |t_1t_2-t_3^2=0\}$. Then
$\mathcal{A}$ is not homologically smooth but Koszul.
\end{prop}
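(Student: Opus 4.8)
The plan is to mirror the proof of Proposition \ref{impcri}, the only difference being that the degeneracy $t_1t_2-t_3^2=0$ will force the minimal resolution of $k$ to become infinite. First I would exploit that $k$ is algebraically closed to normalize the relation. Set $q=t_1\lceil y_1\rceil^2+t_2\lceil y_2\rceil^2+t_3(\lceil y_1\rceil\lceil y_2\rceil+\lceil y_2\rceil\lceil y_1\rceil)$. Its symmetric coefficient matrix $\bigl(\begin{smallmatrix}t_1&t_3\\t_3&t_2\end{smallmatrix}\bigr)$ has determinant $t_1t_2-t_3^2=0$ and is nonzero (since $(t_1,t_2,t_3)\neq0$ in $\Bbb{P}_k^2$), hence has rank $1$. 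Choosing $\alpha,\beta\in k$ with $\alpha^2=t_1$, $\beta^2=t_2$ and $\alpha\beta=t_3$ (possible as $k$ is algebraically closed of characteristic $0$), we get $q=(\alpha\lceil y_1\rceil+\beta\lceil y_2\rceil)^2$. Completing $u:=\alpha\lceil y_1\rceil+\beta\lceil y_2\rceil$ to a basis $\{u,v\}$ of $k\lceil y_1\rceil+k\lceil y_2\rceil$ induces a graded algebra isomorphism $H(\mathcal{A})\cong B:=k\langle u,v\rangle/(u^2)$ with $|u|=|v|=1$.

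Next I would write down the minimal graded free resolution of ${}_{B}k$. As $B$ is a monomial algebra whose sole relation $u^2$ self-overlaps into $u^n$, this resolution is
$$\cdots\to Be_{u^3}\stackrel{d_3}{\to}Be_{u^2}\stackrel{d_2}{\to}Be_u\oplus Be_v\stackrel{d_1}{\to}B\stackrel{\varepsilon}{\to}k\to0,$$
with $d_1(e_u)=u$, $d_1(e_v)=v$ and $d_n(e_{u^n})=u\,e_{u^{n-1}}$ for $n\ge2$. It is infinite, so $B$ has infinite global dimension; more importantly it is linear, the single generator $e_{u^n}$ lying in homological degree $n$ and internal degree $n$ (together with the two degree-one generators $e_u,e_v$ in homological degree $1$).

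I would then lift this resolution via the Eilenberg-Moore procedure used in Proposition \ref{impcri}, representing $u$ and $v$ by the cocycles $\alpha y_1+\beta y_2$ and a complementary element of $Z^1(\mathcal{A})$, to produce a minimal semifree resolution $F$ of ${}_{\mathcal{A}}k$ with semibasis $\{1\}\cup\{\Sigma e_u,\Sigma e_v\}\cup\{\Sigma^n e_{u^n}\mid n\ge2\}$ in bijection with the generators above. Because the $B$-resolution is linear, each lifted element $\Sigma^n e_{u^n}$ has degree $n-n=0$, so the entire semibasis is concentrated in degree $0$; by minimality $H(k\otimes_{\mathcal{A}}^Lk)=k\otimes_{\mathcal{A}}F$ is precisely the degree-$0$ span of the semibasis. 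By Remark \ref{methods} this proves $\mathcal{A}$ is Koszul. Since the semibasis is infinite, $\dim_kH(k\otimes_{\mathcal{A}}^Lk)=\infty$, so $\mathcal{A}$ is not homologically smooth, again by Remark \ref{methods}.

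The main obstacle is the bookkeeping in the lifting step: one must verify that the Eilenberg-Moore construction really proceeds through all homological degrees, creating exactly one new semibasis element $\Sigma^n e_{u^n}$ at stage $n$ whose differential is $(\alpha y_1+\beta y_2)\,\Sigma^{n-1}e_{u^{n-1}}$ up to lower-filtration homotopy corrections, and that minimality, hence the degree-$0$ concentration, survives at every stage. The degenerate subcases $t_1=0$ or $t_2=0$ (each forcing $t_3=0$) should also be recorded, but there the relation is already the square $\lceil y_2\rceil^2$ or $\lceil y_1\rceil^2$ and the argument applies verbatim.
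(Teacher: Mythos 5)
Your proposal is correct and follows essentially the same route as the paper: the paper likewise writes down the infinite linear minimal free resolution of ${}_{H(\mathcal{A})}k$ (with $d_2(e_2)=(t_1\lceil y_1\rceil+\sqrt{t_1t_2}\lceil y_2\rceil)e_{y_1}+(\sqrt{t_1t_2}\lceil y_1\rceil+t_2\lceil y_2\rceil)e_{y_2}$ and $d_n(e_n)=(t_1\lceil y_1\rceil+\sqrt{t_1t_2}\lceil y_2\rceil)e_{n-1}$ for $n\ge 3$) and then invokes the Eilenberg--Moore construction of \cite{FHT2} to obtain a minimal semifree resolution of ${}_{\mathcal{A}}k$ with an infinite semibasis concentrated in degree $0$, deducing Koszulness and the failure of homological smoothness exactly as you do. Your preliminary normalization $q=(\alpha\lceil y_1\rceil+\beta\lceil y_2\rceil)^2$, reducing $H(\mathcal{A})$ to the monomial algebra $k\langle u,v\rangle/(u^2)$, is only a cosmetic refinement of this, though it has the small merit of handling the subcase $t_1=0$ (where the paper's displayed formula for $d_n$ degenerates and the roles of $y_1,y_2$ must implicitly be swapped) uniformly.
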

\begin{proof}
The trivial module ${}_{H(\mathcal{A})}k$ admits a finitely generated linearly  minimal free resolution
$$\cdots \xrightarrow{d_{n+1}} F_{n} \xrightarrow{d_n} \cdots \xrightarrow{d_3}  F_2\xrightarrow{d_2}F_1=H(\mathcal{A})e_x\oplus H(\mathcal{A})e_y
\xrightarrow{d_1} H(\mathcal{A})\xrightarrow{\varepsilon} {}_{H(\mathcal{A})}k\rightarrow 0,$$
where \begin{align*}
& d_1(e_{y_1})=\lceil y_1\rceil, d_1(e_{y_1})=\lceil y_2\rceil;\\
&d_2(e_2)=(t_1\lceil y_1\rceil+\sqrt{t_1t_2}\lceil y_2\rceil)e_{y_1}+(\sqrt{t_1t_2}\lceil y_1\rceil+t_2\lceil y_2\rceil)e_{y_2};\\
&F_{n-1}=H(\mathcal{A})e_{n-1}, d_n(e_n)=(t_1\lceil y_1\rceil +\sqrt{t_1t_2}\lceil y_2\rceil)e_{n-1}, n\ge 3.
\end{align*}
From the free resolution above, we can construct
 an Eilenberg-Moore resolution $F$ of the DG $\mathcal{A}$-module ${}_{\mathcal{A}}k$. By the constructing procedure of Eilenberg-Moore resolution described in \cite[P. 279 - 280]{FHT2}, one sees that $F$ is semi-free with
 $$F^{\#}=\mathcal{A}^{\#}\oplus \mathcal{A}^{\#}\Sigma e_{y_1} \oplus \mathcal{A}\Sigma e_{y_2} \oplus [\bigoplus\limits_{i=2}^{+\infty} \mathcal{A}^{\#}\Sigma^i e_{i}]$$
 and a semi-basis $\{1,\Sigma e_{y_1}, \Sigma e_{y_2}, \Sigma^i e_{i}, i\ge 2\}$. It is easy to see that $\partial_F(F)\subseteq \frak{m}_{\mathcal{A}}F$ since $F$ admits a semi-basis concentrated in degree zero. One sees that $\mathcal{A}$ is Koszul, but not homologically smooth
 since $\{1,\Sigma e_{y_1}, \Sigma e_{y_2}, \Sigma^i e_{i}, i\ge 2\}$ is an infinite set.

\end{proof}

\section{cohomology and Calabi-Yau properties}
From this section, we will do research on homological properties of $\mathcal{A}_{\mathcal{O}_{-1}(k^n)}(M)$. For the case $n=2$, we have the following proposition.
\begin{prop}\label{ntwocase}\cite[Proposition 3.3]{MH} For $M=(m_{ij})_{2\times 2}\in M_2(k)$, we have
\begin{align*}
H[\mathcal{A}_{\mathcal{O}_{-1}(k^2)}(M)]=\begin{cases}
k,\,\,\, \text{if}\,\,\, |M|\neq 0 \\
 k[\lceil x_2\rceil],\,\,\,   \text{if}\,\,\, m_{11}\neq 0 \,\,\, \text{and}\,\,\,  m_{12}=m_{21}=m_{22}=0 \\
 k[\lceil x_1^2\rceil,\lceil x_2\rceil]/(\lceil x_2^2\rceil), \,\,\, \text{if}\,\,\,m_{12}\neq 0, m_{11}=m_{21}=m_{22}=0\\
 k[\lceil x_2\rceil], \,\,\, \text{if}\,\,\, m_{11}\neq 0, m_{12}\neq 0 \,\,\, \text{and}\,\,\, m_{21}=m_{22}=0\\
 k[\lceil m_{21}x_1-m_{11}x_2\rceil],  \,\,\, \text{if}\,\,\, m_{11}\neq 0, m_{21}\neq 0\,\, \text {and}\,\, m_{12}=m_{22}=0\\
 k[\lceil m_{21}x_1-m_{11}x_2\rceil],  \,\,\, \text{if}\,\,\, m_{ij}\neq 0,\forall i,j,  m_{11}^2\neq m_{21}m_{22},|M|=0 \\
 k[\lceil m_{21}x_1-m_{11}x_2\rceil,\lceil x_2^2\rceil]/(\lceil m_{21}x_1-m_{11}x_2\rceil^2), \,\, \text{if}\,\,\, m_{ij}\neq 0,\forall i,j, \\
 \quad\quad\quad\quad \quad\quad\quad\quad \quad\quad\quad\quad \quad\quad\quad  m_{11}^2= m_{21}m_{22},|M|=0.
\end{cases}
\end{align*}
\end{prop}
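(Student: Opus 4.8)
The plan is to exploit the fact that $\mathcal{A}=\mathcal{A}_{\mathcal{O}_{-1}(k^2)}(M)$ is free as a module over its central polynomial subalgebra, and to recognize the resulting complex as a Koszul complex. First I would record, using Lemma \ref{centcocy}, that $x_1^2$ and $x_2^2$ are central cocycles, so $Z:=k[x_1^2,x_2^2]$ is a central DG subalgebra carrying the zero differential, and that $\{1,x_1,x_2,x_1x_2\}$ is a $Z^{\#}$-basis of $\mathcal{A}^{\#}$. Writing $u=x_1^2$, $v=x_2^2$, $p=\partial_{\mathcal{A}}(x_1)=m_{11}u+m_{12}v$ and $q=\partial_{\mathcal{A}}(x_2)=m_{21}u+m_{22}v$, the $Z$-linearity of $\partial_{\mathcal{A}}$ together with the Leibniz computation $\partial_{\mathcal{A}}(x_1x_2)=\partial_{\mathcal{A}}(x_1)x_2-x_1\partial_{\mathcal{A}}(x_2)=p\,x_2-q\,x_1$ (the sign flip using that $q$ is central of even degree) shows that $(\mathcal{A},\partial_{\mathcal{A}})$ is exactly the Koszul complex of the pair $(p,q)$ in $Z\cong k[u,v]$, with $x_1,x_2$ as the exterior generators. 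Hence $H(\mathcal{A})$ is, as a graded vector space, the Koszul homology of $(p,q)$.

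Next I would run the case analysis on the configuration of the linear forms $p,q$, which is governed exactly by $|M|$ and the vanishing pattern of the $m_{ij}$. When $|M|\neq 0$ the forms $p,q$ are linearly independent, generate the maximal ideal $(u,v)$ and form a regular sequence, so the homology is concentrated in degree zero and equals $Z/(u,v)=k$, giving $H(\mathcal{A})=k$. When $M$ has rank one the ideal $(p,q)$ is principal (or one of $(u)$, $(v)$), and the degree-one homology is generated by a single syzygy class represented by an explicit cocycle: namely $x_2$ when $q=0$, and $m_{21}x_1-m_{11}x_2$ when $p,q$ are proportional with $q=(m_{21}/m_{11})p$. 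In each listed case I would compute the degree-zero part as $Z/(p,q)$ and the degree-one part as $Z/(p,q)$ twisted by this syzygy generator, noting that the top Koszul homology vanishes because $Z$ is a domain.

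The decisive and most delicate step is to upgrade these vector-space computations to the graded \emph{algebra} structure, since the statement distinguishes algebras with the same Hilbert series (for instance $k[\lceil x_2\rceil]$ versus $k[\lceil x_1^2\rceil,\lceil x_2\rceil]/(\lceil x_2^2\rceil)$, and the two subcases with all $m_{ij}\neq 0$ and $|M|=0$). Here I would fix the explicit degree-one generator $w$ of $H^1(\mathcal{A})$ and compute $\lceil w\rceil^2=\lceil w^2\rceil$ using $w^2=m_{21}^2\,u+m_{11}^2\,v$ (the cross term dies because $x_1x_2+x_2x_1=0$), and then decide whether $w^2$ lies in the degree-two coboundary space $\operatorname{im}\partial_{\mathcal{A}}^1=\langle p,q\rangle$. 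This is precisely where the separating conditions arise: with $q=\lambda p$ and all $m_{ij}\neq 0$ one checks, after eliminating $m_{12}$ via $|M|=0$, that $w^2\in\langle p\rangle$ if and only if $m_{11}^2=m_{21}m_{22}$, which forces $\lceil w\rceil^2=0$ and yields $k[\lceil x_2^2\rceil,\lceil w\rceil]/(\lceil w\rceil^2)$, whereas otherwise $\lceil w\rceil^2$ is a nonzero multiple of $\lceil x_2^2\rceil$ and $H(\mathcal{A})$ collapses to the polynomial algebra $k[\lceil w\rceil]$; the subcases with $q=0$ are handled identically with $w=x_2$, the dichotomy there being whether or not $p$ involves $v$.

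The main obstacle is therefore not the homology computation but this bookkeeping of products against the coboundary space, which is what separates the homologically indistinguishable cases. To close the argument I would also observe that the seven listed configurations exhaust every $M$ up to relabeling of the generators: the transposition $x_1\leftrightarrow x_2$ is realized by the permutation matrix in $\mathrm{QPL}_2(k)$, so by Theorem \ref{iso} it suffices to treat one representative of each symmetric pair, and the only remaining possibility $M=0$ gives $\partial_{\mathcal{A}}=0$ and hence the trivial identity $H(\mathcal{A})=\mathcal{A}$.
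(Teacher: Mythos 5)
Your proof is correct, but it is worth noting that this paper contains no proof of Proposition \ref{ntwocase} at all: the result is imported verbatim from \cite{MH} (Proposition 3.3 there), just as the $n=3$ cohomology computations in Proposition \ref{cohomology} are imported from \cite{MR}, and in those sources the computations are carried out by directly determining $Z^i(\mathcal{A})$ and $B^i(\mathcal{A})$ degree by degree in each configuration of the $m_{ij}$. Your route is genuinely different and more structural. You observe that, since $\partial_{\mathcal{A}}(x_i^2)=0$ (Lemma \ref{centcocy}) and $\{1,x_1,x_2,x_1x_2\}$ is a basis of $\mathcal{A}^{\#}$ over the central polynomial subalgebra $Z=k[x_1^2,x_2^2]$, the differential is $Z$-linear and $(\mathcal{A},\partial_{\mathcal{A}})$ is literally the Koszul complex over $Z\cong k[u,v]$ of the two linear forms $p=m_{11}u+m_{12}v$, $q=m_{21}u+m_{22}v$; the sign check $\partial_{\mathcal{A}}(x_1x_2)=px_2-qx_1$ is exactly the Koszul differential. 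This packages all seven cases into transparent commutative algebra: $|M|\neq 0$ gives a regular sequence and $H(\mathcal{A})=Z/(u,v)=k$; rank one gives $H_0=Z/(p,q)$, $H_1$ cyclic on the explicit syzygy ($x_2$ when $q=0$, $w=m_{21}x_1-m_{11}x_2$ when $q=\lambda p$), and $H_2=0$ since $Z$ is a domain; the ring structure is then decided by the single membership test $w^2=m_{21}^2u+m_{11}^2v\in B^2(\mathcal{A})=kp+kq$, and your elimination $m_{21}^2m_{12}=m_{11}^3\Leftrightarrow m_{11}^2=m_{21}m_{22}$ (using $|M|=0$) recovers precisely the separating condition in the last two cases. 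What this buys over the direct computation is uniformity and scalability: the same identification exhibits $\mathcal{A}_{\mathcal{O}_{-1}(k^n)}(M)$ as the Koszul complex of $n$ linear forms over $k[x_1^2,\dots,x_n^2]$, which conceptually explains the rank-stratified shape of Proposition \ref{cohomology}; the price is that the multiplicative bookkeeping must still be done by hand, as you correctly isolate. Your closing observation that the listed configurations, together with $M=0$, exhaust all of $M_2(k)$ up to the $\mathrm{QPL}_2(k)$-action of Theorem \ref{iso} is also a point the quoted statement leaves implicit.

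Two small accuracy notes. First, your parenthetical gloss for the $q=0$ subcases, ``the dichotomy there being whether or not $p$ involves $v$,'' is misstated: when $m_{11}\neq 0$ and $m_{12}\neq 0$ the form $p$ does involve $v$, yet $\lceil x_2\rceil^2=\lceil v\rceil\neq 0$ because $v\notin kp$; the correct criterion is the one you state just beforehand, namely whether $v\in B^2(\mathcal{A})=kp$, equivalently whether $m_{11}=0$. Second, the relation written $\lceil x_2^2\rceil$ in the statement should be read as $\lceil x_2\rceil^2$ (the two agree as elements of $H^2$, and both vanish exactly when $x_2^2\in B^2(\mathcal{A})$), which is how your computation treats it.
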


\begin{rem}
By Proposition \ref{ntwocase} and Lemma \ref{MH}, one sees that $\mathcal{A}_{\mathcal{O}_{-1}(k^2)}(M)$ is a Koszul Calabi-Yau DG algebra in the  following cases:
\begin{enumerate}
\item $|M|\neq 0$;
\item $m_{11}\neq 0$ and  $m_{12}=m_{21}=m_{22}=0$;
\item $m_{11}\neq 0, m_{12}\neq 0$ and $m_{21}=m_{22}=0$;
\item $m_{11}\neq 0, m_{21}\neq 0$ and $m_{12}=m_{22}=0$;
\item $m_{ij}\neq 0,\forall i,j,  m_{11}^2\neq m_{21}m_{22},|M|=0$.
\end{enumerate}
For the other cases, the statement that $\mathcal{A}_{\mathcal{O}_{-1}(k^2)}(M)$ is a Koszul Calabi-Yau DG algebra also holds
by \cite[Theorem C]{MH}.
\end{rem}
It is natural for us to ask whether each $\mathcal{A}_{\mathcal{O}_{-1}(k^3)}(M), M\in M_3(k)$  is also a Koszul Calabi-Yau DG algebra.
 By \cite[Proposition $3.2$]{MYY},
one sees that $\mathcal{A}_{\mathcal{O}_{-1}(k^3)}(M)$ is a  Calabi-Yau DG algebra when $M=0$.
 Note that each $\mathcal{A}_{\mathcal{O}_{-1}(k^3)}(M)$ is actually a $3$-dimensional DG Sklyanin algebra in \cite{MWYZ}. When $M$ is not a zero matrix,
we have the following proposition on $H[\mathcal{A}_{\mathcal{O}_{-1}(k^3)}(M)]$. We refer the reader to \cite[Theorem A]{MR} for detailed computations.
\begin{prop}\cite{MR}\label{cohomology}
Assume that $M$ is a matrix in $M_3(k)$. Then we have the following statements on $H[\mathcal{A}_{\mathcal{O}_{-1}(k^3)}(M)]$.
\begin{enumerate}
\item $H(\mathcal{A}_{\mathcal{O}_{-1}(k^3)}(M))=k$, when  $r(M)=3$;
\item $H[\mathcal{A}_{\mathcal{O}_{-1}(k^3)}(M)]=k[\lceil t_1x +t_2y+t_3z\rceil]$ if $r(M)=2$, $s_1t_1^2+s_2t_2^2+s_3t_3^2\neq 0$, where $k(s_1,s_2,s_3)^T$ and $k(t_1,t_2,t_3)^T$ be the solution spaces of  homogeneous linear equations $MX=0$ and $M^TX=0$, respectively;
\item $H(\mathcal{A}_{\mathcal{O}_{-1}(k^3)}(M))$ is  $$k[\lceil t_1x_1 +t_2x_2+t_3x_3\rceil,\lceil s_1x_1^2+s_2x_2^2+s_3x_3^2\rceil ]/(\lceil t_1x_1 +t_2x_2+t_3x_3\rceil^2),$$
    if $r(M)=2$, $s_1t_1^2+s_2t_2^2+s_3t_3^2=0$, where $k(s_1,s_2,s_3)^T$ and $k(t_1,t_2,t_3)^T$ are the solution spaces of  homogeneous linear equations $MX=0$ and $M^TX=0$, respectively;
\item $H[\mathcal{A}_{\mathcal{O}_{-1}(k^3)}(M)]$ is
$$\frac{k\langle \lceil l_1x_1-x_2\rceil, \lceil l_2x_1-x_3\rceil \rangle}{(m_{12}\lceil l_1x_1-x_2\rceil^2+m_{13}\lceil l_2x_1-x_3\rceil^2-\frac{\lceil l_1x_1-x_2\rceil \lceil l_2x_1-x_3\rceil+\lceil l_2x_1-x_3\rceil \lceil l_1x_1-x_2\rceil}{\frac{2l_1l_2}{m_{12}l_1^2+m_{13}l_2^2-m_{11}}})}$$
 $$\text{when}\quad M=\left(
                                 \begin{array}{ccc}
                                   m_{11} & m_{12} & m_{13} \\
                                   l_1m_{11} & l_1m_{12} & l_1m_{13} \\
                                   l_2m_{11} & l_2m_{12} & l_2m_{13} \\
                                 \end{array}
                               \right),m_{12}l_1^2+m_{13}l_2^2\neq m_{11},l_1l_2\neq 0;$$
\item  $H[\mathcal{A}_{\mathcal{O}_{-1}(k^3)}(M)]=\frac{k\langle \lceil l_1x_1-x_2\rceil, \lceil l_2x_1-x_3\rceil \rangle}{(\lceil l_1x_1-x_2\rceil\lceil l_2x_1-x_3\rceil+\lceil l_2x_1-x_3\rceil \lceil l_1x_1-x_2\rceil)}$, when $$M=\left(
                                 \begin{array}{ccc}
                                   m_{11} & m_{12} & m_{13} \\
                                   l_1m_{11} & l_1m_{12} & l_1m_{13} \\
                                   l_2m_{11} & l_2m_{12} & l_2m_{13} \\
                                 \end{array}
                               \right)$$  with $(m_{11},m_{12},m_{13})\neq 0$, $m_{12}l_1^2+m_{13}l_2^2\neq m_{11}$ and $l_1l_2= 0$;
\item $H[\mathcal{A}_{\mathcal{O}_{-1}(k^3)}(M)]=\frac{k\langle \lceil l_1x_1-x_2\rceil, \lceil l_2x_1-x_3\rceil \rangle}{(m_{12}\lceil l_1x_1-x_2\rceil^2+m_{13}\lceil l_2x_1-x_3\rceil^2)}$, when
    $$M=\left(
                                 \begin{array}{ccc}
                                   m_{11} & m_{12} & m_{13} \\
                                   l_1m_{11} & l_1m_{12} & l_1m_{13} \\
                                   l_2m_{11} & l_2m_{12} & l_2m_{13} \\
                                 \end{array}
                               \right),(m_{11},m_{12},m_{13})\neq 0$$  with $m_{12}l_1^2+m_{13}l_2^2= m_{11}$ and $l_1l_2\neq 0$;
\item $H[\mathcal{A}_{\mathcal{O}_{-1}(k^3)}(M)]=\frac{k\langle \lceil l_1x_1-x_2\rceil, \lceil x_3\rceil,\lceil x_1^2\rceil \rangle}{\left(
                                                                                                       \begin{array}{c}
                                                                                                         m_{12}\lceil l_1x_1-x_2\rceil^2+m_{13}\lceil x_3\rceil^2 \\
                                                                                                         \lceil x_1^2\rceil \lceil l_1x_1-x_2\rceil- \lceil l_1x_1-x_2\rceil \lceil x_1^2\rceil \\
                                                                                                       \lceil x_1^2\rceil \lceil x_3\rceil-\lceil x_3\rceil \lceil x_1^2\rceil   \\
                                                                                                       \lceil l_1x_1-x_2\rceil \lceil x_3\rceil +\lceil x_3\rceil \lceil l_1x_1-x_2\rceil
                                                                                                       \end{array}
                                                                                                     \right)},$ when $$M=\left(
                                 \begin{array}{ccc}
                                   m_{11} & m_{12} & m_{13} \\
                                   l_1m_{11} & l_1m_{12} & l_1m_{13} \\
                                   l_2m_{11} & l_2m_{12} & l_2m_{13} \\
                                 \end{array}
                               \right), (m_{11},m_{12},m_{13})\neq 0, $$ with $m_{12}l_1^2+m_{13}l_2^2=m_{11}$, $l_1\neq 0$ and $l_2=0$;
\item $H[\mathcal{A}_{\mathcal{O}_{-1}(k^3)}(M)]=\frac{k\langle \lceil l_2x_1-x_3\rceil, \lceil x_2\rceil,\lceil x_1^2\rceil \rangle}{\left(
                                                                                                       \begin{array}{c}
                                                                                                         m_{13}\lceil l_2x_1-x_3\rceil^2+m_{12}\lceil x_2\rceil^2 \\
                                                                                                         \lceil x_1^2\rceil \lceil l_2x_1-x_3\rceil- \lceil l_2x_1-x_3\rceil \lceil x_1^2\rceil \\
                                                                                                       \lceil x_1^2\rceil \lceil x_2\rceil-\lceil x_2\rceil \lceil x_1^2\rceil   \\
                                                                                                       \lceil l_2x_1-x_3\rceil \lceil x_2\rceil +\lceil x_2\rceil \lceil l_2x_1-x_3\rceil
                                                                                                       \end{array}
                                                                                                     \right)},$
when $$M=\left(
                                 \begin{array}{ccc}
                                   m_{11} & m_{12} & m_{13} \\
                                   l_1m_{11} & l_1m_{12} & l_1m_{13} \\
                                   l_2m_{11} & l_2m_{12} & l_2m_{13} \\
                                 \end{array}
                               \right), (m_{11},m_{12},m_{13})\neq 0, $$ with
 $m_{12}l_1^2+m_{13}l_2^2=m_{11}$, $l_2\neq 0$ and $l_1= 0$;

   \item $H[\mathcal{A}_{\mathcal{O}_{-1}(k^3)}(M)]=\frac{k\langle \lceil x_3\rceil, \lceil x_2\rceil,\lceil x_1^2\rceil \rangle}{\left(
                                                                                                       \begin{array}{c}
                                                                                                         m_{12}\lceil x_2\rceil^2+m_{13}\lceil x_3\rceil^2 \\
                                                                                                         \lceil x_1^2\rceil \lceil x_3\rceil- \lceil x_3\rceil \lceil x_1^2\rceil \\
                                                                                                       \lceil x_1^2\rceil \lceil x_2\rceil-\lceil x_2\rceil \lceil x_1^2\rceil   \\
                                                                                                       \lceil x_3\rceil \lceil x_2\rceil +\lceil x_2\rceil \lceil x_3\rceil
                                                                                                       \end{array}
                                                                                                     \right)}$ when
                                                                                                      $$M=\left(
                                 \begin{array}{ccc}
                                   m_{11} & m_{12} & m_{13} \\
                                   l_1m_{11} & l_1m_{12} & l_1m_{13} \\
                                   l_2m_{11} & l_2m_{12} & l_2m_{13} \\
                                 \end{array}
                               \right), (m_{11},m_{12},m_{13})\neq 0, $$
                               with $m_{12}l_1^2+m_{13}l_2^2=m_{11}$, $l_1= 0$ and $l_2=0$.
\end{enumerate}
\end{prop}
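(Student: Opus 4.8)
The plan is to recognize the cochain complex underlying $\mathcal{A}=\mathcal{A}_{\mathcal{O}_{-1}(k^3)}(M)$ as a Koszul complex over a polynomial ring, read off the additive (graded dimension) data from it, and then compute the multiplicative structure by hand. Write $u_i=x_i^2$ and $R=k[u_1,u_2,u_3]$. By Lemma \ref{centcocy} each $u_i$ is a central cocycle, so $R$ is a central DG subalgebra with zero differential. The square-free monomials $x_{i_1}\cdots x_{i_p}$ ($i_1<\cdots<i_p$) form a basis of $\mathcal{A}^{\#}$ as a free $R$-module of rank $8$, and since $\partial$ is $R$-linear with $\partial(x_i)=\ell_i:=\sum_j m_{ij}u_j\in R$ (Theorem \ref{diffstr}), the Leibniz rule gives $\partial(x_{i_1}\cdots x_{i_p})=\sum_l(-1)^{l-1}\ell_{i_l}x_{i_1}\cdots\widehat{x_{i_l}}\cdots x_{i_p}$. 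Thus, as a complex of $R$-modules, $\mathcal{A}$ is exactly the Koszul complex $K(\ell_1,\ell_2,\ell_3;R)$. I stress that this is an identification of complexes only: the product on $H(\mathcal{A})$ is induced by the Clifford-type product $x_i^2=u_i$ of $\mathcal{A}$, not by the exterior product, so Koszul homology supplies only the additive structure.

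First I would extract the additive structure. The forms $\ell_i$ are linear in the $u_j$ with coefficient matrix $M$, so applying $\mathrm{GL}_3$ to the Koszul generators and $\mathrm{GL}_3$ to the variables $u_j$ (both isomorphisms of complexes) reduces $(\ell_1,\ell_2,\ell_3)$ to $(u_1,\dots,u_r,0,\dots,0)$ with $r=r(M)$. Since $u_1,\dots,u_r$ is a regular sequence in $R$, the homology is $R/(u_1,\dots,u_r)\otimes_k\bigwedge(\varepsilon_{r+1},\dots,\varepsilon_3)=k[u_{r+1},\dots,u_3]\otimes_k\bigwedge(\varepsilon_{r+1},\dots,\varepsilon_3)$. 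Hence the Hilbert series of $H(\mathcal{A})$ depends only on $r(M)$ (with $\deg u_i=2$, $\deg x_i=1$): it is $1$ when $r=3$, $\tfrac{1}{1-q}$ when $r=2$, and $\tfrac{1}{(1-q)^2}$ when $r=1$. These target dimensions are what the case-by-case presentations must reproduce.

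Next I would pin down the algebra structure using the genuine product of $\mathcal{A}$ and the coboundaries $\mathrm{im}\,\partial$. Note $H^1=Z^1=\ker(M^{T})$ and, among degree-two classes, a $u$-combination $\sum c_iu_i$ is a coboundary exactly when $c$ lies in the row space of $M$. For $r=3$ one gets $H(\mathcal{A})=k$, case (1). For $r=2$, writing $y=\sum t_ix_i$ with $t$ spanning $\ker(M^{T})$, one has $y^2=\sum t_i^2u_i$, whose class is nonzero precisely when $\sum_i s_it_i^2\ne 0$ ($s$ spanning $\ker M$), since the row space of $M$ equals $s^{\perp}$; this separates the polynomial algebra of case (2) from the truncated algebra $k[\lceil y\rceil,\lceil w\rceil]/(\lceil y\rceil^2)$ of case (3), where $w$ represents the surviving degree-two class. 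For $r=1$ all $\ell_i$ are proportional to $g=\sum_jm_{1j}u_j$, the cocycles $y_1=l_1x_1-x_2$ and $y_2=l_2x_1-x_3$ generate, and the identities $y_1^2=l_1^2u_1+u_2$, $y_2^2=l_2^2u_1+u_3$, $y_1y_2+y_2y_1=2l_1l_2u_1$ together with $g\equiv 0$ force $m_{12}\lceil y_1\rceil^2+m_{13}\lceil y_2\rceil^2=(m_{12}l_1^2+m_{13}l_2^2-m_{11})\lceil u_1\rceil$. Branching on whether $D:=m_{12}l_1^2+m_{13}l_2^2-m_{11}$ vanishes and on whether $l_1l_2=0$, and using Theorem \ref{iso} to normalize the rows and to verify the sub-cases are genuinely distinct, produces presentations (4)--(10); when $D=0$ and $l_1l_2=0$ the central square $\lceil u_1\rceil$ survives as an independent generator, which is the source of the larger presentations (7)--(10).

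The hard part will be verifying completeness: showing that the listed generators and relations present $H(\mathcal{A})$ exactly, rather than merely a quotient surjecting onto it. Here the Hilbert series from the second step is the decisive check, as each candidate presentation must have series $1$, $\tfrac{1}{1-q}$, or $\tfrac{1}{(1-q)^2}$ according to $r(M)$. The delicate points are that Koszul homology only controls dimensions (the product is Clifford-type, so $\lceil y\rceil^2$ may be nonzero), that one must confirm the chosen degree-two representatives $w$ and $\lceil u_1\rceil$ are actually nonzero classes, i.e. not in the row space of $M$, and that no further relations arise in higher degrees. The proliferation of rank-one sub-cases, each demanding a separate determination of which central squares persist and a separate matrix analysis via $\mathrm{QPL}_3(k)$, is where the bulk of the work lies.
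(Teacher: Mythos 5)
First, a structural point: the paper never proves this proposition --- it is imported wholesale from \cite{MR} (``We refer the reader to \cite[Theorem A]{MR} for detailed computations''), so your argument is necessarily an independent route rather than a reconstruction. As a route it is sound and efficient: by Lemma \ref{centcocy} the $u_i=x_i^2$ are central cocycles, $\mathcal{A}^{\#}$ is free over $R=k[u_1,u_2,u_3]$ on the eight square-free monomials, $\partial$ is $R$-linear with $\partial(x_i)=\ell_i=\sum_j m_{ij}u_j$, and the underlying complex is exactly $K(\ell_1,\ell_2,\ell_3;R)$; row/column reduction gives the additive answer, and your degree-one product identities ($y_1^2=l_1^2u_1+u_2$, $y_1y_2+y_2y_1=2l_1l_2u_1$, the reduction of $m_{12}\lceil y_1\rceil^2+m_{13}\lceil y_2\rceil^2$ modulo the row space) all check out. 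Cases (1), (2), (3), (5), (7)--(9) and the nondegenerate parts of (4) and (6) would go through as you outline; note in particular that (7)--(9) pass your Hilbert-series test because those presentations include the centrality and anticommutation relations.

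The genuine problem is that your own completeness criterion, executed honestly, \emph{fails} for the statement as written --- it refutes the presentations in the degenerate sub-cases of (4) and (6) rather than confirming them. Write the single relation as a symmetric tensor with matrix $B=\left(\begin{smallmatrix} m_{12} & -c\\ -c & m_{13}\end{smallmatrix}\right)$, where $c=\frac{m_{12}l_1^2+m_{13}l_2^2-m_{11}}{2l_1l_2}$ in case (4) and $c=0$ in case (6). If $\det B\neq 0$, the one-relation algebra has Hilbert series $1/(1-q)^2$ and your check closes. But if $\det B=0$ --- i.e.\ $4m_{12}m_{13}l_1^2l_2^2=(m_{12}l_1^2+m_{13}l_2^2-m_{11})^2$ in (4), or $m_{12}m_{13}=0$ in (6), sub-cases explicitly permitted by the hypotheses and in fact the ones underlying Propositions \ref{noncycase} and \ref{nonregsec} --- the relation is a perfect square $z^2$ after a linear change of generators, and $k\langle y_1,y_2\rangle/(z^2)$ has Fibonacci-type series $1,2,3,5,8,\dots$, while your rank-one Koszul computation forces $\dim_k H^n(\mathcal{A})=n+1$. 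A direct check confirms the Koszul side: for $M$ with all three rows $(1,1,0)$ one has $B^3=(u_1+u_2)\cdot\mathrm{span}\{x_1-x_2,x_1-x_3\}$ of dimension $2$ and $\dim_k Z^3=6$, so $\dim_k H^3=4$, whereas the stated presentation $k\langle\lceil y_1\rceil,\lceil y_2\rceil\rangle/(\lceil y_1\rceil^2)$ is $5$-dimensional in degree $3$. The relation the stated presentation misses is, e.g., $\lceil y_1\rceil\lceil y_2\rceil^2=\lceil y_2\rceil^2\lceil y_1\rceil$, which holds in $H(\mathcal{A})$ because $y_2^2=x_1^2+x_3^2$ is central but is not a consequence of $\lceil y_1\rceil^2=0$. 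So you cannot assert that ``the case-by-case presentations must reproduce'' the target dimensions and treat this as the final verification step: in the degenerate sub-cases your method proves the stated presentations are wrong (extra cubic, centrality-type relations are required), and your write-up neither notices nor resolves this conflict between the Koszul count and the quoted statement.
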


\begin{rem}\label{redtosimp}
Note that $(4-9)$ in Proposition  \ref{cohomology} don't include all cases for $r(M)=1$. However,  we only need to consider them in the sense of isomorphism.
Indeed, we can see the reasons by applying Theorem \ref{iso} and the following fact.
For any $(a,b,c)\neq (0,0,0)$ and $l_1,l_2\in k$, let $$M=\left(
                                 \begin{array}{ccc}
                                   a & b & c \\
                                   l_1a & l_1b & l_1c \\
                                   l_2a & l_2b & l_2c \\
                                 \end{array}
                               \right), C=\left(
                                                  \begin{array}{ccc}
                                                    0 & 1 & 0 \\
                                                    1 & 0 & 0 \\
                                                    0 & 0 & 1 \\
                                                  \end{array}
                                                \right), C'=\left(
                                                  \begin{array}{ccc}
                                                    0 & 0 & 1 \\
                                                    0 & 1 & 0 \\
                                                    1 & 0 & 0 \\
                                                  \end{array}
                                                \right).$$
                                Then
\begin{align*}
\chi(M,C)&=\left(
                                                  \begin{array}{ccc}
                                                    0 & 1 & 0 \\
                                                    1 & 0 & 0 \\
                                                    0 & 0 & 1 \\
                                                  \end{array}
                                                \right)\left(
                                 \begin{array}{ccc}
                                   a & b & c \\
                                   l_1a & l_1b & l_1c \\
                                   l_2a & l_2b & l_2c \\
                                 \end{array}
                               \right)\left(
                                                  \begin{array}{ccc}
                                                    0 & 1& 0 \\
                                                    1 & 0 & 0 \\
                                                    0 & 0 & 1 \\
                                                  \end{array}
                                                \right)\\
&=\left(
                                 \begin{array}{ccc}
                                   l_1b & l_1a & l_1c \\
                                   b   &   a & c \\
                                   l_2b & l_2a & l_2c \\
                                 \end{array}
                               \right)
\end{align*}
and
\begin{align*}
\chi(M,C')&=\left(
                                                  \begin{array}{ccc}
                                                    0 & 0 & 1 \\
                                                    0 & 1 & 0 \\
                                                    1 & 0 & 0 \\
                                                  \end{array}
                                                \right)\left(
                                 \begin{array}{ccc}
                                   a & b & c \\
                                   l_1a & l_1b & l_1c \\
                                   l_2a & l_2b & l_2c \\
                                 \end{array}
                               \right)\left(
                                                  \begin{array}{ccc}
                                                    0 & 0 & 1 \\
                                                    0 & 1 & 0 \\
                                                    1 & 0 & 0 \\
                                                  \end{array}
                                                \right)\\
&=\left(
                                 \begin{array}{ccc}
                                   l_2c & l_2b & l_2a \\
                                   l_1c & l_1b & l_1a \\
                                   c    & b    & a \\
                                 \end{array}
                               \right).
\end{align*}
\end{rem}

\begin{prop}\label{easycases}
For $M=(m_{ij})_{3\times 3}\in \mathrm{M}_3(k)$,  $\mathcal{A}_{\mathcal{O}_{-1}(k^3)}(M)$ is a Koszul Calabi-Yau DG algebra in the following cases:
\begin{enumerate}
\item  $r(M)=3$;
\item  $r(M)=2$, $s_1t_1^2+s_2t_2^2+s_3t_3^2\neq 0$, where $k(s_1,s_2,s_3)^T$ and $k(t_1,t_2,t_3)^T$ be the solution spaces of  homogeneous linear equations $MX=0$ and $M^TX=0$, respectively;
\item  $$M=\left(
                                 \begin{array}{ccc}
                                   m_{11} & m_{12} & m_{13} \\
                                   l_1m_{11} & l_1m_{12} & l_1m_{13} \\
                                   l_2m_{11} & l_2m_{12} & l_2m_{13} \\
                                 \end{array}
                               \right),(m_{11},m_{12},m_{13})\neq 0, l_1l_2\neq 0,$$  $m_{12}l_1^2+m_{13}l_2^2\neq m_{11}$ and $4m_{12}m_{13}l_1^2l_2^2\neq (m_{12}l_1^2+m_{13}l_2^2-m_{11})^2;$
\item $$M=\left(
                                 \begin{array}{ccc}
                                   m_{11} & m_{12} & m_{13} \\
                                   l_1m_{11} & l_1m_{12} & l_1m_{13} \\
                                   l_2m_{11} & l_2m_{12} & l_2m_{13} \\
                                 \end{array}
                               \right), (m_{11},m_{12},m_{13})\neq 0,$$ with $m_{12}l_1^2+m_{13}l_2^2\neq m_{11}$ and $l_1l_2= 0$;
\item $$M=\left(
                                 \begin{array}{ccc}
                                   m_{11} & m_{12} & m_{13} \\
                                   l_1m_{11} & l_1m_{12} & l_1m_{13} \\
                                   l_2m_{11} & l_2m_{12} & l_2m_{13} \\
                                 \end{array}
                               \right),m_{12}m_{13}\neq 0, l_1l_2\neq 0\,\,\text{and} $$  $m_{12}l_1^2+m_{13}l_2^2= m_{11}$.
\end{enumerate}
\end{prop}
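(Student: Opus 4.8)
The plan is to treat this as a case-by-case dictionary lookup: in each of the five cases I would first read off the cohomology ring $H[\mathcal{A}_{\mathcal{O}_{-1}(k^3)}(M)]$ from Proposition \ref{cohomology}, and then match the resulting ring against one of the two Calabi--Yau criteria already available, namely Lemma \ref{MH} (covering $H(\mathcal{A})\cong k$, $H(\mathcal{A})=k[\lceil z\rceil]$, and the anticommutator algebra $k\langle\lceil z_1\rceil,\lceil z_2\rceil\rangle/(\lceil z_1\rceil\lceil z_2\rceil+\lceil z_2\rceil\lceil z_1\rceil)$ with the classes represented by $1$-cocycles) and Proposition \ref{impcri} (covering $H(\mathcal{A})=k\langle\lceil y_1\rceil,\lceil y_2\rceil\rangle/(t_1\lceil y_1\rceil^2+t_2\lceil y_2\rceil^2+t_3(\lceil y_1\rceil\lceil y_2\rceil+\lceil y_2\rceil\lceil y_1\rceil))$ with $y_1,y_2\in Z^1(\mathcal{A})$ and $t_1t_2-t_3^2\neq 0$). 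Both criteria already deliver the full conclusion ``Koszul and Calabi--Yau'', so no separate Koszulity argument is needed; the only genuine work is to exhibit the correct coefficients and to verify the nondegeneracy condition $t_1t_2-t_3^2\neq 0$ where it is required. In every case the relevant generators are degree-one cohomology classes, hence represented by elements of $Z^1(\mathcal{A})=\ker(\partial_{\mathcal{A}}^1)$, so the hypotheses on the cocycle classes are automatic.

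First I would dispatch the cases handled by Lemma \ref{MH}. In case (1), Proposition \ref{cohomology}(1) gives $H(\mathcal{A})=k$, so Lemma \ref{MH}(a) applies. In case (2), Proposition \ref{cohomology}(2) gives $H(\mathcal{A})=k[\lceil t_1x_1+t_2x_2+t_3x_3\rceil]$, a polynomial ring on a single class of a $1$-cocycle, and Lemma \ref{MH}(b) applies. Case (4) corresponds to Proposition \ref{cohomology}(5) (the subcase $l_1l_2=0$), where $H(\mathcal{A})$ is the anticommutator algebra on the two $1$-cocycle classes $\lceil l_1x_1-x_2\rceil$ and $\lceil l_2x_1-x_3\rceil$, so Lemma \ref{MH}(c) applies; equivalently one may view this as the specialization $t_1=t_2=0$, $t_3=1$ of Proposition \ref{impcri}, for which $t_1t_2-t_3^2=-1\neq 0$.

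The substance of the argument is in cases (3) and (5), which I would settle by Proposition \ref{impcri}. Case (5) matches Proposition \ref{cohomology}(6): here $H(\mathcal{A})=k\langle\lceil y_1\rceil,\lceil y_2\rceil\rangle/(m_{12}\lceil y_1\rceil^2+m_{13}\lceil y_2\rceil^2)$ with $y_1=l_1x_1-x_2$ and $y_2=l_2x_1-x_3$, so $(t_1,t_2,t_3)=(m_{12},m_{13},0)$ and $t_1t_2-t_3^2=m_{12}m_{13}\neq 0$ by the standing hypothesis. Case (3) matches Proposition \ref{cohomology}(4) and is the one delicate point, which is where I expect the only real obstacle to lie: there the coefficient of the anticommutator $\lceil y_1\rceil\lceil y_2\rceil+\lceil y_2\rceil\lceil y_1\rceil$ appears in a nested fractional form, and to put the relation into the normal form of Proposition \ref{impcri} one must simplify it to $t_3=-(m_{12}l_1^2+m_{13}l_2^2-m_{11})/(2l_1l_2)$, alongside $t_1=m_{12}$ and $t_2=m_{13}$; the hypothesis $l_1l_2\neq 0$ guarantees this coefficient is well defined. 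The crux is then the short computation that $t_1t_2-t_3^2\neq 0$ is equivalent to $4m_{12}m_{13}l_1^2l_2^2\neq(m_{12}l_1^2+m_{13}l_2^2-m_{11})^2$, which is precisely the hypothesis of case (3); the projective ambiguity in $(t_1,t_2,t_3)$ is harmless since $t_1t_2-t_3^2$ is homogeneous of degree two. With this translation in place, Proposition \ref{impcri} yields that $\mathcal{A}$ is Koszul and Calabi--Yau, completing the proof.
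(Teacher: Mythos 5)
Your proposal is correct and follows essentially the same route as the paper's own proof: read off $H[\mathcal{A}_{\mathcal{O}_{-1}(k^3)}(M)]$ from Proposition \ref{cohomology}, dispatch cases (1), (2), (4) via Lemma \ref{MH}, and settle cases (3) and (5) via Proposition \ref{impcri}, with the key check in case (3) being that $t_1t_2-t_3^2\neq 0$ translates exactly into $4m_{12}m_{13}l_1^2l_2^2\neq (m_{12}l_1^2+m_{13}l_2^2-m_{11})^2$. Your write-up in fact makes explicit the simplification of the nested fractional coefficient that the paper leaves implicit, and correctly reads the (typo-afflicted) nondegeneracy hypothesis of Proposition \ref{impcri} as $t_1t_2-t_3^2\neq 0$.
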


\begin{proof}

By Lemma \ref{MH}, Lemma \ref{impcri} and Proposition \ref{cohomology}, it is easy to check that the statement holds for cases $(1),(2)$ and $(4)$.
For case $(3)$, one sees that
$H[\mathcal{A}_{\mathcal{O}_{-1}(k^3)}(M)]$ is
$$\frac{k\langle \lceil l_1x_1-x_2\rceil, \lceil l_2x_1-x_3\rceil \rangle}{(m_{12}\lceil l_1x_1-x_2\rceil^2+m_{13}\lceil l_2x_1-x_3\rceil^2-\frac{\lceil l_1x_1-x_2\rceil \lceil l_2x_1-x_3\rceil+\lceil l_2x_1-x_3\rceil \lceil l_1x_1-x_2\rceil}{\frac{2l_1l_2}{m_{12}l_1^2+m_{13}l_2^2-m_{11}}})}$$
by Proposition \ref{cohomology}.
Since $4m_{12}m_{13}l_1^2l_2^2\neq (m_{12}l_1^2+m_{13}l_2^2-m_{11})^2$, $\mathcal{A}_{\mathcal{O}_{-1}(k^3)}(M)$ is a Koszul Calabi-Yau DG algebra
by Lemma \ref{impcri}. For case $5$, we have
$$H[\mathcal{A}_{\mathcal{O}_{-1}(k^3)}(M)]=\frac{k\langle \lceil l_1x_1-x_2\rceil, \lceil l_2x_1-x_3\rceil \rangle}{(m_{12}\lceil l_1x_1-x_2\rceil^2+m_{13}\lceil l_2x_1-x_3\rceil^2)}$$
by Proposition \ref{cohomology}. Since $m_{12}m_{13}\neq 0$, $\mathcal{A}_{\mathcal{O}_{-1}(k^3)}(M)$ is a Koszul Calabi-Yau DG algebra
by Lemma \ref{impcri}
\end{proof}

\begin{prop}\label{noncycase}
The DG algebra $\mathcal{A}_{\mathcal{O}_{-1}(k^3)}(M)$ is not homologically smooth when
 $$M=\left(
                                 \begin{array}{ccc}
                                   m_{11} & m_{12} & m_{13} \\
                                   l_1m_{11} & l_1m_{12} & l_1m_{13} \\
                                   l_2m_{11} & l_2m_{12} & l_2m_{13} \\
                                 \end{array}
                               \right), m_{12}l_1^2+m_{13}l_2^2\neq m_{11}, l_1l_2\neq 0$$ and $4m_{12}m_{13}l_1^2l_2^2= (m_{12}l_1^2+m_{13}l_2^2-m_{11})^2$. In this case,
                               neither $m_{12}m_{11}< 0$ nor $m_{13}m_{11}< 0$ will occur.
 Furthermore,
\begin{enumerate}
\item if $m_{11}=0$, then $m_{12}l_1=m_{13}l_2$ and
$\mathcal{A}_{\mathcal{O}_{-1}(k^3)}(M)$ is isomorphic to $\mathcal{A}_{\mathcal{O}_{-1}(k^3)}(N)$, where $N=\left(
                                 \begin{array}{ccc}
                                   0 & m_{12} & m_{12} \\
                                   0 & l_1m_{12} & l_1m_{12} \\
                                   0 & l_2\sqrt{m_{12}m_{13}} & l_2\sqrt{m_{12}m_{13}} \\
                                 \end{array}
                               \right);$
\item if $m_{11}m_{12}>0, m_{11}m_{13}>0$ then $\mathcal{A}_{\mathcal{O}_{-1}(k^3)}(M)$  is isomorphic to $\mathcal{A}_{\mathcal{O}_{-1}(k^3)}(Q)$, where $$Q=\left(
                                 \begin{array}{ccc}
                                   m_{11}\sqrt{m_{12}m_{13}} & m_{11}\sqrt{m_{12}m_{13}} & m_{11}\sqrt{m_{12}m_{13}} \\
                                   l_1m_{12}\sqrt{m_{11}m_{13}} & l_1m_{12}\sqrt{m_{11}m_{13}} & l_1m_{12}\sqrt{m_{11}m_{13}} \\
                                   l_2m_{13}\sqrt{m_{11}m_{12}} & l_2m_{13}\sqrt{m_{11}m_{12}} & l_2m_{13}\sqrt{m_{11}m_{12}} \\
                                 \end{array}
                               \right).$$
\end{enumerate}
\end{prop}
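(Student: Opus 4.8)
The plan is to split the statement into three parts: the failure of homological smoothness, the sign restrictions on the entries, and the two isomorphism reductions. For the first part, I would feed the hypotheses into part (4) of Proposition \ref{cohomology}, which tells us that with $y_1=l_1x_1-x_2$ and $y_2=l_2x_1-x_3$ the cohomology algebra is
$$H[\mathcal{A}_{\mathcal{O}_{-1}(k^3)}(M)]=\frac{k\langle \lceil y_1\rceil,\lceil y_2\rceil\rangle}{(t_1\lceil y_1\rceil^2+t_2\lceil y_2\rceil^2+t_3(\lceil y_1\rceil\lceil y_2\rceil+\lceil y_2\rceil\lceil y_1\rceil))}$$
with $t_1=m_{12}$, $t_2=m_{13}$ and $t_3=-\tfrac{m_{12}l_1^2+m_{13}l_2^2-m_{11}}{2l_1l_2}$. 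A direct computation gives $4l_1^2l_2^2(t_1t_2-t_3^2)=4m_{12}m_{13}l_1^2l_2^2-(m_{12}l_1^2+m_{13}l_2^2-m_{11})^2$, so the hypothesis $4m_{12}m_{13}l_1^2l_2^2=(m_{12}l_1^2+m_{13}l_2^2-m_{11})^2$ is exactly the condition $t_1t_2-t_3^2=0$. Moreover $(t_1,t_2,t_3)\neq 0$, since $t_1=t_2=0$ would force $m_{12}=m_{13}=0$ and then the standing inequality $m_{12}l_1^2+m_{13}l_2^2\neq m_{11}$ would make $t_3\neq 0$. Thus Proposition \ref{nonhs} applies and yields that $\mathcal{A}_{\mathcal{O}_{-1}(k^3)}(M)$ is Koszul but not homologically smooth.

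For the sign restrictions, I would abbreviate $a=m_{12}l_1^2$, $b=m_{13}l_2^2$, $c=m_{11}$, so that the hypothesis reads $(c-a-b)^2=4ab$. Since $l_1l_2\neq 0$ this forces $ab=m_{12}m_{13}l_1^2l_2^2\ge 0$, hence $m_{12}m_{13}\ge 0$; in fact $m_{12}m_{13}\neq 0$, because $m_{12}m_{13}=0$ together with $(c-a-b)^2=4ab=0$ would give $m_{12}l_1^2+m_{13}l_2^2=m_{11}$, against hypothesis. Writing $c=a+b\pm 2\sqrt{ab}$ and treating the cases $a,b\ge 0$ and $a,b\le 0$ separately, one sees that $c$ has the same sign as the common sign of $a$ and $b$; concretely $m_{11}$ shares the common sign of $m_{12}$ and $m_{13}$. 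This gives $m_{11}m_{12}\ge 0$ and $m_{11}m_{13}\ge 0$, i.e.\ neither $m_{12}m_{11}<0$ nor $m_{13}m_{11}<0$ occurs.

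It remains to produce the isomorphisms, for which I would use Theorem \ref{iso}: it suffices to exhibit a diagonal (hence quasi-permutation) matrix $C=\mathrm{diag}(d_1,d_2,d_3)\in\mathrm{QPL}_3(k)$ whose associated transform $C^{-1}M(c_{ij}^2)_{3\times 3}$, with $(i,j)$-entry $m_{ij}d_j^2/d_i$, equals the prescribed target. In subcase (1), $m_{11}=0$ forces $a=b$, that is $m_{12}l_1^2=m_{13}l_2^2$, so that $\sqrt{m_{12}m_{13}}$ is the natural scaling appearing in $N$; taking $C=\mathrm{diag}(1,1,\sqrt{m_{12}/m_{13}})$ (a square root exists since $m_{12}/m_{13}>0$ and $k$ is algebraically closed) I would check entry by entry that $C^{-1}M(c_{ij}^2)=N$. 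In subcase (2), all of $m_{11},m_{12},m_{13}$ are nonzero of the same sign, and I would take $C=\mathrm{diag}(\sqrt{m_{12}m_{13}},\sqrt{m_{11}m_{13}},\sqrt{m_{11}m_{12}})$; this choice is dictated by requiring the three columns of $C^{-1}M(c_{ij}^2)$ to coincide, and a direct verification gives $C^{-1}M(c_{ij}^2)=Q$. These two subcases are exhaustive, since we already know $m_{12}m_{13}\neq 0$, and if $m_{11}\neq 0$ the sign analysis forces $m_{11}m_{12}>0$ and $m_{11}m_{13}>0$.

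The conceptual content is light once Propositions \ref{cohomology} and \ref{nonhs} and Theorem \ref{iso} are in hand; the main obstacle is the bookkeeping in the sign analysis, namely extracting the individual signs of $m_{11},m_{12},m_{13}$ from the single scalar identity $(c-a-b)^2=4ab$, and then pinning down the exact diagonal scalings $d_1,d_2,d_3$ so that the normalized matrices $N$ and $Q$ are reproduced on the nose rather than merely up to a further quasi-permutation.
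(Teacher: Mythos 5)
Your proposal is correct, and two of its three parts coincide with the paper's own proof. The non-smoothness is obtained exactly as in the paper: Proposition \ref{cohomology}(4) followed by Proposition \ref{nonhs}, with $t_1=m_{12}$, $t_2=m_{13}$, $t_3=-\tfrac{m_{12}l_1^2+m_{13}l_2^2-m_{11}}{2l_1l_2}$, the discriminant hypothesis being precisely $t_1t_2-t_3^2=0$ (your verification that $(t_1,t_2,t_3)\neq(0,0,0)$ is a detail the paper leaves implicit). Likewise, your diagonal matrices $\mathrm{diag}(1,1,\sqrt{m_{12}/m_{13}})$ and $\mathrm{diag}(\sqrt{m_{12}m_{13}},\sqrt{m_{11}m_{13}},\sqrt{m_{11}m_{12}})$ are literally the matrices $C$ and $D$ of the paper's proof, fed into Theorem \ref{iso} in the same way.

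Where you genuinely diverge is the sign restriction, and there your route differs from the paper's. The paper argues homologically by contradiction: if, say, $m_{11}m_{13}<0$, it applies a quasi-permutation swap to produce $M'$ with $m_{12}'m_{13}'=l_1^2m_{11}m_{13}<0$, and then Proposition \ref{easycases}(3) (or Lemma \ref{impcri} in the degenerate subcase $m_{11}+m_{13}l_2^2-l_1^2m_{12}=0$) would make $\mathcal{A}_{\mathcal{O}_{-1}(k^3)}(M')\cong \mathcal{A}_{\mathcal{O}_{-1}(k^3)}(M)$ a Calabi--Yau DG algebra, contradicting the non-smoothness already established. You instead run elementary arithmetic on the scalar identity $(c-a-b)^2=4ab$ with $a=m_{12}l_1^2$, $b=m_{13}l_2^2$, $c=m_{11}$: this forces $ab\ge 0$, then $m_{12}m_{13}\neq 0$ (else $c=a+b$, against the standing inequality), and writing $c=\pm\bigl(\sqrt{\abs{a}}\mp\sqrt{\abs{b}}\bigr)^2$ pins the weak sign of $m_{11}$ to the common sign of $m_{12},m_{13}$. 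Your argument is shorter, bypasses the Calabi--Yau machinery entirely, and delivers as a byproduct the exhaustive dichotomy ($m_{11}=0$, or $m_{11}m_{12}>0$ and $m_{11}m_{13}>0$) needed for the case split; the paper's longer route has the merit of recycling the homological facts already in hand and of exhibiting explicitly how the rank-one normal form transforms under coordinate swaps, which it reuses elsewhere. Both arguments tacitly treat the parameters as real scalars, a caveat inherent in the statement itself since $k$ is assumed algebraically closed, so this is not a gap on your side relative to the paper.

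One small point in your favour: in subcase (1) you derive $m_{12}l_1^2=m_{13}l_2^2$ from $m_{11}=0$, which is the correct consequence of the discriminant condition; the relation $m_{12}l_1=m_{13}l_2$ asserted in the proposition (and repeated in the paper's proof without justification) does not actually follow --- take $m_{12}=m_{13}=1$, $l_1=1$, $l_2=-1$ --- though, as your entry-by-entry computation shows, the isomorphism $\chi(M,C)=N$ holds without it.
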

\begin{proof}
In this case,  $H[\mathcal{A}_{\mathcal{O}_{-1}(k^3)}(M)]$ is
$$\frac{k\langle \lceil l_1x_1-x_2\rceil, \lceil l_2x_1-x_3\rceil \rangle}{(m_{12}\lceil l_1x_1-x_2\rceil^2+m_{13}\lceil l_2x_1-x_3\rceil^2-\frac{\lceil l_1x_1-x_2\rceil \lceil l_2x_1-x_3\rceil+\lceil l_2x_1-x_3\rceil \lceil l_1x_1-x_2\rceil}{\frac{2l_1l_2}{m_{12}l_1^2+m_{13}l_2^2-m_{11}}})}$$
by Proposition \ref{cohomology}. Since $4m_{12}m_{13}l_1^2l_2^2= (m_{12}l_1^2+m_{13}l_2^2-m_{11})^2$, $\mathcal{A}_{\mathcal{O}_{-1}(k^3)}(M)$ is not homologically smooth  by Proposition \ref{nonhs}. Clearly,  $m_{12}m_{13}>0$ by the assumption.
If $m_{13}m_{11}<0$, we let
$C=\left(
            \begin{array}{ccc}
              0 & -1 & 0 \\
              1 & 0 & 0 \\
              0 & 0 & 1 \\
            \end{array}
          \right)$.
Then
          \begin{align*}
 &\quad \chi(M,C)\\
         &=\left(
             \begin{array}{ccc}
               0 & 1 & 0 \\
               -1 & 0 & 0 \\
               0 & 0 & 1 \\
             \end{array}
           \right)\left(
                    \begin{array}{ccc}
                      m_{11} & m_{12} & m_{13} \\
                      l_1m_{11} & l_1m_{12} & l_1m_{13} \\
                      l_2m_{11} & l_2m_{12} & l_2m_{13} \\
                    \end{array}
                  \right)\left(
                           \begin{array}{ccc}
                             1 & 0 & 0 \\
                             0 & 0 & 1 \\
                             0 & 1 & 0 \\
                           \end{array}
                         \right)\\
           &=\left(
                           \begin{array}{ccc}
                             l_1m_{12}  & l_1m_{11} & l_1m_{13} \\
                             -m_{12} & -m_{11} & -m_{13} \\
                             l_2m_{12} & l_2m_{11} & l_2m_{13} \\
                           \end{array}
                         \right).
          \end{align*}
Let $M'=\left(
                                 \begin{array}{ccc}
                                   m_{11}' & m_{12}' & m_{13}' \\
                                   l_1'm_{11}' & l_1'm_{12}' & l_1'm_{13}' \\
                                   l_2'm_{11}' & l_2'm_{12}' & l_2m_{13}' \\
                                 \end{array}
                               \right)=\left(
                           \begin{array}{ccc}
                             l_1m_{12}  & l_1m_{11} & l_1m_{13} \\
                             -m_{12} & -m_{11} & -m_{13} \\
                             l_2m_{12} & l_2m_{11} & l_2m_{13} \\
                           \end{array}
                         \right)$. Then
                          $$ \begin{cases}
                          \mathcal{A}_{\mathcal{O}_{-1}(k^3)}(M)\cong \mathcal{A}_{\mathcal{O}_{-1}(k^3)}(M')\\
                          l_1'=-\frac{1}{l_1}, l_2'=\frac{l_2}{l_1},\\
                           m_{11}'=l_1m_{12},m_{12}'=l_1m_{11}, m_{13}'=l_1m_{13}.
                          \end{cases}$$
So $l_1'l_2'=\frac{-l_2}{l_1^2}\neq 0$ and
\begin{align*}
m_{12}'(l_1')^2+m_{13}'(l_2')^2- m_{11}'=\frac{1}{l_1}(m_{11}+m_{13}l_2^2-l_1^2m_{12}).
\end{align*}
When $m_{11}+m_{13}l_2^2-l_1^2m_{12}\neq 0$, then $H[\mathcal{A}_{\mathcal{O}_{-1}(k^3)}(M')]$ is
$$\frac{k\langle \lceil l_1'x_1-x_2\rceil, \lceil l_2'x_1-x_3\rceil \rangle}{(m_{12}'\lceil l_1'x_1-x_2\rceil^2+m_{13}'\lceil l_2'x_1-x_3\rceil^2-\frac{\lceil l_1'x_1-x_2\rceil \lceil l_2'x_1-x_3\rceil+\lceil l_2'x_1-x_3\rceil \lceil l_1'x_1-x_2\rceil}{\frac{2l_1'l_2'}{m_{12}'(l_1')^2+m_{13}'(l_2')^2-m_{11}'}})}$$
by Proposition \ref{cohomology}. Since $m_{12}'m_{13}'=l_1^2m_{11}m_{13}<0$, we have $4m_{12}'m_{13}'(l_1')^2(l_2')^2\neq (m_{12}'(l_1')^2+m_{13}'(l_2')^2-m_{11}')^2$. Then $\mathcal{A}_{\mathcal{O}_{-1}(k^n)}(M')$ is a Koszul Calabi-Yau  connected cochain DG algebra by Proposition \ref{easycases} (3). This is impossible since $\mathcal{A}_{\mathcal{O}_{-1}(k^n)}(M)$ is not homologically smooth.
When $m_{11}+m_{13}l_2^2-l_1^2m_{12}=0$, then $$H[\mathcal{A}_{\mathcal{O}_{-1}(k^3)}(M')]=\frac{k\langle \lceil l_1'x_1-x_2\rceil, \lceil l_2'x_1-x_3\rceil \rangle}{(m_{12}'\lceil l_1'x_1-x_2\rceil^2+m_{13}'\lceil l_2'x_1-x_3\rceil^2)}$$ by Proposition \ref{cohomology}. Since $m_{12}'m_{13}'=l_1^2m_{11}m_{13}\neq 0$, the DG algebra $\mathcal{A}_{\mathcal{O}_{-1}(k^n)}(M')$ is a Koszul Calabi-Yau  connected cochain DG algebra by Lemma \ref{impcri}. This also contradicts with the fact that $\mathcal{A}_{\mathcal{O}_{-1}(k^n)}(M)$ is not homologically smooth.

Therefore, $m_{13}m_{11}< 0$ can't occur. Similarly, we can show that $m_{11}m_{12}<0$ is also impossible.
So we have either $m_{11}=0$ or $m_{11}m_{12}>0,m_{11}m_{13}>0$.

(1) If $m_{11}=0$, then $m_{12}l_1=m_{13}l_2$. Let
$$C=\left(
            \begin{array}{ccc}
              1 & 0 & 0 \\
              0 & 1 & 0 \\
              0 & 0 & \sqrt{\frac{m_{12}}{m_{13}}} \\
            \end{array}
          \right).$$
Then
          \begin{align*}
 &\quad \chi(M,C)\\
         &=\left(
             \begin{array}{ccc}
               1 & 0 & 0 \\
               0 & 1 & 0 \\
               0 & 0 & \sqrt{\frac{m_{13}}{m_{12}}} \\
             \end{array}
           \right)\left(
                    \begin{array}{ccc}
                      0 & m_{12} & m_{13} \\
                      0 & l_1m_{12} & l_1m_{13} \\
                      0 & l_2m_{12} & l_2m_{13} \\
                    \end{array}
                  \right)\left(
                           \begin{array}{ccc}
                             1 & 0 & 0 \\
                             0 & 1 & 0 \\
                             0 & 0 & \frac{m_{12}}{m_{13}} \\
                           \end{array}
                         \right)\\
           &=\left(
                           \begin{array}{ccc}
                             0 & m_{12} & m_{12} \\
                             0 & l_1m_{12} & l_1m_{12} \\
                             0 & l_2\sqrt{m_{12}m_{13}} & l_2\sqrt{m_{12}m_{13}} \\
                           \end{array}
                         \right)=N.
          \end{align*}
By Theorem \ref{iso}, $\mathcal{A}_{\mathcal{O}_{-1}(k^3)}(M)\cong \mathcal{A}_{\mathcal{O}_{-1}(k^3)}(N)$.

(2)If $m_{11}m_{12}>0,m_{11}m_{13}>0$, let
$$D=\left(
            \begin{array}{ccc}
              \sqrt{m_{12}m_{13}} & 0 & 0 \\
              0 & \sqrt{m_{11}m_{13}} & 0 \\
              0 & 0 & \sqrt{m_{11}m_{12}}\\
            \end{array}
          \right).$$
Then
          \begin{align*}
  \chi(M,D)=\left(
                                 \begin{array}{ccc}
                                   m_{11}\sqrt{m_{12}m_{13}} & m_{11}\sqrt{m_{12}m_{13}} & m_{11}\sqrt{m_{12}m_{13}} \\
                                   l_1m_{12}\sqrt{m_{11}m_{13}} & l_1m_{12}\sqrt{m_{11}m_{13}} & l_1m_{12}\sqrt{m_{11}m_{13}} \\
                                   l_2m_{13}\sqrt{m_{11}m_{12}} & l_2m_{13}\sqrt{m_{11}m_{12}} & l_2m_{13}\sqrt{m_{11}m_{12}} \\
                                 \end{array}
                               \right)
       =Q.
       \end{align*}
By Theorem \ref{iso}, $\mathcal{A}_{\mathcal{O}_{-1}(k^3)}(M)\cong \mathcal{A}_{\mathcal{O}_{-1}(k^3)}(Q)$.
\end{proof}

\begin{rem}
Note that the differential of $\mathcal{A}_{\mathcal{O}_{-1}(k^3)}(N)$ in Proposition \ref{noncycase}(1) is defined by
\begin{align*}
\begin{cases}
\partial_{\mathcal{A}}(x_1)=m_{12}(x_2^2+x_3^2)\\
\partial_{\mathcal{A}}(x_2)=l_1m_{12}(x_2^2+x_3^2)\\
\partial_{\mathcal{A}}(x_3)=l_2\sqrt{m_{12}m_{13}}(x_2^2+x_3^2),
\end{cases}
\end{align*}
where $l_1m_{12}=l_2m_{13}, l_1,l_2,m_{12},m_{13}\in k^{\times}$. Let $l_1=l_2=m_{12}=m_{13}=1$. Then $$N=\left(
                           \begin{array}{ccc}
                             0 & 1 & 1 \\
                             0 & 1 & 1 \\
                             0 & 1 & 1 \\
                           \end{array}
                         \right)$$ and we get a simple example $\mathcal{A}_{\mathcal{O}_{-1}(k^3)}(N)$, which is not homologically smooth but Koszul.
Similarly, the differential of $\mathcal{A}_{\mathcal{O}_{-1}(k^3)}(Q)$ in Proposition \ref{noncycase}(2) is defined by
\begin{align*}
\begin{cases}
\partial_{\mathcal{A}}(x_1)= m_{11}\sqrt{m_{12}m_{13}}(x_1^2+x_2^2+x_3^2)\\
\partial_{\mathcal{A}}(x_2)=l_1m_{12}\sqrt{m_{11}m_{13}}(x_1^2+x_2^2+x_3^2)\\
\partial_{\mathcal{A}}(x_3)=l_2m_{13}\sqrt{m_{11}m_{12}}(x_1^2+x_2^2+x_3^2),
\end{cases}
\end{align*}
where $m_{12}m_{13},m_{11}m_{13},m_{11}m_{12}>0, l_1l_2\neq 0$ and $$4m_{12}m_{13}l_1^2l_2^2= (m_{12}l_1^2+m_{13}l_2^2-m_{11})^2.$$ For example, let $l_1=m_{11}=m_{12}=m_{13}=1, l_2=2$, then $$Q=\left(
                           \begin{array}{ccc}
                             1 & 1 & 1 \\
                             1 & 1 & 1 \\
                             2 & 2 & 2 \\
                           \end{array}
                         \right)$$ and $\mathcal{A}_{\mathcal{O}_{-1}(k^3)}(Q)$ is a simple example of DG algebra which is not homologically smooth but Koszul.
\end{rem}
\begin{prop}\label{nonregsec}
The DG algebra $\mathcal{A}_{\mathcal{O}_{-1}(k^3)}(M)$ is not homologically smooth but Koszul when
 $$M=\left(
                                 \begin{array}{ccc}
                                   m_{11} & m_{12} & m_{13} \\
                                   l_1m_{11} & l_1m_{12} & l_1m_{13} \\
                                   l_2m_{11} & l_2m_{12} & l_2m_{13} \\
                                 \end{array}
                               \right),\,\, (m_{11},m_{12},m_{13})\neq 0,\,\, l_1l_2\neq 0,$$
 $m_{12}m_{13}=0$ and $m_{12}l_1^2+m_{13}l_2^2= m_{11}$. Furthermore, $\mathcal{A}_{\mathcal{O}_{-1}(k^3)}(M)\cong \mathcal{A}_{\mathcal{O}_{-1}(k^3)}(N)$, where $$N=\left(
                                 \begin{array}{ccc}
                                   1 & 1 & 0 \\
                                   1 & 1 & 0 \\
                                   1 & 1 & 0\\
                                 \end{array}
                               \right).$$
\end{prop}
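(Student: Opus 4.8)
The plan is to read off the cohomology algebra from Proposition~\ref{cohomology}, deduce the homological assertion from Proposition~\ref{nonhs}, and then settle the isomorphism by exhibiting an explicit quasi-permutation matrix and invoking Theorem~\ref{iso}. First I observe that the hypotheses place us squarely in case $(6)$ of Proposition~\ref{cohomology}: the matrix $M$ has the stated rank-one shape with $l_1l_2\neq 0$, $(m_{11},m_{12},m_{13})\neq 0$ and $m_{12}l_1^2+m_{13}l_2^2=m_{11}$, so
\[
H[\mathcal{A}_{\mathcal{O}_{-1}(k^3)}(M)]=\frac{k\langle \lceil l_1x_1-x_2\rceil, \lceil l_2x_1-x_3\rceil \rangle}{(m_{12}\lceil l_1x_1-x_2\rceil^2+m_{13}\lceil l_2x_1-x_3\rceil^2)}.
\]
The extra hypothesis $m_{12}m_{13}=0$, together with $(m_{11},m_{12},m_{13})\neq 0$, forces exactly one of $m_{12},m_{13}$ to vanish; indeed, were both zero, the relation $m_{12}l_1^2+m_{13}l_2^2=m_{11}$ would give $m_{11}=0$ as well. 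Hence $(m_{12},m_{13},0)$ is a well-defined point of $\mathbb{P}_k^2$ satisfying $m_{12}\cdot m_{13}-0^2=0$. Setting $y_1=l_1x_1-x_2$ and $y_2=l_2x_1-x_3$, the displayed cohomology is exactly of the shape treated in Proposition~\ref{nonhs} with $(t_1,t_2,t_3)=(m_{12},m_{13},0)$, so Proposition~\ref{nonhs} immediately yields that $\mathcal{A}_{\mathcal{O}_{-1}(k^3)}(M)$ is not homologically smooth but Koszul.

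For the isomorphism statement, by Theorem~\ref{iso} it suffices to produce a matrix $C\in\mathrm{QPL}_3(k)$ with $\chi(M,C)=C^{-1}M(c_{ij}^2)_{3\times 3}=N$. I would search among diagonal matrices $C=\mathrm{diag}(d_1,d_2,d_3)$, for which one computes directly that $\chi(M,C)_{ij}=(d_j^2/d_i)\,m_{ij}$; this reduces the matrix equation $\chi(M,C)=N$ to a small scalar system. The argument then splits according to which of $m_{12},m_{13}$ is nonzero.

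In the case $m_{13}=0$ (so $m_{12}\neq 0$ and $m_{11}=m_{12}l_1^2$, matching the shape of $N$ itself), the candidate is
\[
C=\mathrm{diag}\Big(\tfrac{1}{m_{12}l_1^2},\ \tfrac{1}{m_{12}l_1},\ \tfrac{l_2}{m_{12}l_1^2}\Big),
\]
whose three diagonal entries are nonzero because $m_{12},l_1,l_2\neq 0$, so $C\in\mathrm{QPL}_3(k)$. The values $d_1,d_2,d_3$ are pinned down by the three entries $\chi(M,C)_{11}$, $\chi(M,C)_{22}$, $\chi(M,C)_{31}$ being equal to $1$, and a direct entrywise check confirms that the remaining entries then automatically match $N$ (the third column is zero since $m_{i3}=0$). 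In the complementary case $m_{12}=0$ (so $m_{13}\neq 0$ and $m_{11}=m_{13}l_2^2$), I first conjugate by the transposition matrix $P$ interchanging the indices $2$ and $3$: since $P$ is a permutation matrix we have $(p_{ij}^2)=P$ and $P^{-1}=P$, whence $\chi(M,P)=PMP$ is again a matrix of rank-one form but with the roles of $(m_{12},l_1)$ and $(m_{13},l_2)$ interchanged, thereby reducing to the case already solved. Because $\chi$ is a right action, composing $P$ with the diagonal matrix of the first case gives a single $C\in\mathrm{QPL}_3(k)$ realizing $\chi(M,C)=N$, and Theorem~\ref{iso} delivers $\mathcal{A}_{\mathcal{O}_{-1}(k^3)}(M)\cong \mathcal{A}_{\mathcal{O}_{-1}(k^3)}(N)$.

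The only genuine labor is pinning down $C$: everything else is a direct appeal to Propositions~\ref{cohomology} and~\ref{nonhs} and to Theorem~\ref{iso}. The diagonal ansatz makes the search essentially mechanical, so the step requiring the most care is the entrywise verification that the three solved-for scalars $d_1,d_2,d_3$ are simultaneously consistent with all nine entries of $\chi(M,C)=N$ and are all nonzero (ensuring membership in $\mathrm{QPL}_3(k)$). I expect no conceptual obstacle beyond this bookkeeping.
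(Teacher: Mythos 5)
Your proposal is correct and follows essentially the same route as the paper's own proof: read off the cohomology from Proposition \ref{cohomology}(6), apply Proposition \ref{nonhs} with $(t_1,t_2,t_3)=(m_{12},m_{13},0)$ (noting $t_1t_2-t_3^2=m_{12}m_{13}=0$ and that not both $m_{12},m_{13}$ vanish), reduce to the case $m_{13}=0$ via the $(2,3)$-swap in $\mathrm{QPL}_3(k)$, and conclude with an explicit diagonal matrix and Theorem \ref{iso}. In fact your third diagonal entry $d_3=\tfrac{l_2}{m_{12}l_1^2}$ is the correct one: the paper's printed $C'$ carries the reciprocal $\tfrac{l_1^2m_{12}}{l_2}$ in that slot, which would make the $(3,1)$-entry of $\chi(M,C')$ equal $\tfrac{l_2^2}{l_1^4m_{12}^2}$ rather than $1$, so your entrywise verification actually repairs a typographical slip in the published choice.
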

\begin{proof}
By Proposition \ref{cohomology}(6), we have $$H(\mathcal{A}_{\mathcal{O}_{-1}(k^3)}(M))=\frac{k\langle \lceil l_1x_1-x_2\rceil, \lceil l_2x_1-x_3\rceil \rangle}{(m_{12}\lceil l_1x_1-x_2\rceil^2+m_{13}\lceil l_2x_1-x_3\rceil^2)}.$$
Since $(m_{11},m_{12},m_{13})\neq 0, m_{12}m_{13}=0$ and $m_{12}l_1^2+m_{13}l_2^2= m_{11}$, we have either $m_{12}=0,m_{13}\neq 0$ or $m_{12}\neq 0, m_{13}=0$. In both cases, the DG algebra $\mathcal{A}_{\mathcal{O}_{-1}(k^3)}(M)$ is
not homologically smooth but Koszul by Proposition \ref{nonhs}. Let $C=\left(
                           \begin{array}{ccc}
                             1 & 0 & 0 \\
                             0 & 0 & 1 \\
                             0 & 1 & 0 \\
                           \end{array}
                         \right)$.  Then
\begin{align*}
  \chi(M,C)=\left(
                                 \begin{array}{ccc}
                                   m_{11} & m_{13} & m_{12} \\
                                   l_2m_{11} & l_2m_{13} & l_2m_{12} \\
                                   l_1m_{11}& l_1m_{13}& l_1m_{12} \\
                                 \end{array}
                               \right).
       \end{align*}
This implies that we might as well assume that $m_{13}=0$ and $m_{12}\neq 0$. Then $m_{11}=m_{12}l_1^2$ and hence
 $$ M=\left(
                                 \begin{array}{ccc}
                                   m_{12}l_1^2 & m_{12} & 0 \\
                                   m_{12}l_1^3 & l_1m_{12} & 0 \\
                                   m_{12}l_1^2l_2 & l_2m_{12} & 0 \\
                                 \end{array}
                               \right). $$   Let $C'= \left(
                           \begin{array}{ccc}
                             \frac{1}{l_1^2m_{12}} & 0 & 0 \\
                             0 & \frac{1}{l_1m_{12}} & 0 \\
                             0 & 0 & \frac{l_1^2m_{12}}{l_2} \\
                           \end{array}
                         \right)$.   Then we have $
  \chi(M,C')=\left(
                                 \begin{array}{ccc}
                                   1 & 1 & 0 \\
                                   1 & 1 & 0 \\
                                   1 & 1 & 0\\
                                 \end{array}
                               \right)$.
Therefore,  $\mathcal{A}_{\mathcal{O}_{-1}(k^3)}(M)\cong \mathcal{A}_{\mathcal{O}_{-1}(k^3)}(N)$ where
$$N=\left(
                                 \begin{array}{ccc}
                                   1 & 1 & 0 \\
                                   1 & 1 & 0 \\
                                   1 & 1 & 0\\
                                 \end{array}
                               \right).$$

\end{proof}

It remains to consider the Calabi-Yau properties of $\mathcal{A}_{\mathcal{O}_{-1}(k^3)}(M)$ in the following $5$ cases:
\begin{enumerate}
\item  Case $1$: $r(M)=2$ and $s_1t_1^2+s_2t_2^2+s_3t_3^2= 0$ where $k(s_1,s_2,s_3)^T$ and $k(t_1,t_2,t_3)^T$ be the solution spaces of  homogeneous linear equations $MX=0$ and $M^TX=0$, respectively;
\item  Case $2$:
 $M=\left(
                                 \begin{array}{ccc}
                                   m_{11} & m_{12} & m_{13} \\
                                   l_1m_{11} & l_1m_{12} & l_1m_{13} \\
                                   l_2m_{11} & l_2m_{12} & l_2m_{13} \\
                                 \end{array}
                               \right)$, $(m_{11},m_{12},m_{13})\neq 0$,  with $m_{12}l_1^2+m_{13}l_2^2=m_{11}$, $l_1\neq 0$ and $l_2=0$;
\item  Case $3$: $M=\left(
                                 \begin{array}{ccc}
                                   m_{11} & m_{12} & m_{13} \\
                                   l_1m_{11} & l_1m_{12} & l_1m_{13} \\
                                   l_2m_{11} & l_2m_{12} & l_2m_{13} \\
                                 \end{array}
                               \right)$, $(m_{11},m_{12},m_{13})\neq 0$, with $m_{12}l_1^2+m_{13}l_2^2=m_{11}$, $l_1=0$ and $l_2\neq 0$;
\item  Case $4$: $M=\left(
                                 \begin{array}{ccc}
                                   m_{11} & m_{12} & m_{13} \\
                                   l_1m_{11} & l_1m_{12} & l_1m_{13} \\
                                   l_2m_{11} & l_2m_{12} & l_2m_{13} \\
                                 \end{array}
                               \right)$, $(m_{11},m_{12},m_{13})\neq 0$, with $m_{12}l_1^2+m_{13}l_2^2=m_{11}$, $l_1=0$ and $l_2=0$.
\end{enumerate}
The proof of each case involves further classifications and complicated analysis. The main ideas of our proof is to construct the minimal semi-free resolution of ${}_{\mathcal{A}}k$ in each case and compute the corresponding Ext-algebras. In the rest of this paper, we will allocate Section \ref{caseone} and Section \ref{casetwo} to discuss the homological properties of $\mathcal{A}_{\mathcal{O}_{-1}(k^3)}(M)$ for Case $1$ and Case $2-4$ separately.

\section{case $1$}\label{caseone}
By Proposition \ref{cohomology},  $H(\mathcal{A}_{\mathcal{O}_{-1}(k^3)}(M))$ in Case $1$  is not homologically smooth since it is
$k[\lceil t_1x_1 +t_2x_2+t_3x_3\rceil,\lceil s_1x_1^2+s_2x_2^2+s_3x_3^2\rceil ]/(\lceil t_1x_1 +t_2x_2+t_3x_3\rceil^2).$ From $H(\mathcal{A}_{\mathcal{O}_{-1}(k^3)}(M))$  we can't judge the  Calabi-Yau properties (resp. homologically smoothness) of $\mathcal{A}_{\mathcal{O}_{-1}(k^3)}(M)$. We turn to construct the minimal semi-free resolution of ${}_{\mathcal{A}}k$.
According to the constructing procedure of \cite[Proposition 2.4]{MW1}, we will construct the resolution as follows.

 Let $F_0=\mathcal{A}$ and $\varepsilon_0=\varepsilon: F_0=\mathcal{A}\to k$.
 Define $F_1$ as an extension of the  DG $\mathcal{A}$-module $F_0$ by $F_1^{\#}=F_0^{\#}\oplus \mathcal{A}^{\#}e_1$ and $\partial_{F_1}(e_1)=t_1x_1+t_2x_2+t_3x_3.$ Since $t_1^2x_1^2+t_2^2x_2^2+t_3^2x_3^2\in B^2(\mathcal{A})$, there exist $\sigma=q_1x_1+q_2x_2+q_3x_3 \in \mathcal{A}^1$ such that $\partial_{\mathcal{A}}(\sigma)=t_1^2x_1^2+t_2x_2^2+t_3x_3^2$.
 Define $F_2$ as an extension of the  DG $\mathcal{A}$-module $F_1$ by $F_2^{\#}=F_1^{\#}\oplus \mathcal{A}^{\#}e_2$ and $\partial_{F_2}(e_2)=(t_1x_1+t_2x_2+t_3x_3)e_1+\sigma.$

 Case $1.1$. If the condition $\textbf{C}1$:$q_1t_1x_1^2+q_2t_2x_2^2+q_3t_3x_3^2\not\in B^2(\mathcal{A})$ holds, then $q_1t_1x_1^2+q_2t_2x_2^2+q_3t_3x_3^2=\partial_{\mathcal{A}}(b)+s_1x_1^2+s_2x_2^2+s_3x_3^2$, for some $b\in \mathcal{A}^1$.
 For any cocycle element $a+a_1e_1+a_2e_2\in Z^1(F_2)$, we have
 \begin{align*}
 0&=\partial_{F_2}(a+a_1e_1+a_2e_2)\\
  &=\partial_{\mathcal{A}}(a)+\partial_{\mathcal{A}}(a_1)e_1-a_1(t_1x_1+t_2x_2+t_3x_3)+\partial_{\mathcal{A}}(a_2)e_2\\
  &\quad -a_2[(t_1x_1+t_2x_2+t_3x_3)e_1+\sigma] \\
  &=\partial_{\mathcal{A}}(a_2)e_2+[\partial_{\mathcal{A}}(a_1)-a_2(t_1x_1+t_2x_2+t_3x_3)]e_1\\
  &\quad +\partial_{\mathcal{A}}(a)-a_1(t_1x_1+t_2x_2+t_3x_3).
 \end{align*}
 This implies that
 \begin{align}\label{8.1}
 \begin{cases}
 \partial_{\mathcal{A}}(a_2)=0\\
 \partial_{\mathcal{A}}(a_1)-a_2(t_1x_1+t_2x_2+t_3x_3)=0\\
 \partial_{\mathcal{A}}(a)-a_1(t_1x_1+t_2x_2+t_3x_3)=0.
 \end{cases}
 \end{align}
Since $q_1t_1x_1^2+q_2t_2x_2^2+q_3t_3x_3^2\not\in B^2(\mathcal{A})$, one can easily check that (\ref{8.1}) implies that
\begin{align*}
\begin{cases}
a_2=0\\
a_1=b_1(t_1x_1+t_2x_2+t_3x_3)\\
a=b_1(q_1x_1+q_2x_2+q_3x_3)+b_0(t_1x_1+t_2x_2+t_3x_3)
\end{cases}
\end{align*}
 for some $b_1,b_0\in k$. So
$a+a_1e_1+a_2e_2=\partial_{F_2}(b_1e_2+b_0e_1)\in B^1(F_2)$. Hence $H^1(F_2)=0$. Furthermore,
 $F_2$ is the minimal semi-free resolution of ${}_{\mathcal{A}}k$ (see A.2. in the appendix). Note that
 $F_2^{\#}=\mathcal{A}^{\#}\oplus \mathcal{A}^{\#}e_1\oplus \mathcal{A}^{\#}e_2$ with a differential $\partial_{F_2}$ defined by
 $$\left(
                         \begin{array}{c}
                          \partial_{F_2}(1) \\
                          \partial_{F_2}(e_1)\\
                          \partial_{F_2}(e_2)
                         \end{array}
                       \right)=\left(
            \begin{array}{ccc}
              0 & 0& 0  \\
              \sum\limits_{i=1}^3t_ix_i & 0 & 0 \\
              \sigma & \sum\limits_{i=1}^3t_ix_i & 0 \\
            \end{array}
          \right)\left(
                         \begin{array}{c}
                          1 \\
                          e_1\\
                          e_2
                         \end{array}
                       \right).$$
  To make the paper more readable, we put the proof in the Appendix.

  We can list the following examples for Case $1.1$.
 \begin{ex}\label{ex8.1}
The DG algebra $\mathcal{A}_{\mathcal{O}_{-1}(k^3)}(M)$ belongs to Case $1.1$, when $M$ is one of the following matrixes:
 \begin{align*}
& (1) \left(
           \begin{array}{ccc}
             1 & 0 & 1 \\
             1 & 1 & 1 \\
             1 & 0 & 1 \\
           \end{array}
         \right), (2) \left(
           \begin{array}{ccc}
             0 & 1 & 0 \\
             1 & 0 & 1 \\
             1 & 1 & 1 \\
           \end{array}
         \right), (3) \left(
           \begin{array}{ccc}
             1 & 0 & 1 \\
             0 & 1 & 0 \\
             1 & 1 & 1 \\
           \end{array}
         \right),\\
         &(4) \left(
           \begin{array}{ccc}
             1 & 0 & 1 \\
             0 & 1 & 1 \\
             1 & 0 & 1 \\
           \end{array}
         \right),
(5) \left(
           \begin{array}{ccc}
             1 & 1 & 1 \\
             0 & 1 & 0 \\
             1 & 1 & 1 \\
           \end{array}
         \right),
(6) \left(
           \begin{array}{ccc}
             0 & 1 & 0 \\
             1 & 1 & 1 \\
             0 & 1 & 0 \\
           \end{array}
         \right),\\
&(7) \left(
           \begin{array}{ccc}
             1 & 0 & 1 \\
             0 & 1 & 0 \\
             1 & 0 & 1 \\
           \end{array}
         \right),
(8)\left(
           \begin{array}{ccc}
             1 & -1 & 0 \\
             1 & 1 & 1 \\
             1 & -1 & 1 \\
           \end{array}
         \right),
(9)\left(
           \begin{array}{ccc}
             1 & 1 & 1 \\
             -1 & 1 & 1 \\
             -1 & 1 & 1 \\
           \end{array}
         \right).
\end{align*}
 \end{ex}
 \begin{rem}
 Note that the matrixes appear in Example \ref{ex8.1} don't include all representatives of isomorphic class of matrices for which the algebra is in Case $1.1$. We list them for the reader to check and calculate specifically. The same comment holds for examples that appear below for other cases in this section.
 \end{rem}

Now, let us study the case that $q_1t_1x_1^2+q_2t_2x_2^2+q_3t_3x_3^2 \in B^2(\mathcal{A})$. We claim that we can divide it into the following two series:
\begin{itemize}
\item Case $1.2.*$,  when the condition $\textbf{C}2$: $(q_1t_1, q_2t_2, q_3t_3)^T=0$ holds;
\item Case $1.3.*$,  when we have $\textbf{C}2'$: $(q_1t_1, q_2t_2, q_3t_3)^T$ and $(t_1^2, t_2^2, t_3^2)^T$ are linearly independent.
\end{itemize}
Indeed, when $0\neq (q_1t_1, q_2t_2, q_3t_3)^T$ and $(t_1^2, t_2^2, t_3^2)^T$ are linearly dependent, we may as well let
$(q_1t_1, q_2t_2, q_3t_3)^T=c(t_1^2, t_2^2, t_3^2)^T$ for some $c\in k^{\times}$.
Let $q_1'=q_1-ct_1,q_2'=q_2-ct_2,q_3'=q_3-ct_3$. Then
\begin{align*}
\begin{cases}
q_1't_1+q_2't_2+q_3't_3=0\\
\partial_{\mathcal{A}}(q_1'x_1+q_2'x_2+q_3'x_3)=t_1^2x_1^2+t_2^2x_2^2+t_3^2x_3^2.
\end{cases}
\end{align*} We can replace $q_1,q_2,q_3$ by $q_1',q_2',q_3'$ in the construction.
\subsection{Case $1.2.*$}
 Since $q_1t_1x_1^2+q_2t_2x_2^2+q_3t_3x_3^2=0$,  we may choose $\tau=r_1x_1+r_2x_2+r_3x_3=0$, equivalently each $r_i=0$,  such that $\partial_{\mathcal{A}}(\tau)=0$. We label it ``Case $1.2.1$" when the conditions $\textbf{C}2$ and $\textbf{C}3$: $q_1^2x_1^2+q_2^2x_2^2+q_3^2x_3^2\not\in B^2(\mathcal{A})$ hold.
We extend $F_2$ to a semi-free DG module $F_3$ with $F_3^{\#}=F_2^{\#}\oplus \mathcal{A}^{\#}e_3$ and
$\partial_{F_3}(e_3)=(t_1x_1+t_2x_2+t_3x_3)e_2+\sigma e_1.$ We claim $H^1(F_3)=0$. Indeed, for any  cocycle element $a+a_1e_1+a_2e_2+a_3e_3\in Z^1(F_3)$, we have
 \begin{align*}
 0&=\partial_{F_3}(a+a_1e_1+a_2e_2+a_3e_3)\\
  &=\partial_{\mathcal{A}}(a)+\partial_{\mathcal{A}}(a_1)e_1-a_1(t_1x_1+t_2x_2+t_3x_3)+\partial_{\mathcal{A}}(a_2)e_2+\partial_{\mathcal{A}}(a_3)e_3\\
  &\quad -a_2[(t_1x_1+t_2x_2+t_3x_3)e_1+\sigma]-a_3[(t_1x_1+t_2x_2+t_3x_3)e_2+\sigma e_1] \\
  &=\partial_{\mathcal{A}}(a_3)e_3+[\partial_{\mathcal{A}}(a_2)-a_3(t_1x_1+t_2x_2+t_3x_3)]e_2\\
  &\quad +[\partial_{\mathcal{A}}(a_1)-a_3\sigma-a_2(t_1x_1+t_2x_2+t_3x_3)]e_1\\
  &\quad +\partial_{\mathcal{A}}(a)-a_1(t_1x_1+t_2x_2+t_3x_3)-a_2\sigma .
 \end{align*}
Then
\begin{align}\label{8.3}
 \begin{cases}
 \partial_{\mathcal{A}}(a_3)=0\\
 \partial_{\mathcal{A}}(a_2)-a_3(t_1x_1+t_2x_2+t_3x_3)=0\\
 \partial_{\mathcal{A}}(a_1)-a_3\sigma-a_2(t_1x_1+t_2x_2+t_3x_3)=0\\
\partial_{\mathcal{A}}(a)-a_1(t_1x_1+t_2x_2+t_3x_3)-a_2\sigma =0.
 \end{cases}
 \end{align}
Since $q_1^2x_1^2+q_2^2x_2^2+q_3^2x_3^2\not\in B^2(\mathcal{A})$,
it is easy to check that
(\ref{8.3}) implies,
\begin{align*}
\begin{cases}
a_3=0\\
a_2=c_2(t_1x_1+t_2x_2+t_3x_3)\\
a_1=c_2(q_1x_1+q_2x_2+q_3x_3)+c_1(t_1x_1+t_2x_2+t_3x_3)\\
a=c_1\sigma +c_0(t_1x_1+t_2x_2+t_3x_3),
\end{cases}
\end{align*}
for some $c_0,c_1$ and $c_2\in k$.  Then
$a+a_1e_1+a_2e_2+a_3e_3 =\partial_{\mathcal{A}}(c_2e_3+c_1e_2+c_0e_1)$.  So $H^1(F_3)=0$. Furthermore, we can show that
 $F_3$ is the minimal semi-free resolution of ${}_{\mathcal{A}}k$(see A.2 in the appendix).
 Note that
 $F_3^{\#}=\mathcal{A}^{\#}\oplus \mathcal{A}^{\#}e_1\oplus \mathcal{A}^{\#}e_2\oplus \mathcal{A}^{\#}e_3$ with a differential $\partial_{F_3}$ defined by
 $$\left(
                         \begin{array}{c}
                          \partial_{F_3}(1) \\
                          \partial_{F_3}(e_1)\\
                          \partial_{F_3}(e_2) \\
                          \partial_{F_3}(e_3)
                         \end{array}
                       \right)=\left(
            \begin{array}{cccc}
              0 & 0& 0 & 0 \\
              \sum\limits_{i=1}^3t_ix_i & 0 & 0 & 0\\
              \sigma & \sum\limits_{i=1}^3t_ix_i & 0 & 0 \\
              2\tau  & \sigma &\sum\limits_{i=1}^3t_ix_i  & 0
            \end{array}
          \right)\left(
                         \begin{array}{c}
                          1 \\
                          e_1\\
                          e_2\\
                          e_3
                         \end{array}
                       \right).$$
    We can list the following examples for Case $1.2.1$.
 \begin{ex}\label{ex8.3}
The DG algebra $\mathcal{A}_{\mathcal{O}_{-1}(k^3)}(M)$ belongs to Case $1.2.1$, when $M$ is one of the following matrixes:
 \begin{align*}
 & (1) \left(
           \begin{array}{ccc}
             1 & 1 & 0 \\
             1 & 0 & 1 \\
             1 & 1 & 0 \\
           \end{array}
         \right),(2) \left(
           \begin{array}{ccc}
             1 & 1 & 1 \\
             0 & 0 & 1 \\
             0 & 0 & 0 \\
           \end{array}
         \right),(3) \left(
           \begin{array}{ccc}
             1 & 0 & 0 \\
             1 & 0 & 1 \\
             1 & 0 & 0 \\
           \end{array}
         \right),\\
 &(4) \left(
           \begin{array}{ccc}
             0 & 0 & 1 \\
             0 & 1 & 0 \\
             0 & 0 & 0 \\
           \end{array}
         \right), (5) \left(
           \begin{array}{ccc}
             0 & 1 & 0 \\
             0 & 0 & 0 \\
             1 & 0 & 1 \\
           \end{array}
         \right), (6) \left(
           \begin{array}{ccc}
             1 & 1 & 0 \\
             0 & 0 & 0 \\
             1 & 0 & 0 \\
           \end{array}
         \right).
 \end{align*}
\end{ex}

If $\textbf{C}2$: $q_1t_1x_1^2+q_2t_2x_2^2+q_3t_3x_3^2=0$ and $\overline{\textbf{C}3}$: $q_1^2x_1^2+q_2^2x_2^2+q_3^2x_3^2\in B^2(\mathcal{A})$ hold, then  things will be different from Case $1.2.1$.
We must proceed our construction. Let $\lambda=u_1x_1+u_2x_2+u_3x_3$ such that $\partial_{\mathcal{A}}(\lambda)=q_1^2x_1^2+q_2^2x_2^2+q_3^2x_3^2$.
We label it ``Case $1.2.2$" when the conditions $\textbf{C}2$, $\overline{\textbf{C}3}$ and $\textbf{C}4$: $u_1t_1x_1^2+u_2t_2x_2^2+u_3t_3x_3^2\not\in B^2\mathcal{A}$ hold. We extend $F_3$ in Case $1.2.1$ to a semi-free DG module $F_4$ with $F_4^{\#}=F_3^{\#}\oplus \mathcal{A}^{\#}e_4$ and
$\partial_{F_4}(e_4)=(t_1x_1+t_2x_2+t_3x_3)e_3+\sigma e_2+\lambda.$ We claim $H^1(F_4)=0$.
Indeed, for any  cocycle element $a+a_1e_1+a_2e_2+a_3e_3+a_4e_4\in Z^1(F_4)$, we have
 \begin{align*}
 0&=\partial_{F_4}(a+a_1e_1+a_2e_2+a_3e_3+a_4e_4)\\
  &=\partial_{\mathcal{A}}(a)+\partial_{\mathcal{A}}(a_1)e_1-a_1(t_1x_1+t_2x_2+t_3x_3)+\partial_{\mathcal{A}}(a_2)e_2\\
  &\quad -a_2[(t_1x_1+t_2x_2+t_3x_3)e_1+\sigma] +\partial_{\mathcal{A}}(a_3)e_3-a_3[(t_1x_1+t_2x_2+t_3x_3)e_2+\sigma e_1]\\
  &\quad +\partial_{\mathcal{A}}(a_4)e_4-a_4[(t_1x_1+t_2x_2+t_3x_3)e_3+\sigma e_2+\lambda] \\
  &=\partial_{\mathcal{A}}(a_4)e_4+[\partial_{\mathcal{A}}(a_3)-a_4(t_1x_1+t_2x_2+t_3x_3)]e_3\\
  &\quad +[\partial_{\mathcal{A}}(a_2)-a_4\sigma-a_3(t_1x_1+t_2x_2+t_3x_3)]e_2\\
  &\quad +[\partial_{\mathcal{A}}(a_1)-a_2(t_1x_1+t_2x_2+t_3x_3)-a_3\sigma]e_1\\
  &\quad +\partial_{\mathcal{A}}(a)-a_1(t_1x_1+t_2x_2+t_3x_3)-a_2\sigma -a_4\lambda.
 \end{align*}
Then
\begin{align}\label{8.4}
 \begin{cases}
 \partial_{\mathcal{A}}(a_4)=0\\
 \partial_{\mathcal{A}}(a_3)-a_4(t_1x_1+t_2x_2+t_3x_3)=0\\
 \partial_{\mathcal{A}}(a_2)-a_4\sigma-a_3(t_1x_1+t_2x_2+t_3x_3)=0\\
 \partial_{\mathcal{A}}(a_1)-a_2(t_1x_1+t_2x_2+t_3x_3)-a_3\sigma=0\\
\partial_{\mathcal{A}}(a)-a_1(t_1x_1+t_2x_2+t_3x_3)-a_2\sigma -a_4\lambda=0.
 \end{cases}
 \end{align}
Since $u_1t_1x_1^2+u_2t_2x_2^2+u_3t_3x_3^2\not\in B^2\mathcal{A}$, (\ref{8.4}) implies
\begin{align*}
\begin{cases}
a_4=0\\
a_3=c_3(t_1x_1+t_2x_2+t_3x_3)\\
a_2=c_3(q_1x_1+q_2x_2+q_3x_3)+c_2(t_1x_1+t_2x_2+t_3x_3)\\
a_1=c_2(q_1x_1+q_2x_2+q_3x_3)+c_1(t_1x_1+t_2x_2+t_3x_3)\\
a=c_1(q_1x_1+q_2x_2+q_3x_3) +c_3(u_1x_1+u_2x_2+u_3x_3)+c_0(t_1x_1+t_2x_2+t_3x_3),
\end{cases}
\end{align*}
for some $c_0,c_1,c_2,c_3\in k$.
 Then
$$a+a_1e_1+a_2e_2+a_3e_3+a_4e_4 =\partial_{\mathcal{A}}(c_3e_4+c_2e_3+c_1e_2+c_0e_1).$$
 Hence $H^1(F_4)=0$. Furthermore,
 $F_4$ is the minimal semi-free resolution of ${}_{\mathcal{A}}k$ (see A.2 in the appendix).
  Note that
 $F_4^{\#}=\mathcal{A}^{\#}\oplus \mathcal{A}^{\#}e_1\oplus \mathcal{A}^{\#}e_2\oplus \mathcal{A}^{\#}e_3\oplus \mathcal{A}^{\#}e_4$ with a differential $\partial_{F_4}$ defined by
 $$\left(
                         \begin{array}{c}
                          \partial_{F_4}(1) \\
                          \partial_{F_4}(e_1)\\
                          \partial_{F_4}(e_2) \\
                          \partial_{F_4}(e_3) \\
                          \partial_{F_4}(e_4)
                         \end{array}
                       \right)=\left(
            \begin{array}{ccccc}
              0 & 0& 0 & 0 &0 \\
              \sum\limits_{i=1}^3t_ix_i & 0 & 0 & 0 &0 \\
              \sigma & \sum\limits_{i=1}^3t_ix_i & 0 & 0 &0\\
              2\tau  & \sigma &\sum\limits_{i=1}^3t_ix_i  & 0 &0\\
              \lambda & 2\tau & \sigma & \sum\limits_{i=1}^3t_ix_i & 0
            \end{array}
          \right)\left(
                         \begin{array}{c}
                          1 \\
                          e_1\\
                          e_2\\
                          e_3\\
                          e_4
                         \end{array}
                       \right).$$
  We can list the following examples for Case $1.2.2$.
 \begin{ex}\label{ex8.4}
The DG algebra $\mathcal{A}_{\mathcal{O}_{-1}(k^3)}(M)$ belongs to Case $1.2.2$, when $M$ is either one of the following matrixes:
 \begin{align*}
 (1) \left(
           \begin{array}{ccc}
             1 & 1 & 1 \\
             1 & 0 & 1 \\
             1 & 1 & 1 \\
           \end{array}
         \right),(2) \left(
           \begin{array}{ccc}
             0 & 1 & 0 \\
             1 & 0 & 1 \\
             0 & 1 & 0 \\
           \end{array}
         \right), (3) \left(
           \begin{array}{ccc}
             0 & 1 & 1 \\
             1 & 0 & 0 \\
             1 & 0 & 0 \\
           \end{array}
         \right).
 \end{align*}
\end{ex}

If the conditions $\textbf{C}2, \overline{\textbf{C}3}$ and $\overline{\textbf{C}4}$: $u_1t_1x_1^2+u_2t_2x_2^2+u_3t_3x_3^2\in B^2(\mathcal{A})$ hold, then   then we must continue the process after the construction of $F_4$ in Case $1.2.2$.
Let $\omega=v_1x_1+v_2x_2+v_3x_3$ such that $\partial_{\mathcal{A}}(\omega)=u_1t_1x_1^2+u_2t_2x_2^2+u_3t_3x_3^2$.
We label it ``Case $1.2.3$" when $\textbf{C}2, \overline{\textbf{C}3}$, $\overline{\textbf{C}4}$ and the condition $\textbf{C}5$:
$$(4v_1t_1+2q_1u_1)x_1^2+(4v_2t_2+2q_2u_2)x_2^2+(4v_3t_3+2q_3u_3)x_3^2\not\in B^2\mathcal{A}$$  hold. We extend $F_4$ in Case $1.2.2$ to a semi-free DG module $F_5$ such that $F_5^{\#}=F_4^{\#}\oplus \mathcal{A}^{\#}e_5$ and
$\partial_{F_5}(e_5)=(t_1x_1+t_2x_2+t_3x_3)e_4+ \sigma e_3+\lambda e_1+2\omega.$ We claim $H^1(F_5)=0$.
Indeed, for any $a+a_1e_1+a_2e_2+a_3e_3+a_4e_4+a_5e_5\in Z^1(F_5)$, we have
 \begin{align*}
 0&=\partial_{F_5}(a+a_1e_1+a_2e_2+a_3e_3+a_4e_4+a_5e_5)\\
  &=\partial_{\mathcal{A}}(a)+\partial_{\mathcal{A}}(a_1)e_1-a_1(t_1x_1+t_2x_2+t_3x_3)+\partial_{\mathcal{A}}(a_2)e_2\\
  &\quad -a_2[(t_1x_1+t_2x_2+t_3x_3)e_1+\sigma] +\partial_{\mathcal{A}}(a_3)e_3-a_3[(t_1x_1+t_2x_2+t_3x_3)e_2+\sigma e_1]\\
  &\quad +\partial_{\mathcal{A}}(a_4)e_4-a_4[(t_1x_1+t_2x_2+t_3x_3)e_3+\sigma e_2+\lambda]+\partial_{\mathcal{A}}(a_5)e_5 \\
  &\quad -a_5[(t_1x_1+t_2x_2+t_3x_3)e_4+ \sigma e_3+\lambda e_1+2\omega ]\\
  &=\partial_{\mathcal{A}}(a_5)e_5 +[\partial_{\mathcal{A}}(a_4)-a_5(t_1x_1+t_2x_2+t_3x_3)]e_4\\
  &\quad +[\partial_{\mathcal{A}}(a_3)-a_5\sigma -a_4(t_1x_1+t_2x_2+t_3x_3)]e_3\\
  &\quad +[\partial_{\mathcal{A}}(a_2)-a_4\sigma-a_3(t_1x_1+t_2x_2+t_3x_3)]e_2\\
  &\quad +[\partial_{\mathcal{A}}(a_1)-a_2(t_1x_1+t_2x_2+t_3x_3)-a_3\sigma-a_5\lambda]e_1\\
  &\quad +\partial_{\mathcal{A}}(a)-a_1(t_1x_1+t_2x_2+t_3x_3)-a_2\sigma -a_4\lambda-2a_5\omega.
 \end{align*}
Then
\begin{align}\label{8.4}
 \begin{cases}
 \partial_{\mathcal{A}}(a_5)=0\\
 \partial_{\mathcal{A}}(a_4)-a_5(t_1x_1+t_2x_2+t_3x_3)=0\\
 \partial_{\mathcal{A}}(a_3)-a_5\sigma-a_4(t_1x_1+t_2x_2+t_3x_3)=0\\
 \partial_{\mathcal{A}}(a_2)-a_4\sigma-a_3(t_1x_1+t_2x_2+t_3x_3)=0\\
\partial_{\mathcal{A}}(a_1)-a_2(t_1x_1+t_2x_2+t_3x_3)-a_3\sigma-a_5\lambda=0\\
\partial_{\mathcal{A}}(a)-a_1(t_1x_1+t_2x_2+t_3x_3)-a_2\sigma -a_4\lambda-2a_5\omega.
 \end{cases}
 \end{align}
Since $(4v_1t_1+2q_1u_1)x_1^2+(4v_2t_2+2q_2u_2)x_2^2+(4v_3t_3+2q_3u_3)x_3^2\not\in B^2(\mathcal{A})$, (\ref{8.4}) implies
\begin{align*}
\begin{cases}
a_5=0\\
a_4=c_4(t_1x_1+t_2x_2+t_3x_3)\\
a_3=c_4(q_1x_1+q_2x_2+q_3x_3)+c_3(t_1x_1+t_2x_2+t_3x_3)\\
a_2=c_3(q_1x_1+q_2x_2+q_3x_3)+c_2(t_1x_1+t_2x_2+t_3x_3)\\
a_1=c_2(q_1x_1+q_2x_2+q_3x_3)+c_4(u_1x_1+u_2x_2+u_3x_3)+c_1(t_1x_1+t_2x_2+t_3x_3)\\
a=c_1(q_1x_1+q_2x_2+q_3x_3) +c_3(u_1x_1+u_2x_2+u_3x_3)+2c_4(v_1x_1+v_2x_2+v_3x_3)\\
\quad + c_0(t_1x_1+t_2x_2+t_3x_3)
\end{cases}
\end{align*}
for some $c_0,c_1,c_2,c_3,c_4\in k$.
 Then
$$a+a_1e_1+a_2e_2+a_3e_3+a_4e_4+a_5e_5 =\partial_{\mathcal{A}}(c_4e_5+c_3e_4+c_2e_3+c_1e_2+c_0e_1).$$
 Hence $H^1(F_5)=0$. Furthermore,
 $F_5$ is the minimal semi-free resolution of ${}_{\mathcal{A}}k$ (see A.2 in the appendix). Note that
 $$F_5^{\#}=\mathcal{A}^{\#}\oplus \mathcal{A}^{\#}e_1\oplus \mathcal{A}^{\#}e_2\oplus \mathcal{A}^{\#}e_3\oplus \mathcal{A}^{\#}e_4\oplus \mathcal{A}^{\#}e_5$$ with a differential defined by
 $$\left(
                         \begin{array}{c}
                          \partial_{F_5}(1) \\
                          \partial_{F_5}(e_1)\\
                          \partial_{F_5}(e_2) \\
                          \partial_{F_5}(e_3) \\
                          \partial_{F_5}(e_4) \\
                          \partial_{F_5}(e_5)
                         \end{array}
                       \right)= \left(
            \begin{array}{cccccc}
              0 & 0& 0 & 0 &0 & 0 \\
              \sum\limits_{i=1}^3t_ix_i & 0 & 0 & 0 &0 &0 \\
              \sigma & \sum\limits_{i=1}^3t_ix_i & 0 & 0 &0 &0 \\
              2\tau  & \sigma &\sum\limits_{i=1}^3t_ix_i  & 0 &0 &0\\
              \lambda & 2\tau & \sigma & \sum\limits_{i=1}^3t_ix_i & 0 & 0\\
              2\omega & \lambda & 2\tau & \sigma & \sum\limits_{i=1}^3t_ix_i & 0
            \end{array}
          \right) \left(
                         \begin{array}{c}
                          1 \\
                          e_1\\
                          e_2\\
                          e_3\\
                          e_4\\
                          e_5
                         \end{array}
                       \right).$$
  We can list the following example for Case $1.2.3$.
 \begin{ex}\label{ex8.5}
The DG algebra $\mathcal{A}_{\mathcal{O}_{-1}(k^3)}(M)$ belongs to Case $1.2.3$, when
 \begin{align*}
 M=\left(
           \begin{array}{ccc}
             1 & 1 & 1 \\
             0 & 0 & 0 \\
             1 & 0 & 1 \\
           \end{array}
         \right).
 \end{align*}
\end{ex}
If the conditions $\textbf{C}2, \overline{\textbf{C}3}$, $\overline{\textbf{C}4}$ and $\overline{\textbf{C}5}$: $$(4v_1t_1+2q_1u_1)x_1^2+(4v_2t_2+2q_2u_2)x_2^2+(4v_3t_3+2q_3u_3)x_3^2\in B^2\mathcal{A}$$ hold,
then we must continue the process after the construction of $F_5$ in Case $1.2.3$. Since $(t_1,t_2,t_3)\neq 0$ and $\partial_{\mathcal{A}}(q_1x_1+q_2x_2+q_3x_3)=t_1^2x_1^2+t_2^2x_2^2+t_3^2x_3^2\neq 0$, we have $(q_1,q_2,q_3)\neq 0$. Hence $q_1t_1x_1^2+q_2t_2x_2^2+q_3t_3x_3^2=0$ implies that there exist one or two nonzero elements in $\{t_1,t_2,t_3\}$. By symmetry, we only need to consider the following three cases:
$$\text{Case A:}\begin{cases}
t_1=0,q_1\neq 0\\
t_2\neq 0, q_2=0\\
t_3\neq 0,q_3=0
\end{cases}\quad  \text{Case B:}\begin{cases}
t_1=0,q_1\neq 0\\
t_2=0, q_2\neq 0\\
t_3\neq 0,q_3=0
\end{cases}\quad \text{Case C:}\begin{cases}
t_1=0,q_1= 0\\
t_2=0, q_2\neq 0\\
t_3\neq 0,q_3=0.
\end{cases}$$

If Case $A$ happens, then $\partial_{\mathcal{A}}(t_2x_2+t_3x_3)=0$, $\partial_{\mathcal{A}}(q_1x_1)=t_2^2x_2^2+t_3^2x_3^2$ and $\partial_{\mathcal{A}}(u_1x_1+u_2x_2+u_3x_3)=q_1^2x_1^2$. So
$B^2(\mathcal{A})=k(t_2^2x_2^2+t_3^2x_3^2)\oplus kx_1^2$. Since $$\partial_{\mathcal{A}}(v_1x_1+v_2x_2+v_3x_3)=u_2t_2x_2^2+u_3t_3x_3^2\in B^2(\mathcal{A}),$$  we have $u_2=lt_2,u_3=lt_3$, for some $l\in k$.
If $u_1=0$, then $$\partial_{\mathcal{A}}(u_2x_2+u_3x_3)=l\partial_{\mathcal{A}}(t_2x_2+t_3x_3)=0$$ which contradicts with $\partial_{\mathcal{A}}(u_2x_2+u_3x_3)=q_1^2x_1^2\neq 0$. Hence $u_1\neq 0$. Then
\begin{align*}
\partial_{\mathcal{A}}(u_1x_1+u_2x_2+u_3x_3)&=u_1\partial_{\mathcal{A}}(x_1)+l\partial_{\mathcal{A}}(t_2x_2+t_3x_3)\\
                                            &=\frac{u_1}{q_1}[t_2^2x_2^2+t_3^2x_3^2],
\end{align*}
which contradicts with the assumption $\partial_{\mathcal{A}}(u_1x_1+u_2x_2+u_3x_3)=q_1^2x_1^2$. Hence Case $A$ is impossible to occur.

If Case $B$ happens,  then we have $\partial_{\mathcal{A}}(x_3)=0$, $\partial_{\mathcal{A}}(q_1x_1+q_2x_2)=t_3^2x_3^2$ and $$\partial_{\mathcal{A}}(u_1x_1+u_2x_2+u_3x_3)=\partial_{\mathcal{A}}(u_1x_1+u_2x_2)=q_1^2x_1^2+q_2^2x_2^2.$$ So $B^2(\mathcal{A})=kx_3^2\oplus k(q_1^2x_1^2+q_2^2x_2^2)$.
We have $\partial_{\mathcal{A}}(v_1x_1+v_2x_2+v_3x_3)=u_3t_3x_3^2$. Since $\partial_{\mathcal{A}}(x_3)=0$, we may choose $v_3=0$.  If $u_3\neq 0$, then $(v_1,v_2)\neq 0$ and $(v_1,v_2)=(\frac{u_3}{t_3}q_1, \frac{u_3}{t_3}q_2)$ since $Z^1(\mathcal{A})=kx_3$. Then
$$(4v_1t_1+2q_1u_1)x_1^2+(4v_2t_2+2q_2u_2)x_2^2+(4v_3t_3+2q_3u_3)x_3^2=2q_1u_1x_1^2+2q_2u_2x_2^2\in B^2\mathcal{A}.$$
So $u_1=cq_1,u_2=cq_2$ for some $c\in k$. But then we have
\begin{align*}
\partial_{\mathcal{A}}(u_1x_1+u_2x_2+u_3x_3) &=\partial_{\mathcal{A}}(cq_1x_1+cq_2x_2)+u_3\partial_{\mathcal{A}}(x_3) \\
&=ct_3^2x_3^2,
\end{align*}
which contradicts with the assumption that $\partial_{\mathcal{A}}(u_1x_1+u_2x_2+u_3x_3)=q_1^2x_1^2+q_2^2x_2^2.$
Hence $u_3=0$. So $u_1t_1x_1^2+u_2t_2x_2^2+u_3t_3x_3^2=0$. We may choose $v_1=v_2=v_3=0$.
Then \begin{align*}
&2q_1u_1x_1^2+2q_2u_2x_2^2\\
&=(4v_1t_1+2q_1u_1)x_1^2+(4v_2t_2+2q_2u_2)x_2^2+(4v_3t_3+2q_3u_3)x_3^2\in B^2\mathcal{A}.
\end{align*}
Since $B^2(\mathcal{A})=kx_3^2\oplus k(q_1^2x_1^2+q_2^2x_2^2)$, we get $u_1=lq_1,u_2=lq_2$ for some $l\in k$. Then
$\partial_{\mathcal{A}}(u_1x_1+u_2x_2)=l\partial_{\mathcal{A}}(q_1x_1+q_2x_2)=lt_3^2x_3^2$, and we reach a contradiction with the assumption that $\partial_{\mathcal{A}}(u_1x_1+u_2x_2)=\partial_{\mathcal{A}}(u_1x_1+u_2x_2+u_3x_3)=q_1^2x_1^2+q_2^2x_2^2.$ Therefore, Case $B$ is also impossible to occur.

If Case $C$ happens, then  we have $\partial_{\mathcal{A}}(x_3)=0$, $\partial_{\mathcal{A}}(q_2x_2)=t_3^2x_3^2$, and $$\partial_{\mathcal{A}}(u_1x_1+u_2x_2+u_3x_3)=q_2^2x_2^2.$$ So $B^2(\mathcal{A})=kx_2^2\oplus kx_3^2$.
Since $\partial_{\mathcal{A}}(x_3)=0$, we may choose $u_3=0$ in construction. We claim that $u_1\neq 0$. Indeed, if $u_1=0$, then $\partial_{\mathcal{A}}(u_2x_2)=\partial_{\mathcal{A}}(u_1x_1+u_2x_2+u_3x_3)=q_2x_2^2$, which contradicts with the assumption that $\partial_{\mathcal{A}}(q_2x_2)=t_3^2x_3^2$. So $u_1\neq 0$. In the construction, we can choose $v_1=v_2=v_3=0$ since $u_1t_1x_1^2+u_2t_2x_2^2+u_3t_3x_3^2=0$. Then $(4v_1t_1+2q_1u_1)x_1^2+(4v_2t_2+2q_2u_2)x_2^2+(4v_3t_3+2q_3u_3)x_3^2=2q_2u_2x_2^2$. We may choose $$w_1=\frac{2u_2u_1}{q_2}, w_2=\frac{2u_2^2}{q_2},w_3=0$$ and $\eta=w_1x_1+w_2x_2+w_3x_3$ such that $\partial_{\mathcal{A}}(\eta)=2q_2u_2x_2^2$. We extend $F_5$ in Case $1.2.3$ to a semi-free DG module $F_6$ with $F_6^{\#}=F_5^{\#}\oplus \mathcal{A}^{\#}e_6$ and
$$\partial_{F_5}(e_6)=(t_3x_3)e_5+ \sigma e_4+\lambda e_2+\eta.$$
It is straightforward for one to show
that $\partial_{F_6}[(t_3x_3)e_6+\sigma e_5+\lambda e_3+\eta e_1]=0.$ So $H^1(F_6)\neq 0$.
We extend $F_6$ to a semi-free DG module $F_7$ with $F_7^{\#}=F_6^{\#}\oplus \mathcal{A}^{\#}e_7$ and
$\partial_{F_7}(e_7)=(t_3x_3)e_6+\sigma e_5+\lambda e_3+\eta e_1$. We claim $H^1(F_7)=0$.
Indeed, for any  cocycle element $a+a_1e_1+a_2e_2+a_3e_3+a_4e_4+a_5e_5+a_6e_6+a_7e_7\in Z^1(F_7)$, we have
 \begin{align*}
 0&=\partial_{F_7}(a+a_1e_1+a_2e_2+a_3e_3+a_4e_4+a_5e_5+a_6e_6+a_7e_7)\\
  &=\partial_{\mathcal{A}}(a)+\partial_{\mathcal{A}}(a_1)e_1-a_1(t_3x_3)+\partial_{\mathcal{A}}(a_2)e_2-a_2[(t_3x_3)e_1+\sigma]+\partial_{\mathcal{A}}(a_3)e_3\\
  &\quad -a_3[(t_3x_3)e_2+\sigma e_1]+\partial_{\mathcal{A}}(a_4)e_4-a_4[(t_3x_3)e_3+\sigma e_2+\lambda]+\partial_{\mathcal{A}}(a_5)e_5\\
  &\quad -a_5[(t_3x_3)e_4+ \sigma e_3+\lambda e_1 ]+\partial_{\mathcal{A}}(a_6)e_6-a_6[(t_3x_3)e_5+ \sigma e_4+\lambda e_2+\eta]  \\
  &\quad +\partial_{\mathcal{A}}(a_7)e_7-a_7[(t_3x_3)e_6+\sigma e_5+\lambda e_3+\eta e_1]\\
  &=\partial_{\mathcal{A}}(a_7)e_7+[\partial_{\mathcal{A}}(a_6)-a_7(t_3x_3)]e_6+[\partial_{\mathcal{A}}(a_5)-a_6(t_3x_3)-a_7\sigma]e_5 \\
  &\quad +[\partial_{\mathcal{A}}(a_4)-a_5(t_3x_3)-a_6\sigma]e_4+[\partial_{\mathcal{A}}(a_3)-a_5\sigma -a_4(t_3x_3)-a_7\lambda]e_3\\
  &\quad +[\partial_{\mathcal{A}}(a_2)-a_4\sigma-a_3(t_3x_3)-a_6\lambda]e_2+[\partial_{\mathcal{A}}(a_1)-a_2(t_3x_3)-a_3\sigma-a_5\lambda]e_1\\
  &\quad +\partial_{\mathcal{A}}(a)-a_1(t_3x_3)-a_2\sigma -a_4\lambda-a_6\eta.
\end{align*}
Then
\begin{align}\label{8.5}
 \begin{cases}
 \partial_{\mathcal{A}}(a_7)=0\\
 \partial_{\mathcal{A}}(a_6)-a_7(t_3x_3)=0\\
 \partial_{\mathcal{A}}(a_5)-a_7\sigma-a_6(t_3x_3)=0\\
 \partial_{\mathcal{A}}(a_4)-a_6\sigma-a_5(t_3x_3)=0\\
\partial_{\mathcal{A}}(a_3)-a_5\sigma -a_4(t_3x_3)-a_7\lambda=0\\
\partial_{\mathcal{A}}(a_2)-a_4\sigma-a_3(t_3x_3)-a_6\lambda=0\\
\partial_{\mathcal{A}}(a_1)-a_2(t_3x_3)-a_3\sigma-a_5\lambda=0\\
\partial_{\mathcal{A}}(a)-a_1(t_3x_3)-a_2\sigma -a_4\lambda-a_6\eta=0.
 \end{cases}
 \end{align}
Since $u_1\neq 0$, we have $u_1^2x_1^2+3u_2^2x_2^2\not\in B^2(\mathcal{A})$. Then (\ref{8.5}) implies that
\begin{align*}
\begin{cases}
a_7=0\\
a_6=c_6t_3x_3\\
a_5=c_6q_2x_2+c_5t_3x_3\\
a_4=c_5q_2x_2+c_4t_3x_3\\
a_3=c_4q_2x_2+c_6(u_1x_1+u_2x_2)+c_3t_3x_3\\
a_2=c_5(u_1x_1+u_2x_2)+c_3q_2x_2+c_2t_3x_3\\
a_1=c_4(u_1x_1+u_2x_2)+c_2q_2x_2+c_6\eta +c_1t_3x_3\\
a=c_3(u_1x_1+u_2x_2)+c_1q_2x_2+c_5\eta +c_0t_3x_3,
\end{cases}
\end{align*}
for some $c_0,c_1,c_2,c_3,c_4,c_5,c_6\in k$.
\begin{align*}
&\quad a+a_1e_1+a_2e_2+a_3e_3+a_4e_4+a_5e_5+a_6e_6+a_7e_7\\
&=\partial_{F_7}(c_6e_7+c_5e_6+c_4e_5+c_3e_4+c_2e_3+c_1e_2+c_0e_1).
\end{align*}
Hence $H^1(F_7)=0$. Furthermore,
 $F_7$ is the minimal semi-free resolution of ${}_{\mathcal{A}}k$ (see A.2 in the appendix).
 Note that $$F_7^{\#}=\mathcal{A}^{\#}\oplus \mathcal{A}^{\#}e_1\oplus \mathcal{A}^{\#}e_2\oplus \mathcal{A}^{\#}e_3\oplus \mathcal{A}^{\#}e_4\oplus \mathcal{A}^{\#}e_5\oplus \mathcal{A}^{\#}e_6\oplus \mathcal{A}^{\#}e_7$$
and
$$\left(
                         \begin{array}{c}
                          \partial_{F_7}(1) \\
                          \partial_{F_7}(e_1)\\
                          \partial_{F_7}(e_2)\\
                          \partial_{F_7}(e_3)\\
                          \partial_{F_7}(e_4)\\
                          \partial_{F_7}(e_5)\\
                          \partial_{F_7}(e_6)\\
                          \partial_{F_7}(e_7)
                         \end{array}
                       \right) =\left(
            \begin{array}{cccccccc}
              0 & 0& 0 & 0 &0 & 0 &0 & 0\\
               t_3x_3  & 0 & 0 & 0 &0 &0&0 & 0 \\
              \sigma &  t_3x_3  & 0 & 0 &0 &0&0 & 0 \\
              0 & \sigma & t_3x_3   & 0 &0 &0&0 & 0\\
              \lambda & 0 & \sigma &  t_3x_3  & 0 & 0&0 & 0\\
              0   & \lambda & 0       & \sigma &  t_3x_3  & 0      &0 & 0\\
             \eta & 0       & \lambda & 0      & \sigma   & t_3x_3 &0 & 0\\
             0    & \eta    & 0       &\lambda &  0       &\sigma  &t_3x_3 & 0
            \end{array}
          \right)\left(
                         \begin{array}{c}
                          1\\
                          e_1\\
                          e_2\\
                          e_3\\
                          e_4\\
                          e_5\\
                          e_6\\
                          e_7
                         \end{array}
                       \right).$$
  For the convenience of future talk, we rename Case $C$ to Case $1.2.4$.
 We can list the following two examples for Case $1.2.4$.
\begin{ex}\label{ex8.5}
The DG algebra $\mathcal{A}_{\mathcal{O}_{-1}(k^3)}(M)$ belongs to Case $1.2.4$, when $M$ is either one of the following matrixes:
 \begin{align*}
 \left(
           \begin{array}{ccc}
             0 & 1 & 0 \\
             0 & 0 & 1 \\
             0 & 0 & 0 \\
           \end{array}
         \right),  \left(
           \begin{array}{ccc}
             0 & 1 & 1 \\
             0 & 0 & 1 \\
             0 & 0 & 0 \\
           \end{array}
         \right).
 \end{align*}
\end{ex}

\subsection{Case $1.3.*$}
 By assumption,  $q_1t_1x_1^2+q_2t_2x_2^2+q_3t_3x_3^2$ and $t_1^2x_1^2+t_2^2x_2^2+t_3^2x_3^2$ constitute a basis of  $B^2(\mathcal{A})$.  Let $\tau=r_1x_1+r_2x_2+r_3x_3\in \mathcal{A}^1$ such that $\partial_{\mathcal{A}}(\tau)=q_1t_1x_1^2+q_2t_2x_2^2+q_3t_3x_3^2$.
 We label it ``Case $1.3.1$" when the conditions $\textbf{C}2'$ and
 $\textbf{C}3'$: $\sum\limits_{i=1}^3(4r_it_i+q_i^2)x_i^2\not\in B^2(\mathcal{A})$ hold.
We extend $F_2$ to a semi-free DG module $F_3$ with $$F_3^{\#}=F_2^{\#}\oplus \mathcal{A}^{\#}e_3\,\, \text{and}\,\,\partial_{F_3}(e_3)=(t_1x_1+t_2x_2+t_3x_3)e_2+\sigma e_1+2\tau.$$ We claim $H^1(F_3)=0$. Indeed,
for any $a+a_1e_1+a_2e_2+a_3e_3\in Z^1(F_3)$, we have
 \begin{align*}
 0&=\partial_{F_3}(a+a_1e_1+a_2e_2+a_3e_3)\\
  &=\partial_{\mathcal{A}}(a)+\partial_{\mathcal{A}}(a_1)e_1-a_1(t_1x_1+t_2x_2+t_3x_3)+\partial_{\mathcal{A}}(a_2)e_2+\partial_{\mathcal{A}}(a_3)e_3\\
  &\quad -a_2[(t_1x_1+t_2x_2+t_3x_3)e_1+\sigma]-a_3[(t_1x_1+t_2x_2+t_3x_3)e_2+\sigma e_1+2\tau] \\
  &=\partial_{\mathcal{A}}(a_3)e_3+[\partial_{\mathcal{A}}(a_2)-a_3(t_1x_1+t_2x_2+t_3x_3)]e_2\\
  &\quad +[\partial_{\mathcal{A}}(a_1)-a_3\sigma-a_2(t_1x_1+t_2x_2+t_3x_3)]e_1\\
  &\quad +\partial_{\mathcal{A}}(a)-a_1(t_1x_1+t_2x_2+t_3x_3)-a_2\sigma -2a_3\tau.
 \end{align*}
Then
\begin{align}\label{8.2}
 \begin{cases}
 \partial_{\mathcal{A}}(a_3)=0\\
 \partial_{\mathcal{A}}(a_2)-a_3(t_1x_1+t_2x_2+t_3x_3)=0\\
 \partial_{\mathcal{A}}(a_1)-a_3\sigma-a_2(t_1x_1+t_2x_2+t_3x_3)=0\\
\partial_{\mathcal{A}}(a)-a_1(t_1x_1+t_2x_2+t_3x_3)-a_2\sigma -2a_3\tau=0.
 \end{cases}
 \end{align}
Since $(4r_1t_1+q_1^2)x_1^2+(4r_2t_2+q_2^2)x_2^2+(4r_3t_3+q_3^2)x_3^2\not\in B^2(\mathcal{A})$, it is easy to check that
(\ref{8.2}) implies ,
\begin{align*}
\begin{cases}
a_3=0\\
a_2=c_2(t_1x_1+t_2x_2+t_3x_3)\\
a_1=c_2(q_1x_1+q_2x_2+q_3x_3)+c_1(t_1x_1+t_2x_2+t_3x_3)\\
a=2c_2\tau+c_1\sigma +c_0(t_1x_1+t_2x_2+t_3x_3),
\end{cases}
\end{align*}
for some $c_0,c_1$ and $c_2\in k$. Then
$a+a_1e_1+a_2e_2+a_3e_3 =\partial_{\mathcal{A}}(c_2e_3+c_1e_2+c_0e_1)$.
 Hence $H^1(F_3)=0$. Furthermore,
 $F_3$ is the minimal semi-free resolution of ${}_{\mathcal{A}}k$ (see A.2 in the appendix). Note that
 $F_3^{\#}=\mathcal{A}^{\#}\oplus \mathcal{A}^{\#}e_1\oplus \mathcal{A}^{\#}e_2\oplus \mathcal{A}^{\#}e_3$ with a differential $\partial_{F_3}$ defined by
 $$\left(
                         \begin{array}{c}
                          \partial_{F_3}(1) \\
                          \partial_{F_3}(e_1)\\
                          \partial_{F_3}(e_2) \\
                          \partial_{F_3}(e_3)
                         \end{array}
                       \right)= \left(
            \begin{array}{cccc}
              0 & 0& 0 & 0 \\
              \sum\limits_{i=1}^3t_ix_i & 0 & 0 & 0\\
              \sigma & \sum\limits_{i=1}^3t_ix_i & 0 & 0 \\
              2\tau  & \sigma &\sum\limits_{i=1}^3t_ix_i  & 0
            \end{array}
          \right) \left(
                         \begin{array}{c}
                          1 \\
                          e_1\\
                          e_2\\
                          e_3
                         \end{array}
                       \right).$$
 We can list the following examples for Case $1.3.1$.
 \begin{ex}\label{ex8.2}
The DG algebra $\mathcal{A}_{\mathcal{O}_{-1}(k^3)}(M)$ belongs to Case $1.3.1$, when $M$ is one of the following matrixes:
 \begin{align*}
& (1) \left(
           \begin{array}{ccc}
             1 & 0 & 1 \\
             1 & 1 & 1 \\
             0 & 1 & 0 \\
           \end{array}
         \right),(2) \left(
           \begin{array}{ccc}
             1 & 0 & 0 \\
             0 & 0 & 1 \\
             1 & 0 & 0 \\
           \end{array}
         \right),(3) \left(
           \begin{array}{ccc}
             1 & 1 & 1 \\
             0 & 1 & 1 \\
             1 & 0 & 0 \\
           \end{array}
         \right).
 \end{align*}
\end{ex}

 We label it ``Case $1.3.2$" when the condition $\textbf{C}2'$ and $\overline{\textbf{C}3'}$: $\sum\limits_{i=1}^3(4r_it_i+q_i^2)x_i^2\in B^2(\mathcal{A})$ hold. Then things will be different from Case $1.3.1$.
We must proceed our process after constructing $F_3$ in Case $1.3.1$. Let $\lambda=u_1x_1+u_2x_2+u_3x_3$ such that $$\partial_{\mathcal{A}}(\lambda)=(4r_1t_1+q_1^2)x_1^2+(4r_2t_2+q_2^2)x_2^2+(4r_3t_3+q_3^2)x_3^2.$$ We extend $F_3$ in Case $1.3.1$ to a semi-free DG module $F_4$ with $F_4^{\#}=F_3^{\#}\oplus \mathcal{A}^{\#}e_4$ and
$\partial_{F_4}(e_4)=(t_1x_1+t_2x_2+t_3x_3)e_3+\sigma e_2+\lambda.$  In order to get a minimal semi-free resolution of $k$, we should proceed our construction by extending $F_4$. For this, we need some analysis first.

\begin{prop}\label{simpleprop}
Let $M=(m_{ij})_{3 \times 3}$  be a matrix which satisfies the the following conditions:

\begin{enumerate}
\item
$r(M)=2$,   $\exists \overrightarrow{s}=\left(
    \begin{array}{ccc}
      s_1 \\
      s_2 \\
      s_3 \\
    \end{array}
  \right)\neq 0 $ and $\overrightarrow{t}=\left(
    \begin{array}{ccc}
      t_1 \\
      t_2 \\
      t_3 \\
    \end{array}
  \right)\neq 0$ such that $M\overrightarrow{s}=0$, $M^T\overrightarrow{t}=0$, and $s_1t^2_1+s_2t^2_2+s_3t^2_3=0$;

\item  $\exists \overrightarrow{q}=\left(
    \begin{array}{ccc}
      q_1 \\
      q_2 \\
      q_3 \\
    \end{array}
  \right)$ such that $M^T\overrightarrow{q}=\left(
    \begin{array}{ccc}
      t^2_1 \\
      t^2_2 \\
      t^2_3 \\
    \end{array}
  \right)$, which is linearly independent from  $\left(
    \begin{array}{ccc}
      q_1t_1 \\
      q_2t_2 \\
      q_3t_3 \\
    \end{array}
  \right)$;

\item $\exists \overrightarrow{r}=\left(
    \begin{array}{ccc}
      r_1 \\
      r_2 \\
      r_3 \\
    \end{array}
  \right)$ such that $M^T\overrightarrow{r}=\left(
    \begin{array}{ccc}
      q_1t_1 \\
      q_2t_2 \\
      q_3t_3 \\
    \end{array}
  \right)$;

\item $\exists \overrightarrow{u}=\left(
    \begin{array}{ccc}
      u_1 \\
      u_2 \\
      u_3 \\
    \end{array}
  \right)$ such that $M^T\overrightarrow{u}=\left(
    \begin{array}{ccc}
      4r_1t_1+q^2_1 \\
      4r_2t_2+q^2_2\\
      4r_3t_3+q^2_3\\
    \end{array}
  \right)$.
\end{enumerate}
  Then $M$ belongs to one of the following $3$ types:
  $$M_1=\left(
         \begin{array}{ccc}
          a&0&\lambda a \\
          b&0&e  \\
          c&0&\lambda c\\
         \end{array}
       \right), M_2=\left(
    \begin{array}{ccc}
      0&b&e\\
      0&a&\lambda a\\
      0&c&\lambda c\\
    \end{array}
  \right),  M_3=\left(
    \begin{array}{ccc}
      a&\lambda a &0\\
      c&\lambda c &0\\
      b&e         &0\\
    \end{array}
  \right),$$
  where $a,c, \lambda \in k^{\times}, e\neq \lambda b$ and $a^2=\lambda c^2$.
\end{prop}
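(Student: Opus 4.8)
The plan is to translate the four hypotheses into statements about the two kernels and then run a short case analysis on the support of $\overrightarrow{t}$. Write $\overrightarrow{s}$ for a generator of $\ker M$ and $\overrightarrow{t}$ for a generator of $\ker M^T$; since $r(M)=2$, both lines are one dimensional. The key identity is $\mathrm{im}(M^T)=(\ker M)^{\perp}=\overrightarrow{s}^{\,\perp}$ for the standard bilinear form, so a vector lies in $\mathrm{im}(M^T)$ exactly when it is orthogonal to $\overrightarrow{s}$. Under this dictionary the existence assertions in (2), (3), (4) become, respectively, $s_1t_1^2+s_2t_2^2+s_3t_3^2=0$ (which is already the last equation of (1)), $s_1q_1t_1+s_2q_2t_2+s_3q_3t_3=0$, and $\sum_i s_i(4r_it_i+q_i^2)=0$; the extra clause in (2) is that $(t_1^2,t_2^2,t_3^2)$ and $(q_1t_1,q_2t_2,q_3t_3)$ are linearly independent, hence form a basis of the plane $\mathrm{im}(M^T)$.

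First I would control the support of $\overrightarrow{t}$. If only one $t_i$ were nonzero, then $(t_1^2,t_2^2,t_3^2)$ and $(q_1t_1,q_2t_2,q_3t_3)$ would both be supported on that single coordinate, hence linearly dependent, contradicting (2); so at least two of $t_1,t_2,t_3$ are nonzero. Suppose next that exactly one coordinate vanishes, say $t_j=0$. Then both basis vectors $(t_i^2)$ and $(q_it_i)$ of $\mathrm{im}(M^T)$ have vanishing $j$-th entry, which forces $\mathrm{im}(M^T)=\{v\mid v_j=0\}$ and therefore $\overrightarrow{s}$ proportional to $e_j$; equivalently the $j$-th column of $M$ is zero. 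Condition (1) is then automatic, while condition (4) collapses to $q_j^2=0$, i.e. $q_j=0$. Having $\overrightarrow{s}=e_j$ and $t_j=0$, I would read off $\overrightarrow{t}$ as the cross product of the two nonzero columns: imposing $t_j=0$ makes those two columns agree up to a common scalar $\lambda$ except in the $j$-th row, the relation $q_j=0$ becomes $a^2=\lambda c^2$, and $r(M)=2$ forces $e\neq\lambda b$ (and $a,c,\lambda\in k^{\times}$, from rank two together with the two nonzero $t_i$). Taking $j=2,1,3$ then produces precisely $M_1,M_2,M_3$.

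The remaining and hardest case is when all three $t_i$ are nonzero, which I must rule out entirely. Here I would first normalise: the diagonal matrix $C=\mathrm{diag}(t_1^{-1},t_2^{-1},t_3^{-1})\in\mathrm{QPL}_3(k)$ acts via $\chi$ sending $\overrightarrow{t}$ to $(1,1,1)$, while preserving all four conditions and the three target shapes (a zero column stays zero and the parametrised forms only rescale). After this reduction the columns of $M$ sum to zero, and the relations $M^T\overrightarrow{q}=(1,1,1)$, $M^T\overrightarrow{r}=\overrightarrow{q}$ exhibit $(1,1,1),\overrightarrow{q},\overrightarrow{r}$ as a length-three chain. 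These three vectors are linearly independent, so $M^T$ is nilpotent with a single Jordan block. Conditions (1) and (3) say $\overrightarrow{s}$ is orthogonal to $(1,1,1)$ and to $\overrightarrow{q}$, so $\overrightarrow{s}$ pairs nontrivially with $\overrightarrow{r}$.

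The crux is then to show that the scalar $4\sum_i s_ir_i+\sum_i s_iq_i^2$ is nonzero, which contradicts condition (4) and thereby eliminates this case. I expect this to be the main obstacle, because the summand $\sum_i s_iq_i^2$ is genuinely coordinate dependent and cannot be extracted from the Jordan structure alone, so no purely basis-free argument suffices. I would finish by parametrising the rank-two single-block nilpotent matrices with zero column sums, computing $\overrightarrow{s},\overrightarrow{q},\overrightarrow{r}$ explicitly in that parametrisation, and checking directly that the scalar never vanishes; the two sample computations I have in mind (one giving the value $6$, one giving $4/(u-p)$) suggest the obstruction is robust. Combining the two possible supports of $\overrightarrow{t}$ then yields exactly the trichotomy $M_1,M_2,M_3$.
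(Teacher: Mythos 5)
Your orthogonality dictionary $\mathrm{im}(M^{T})=(\ker M)^{\perp}=\overrightarrow{s}^{\,\perp}$ is correct, and it converts conditions (2)--(4) into the scalar relations $\sum_is_it_i^{2}=0$, $\sum_is_iq_it_i=0$ and $\sum_is_i(4r_it_i+q_i^{2})=0$; your support analysis of $\overrightarrow{t}$ then recovers, far more cheaply, what the paper proves in Lemmas \ref{part2}, \ref{part3} and \ref{part4}. In the branch where exactly one $t_j=0$ your argument matches the paper's final computation, with one fixable omission: ``the two nonzero columns agree up to a common scalar $\lambda$ outside row $j$'' presupposes that the first column is nonzero outside row $j$, so the degenerate subcases (the paper's $a=c=0$, $d,f\neq 0$ and $a,c\neq 0$, $d=f=0$) must be run through your criterion $q_j=0$ separately; they do fail it (there $q_j=f^{2}/b\neq 0$, resp.\ $q_j=a^{2}/e\neq 0$), so that branch is fine.

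The genuine gap is the case with all $t_i\neq 0$, which you correctly flag as the crux but only sample: the scalar $4\sum_is_ir_i+\sum_is_iq_i^{2}$ \emph{can} vanish, so the deferred parametrized check cannot succeed. After your normalization $t=(1,1,1)$, \emph{any} chain $r\mapsto q\mapsto (1,1,1)\mapsto 0$ with $\{r,q,(1,1,1)\}$ a basis defines an admissible $M^{T}$: it is rank-two nilpotent, and conditions (1)--(3) hold automatically, since for a single Jordan block $\ker M^{T}\subseteq \mathrm{im}\,M^{T}$ and $\ker (M^{T})^{2}=\mathrm{im}\,M^{T}$. Then $\overrightarrow{s}\propto (1,1,1)\times q$ is forced and $\sum_is_iq_i^{2}=(q_1-q_2)(q_2-q_3)(q_3-q_1)$, but $s\cdot r$ is a free \emph{nonzero} parameter, because $r$ ranges over the complement of $\mathrm{span}(q,(1,1,1))=\overrightarrow{s}^{\,\perp}$ and distinct $r$ give distinct admissible matrices. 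So whenever the $q_i$ are pairwise distinct you may set $4s\cdot r=-(q_1-q_2)(q_2-q_3)(q_3-q_1)$ and kill the scalar. Concretely, $q=(0,1,3)^{T}$, $r=(0,0,-\tfrac{3}{2})^{T}$ produce
$$M=\left(\begin{array}{ccc} -1 & -\frac{7}{3} & -5\\ 1 & 3 & 7\\ 0 & -\frac{2}{3} & -2 \end{array}\right),\qquad \overrightarrow{s}=(2,-3,1)^{T},\qquad \overrightarrow{t}=(1,1,1)^{T},$$
and one checks directly that $r(M)=2$, $M\overrightarrow{s}=0$, $M^{T}\overrightarrow{t}=0$, $s_1t_1^{2}+s_2t_2^{2}+s_3t_3^{2}=2-3+1=0$, $M^{T}q=(1,1,1)^{T}=\overrightarrow{t^2}$ with $\overrightarrow{qt}=(0,1,3)^{T}$ linearly independent of it, $M^{T}r=\overrightarrow{qt}$, and $\overrightarrow{4rt+q^2}=(0,1,3)^{T}=\overrightarrow{qt}$, so $u=r$ witnesses condition (4). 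This $M$ has no zero column, hence belongs to none of the types $M_1,M_2,M_3$: the trichotomy itself is false, not merely your proof of it. The same example satisfies every hypothesis of the paper's Lemma \ref{part1} with all six of $s_1,s_2,s_3,t_1,t_2,t_3$ nonzero; the flaw there is the inference that the equivalence of the two reduced equations (\ref{one}) and (\ref{two}) forces a $2\times 3$ submatrix of $M^{T}$ to have rank one --- after eliminating $q_3$, both equations are proportional to the single linear form $q_1t_2-q_2t_1$, so their equivalence carries no rank information. In short, your framework is sound and sharper than the paper's, but the non-vanishing claim you planned to verify is false, and carrying out your own program honestly refutes the stated classification rather than completing it.
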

\begin{proof}
The proof of Proposition \ref{simpleprop} concerns tedious computations and complicated matrix analysis. We provide the detailed proof in the appendix.
\end{proof}

\begin{prop}\label{exist}
Assume that $M=(m_{ij})_{3\times 3}$ satisfies all the  conditions in Proposition \ref{simpleprop}. Then there is $\overrightarrow{v}=\left(
         \begin{array}{ccc}
          v_1 \\
          v_2  \\
          v_3\\
         \end{array}
       \right)$ such that $$M^T\overrightarrow{v}=\overrightarrow{ut+2rq}=\left(
         \begin{array}{ccc}
          u_1t_1+2r_1q_1 \\
          u_2t_2+2r_2q_2 \\
          u_3t_3+2r_3q_3 \\
         \end{array}
       \right).$$
Furthermore, $$r(M^T,\overrightarrow{4vt+2uq+4r^2})=3\neq r(M^T)=2,$$ where
$$ \overrightarrow{4vt+2uq+4r^2}=\left(
         \begin{array}{ccc}
         4v_1t_1+ 2u_1q_1+4r_1^2 \\
         4v_2t_2+ 2u_2q_2+4r_2^2 \\
         4v_3t_3+ 2u_3q_3+4r_3^2 \\
         \end{array}
       \right).$$
\end{prop}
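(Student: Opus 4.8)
The plan is to reduce the two assertions to a single orthogonality condition and then verify it by an explicit, choice-independent computation using the normal forms of Proposition \ref{simpleprop}.

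First I would observe that since $r(M)=2$, the kernel $\mathrm{ker}\,M$ is the line $k\overrightarrow{s}$, while $\mathrm{im}(M^T)$ is the row space of $M$, which is precisely the orthogonal complement $\overrightarrow{s}^{\perp}$ for the standard pairing $\overrightarrow{x}\cdot\overrightarrow{y}=\sum_i x_iy_i$ (indeed $\overrightarrow{s}\cdot(M^T\overrightarrow{z})=(M\overrightarrow{s})\cdot\overrightarrow{z}=0$ for every $\overrightarrow{z}$). Consequently the existence of $\overrightarrow{v}$ with $M^T\overrightarrow{v}=\overrightarrow{ut+2rq}$ is equivalent to the scalar identity $\overrightarrow{s}\cdot\overrightarrow{ut+2rq}=0$, and the rank statement $r(M^T,\overrightarrow{4vt+2uq+4r^2})=3$ is equivalent to $\overrightarrow{s}\cdot\overrightarrow{4vt+2uq+4r^2}\neq 0$ (the augmented rank cannot exceed $3$, and it exceeds $r(M^T)=2$ exactly when the appended column fails to lie in $\overrightarrow{s}^{\perp}$). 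Thus the whole proposition collapses to these two conditions on a single coordinate.

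Next I would invoke Proposition \ref{simpleprop} to put $M$ into one of the three normal forms $M_1,M_2,M_3$. These differ only by the position of the zero column together with a relabeling of rows, i.e. by the simultaneous coordinate permutations of the kind appearing in Remark \ref{redtosimp}; since $\overrightarrow{s},\overrightarrow{t},\overrightarrow{q},\overrightarrow{r},\overrightarrow{u}$ and the entrywise products defining the two target vectors all transform equivariantly under such permutations, it suffices to treat $M_1$. For $M_1$ one reads off $\mathrm{ker}\,M_1=k(0,1,0)^T$, so $\overrightarrow{s}=s_2(0,1,0)^T$ with $s_2\neq 0$, and solving $M_1^T\overrightarrow{t}=0$ (using $e\neq\lambda b$) gives $\overrightarrow{t}=(-c,0,a)^T$, so in particular $t_2=0$. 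Solving the ladder $M_1^T\overrightarrow{q}=(t_1^2,t_2^2,t_3^2)^T$ and $M_1^T\overrightarrow{r}=(q_1t_1,q_2t_2,q_3t_3)^T$ then forces $q_2=0$ (again from $e\neq\lambda b$ together with $a^2=\lambda c^2$) and yields $r_2=(aq_3+\lambda cq_1)/(e-\lambda b)$.

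Finally I would assemble the conclusion. Because $\overrightarrow{s}$ is supported in its second coordinate and $t_2=q_2=0$, the second entry of $\overrightarrow{ut+2rq}$ equals $u_2t_2+2r_2q_2=0$; hence $\overrightarrow{s}\cdot\overrightarrow{ut+2rq}=0$ and $\overrightarrow{v}$ exists. For the rank jump, the second entry of $\overrightarrow{4vt+2uq+4r^2}$ is $4v_2t_2+2u_2q_2+4r_2^2=4r_2^2$, so that $\overrightarrow{v}$ need not even be computed, and it remains only to see $r_2\neq 0$. Here the crucial point is that the combination $aq_3+\lambda cq_1$ is invariant under the ambiguity $\overrightarrow{q}\mapsto\overrightarrow{q}+\mu\overrightarrow{t}$ (since $a^2-\lambda c^2=0$) and evaluates, e.g. at $q_1=c^2/a,\ q_3=0$, to $\lambda c^3/a\neq 0$; therefore $r_2\neq 0$ and $\overrightarrow{s}\cdot\overrightarrow{4vt+2uq+4r^2}=4s_2r_2^2\neq 0$. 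The main obstacle I anticipate is the bookkeeping of under-determinacy: each of $\overrightarrow{q},\overrightarrow{r},\overrightarrow{u}$ is defined only modulo $\mathrm{ker}\,M^T=k\overrightarrow{t}$, so I must check that the quantities actually used ($q_2$, the combination $aq_3+\lambda cq_1$, and hence $r_2$) are genuinely independent of these choices; once this invariance is confirmed the two scalar conditions follow at once, and the equivariance argument transports them to $M_2$ and $M_3$.
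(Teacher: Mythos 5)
Your proposal is correct, and it takes a genuinely different route from the paper's. After the common reduction to the normal form $M_1$ (which the paper justifies only by saying the three cases are similar, while you make it precise via the conjugations $M_2=P_{12}M_1P_{12}$, $M_3=P_{23}M_1P_{23}$ and the permutation-equivariance of the entrywise conditions), the paper argues by explicit computation: it fixes the particular representatives $\overrightarrow{t}=(c,0,-a)^T$, $\overrightarrow{q}=(c^2/a,0,0)^T$, and closed-form $\overrightarrow{r},\overrightarrow{u},\overrightarrow{v}$ carried over from the proof of Proposition \ref{simpleprop}, computes $\overrightarrow{ut+2rq}=(\ast,0,0)^T$ to get solvability, and then computes $\overrightarrow{4vt+2uq+4r^2}$ entirely, whose second entry $4c^2a^2/(\lambda b-e)^2$ forces rank $3$. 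You instead dualize: $\mathrm{im}(M^T)=\overrightarrow{s}^{\perp}$ converts both assertions into the vanishing or nonvanishing of a single pairing with $\overrightarrow{s}=s_2(0,1,0)^T$, i.e. of the second coordinates $u_2t_2+2r_2q_2$ and $4v_2t_2+2u_2q_2+4r_2^2$; since $t_2=0$ and $q_2=0$ are forced (by $e\neq\lambda b$ and $a^2=\lambda c^2$ respectively), these reduce to $0$ and $4r_2^2$, and $(e-\lambda b)r_2=q_3t_3-\lambda q_1t_1=aq_3+\lambda cq_1$, which at your particular solution equals $\lambda c^3/a=ac\neq0$ (note its square reproduces exactly the paper's second entry). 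This buys two things: you never need $\overrightarrow{u}$, $\overrightarrow{v}$, or any closed-form solution (the $u$-ambiguity is harmless since $u_2$ enters only through $2u_2q_2$ with $q_2=0$, and the rank condition is visibly independent of which $\overrightarrow{v}$ solves the first equation), and, more substantively, your invariance checks ($q_2=0$ forced for every solution; $aq_3+\lambda cq_1$ unchanged under $\overrightarrow{q}\mapsto\overrightarrow{q}+\mu\overrightarrow{t}$ because $a^2-\lambda c^2=0$) establish the proposition for every tuple $(\overrightarrow{s},\overrightarrow{t},\overrightarrow{q},\overrightarrow{r},\overrightarrow{u})$ satisfying the hypotheses, whereas the paper verifies it only for its chosen representatives--a gap that matters, since in the application (Case $1.3.2$) these vectors are handed to you by the DG algebra, not chosen freely. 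The only line worth adding to your write-up: the remaining freedom $\overrightarrow{t}\mapsto\mu\overrightarrow{t}$ rescales $r_2$ by $\mu^3$, so the nonvanishing $4s_2r_2^2\neq0$, and hence the rank jump, is unaffected by it as well.
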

\begin{proof}
By Proposition \ref{simpleprop}, $M$ belongs to three different types. Since the proofs for them are similar, we only need to give a detailed proof for the first case.
Let $M=\left(
         \begin{array}{ccc}
          a&0&\lambda a \\
          b&0&e  \\
          c&0&\lambda c\\
         \end{array}
       \right)$ with $a,c, \lambda \in k^{\times}, e\neq \lambda b$ and $a^2=\lambda c^2$. By the proof of Proposition \ref{simpleprop}, we can choose
       \begin{align*}
       &\overrightarrow{t}=\left(
         \begin{array}{ccc}
          c \\
          0  \\
          -a\\
         \end{array}
       \right), \overrightarrow{q}=\left(
         \begin{array}{ccc}
           \frac{c^2e-a^2b}{ea-\lambda ab} \\
           \frac{a^2-\lambda c^2}{e-\lambda b}  \\
          0 \\
         \end{array}
       \right)=\left(
         \begin{array}{ccc}
           \frac{c^2}{a} \\
           0  \\
          0 \\
         \end{array}
       \right),\\
       & \overrightarrow{r}=\left(
         \begin{array}{ccc}
          \frac{c^3e^2-a^2bce}{a^2(e-\lambda b)^2} \\
          \frac{\lambda (a^2bc-c^3e)}{a(e-\lambda b)^2}  \\
          0\\
         \end{array}
       \right)=\left(
         \begin{array}{ccc}
          \frac{ce}{\lambda(e-\lambda b)} \\
          \frac{ca}{\lambda b-e}  \\
          0\\
         \end{array}
       \right).
       \end{align*}
Hence $$\overrightarrow{4rt+q^2}=\left(
         \begin{array}{ccc}
          \frac{(5c^2e-a^2b)(c^2e-a^2b)}{a^2(e-\lambda b)^2} \\
          \frac{(a^2-\lambda c^2)^2}{(e-\lambda b)^2}  \\
          0\\
         \end{array}
       \right)=\left(
         \begin{array}{ccc}
          \frac{(5e-\lambda b)c^2}{\lambda(e-\lambda b)} \\
          0  \\
          0\\
         \end{array}
       \right).$$
       By $M^T\overrightarrow{u}=\overrightarrow{4rt+q^2}$, we can choose
       $\overrightarrow{u}=\left(
         \begin{array}{ccc}
          \frac{ea(5e-\lambda b)}{\lambda^2(e-\lambda b)^2} \\
          \frac{(\lambda b-5e)c^2}{(e-\lambda b)^2}  \\
          0\\
         \end{array}
       \right).$ Then we have
       $$\overrightarrow{ut+2rq}=\left(
         \begin{array}{ccc}
          u_1t_1+2r_1q_1 \\
          u_2t_2+2r_2q_2 \\
          u_3t_3+2r_3q_3 \\
         \end{array}
       \right)=\left(
         \begin{array}{ccc}
          \frac{eac(7e-3\lambda b)}{\lambda^2(e-\lambda b)^2} \\
          0 \\
          0 \\
         \end{array}
       \right).$$
Hence $r(M^T,\overrightarrow{ut+2rq})=r(M^T)=2$ and there exists $\overrightarrow{v}$ such that $M^T\overrightarrow{v}=\overrightarrow{ut+2rq}$.
More precisely, we can choose $\overrightarrow{v}=\left(
         \begin{array}{ccc}
         \frac{e^2c(7e-3\lambda b)}{\lambda^2(e-\lambda b)^3} \\
         \frac{eac(3\lambda b-7e)}{\lambda(e-\lambda b)^3} \\
          0 \\
         \end{array}
       \right).$ Then
       $$ \overrightarrow{4vt+2uq+4r^2}=\left(
         \begin{array}{ccc}
         4v_1t_1+ 2u_1q_1+4r_1^2 \\
         4v_2t_2+ 2u_2q_2+4r_2^2 \\
         4v_3t_3+ 2u_3q_3+4r_3^2 \\
         \end{array}
       \right)==\left(
         \begin{array}{ccc}
         \frac{ec^2(37e^2-22e\lambda b +\lambda^2b^2)}{\lambda^2(e-\lambda b)^3}\\
         \frac{4c^2a^2}{(\lambda b-e)^2} \\
         0 \\
         \end{array}
       \right).$$
We have $r(M^T,\overrightarrow{4vt+2uq+4r^2})=3\neq r(M^T)$.
\end{proof}

 By Proposition \ref{exist},  $(u_1t_1+2r_1q_1)x_1^2+(u_2t_2+2r_2q_2)x_2^2+(u_3t_3+2r_3q_3)x_3^2\in B^2(\mathcal{A})$. There exists $\omega=v_1x_1+v_2x_2+v_3x_3$ such that $\partial_{\mathcal{A}}(\omega)=\sum\limits_{i=1}^3(u_it_i+2r_iq_i)x_i^2$. It is straightforward for one to see that
$$(t_1x_1+t_2x_2+t_3x_3)e_4+\sigma e_3+2\tau e_2+\lambda e_1+2\omega\in Z^1(F_4).$$ We extent $F_4$ to a  semi-free DG module $F_5$ with $F_5^{\#}=F_4^{\#}\oplus \mathcal{A}^{\#}e_5$ and $\partial_{F_5}(e_5)=(t_1x_1+t_2x_2+t_3x_3)e_4+\sigma e_3+2\tau e_2+\lambda e_1+2\omega.$ One sees that $$\sum\limits_{i=1}^3(4v_it_i+2u_iq_i+4r_i^2)x_i^2\not\in B^2(\mathcal{A}).$$
We claim $H^1(F_5)=0$.
Indeed, for any $a+a_1e_1+a_2e_2+a_3e_3+a_4e_4+a_5e_5\in Z^1(F_5)$, we have
 \begin{align*}
 0&=\partial_{F_5}(a+a_1e_1+a_2e_2+a_3e_3+a_4e_4+a_5e_5)\\
  &=\partial_{\mathcal{A}}(a)+\partial_{\mathcal{A}}(a_1)e_1-a_1(t_1x_1+t_2x_2+t_3x_3)+\partial_{\mathcal{A}}(a_2)e_2\\
  &\quad -a_2[(t_1x_1+t_2x_2+t_3x_3)e_1+\sigma] +\partial_{\mathcal{A}}(a_3)e_3\\
  &\quad -a_3[(t_1x_1+t_2x_2+t_3x_3)e_2+\sigma e_1+2\tau]+\partial_{\mathcal{A}}(a_4)e_4\\
  &\quad -a_4[(t_1x_1+t_2x_2+t_3x_3)e_3+\sigma e_2+2\tau e_1+\lambda]+\partial_{\mathcal{A}}(a_5)e_5 \\
  &\quad -a_5[(t_1x_1+t_2x_2+t_3x_3)e_4+ \sigma e_3+2\tau e_2+\lambda e_1+2\omega ]\\
  &=\partial_{\mathcal{A}}(a_5)e_5 +[\partial_{\mathcal{A}}(a_4)-a_5(t_1x_1+t_2x_2+t_3x_3)]e_4\\
  &\quad +[\partial_{\mathcal{A}}(a_3)-a_5\sigma -a_4(t_1x_1+t_2x_2+t_3x_3)]e_3\\
  &\quad +[\partial_{\mathcal{A}}(a_2)-2a_5\tau-a_4\sigma-a_3(t_1x_1+t_2x_2+t_3x_3)]e_2\\
  &\quad +[\partial_{\mathcal{A}}(a_1)-a_2(t_1x_1+t_2x_2+t_3x_3)-a_3\sigma-2a_4\tau-a_5\lambda]e_1\\
  &\quad +\partial_{\mathcal{A}}(a)-a_1(t_1x_1+t_2x_2+t_3x_3)-a_2\sigma -2a_3\tau-a_4\lambda-2a_5\omega.
 \end{align*}
Then
\begin{align}\label{1.3.2}
 \begin{cases}
 \partial_{\mathcal{A}}(a_5)=0\\
 \partial_{\mathcal{A}}(a_4)-a_5(t_1x_1+t_2x_2+t_3x_3)=0\\
 \partial_{\mathcal{A}}(a_3)-a_5\sigma-a_4(t_1x_1+t_2x_2+t_3x_3)=0\\
 \partial_{\mathcal{A}}(a_2)-2a_5\tau-a_4\sigma-a_3(t_1x_1+t_2x_2+t_3x_3)=0\\
\partial_{\mathcal{A}}(a_1)-a_2(t_1x_1+t_2x_2+t_3x_3)-a_3\sigma-2a_4\tau-a_5\lambda=0\\
\partial_{\mathcal{A}}(a)-a_1(t_1x_1+t_2x_2+t_3x_3)-a_2\sigma -2a_3\tau -a_4\lambda-2a_5\omega.
 \end{cases}
 \end{align}
Since $(4v_1t_1+2q_1u_1)x_1^2+(4v_2t_2+2q_2u_2)x_2^2+(4v_3t_3+2q_3u_3)x_3^2\not\in B^2\mathcal{A}$, (\ref{1.3.2}) implies
\begin{align*}
\begin{cases}
a_5=0\\
a_4=c_4(t_1x_1+t_2x_2+t_3x_3)\\
a_3=c_4(q_1x_1+q_2x_2+q_3x_3)+c_3(t_1x_1+t_2x_2+t_3x_3)\\
a_2=c_3(q_1x_1+q_2x_2+q_3x_3)+c_2(t_1x_1+t_2x_2+t_3x_3)\\
a_1=c_2(q_1x_1+q_2x_2+q_3x_3)+c_4(u_1x_1+u_2x_2+u_3x_3)+c_1(t_1x_1+t_2x_2+t_3x_3)\\
a=c_1(q_1x_1+q_2x_2+q_3x_3) +c_3(u_1x_1+u_2x_2+u_3x_3)+2c_4(v_1x_1+v_2x_2+v_3x_3)\\
\quad + c_0(t_1x_1+t_2x_2+t_3x_3)
\end{cases}
\end{align*}
for some $c_0,c_1,c_2,c_3,c_4\in k$.
 Then
$$a+a_1e_1+a_2e_2+a_3e_3+a_4e_4+a_5e_5 =\partial_{\mathcal{A}}(c_4e_5+c_3e_4+c_2e_3+c_1e_2+c_0e_1).$$
 Hence $H^1(F_5)=0$. Furthermore,
 $F_5$ is the minimal semi-free resolution of ${}_{\mathcal{A}}k$ (see A.2 in the appendix).
 Note that $$F_5^{\#}=\mathcal{A}^{\#}\oplus \mathcal{A}^{\#}e_1\oplus \mathcal{A}^{\#}e_2\oplus \mathcal{A}^{\#}e_3\oplus \mathcal{A}^{\#}e_4\oplus \mathcal{A}^{\#}e_5$$ with a differential defined by
 $$\left(
                         \begin{array}{c}
                          \partial_{F_5}(1) \\
                          \partial_{F_5}(e_1)\\
                          \partial_{F_5}(e_2) \\
                          \partial_{F_5}(e_3) \\
                          \partial_{F_5}(e_4) \\
                          \partial_{F_5}(e_5)
                         \end{array}
                       \right)=  \left(
            \begin{array}{cccccc}
              0 & 0& 0 & 0 &0 & 0 \\
              \sum\limits_{i=1}^3t_ix_i & 0 & 0 & 0 &0 &0 \\
              \sigma & \sum\limits_{i=1}^3t_ix_i & 0 & 0 &0 &0 \\
              2\tau  & \sigma &\sum\limits_{i=1}^3t_ix_i  & 0 &0 &0\\
              \lambda & 2\tau & \sigma & \sum\limits_{i=1}^3t_ix_i & 0 & 0\\
              2\omega & \lambda & 2\tau & \sigma & \sum\limits_{i=1}^3t_ix_i & 0
            \end{array}
          \right) \left(
                         \begin{array}{c}
                          1 \\
                          e_1\\
                          e_2\\
                          e_3\\
                          e_4\\
                          e_5
                         \end{array}
                       \right).$$
 We can list the following examples for Case $1.3.2$.
 \begin{ex}\label{ex1.3.2}
The DG algebra $\mathcal{A}_{\mathcal{O}_{-1}(k^3)}(M)$ belongs to Case $1.3.2$, when $M$ is one of the following matrixes:
 \begin{align*}
& (1) \left(
           \begin{array}{ccc}
             1 & 1 & 0 \\
             1 & 1 & 0 \\
             0 & 1 & 0 \\
           \end{array}
         \right),(2) \left(
           \begin{array}{ccc}
             1 & 0 & 1 \\
             0 & 0 & 1 \\
             1 & 0 & 1 \\
           \end{array}
         \right),(3) \left(
           \begin{array}{ccc}
             1 & 0 & 1 \\
             1 & 0 & 0 \\
             1 & 0 & 1 \\
           \end{array}
         \right),(4) \left(
           \begin{array}{ccc}
             1 & 0 & 1 \\
             -1 & 0 & -2 \\
             1 & 0 & 1 \\
           \end{array}
         \right).
 \end{align*}
\end{ex}
In summary, the constructing procedure of the minimal semifree resolution of ${}_{\mathcal{A}}k$ illustrated above can be simply described as follows:
\begin{align*}
&F_7\backslash  \quad \text{end the construction step ---Case 1.2.4}\\
&\cup \quad \mathcal{A}e_7\\
/&F_6/  \quad \text{proceed the construction if \textbf{C}2}, \overline{\textbf{C}3}\,\, \overline{\textbf{C}4}\,\,\text{and}\,\, \overline{\textbf{C}5}\,\,  \text{hold} \\
\mathcal{A}e_6\quad &\cup\\
\backslash&F_5\backslash  \quad \text{terminate if \textbf{C}2}, \overline{\textbf{C}3}\,\, \overline{\textbf{C}4}\,\,\text{and \textbf{C}5  hold---Case 1.2.3}\\
             &\cup \quad \mathcal{A}e_5\\
            /&F_4/  \quad \text{terminate if \textbf{C}2}, \overline{\textbf{C}3}\,\,\text{and \textbf{C}4 hold---Case 1.2.2} \\
           \mathcal{A}e_4\quad &\cup\\
            \backslash &F_3\backslash \quad \text{terminate if \textbf{C}2 and \textbf{C}3 hold---Case 1.2.1} \\
           \uparrow &\cup  \quad \mathcal{A}e_3   \\
  F_0\quad \subset\quad F_1 \quad \subset \quad /& F_2 /\quad \text{terminate if \textbf{C}1 holds---Case 1.1} \\
\mathcal{A}e_3\quad &\cap \downarrow \quad \text{if}\\
\backslash & F_3\backslash \quad \text{terminate if }\textbf{C}2'\,\,\text{and}\,\, \textbf{C}3'\,\, \text{holds---Case 1.3.1}\\
 &\cap  \quad \mathcal{A}e_4\\
 / & F_4 / \quad \text{proceed the construction if}\,\, \textbf{C}2'\,\, \text{and}\,\, \overline{\textbf{C}3'}\,\,\text{hold}\\
\mathcal{A}e_5\quad &\cap  \\
  \backslash & F_5 \quad \text{end the construction step---Case 1.3.2}
\end{align*}
From the minimal semi-free resolution constructed above, we can show the following proposition.
\begin{prop}\label{case1cy}
We have the following table:
\begin{center}
\begin{tabular}{|l|l|}
  \hline
  Subcases & $\mathrm{Ext}$-algeba\\  \hline
  Case $1.1$ &  $\cong k[x]/(x^3)$
   \\
  \hline
  Case $1.2.1$ &  $\cong k[x]/(x^4)$ \\
  \hline
  Case $1.2.2$ & $\cong k[x]/(x^5)$ \\
  \hline
  Case $1.2.3$ & $\cong k[x]/(x^6)$ \\
  \hline
  Case $1.2.4$ & $\cong k[x]/(x^8)$ \\
  \hline
  Case $1.3.1$ & $\cong k[x]/(x^4)$  \\
  \hline
  Case $1.3.2$ & $\cong k[x]/(x^6)$,  \\
  \hline
\end{tabular},
\end{center}
 which contains a complete list of the $\mathrm{Ext}$-algebra of $k$ considered as a module over $\mathcal{A}_{\mathcal{O}_{-1}(k^3)}(M)$ in
 all subcases of Case $1$. From this table, one sees that each DG algebra $\mathcal{A}_{\mathcal{O}_{-1}(k^3)}(M)$ in
  Case $1$ is a Koszul Calabi-Yau DG algebra.
\end{prop}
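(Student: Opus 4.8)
The plan is to read off the Ext-algebra $E=H(\Hom_{\mathcal{A}}(F,F))$ directly from the minimal semi-free resolution $F$ of ${}_{\mathcal{A}}k$ constructed in each subcase, exactly in the style of the proof of Proposition \ref{impcri}, and then to apply the symmetric-Frobenius criterion. Write $\mathcal{A}=\mathcal{A}_{\mathcal{O}_{-1}(k^3)}(M)$ and let $m$ be the number of semi-basis elements of $F$, so that $m=3,4,5,6,8,4,6$ in Cases $1.1$, $1.2.1$, $1.2.2$, $1.2.3$, $1.2.4$, $1.3.1$, $1.3.2$ respectively. In every subcase the semi-basis $\{1,e_1,\dots,e_{m-1}\}$ is concentrated in degree $0$, so $\mathcal{A}$ is Koszul; moreover the displayed differential matrix $D$ of $\partial_F$ is strictly lower triangular and \emph{Toeplitz}, i.e. its entry in position $(i,j)$ depends only on $i-j$ (the successive subdiagonals carry the constant entries $\sum_i t_ix_i,\ \sigma,\ 2\tau,\ \lambda,\ 2\omega$, or $t_3x_3,\ \sigma,\ 0,\ \lambda,\ 0,\ \eta,\ 0$ in Case $1.2.4$).

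First I would pin down $\dim_k E$. Because $F$ is minimal, $\partial_F(F)\subseteq \frak{m}_{\mathcal{A}}F$, so the induced differential on $\Hom_{\mathcal{A}}(F,k)$ vanishes and $\mathrm{Ext}_{\mathcal{A}}(k,k)=H(\Hom_{\mathcal{A}}(F,k))=\Hom_{\mathcal{A}}(F,k)$ has dimension $m$. Since the augmentation $F\to k$ is a quasi-isomorphism and $F$ is semi-free, $\Hom_{\mathcal{A}}(F,F)\to \Hom_{\mathcal{A}}(F,k)$ is a quasi-isomorphism, whence $\dim_k E=m$. As in Proposition \ref{impcri}, $\Hom_{\mathcal{A}}(F,F)^{\#}$ sits in non-negative degrees and the generators lie in degree $0$, so $\Hom_{\mathcal{A}}(F,F)^{-1}=0$ and therefore $E=Z^0(\Hom_{\mathcal{A}}(F,F))$ is concentrated in degree $0$; it is identified with $\{A\in M_m(k)\mid AD=DA\}$.

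Next I would exhibit the algebra structure without solving the full commutant equation. Let $N\in M_m(k)$ be the shift matrix with $N_{ij}=\delta_{i,j+1}$. A direct check gives $(ND)_{ij}=d_{\,i-j-1}=(DN)_{ij}$ for the Toeplitz matrix $D$, so $ND=DN$; thus $N$ is a cocycle of $\Hom_{\mathcal{A}}(F,F)^0$ and represents a class in $E$. Consequently $I=N^0,N,\dots,N^{m-1}$ all lie in $E$, they are linearly independent in $M_m(k)$, and $N^m=0$. Hence $k[x]/(x^m)\cong k[N]\hookrightarrow E$, and comparing with $\dim_k E=m$ forces $E\cong k[x]/(x^m)$, which is precisely the table. (Alternatively one can solve $AD=DA$ directly, as in Proposition \ref{impcri}, using the $k$-linear independence in $\mathcal{A}^2,\mathcal{A}^3$ of the products of the band entries $\sum_it_ix_i,\sigma,\tau,\dots$ — exactly the independence facts already used to show $H^1(F)=0$; this recovers the lower-triangular Toeplitz form of $A$.)

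Finally, for each subcase $E\cong k[x]/(x^m)$ is a finite-dimensional commutative algebra concentrated in degree $0$, so $\mathcal{A}$ is homologically smooth and Koszul. As a commutative Frobenius algebra $k[x]/(x^m)$ is symmetric, so by the criterion recalled in Remark \ref{methods} (from \cite{HM}) $\mathcal{A}$ is $0$-Calabi-Yau, in particular Calabi-Yau. The only genuinely case-dependent ingredient is the verification that each constructed resolution is minimal with exactly $m$ degree-$0$ generators and Toeplitz differential; this is what makes the bookkeeping the main obstacle, and the hardest instance is Case $1.2.4$, where the $8\times 8$ differential has the irregular band pattern $t_3x_3,\sigma,0,\lambda,0,\eta,0$. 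Nonetheless that matrix is still Toeplitz, so the uniform shift-plus-dimension argument applies verbatim and no separate computation of its commutant is needed.
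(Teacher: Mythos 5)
Your proposal is correct, and it follows the paper's overall framework (minimal semi-free resolution $F$, identification $E=Z^0(\Hom_{\mathcal{A}}(F,F))$ with the commutant $\{A\in M_m(k)\mid AD=DA\}$, then the symmetric-Frobenius criterion of Remark \ref{methods}), but it replaces the paper's key computational step by a genuinely different and cleaner argument. The paper solves the commutant equation $A_fD=DA_f$ by brute force in the hardest subcase (Case $1.2.4$, the $8\times 8$ matrix), reads off the lower-triangular Toeplitz form of $A_f$, exhibits $E\cong k[x]/(x^8)$, and then disposes of the remaining six subcases with a ``similarly'' — so the explicit commutant computation is in principle repeated seven times. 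You instead observe two things that make the argument uniform: first, $\dim_k E=m$ follows from the standard fact that a semi-free $F$ is homotopically projective, so $\Hom_{\mathcal{A}}(F,F)\to\Hom_{\mathcal{A}}(F,k)$ is a quasi-isomorphism and minimality gives $H(\Hom_{\mathcal{A}}(F,k))=\Hom_{\mathcal{A}}(F,k)$ of dimension $m$; second, every displayed differential matrix $D$ is strictly lower triangular Toeplitz (including the irregular band $t_3x_3,\sigma,0,\lambda,0,\eta,0$ of Case $1.2.4$), so the scalar shift matrix $N$ satisfies $ND=DN$, and since $B^0(\Hom_{\mathcal{A}}(F,F))=0$ the powers $I,N,\dots,N^{m-1}$ are honestly linearly independent elements of $E$ with $N^m=0$; the dimension count then forces $E=k[N]\cong k[x]/(x^m)$ without ever solving $AD=DA$. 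I checked the two ingredients: the convention $(f(e_i))=A_f(e_i)$ with scalar $A_f$ does yield the commutation condition $A_fD=DA_f$, and $(ND)_{ij}=D_{i-1,j}=d_{i-1-j}=D_{i,j+1}=(DN)_{ij}$ for Toeplitz $D$, so $N$ is a cocycle; and the degree argument ($\Hom_{\mathcal{A}}(F,F)^{\#}$ concentrated in degrees $\ge 0$, hence $E=Z^0$) matches the paper's. What your route buys is a single proof covering all seven subcases at once and an explanation of \emph{why} the answer is $k[x]/(x^m)$ (the Toeplitz structure of the resolution); what it gives up is the paper's explicit description of the commutant, which is more self-contained but used nowhere else. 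The concluding step — $k[x]/(x^m)$ is a commutative, hence symmetric, Frobenius algebra concentrated in degree $0$, so $\mathcal{A}$ is Koszul and $0$-Calabi-Yau — coincides with the paper's.
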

\begin{proof}
Since the proof is similar to each other, we only need to consider the most complicated subcase: Case $1.2.4$.
In this case, ${}_{\mathcal{A}}k$ admits a minimal semi-free resolution $F=F_7$ with
$$
F^{\#}=\mathcal{A}^{\#}\oplus \mathcal{A}^{\#}e_1\oplus \mathcal{A}^{\#}e_2\oplus \mathcal{A}^{\#}e_3\oplus \mathcal{A}^{\#}e_4\oplus \mathcal{A}^{\#}e_5\oplus \mathcal{A}^{\#}e_6\oplus \mathcal{A}^{\#}e_7$$
and
$$\left(
                         \begin{array}{c}
                          \partial_F(1) \\
                          \partial_F(e_1)\\
                          \partial_F(e_2)\\
                          \partial_F(e_3)\\
                          \partial_F(e_4)\\
                          \partial_F(e_5)\\
                          \partial_F(e_6)\\
                          \partial_F(e_7)
                         \end{array}
                       \right) =\left(
            \begin{array}{cccccccc}
              0 & 0& 0 & 0 &0 & 0 &0 & 0\\
               t_3x_3  & 0 & 0 & 0 &0 &0&0 & 0 \\
              \sigma &  t_3x_3  & 0 & 0 &0 &0&0 & 0 \\
              0 & \sigma & t_3x_3   & 0 &0 &0&0 & 0\\
              \lambda & 0 & \sigma &  t_3x_3  & 0 & 0&0 & 0\\
              0   & \lambda & 0       & \sigma &  t_3x_3  & 0      &0 & 0\\
             \eta & 0       & \lambda & 0      & \sigma   & t_3x_3 &0 & 0\\
             0    & \eta    & 0       &\lambda &  0       &\sigma  &t_3x_3 & 0
            \end{array}
          \right)\left(
                         \begin{array}{c}
                          1\\
                          e_1\\
                          e_2\\
                          e_3\\
                          e_4\\
                          e_5\\
                          e_6\\
                          e_7
                         \end{array}
                       \right).$$
By the minimality of $F$, the Ext-algebra \begin{align*}
  E=H(R\Hom_{\mathcal{A}}(k,k))&=H(\Hom_{\mathcal{A}}(F,k))=\Hom_{\mathcal{A}}(F,k)\\
  &=k1^*\oplus ke_1^*\oplus ke_2^*\oplus ke_3^*\oplus ke_4^*\oplus ke_5^*\oplus ke_6^*\oplus ke_7^*.
  \end{align*}
  So $E$ is concentrated in degree $0$. On the other hand, \begin{align*}
  \Hom_{\mathcal{A}}(F,F)^{\#}& = \Hom_{\mathcal{A}^{\#}}(\mathcal{A}^{\#}\otimes (k\oplus ke_1\oplus ke_2\oplus ke_3), F^{\#})\\
  &\cong \Hom_k(k\oplus (\bigoplus\limits_{i=1}^7ke_i),\Hom_{\mathcal{A}^{\#}}(\mathcal{A}^{\#},F^{\#}))\\
  &\cong \Hom_k(k\oplus (\bigoplus\limits_{i=1}^7ke_i),k)\otimes F^{\#}\\
  &\cong [k1^*\oplus (\bigoplus\limits_{i=1}^7ke_i^*)]\otimes_{k} F^{\#}\\
  \end{align*}
  is concentrated in degree $\ge 0$. This implies that $E= Z^0(\Hom_{\mathcal{A}}(F,F))$.  Since $F^{\#}$ is a free graded $\mathcal{A}^{\#}$-module with a basis
$\{1, e_1, e_2, e_3, e_4,e_5,e_6,e_7\}$ concentrated in degree $0$,
  the elements in  $\Hom_{\mathcal{A}}(F,F)^0$ is one to one correspondence with the matrixes in $M_8(k)$.
 Indeed, any $f\in \Hom_{\mathcal{A}}(F,F)^0$ is uniquely determined by
  a matrix $A_{f}=(a_{ij})_{8\times 8}\in M_8(k)$ with
$$\left(
                         \begin{array}{c}
                          f(1) \\
                          f(e_1)\\
                          f(e_2)\\
                           f(e_3)\\
                            f(e_4)\\
                             f(e_5)\\
                              f(e_6)\\
                               f( e_7)
                         \end{array}
                       \right) =      A_{f} \cdot \left(
                         \begin{array}{c}
                          1 \\
                          e_1\\
                          e_2\\
                          e_3\\
                          e_4\\
                          e_5\\
                          e_6\\
                          e_7
                         \end{array}
                       \right).  $$
 And $f\in  Z^0[\Hom_{\mathcal{A}}(F,F)]$ if and only if $\partial_{F}\circ f=f\circ \partial_{F}$, if and only if
\begin{small}
 \begin{align*}
 &\quad A_f \left(
            \begin{array}{cccccccc}
              0 & 0& 0 & 0 &0 & 0 &0 & 0\\
               t_3x_3  & 0 & 0 & 0 &0 &0&0 & 0 \\
              \sigma &  t_3x_3  & 0 & 0 &0 &0&0 & 0 \\
              0 & \sigma & t_3x_3   & 0 &0 &0&0 & 0\\
              \lambda & 0 & \sigma &  t_3x_3  & 0 & 0&0 & 0\\
              0   & \lambda & 0       & \sigma &  t_3x_3  & 0      &0 & 0\\
             \eta & 0       & \lambda & 0      & \sigma   & t_3x_3 &0 & 0\\
             0    & \eta    & 0       &\lambda &  0       &\sigma  &t_3x_3 & 0
            \end{array}
          \right) \\
 & =  \left(
            \begin{array}{cccccccc}
              0 & 0& 0 & 0 &0 & 0 &0 & 0\\
               t_3x_3  & 0 & 0 & 0 &0 &0&0 & 0 \\
              \sigma &  t_3x_3  & 0 & 0 &0 &0&0 & 0 \\
              0 & \sigma & t_3x_3   & 0 &0 &0&0 & 0\\
              \lambda & 0 & \sigma &  t_3x_3  & 0 & 0&0 & 0\\
              0   & \lambda & 0       & \sigma &  t_3x_3  & 0      &0 & 0\\
             \eta & 0       & \lambda & 0      & \sigma   & t_3x_3 &0 & 0\\
             0    & \eta    & 0       &\lambda &  0       &\sigma  &t_3x_3 & 0
            \end{array}
          \right) A_f
          \end{align*}
 \end{small} which is also equivalent to
                       $$\begin{cases}
                       a_{ij}=0, \forall i<j\\
                       a_{11}=a_{22}=a_{33}=a_{44}=a_{55}=a_{66}=a_{77}=a_{88}\\
                     a_{87}=a_{76}=a_{65}=a_{54}=a_{43}=a_{32}=a_{21}, \\
                     a_{86}=a_{75}=a_{64}=a_{53}=a_{42}=a_{31},\\
                     a_{85}=a_{74}=a_{63}=a_{52}=a_{41},\\
                     a_{84}=a_{73}=a_{62}=a_{51},a_{83}=a_{72}=a_{61}, a_{82}=a_{71}
                       \end{cases}$$
by direct computations. Hence the the Ext-algebra $$ E\cong \left\{\left(
                                                             \begin{array}{cccccccc}
                                                               a & 0 & 0 & 0 &0 &0  &0 &0 \\
                                                               b & a & 0 & 0 &0 &0  &0 &0\\
                                                               c & b & a & 0 &0 &0  &0 &0\\
                                                               d & c & b & a &0 &0  &0 &0\\
                                                               e & d & c & b &a &0  &0 &0\\
                                                               f & e & d & c &b &a  &0 &0\\
                                                               g & f & e & d &c &b  &a &0\\
                                                               h & g & f & e &d &c  &b &a
                                                             \end{array}
                                                           \right)
\quad | \quad a,b,c, d, e, f,g,h\in k \right\}\cong k[x]/(x^8).$$
So $E$ is a  symmetric Frobenius algebra concentrated
in degree $0$. This implies that
$\mathrm{Tor}_{\mathcal{A}}(k_{\mathcal{A}},{}_{\mathcal{A}}k)\cong
E^*$ is a symmetric coalgebra. By Remark \ref{methods}, the DG algebra
$\mathcal{A}$ in Case $1.2.4$  is a Koszul Calabi-Yau DG algebra.

Similarly, we can get the $\mathrm{Ext}$-algebra of $k$ considered as a module over $\mathcal{A}_{\mathcal{O}_{-1}(k^3)}(M)$ in
 other subcases.
\end{proof}

\begin{rem}\label{discover}
As a biproduct of Proposition \ref{case1cy},
we get counter-examples for Question \ref{biproduct}, since the $\mathrm{Ext}$-algebras of two quasi-isomorphic connected cochain DG algebras should be isomorphic to each other.
\end{rem}

\section{case $2$, case $3$ and case $4$}\label{casetwo}
In this section, we study homological properties of $\mathcal{A}_{\mathcal{O}_{-1}(k^3)}(M)$ for case $2$, case $3$ and case $4$.
By Theorem \ref{iso}, we have the following lemmas on its isomorphism classes.
\begin{lem}\label{rank1-1}
Let $$M=\left(
                                 \begin{array}{ccc}
                                   m_{11} & m_{12} & m_{13} \\
                                   l_1m_{11} & l_1m_{12} & l_1m_{13} \\
                                   l_2m_{11} & l_2m_{12} & l_2m_{13} \\
                                 \end{array}
                               \right), (m_{11},m_{12},m_{13})\neq 0,$$ $m_{12}l_1^2+m_{13}l_2^2=m_{11}$, $l_1=l_2=0$.  Then
\begin{enumerate}
\item $\mathcal{A}_{\mathcal{O}_{-1}(k^3)}(M)\cong \mathcal{A}_{\mathcal{O}_{-1}(k^3)}(E_{12}+E_{13}) $ if $M_{12}\neq 0$ and $M_{13}\neq 0$;
\item $\mathcal{A}_{\mathcal{O}_{-1}(k^3)}(M)\cong \mathcal{A}_{\mathcal{O}_{-1}(k^3)}(E_{12}) $ if $M_{13}=0, M_{12}\neq 0$ or $M_{12}\neq 0, M_{13}=0$.
\end{enumerate}
\end{lem}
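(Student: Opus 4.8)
The plan is to deduce everything directly from Theorem~\ref{iso}, exhibiting in each case an explicit matrix $C\in\mathrm{QPL}_3(k)$ that realizes the asserted isomorphism via the group action $\chi(M,C)=C^{-1}M(c_{ij}^2)_{3\times3}$. The first step is to simplify $M$ under the standing hypotheses. Since $l_1=l_2=0$, the second and third rows of $M$ vanish, and the constraint $m_{12}l_1^2+m_{13}l_2^2=m_{11}$ then forces $m_{11}=0$. Hence
$$M=\left(\begin{array}{ccc} 0 & m_{12} & m_{13} \\ 0 & 0 & 0 \\ 0 & 0 & 0 \end{array}\right)=m_{12}E_{12}+m_{13}E_{13},$$
where $E_{ij}$ is the matrix unit, and the assumption $(m_{11},m_{12},m_{13})\neq0$ becomes $(m_{12},m_{13})\neq0$. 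Thus the lemma reduces to normalizing the single nonzero row $(0,m_{12},m_{13})$ by a suitable quasi-permutation matrix.

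For part (1), where $m_{12},m_{13}\neq0$, I would take the diagonal matrix $C=\mathrm{diag}(1,d_2,d_3)$, which lies in $\mathrm{QPL}_3(k)$ whenever $d_2,d_3\neq0$. A direct computation gives $\chi(M,C)=C^{-1}M(c_{ij}^2)_{3\times3}=m_{12}d_2^2\,E_{12}+m_{13}d_3^2\,E_{13}$; the left multiplication by $C^{-1}$ leaves the first row unchanged because the first row of $C^{-1}$ is $(1,0,0)$. Choosing $d_2,d_3$ with $d_2^2=m_{12}^{-1}$ and $d_3^2=m_{13}^{-1}$, which is possible since $k$ is algebraically closed, yields $\chi(M,C)=E_{12}+E_{13}$, so Theorem~\ref{iso} gives $\mathcal{A}_{\mathcal{O}_{-1}(k^3)}(M)\cong\mathcal{A}_{\mathcal{O}_{-1}(k^3)}(E_{12}+E_{13})$.

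For part (2) exactly one of $m_{12},m_{13}$ is nonzero. When $m_{13}=0$ and $m_{12}\neq0$, the same diagonal $C=\mathrm{diag}(1,d_2,1)$ with $d_2^2=m_{12}^{-1}$ sends $M$ to $E_{12}$. When instead $m_{12}=0$ and $m_{13}\neq0$, I must first transport the nonzero entry from column $3$ to column $2$, so I would use the nonsingular quasi-permutation matrix
$$C=\left(\begin{array}{ccc} m_{13} & 0 & 0 \\ 0 & 0 & 1 \\ 0 & 1 & 0 \end{array}\right);$$
tracking the effect of $(c_{ij}^2)_{3\times3}$, which performs the column transposition $(2\,3)$, together with the rescaling produced by $C^{-1}$, one checks $\chi(M,C)=m_{13}\cdot 1\cdot m_{13}^{-1}E_{12}=E_{12}$, and Theorem~\ref{iso} again applies. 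In every case the verification is a routine matrix computation, so there is no genuine obstacle here; the only points demanding care are the reduction $m_{11}=0$ at the outset, the bookkeeping of which factor (the diagonal squares $(c_{ij}^2)$ or the inverse $C^{-1}$) contributes each scaling, and the appeal to algebraic closedness when extracting the square roots $d_2,d_3$. Unlike the cases treated by semifree resolutions elsewhere in the paper, this statement is settled entirely at the level of the matrix normal form.
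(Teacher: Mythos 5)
Your proposal is correct and follows essentially the same route as the paper's proof: reduce to $m_{11}=0$, then apply Theorem \ref{iso} with explicit matrices in $\mathrm{QPL}_3(k)$, using diagonal rescalings (with square roots available since $k$ is algebraically closed) exactly as the paper does. The only cosmetic difference is in the case $m_{12}=0$, $m_{13}\neq 0$, where the paper first normalizes to $E_{13}$ and then applies a separate permutation matrix $Q$ with $\chi(E_{12},Q)=E_{13}$, whereas you compose both steps into the single quasi-permutation matrix $C$ with $c_{11}=m_{13}$ (which, since the scaling $c_{11}^2$ only touches the zero first column, even avoids a square root) --- your computation $\chi(M,C)=E_{12}$ checks out.
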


\begin{proof}
(1) By the assumption, we have $m_{11}=0$ and $M=\left(
                                 \begin{array}{ccc}
                                   0 & m_{12} & m_{13} \\
                                   0 & 0 & 0 \\
                                   0 & 0 & 0 \\
                                 \end{array}
                               \right)$. Let $C=\left(
                                                  \begin{array}{ccc}
                                                    1 & 0 & 0 \\
                                                    0 & \sqrt{\frac{1}{m_{12}}} & 0 \\
                                                    0 & 0 & \sqrt{\frac{1}{m_{13}}} \\
                                                  \end{array}
                                                \right)$.
                                Then $C\in \mathrm{Gl}_3(k)$, and
\begin{align*}
\chi(M,C)&=\left(
                                                  \begin{array}{ccc}
                                                    1 & 0 & 0 \\
                                                    0 & \sqrt{m_{12}} & 0 \\
                                                    0 & 0 & \sqrt{m_{13}} \\
                                                  \end{array}
                                                \right)\left(
                                 \begin{array}{ccc}
                                   0 & m_{12} & m_{13} \\
                                   0 & 0 & 0 \\
                                   0 & 0 & 0 \\
                                 \end{array}
                               \right)\left(
                                                  \begin{array}{ccc}
                                                    1 & 0 & 0 \\
                                                    0 & \frac{1}{m_{12}} & 0 \\
                                                    0 & 0 & \frac{1}{m_{13}} \\
                                                  \end{array}
                                                \right)\\
&=\left(
                                 \begin{array}{ccc}
                                   0 & 1 & 1 \\
                                   0 & 0 & 0 \\
                                   0 & 0 & 0 \\
                                 \end{array}
                               \right)=E_{12}+E_{13}.
\end{align*}
By Theorem \ref{iso}, $\mathcal{A}_{\mathcal{O}_{-1}(k^3)}(M)\cong \mathcal{A}_{\mathcal{O}_{-1}(k^3)}(E_{12}+E_{13})$.

(2)If $m_{12}\neq 0$ and $m_{13}=0$, then $M=m_{12}E_{12}$.  Let $C'=\left(
                                                  \begin{array}{ccc}
                                                    1 & 0 & 0 \\
                                                    0 & \sqrt{\frac{1}{m_{12}}} & 0 \\
                                                    0 & 0 & 1 \\
                                                  \end{array}
                                                \right)$.
Then
\begin{align*}
\quad\chi(M,C')&=\left(
                                                  \begin{array}{ccc}
                                                    1 & 0 & 0 \\
                                                    0 & \sqrt{m_{12}} & 0 \\
                                                    0 & 0 & 1 \\
                                                  \end{array}
                                                \right)\left(
                                 \begin{array}{ccc}
                                   0 & m_{12} & 0 \\
                                   0 & 0 & 0 \\
                                   0 & 0 & 0 \\
                                 \end{array}
                               \right)\left(
                                                  \begin{array}{ccc}
                                                    1 & 0 & 0 \\
                                                    0 & \frac{1}{m_{12}} & 0 \\
                                                    0 & 0 &1 \\
                                                  \end{array}
                                                \right)\\
&=\left(
                                 \begin{array}{ccc}
                                   0 & 1 & 0 \\
                                   0 & 0 & 0 \\
                                   0 & 0 & 0 \\
                                 \end{array}
                               \right)=E_{12}.
\end{align*}

If $m_{12}=0$ and $m_{13}\neq 0$, then $M=m_{13}E_{13}$.  Let $C''=\left(
                                                  \begin{array}{ccc}
                                                    1 & 0 & 0 \\
                                                    0 & 1 & 0 \\
                                                    0 & 0 & \sqrt{\frac{1}{m_{13}}} \\
                                                  \end{array}
                                                \right)$.
Then
\begin{align*}
\quad\chi(M,C'')&=\left(
                                                  \begin{array}{ccc}
                                                    1 & 0 & 0 \\
                                                    0 & 1 & 0 \\
                                                    0 & 0 & \sqrt{m_{13}} \\
                                                  \end{array}
                                                \right)\left(
                                 \begin{array}{ccc}
                                   0 & 0 & m_{13} \\
                                   0 & 0 & 0 \\
                                   0 & 0 & 0 \\
                                 \end{array}
                               \right)\left(
                                                  \begin{array}{ccc}
                                                    1 & 0 & 0 \\
                                                    0 & 1 & 0 \\
                                                    0 & 0 & \frac{1}{m_{13}} \\
                                                  \end{array}
                                                \right)\\
&=\left(
                                 \begin{array}{ccc}
                                   0 & 0 & 1 \\
                                   0 & 0 & 0 \\
                                   0 & 0 & 0 \\
                                 \end{array}
                               \right)=E_{13}.
\end{align*}
On the other hand, let $Q=\left(
                            \begin{array}{ccc}
                              1 & 0 & 0 \\
                              0 & 0 & 1 \\
                              0 & 1 & 0 \\
                            \end{array}
                          \right)$.  Then $\chi(E_{12}, Q)=E_{13}$ and so $\mathcal{A}_{\mathcal{O}_{-1}(k^3)}(E_{12})\cong \mathcal{A}_{\mathcal{O}_{-1}(k^3)}(E_{13})$ by Theorem \ref{iso}.
\end{proof}

\begin{lem}\label{rank1-2}
Let $$M=\left(
                                 \begin{array}{ccc}
                                   m_{11} & m_{12} & m_{13} \\
                                   l_1m_{11} & l_1m_{12} & l_1m_{13} \\
                                   l_2m_{11} & l_2m_{12} & l_2m_{13} \\
                                 \end{array}
                               \right), (m_{11},m_{12},m_{13})\neq 0,$$  $m_{12}l_1^2+m_{13}l_2^2=m_{11}$, $l_1\neq 0$ and $l_2= 0$.
Then
\begin{enumerate}
\item $\mathcal{A}_{\mathcal{O}_{-1}(k^3)}(M)\cong \mathcal{A}_{\mathcal{O}_{-1}(k^3)}(E_{11}+E_{12}+E_{13}+E_{21}+E_{22}+E_{23})$ if $m_{12}\neq 0$ and $m_{13}\neq 0$;
\item $\mathcal{A}_{\mathcal{O}_{-1}(k^3)}(M)\cong \mathcal{A}_{\mathcal{O}_{-1}(k^3)}(E_{13}+E_{23})$ if $m_{12}=0$ and $m_{13}\neq 0$;
\item $\mathcal{A}_{\mathcal{O}_{-1}(k^3)}(M)\cong \mathcal{A}_{\mathcal{O}_{-1}(k^3)}(E_{11}+E_{12}+E_{21}+E_{22})$ if $m_{12}\neq 0$ and $m_{13}=0$.
\end{enumerate}
\end{lem}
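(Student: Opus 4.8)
The plan is to apply Theorem \ref{iso} directly: two such DG algebras are isomorphic exactly when their defining matrices lie in the same orbit of the right action $\chi$ of $\mathrm{QPL}_3(k)$ introduced in Definition \ref{leftaction}. Since every non-singular diagonal matrix is a quasi-permutation matrix and hence lies in $\mathrm{QPL}_3(k)$ by Proposition \ref{subgroup}, it suffices, in each of the three cases, to exhibit a diagonal $C=\mathrm{diag}(c_1,c_2,c_3)\in\mathrm{QPL}_3(k)$ with $\chi(M,C)$ equal to the prescribed normal form. This mirrors the template already used in Lemma \ref{rank1-1}.

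First I would use the standing hypotheses to put $M$ in explicit form. Since $l_2=0$, the relation $m_{12}l_1^2+m_{13}l_2^2=m_{11}$ collapses to $m_{11}=m_{12}l_1^2$, so that
$$M=\begin{pmatrix} m_{12}l_1^2 & m_{12} & m_{13} \\ m_{12}l_1^3 & l_1 m_{12} & l_1 m_{13} \\ 0 & 0 & 0 \end{pmatrix}.$$
The key computational observation is that for a diagonal $C$ one has $\chi(M,C)=C^{-1}M(c_{ij}^2)_{3\times 3}=\mathrm{diag}(c_1^{-1},c_2^{-1},c_3^{-1})\,M\,\mathrm{diag}(c_1^2,c_2^2,c_3^2)$, whose $(i,j)$ entry is simply $m_{ij}c_j^2/c_i$. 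This turns each target condition into a monomial equation in $c_1,c_2,c_3$.

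Then I would treat the three cases by choosing $C$ to normalize the diagonal and checking the remaining entries come out automatically. For case (2), where $m_{12}=0$ (hence also $m_{11}=0$), take $C=\mathrm{diag}(m_{13},\,l_1m_{13},\,1)$; for case (3), where $m_{13}=0$, take $C=\mathrm{diag}\!\big((m_{12}l_1^2)^{-1},(l_1m_{12})^{-1},1\big)$; and for case (1) take $C=\mathrm{diag}\!\big((m_{12}l_1^2)^{-1},(l_1m_{12})^{-1},(l_1\sqrt{m_{12}m_{13}})^{-1}\big)$, where the square root exists because $k$ is algebraically closed. In each case the entries $c_i$ are nonzero precisely because $m_{12},m_{13},l_1$ are nonzero under the relevant hypotheses, so $C\in\mathrm{QPL}_3(k)$; substituting into $m_{ij}c_j^2/c_i$ yields the claimed normal form, and Theorem \ref{iso} concludes the argument.

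The one point requiring care — and the place where the special structure of $M$ is used — is that a single diagonal $C$ must simultaneously normalize several entries: in case (1), six nonzero entries with only three free parameters. That this over-determined system is consistent is not automatic; it works because the rows of $M$ are proportional (so $M$ has rank one) and the relation $m_{11}=m_{12}l_1^2$ ties the first column to the second. Concretely, once $c_1,c_2$ are fixed by the $(1,1)$ and $(2,2)$ entries, the $(1,2)$ and $(2,1)$ entries are forced to $1$ by these proportionality relations, and choosing $c_3$ to fix the $(1,3)$ entry automatically fixes $(2,3)$ as well. Verifying these compatibilities is the main, though entirely routine, obstacle, which I would dispatch by direct substitution rather than by developing any further theory.
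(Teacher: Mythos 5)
Your proposal is correct and follows essentially the same route as the paper: after reducing to $m_{11}=m_{12}l_1^2$, the paper also applies Theorem \ref{iso} with explicit non-singular diagonal matrices in $\mathrm{QPL}_3(k)$ (e.g.\ $\mathrm{diag}\bigl(\tfrac{1}{m_{12}l_1^2},\tfrac{1}{m_{12}l_1},\sqrt{\tfrac{1}{m_{12}l_1^2m_{13}}}\bigr)$ in case (1)), and your choices agree with the paper's up to harmless replacements of the (irrelevant or square-root) entries in cases (2) and (3). Your entrywise formula $\chi(M,C)_{ij}=m_{ij}c_j^2/c_i$ and the consistency check for the over-determined system are just a cleaner packaging of the paper's direct matrix computations.
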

\begin{proof}
(1)By the assumption, $m_{11}=m_{12}l_1^2$ and $M=\left(
                                 \begin{array}{ccc}
                                   m_{12}l_1^2 & m_{12} & m_{13} \\
                                   m_{12}l_1^3 & l_1m_{12} & l_1m_{13} \\
                                   0 & 0 & 0 \\
                                 \end{array}
                               \right)$.
Let $C=\left(
         \begin{array}{ccc}
           \frac{1}{m_{12}l_1^2} & 0 & 0 \\
           0 & \frac{1}{m_{12}l_1} & 0 \\
           0 & 0 & \sqrt{\frac{1}{m_{12}l_1^2m_{13}}} \\
         \end{array}
       \right).$ Then
       \begin{align*}
       &\quad\chi(M,C)=C^{-1}M(c_{ij}^2)\\
       &= \left(
         \begin{array}{ccc}
           m_{12}l_1^2 & 0 & 0 \\
           0 & m_{12}l_1 & 0 \\
           0 & 0 & \sqrt{m_{12}l_1^2m_{13}} \\
         \end{array}
       \right)M\left(
         \begin{array}{ccc}
           \frac{1}{m_{12}^2l_1^4} & 0 & 0 \\
           0 & \frac{1}{m_{12}^2l_1^2} & 0 \\
           0 & 0 & \frac{1}{m_{12}l_1^2m_{13}} \\
         \end{array}
       \right)\\
  &=\left(
      \begin{array}{ccc}
        1 & 1 & 1 \\
        1 & 1 & 1 \\
        0 & 0 & 0 \\
      \end{array}
    \right)=E_{11}+E_{12}+E_{13}+E_{21}+E_{22}+E_{23}).
\end{align*} So $\mathcal{A}_{\mathcal{O}_{-1}(k^3)}(M)\cong \mathcal{A}_{\mathcal{O}_{-1}(k^3)}(E_{11}+E_{12}+E_{13}+E_{21}+E_{22}+E_{23}$ by Theorem \ref{iso}.

(2)By the assumption, $m_{11}=m_{12}l_1^2=0$ and $M=\left(
                                 \begin{array}{ccc}
                                   0 & 0 & m_{13} \\
                                   0 & 0 & l_1m_{13} \\
                                   0 & 0 & 0 \\
                                 \end{array}
                               \right)$.
Let $C'=\left(
         \begin{array}{ccc}
           \frac{1}{l_1^2} & 0 & 0 \\
           0 & \frac{1}{l_1} & 0 \\
           0 & 0 & \sqrt{\frac{1}{l_1^2m_{13}}} \\
         \end{array}
       \right).$
Then
        \begin{align*}
 \chi(M,C') &= \left(
         \begin{array}{ccc}
           l_1^2 & 0 & 0 \\
           0 & l_1 & 0 \\
           0 & 0 & \sqrt{l_1^2m_{13}} \\
         \end{array}
       \right) \left(
                                 \begin{array}{ccc}
                                   0 & 0 & m_{13} \\
                                   0 & 0 & l_1m_{13} \\
                                   0 & 0 & 0 \\
                                 \end{array}
                               \right)  \left(
         \begin{array}{ccc}
           \frac{1}{l_1^4} & 0 & 0 \\
           0 & \frac{1}{l_1^2} & 0 \\
           0 & 0 & \frac{1}{l_1^2m_{13}} \\
         \end{array}
       \right)\\
       &=\left(
           \begin{array}{ccc}
             0 & 0 & 1 \\
             0 & 0 & 1 \\
             0 & 0 & 0 \\
           \end{array}
         \right)=E_{13}+E_{23}.
       \end{align*}
Thus $\mathcal{A}_{\mathcal{O}_{-1}(k^3)}(M)\cong \mathcal{A}_{\mathcal{O}_{-1}(k^3)}(E_{13}+E_{23})$ by Theorem \ref{iso}.

(3)By assumptions, $m_{11}=m_{12}l_1^2\neq 0$ and $m_{13}=0$ $M=\left(
                                 \begin{array}{ccc}
                                   m_{12}l_1^2 & m_{12} & 0 \\
                                   m_{12}l_1^3 & l_1m_{12} & 0 \\
                                   0 & 0 & 0 \\
                                 \end{array}
                               \right)$.
Let $C''=\left(
         \begin{array}{ccc}
           \frac{1}{m_{12}l_1^2} & 0 & 0 \\
           0 & \frac{1}{m_{12}l_1} & 0 \\
           0 & 0 & \sqrt{\frac{1}{m_{12}l_1^2}} \\
         \end{array}
       \right).$
Then
        \begin{align*}
& \quad\chi(M,C'')\\
&= \begin{small}\left(
         \begin{array}{ccc}
           m_{12}l_1^2 & 0 & 0 \\
           0 & m_{12}l_1 & 0 \\
           0 & 0 & \sqrt{m_{12}l_1^2} \\
         \end{array}
       \right) \left(
                                 \begin{array}{ccc}
                                   m_{12}l_1^2 & m_{12} & 0 \\
                                   m_{12}l_1^3 & l_1m_{12} & 0 \\
                                   0 & 0 & 0 \\
                                 \end{array}
                               \right)  \left(
         \begin{array}{ccc}
           \frac{1}{m_{12}^2l_1^4} & 0 & 0 \\
           0 & \frac{1}{m_{12}^2l_1^2} & 0 \\
           0 & 0 & \frac{1}{m_{12}l_1^2} \\
         \end{array}
       \right)\end{small} \\
       &=\left(
           \begin{array}{ccc}
             1 & 1 & 0 \\
             1 & 1 & 0 \\
             0 & 0 & 0 \\
           \end{array}
         \right)=E_{11}+E_{12}+E_{21}+E_{22}.
       \end{align*}
Therefore,  $\mathcal{A}_{\mathcal{O}_{-1}(k^3)}(M)\cong \mathcal{A}_{\mathcal{O}_{-1}(k^3)}(E_{11}+E_{12}+E_{21}+E_{22})$ by Theorem \ref{iso}.
\end{proof}
By a similar proof, we can show the following proposition.
\begin{lem}\label{rank1-3}Let $$M=\left(
                                 \begin{array}{ccc}
                                   m_{11} & m_{12} & m_{13} \\
                                   l_1m_{11} & l_1m_{12} & l_1m_{13} \\
                                   l_2m_{11} & l_2m_{12} & l_2m_{13} \\
                                 \end{array}
                               \right), (m_{11},m_{12},m_{13})\neq 0,$$ $m_{12}l_1^2+m_{13}l_2^2=m_{11}$,  $l_1=0$ and $l_2\neq 0$. Then
\begin{enumerate}
\item $\mathcal{A}_{\mathcal{O}_{-1}(k^3)}(M)\cong \mathcal{A}_{\mathcal{O}_{-1}(k^3)}(E_{11}+E_{12}+E_{13}+E_{31}+E_{32}+E_{33})$ if $m_{12}\neq 0$ and $m_{13}\neq 0$;
\item $\mathcal{A}_{\mathcal{O}_{-1}(k^3)}(M)\cong \mathcal{A}_{\mathcal{O}_{-1}(k^3)}(E_{11}+E_{13}+E_{31}+E_{33})$ if $m_{12}=0$ and $m_{13}\neq 0$;
\item $\mathcal{A}_{\mathcal{O}_{-1}(k^3)}(M)\cong \mathcal{A}_{\mathcal{O}_{-1}(k^3)}(E_{11}+E_{12}+E_{31}+E_{32})$ if $m_{12}\neq 0$ and $m_{13}=0$.
\end{enumerate}
\end{lem}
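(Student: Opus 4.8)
The plan is to prove all three isomorphisms directly from Theorem \ref{iso}, which asserts that $\mathcal{A}_{\mathcal{O}_{-1}(k^3)}(M)\cong \mathcal{A}_{\mathcal{O}_{-1}(k^3)}(N)$ exactly when $N=\chi(M,C)=C^{-1}M(c_{ij}^2)_{3\times 3}$ for some $C=(c_{ij})\in \mathrm{QPL}_3(k)$. Hence in each case it suffices to exhibit one explicit non-singular quasi-permutation matrix $C$ with $\chi(M,C)$ equal to the stated normal form. The first step is to record the exact shape of $M$ under the standing hypotheses $l_1=0$ and $l_2\neq 0$: since $m_{11}=m_{12}l_1^2+m_{13}l_2^2=m_{13}l_2^2$, the second row of $M$ vanishes identically, while the first row is $(m_{13}l_2^2,\,m_{12},\,m_{13})$ and the third row is $l_2(m_{13}l_2^2,\,m_{12},\,m_{13})$, i.e. the third row is $l_2$ times the first. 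This structural proportionality is what makes a single diagonal $C$ suffice for the normalization.

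For the main computation I would take a diagonal matrix $C=\mathrm{diag}(c_1,c_2,c_3)$, which is automatically a non-singular quasi-permutation matrix. For such a $C$ one has $C^{-1}=\mathrm{diag}(c_1^{-1},c_2^{-1},c_3^{-1})$ and $(c_{ij}^2)=\mathrm{diag}(c_1^2,c_2^2,c_3^2)$, so the $(i,j)$ entry of $\chi(M,C)$ is simply $m_{ij}c_j^2/c_i$; each surviving entry can therefore be rescaled independently. I would then solve the system $m_{ij}c_j^2/c_i=1$ for the $c_i$, extracting square roots freely since $k$ is algebraically closed. In case (1), where $m_{12}m_{13}\neq 0$ (so $m_{11}\neq 0$), the choice $c_1=(m_{13}l_2^2)^{-1}$, $c_3=(l_2m_{13})^{-1}$ and $c_2$ a square root of $(m_{12}m_{13}l_2^2)^{-1}$ sends all six nonzero entries of $M$ to $1$, producing $E_{11}+E_{12}+E_{13}+E_{31}+E_{32}+E_{33}$. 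In case (2), where $m_{12}=0$, the same $c_1,c_3$ together with an arbitrary nonzero $c_2$ normalize the four surviving entries $(1,1),(1,3),(3,1),(3,3)$ and yield $E_{11}+E_{13}+E_{31}+E_{33}$. In both situations Theorem \ref{iso} then gives the claimed isomorphism.

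Case (3), where $m_{12}\neq 0$ and $m_{13}=0$, is where I expect the main obstacle to lie, since the constraint now forces $m_{11}=m_{13}l_2^2=0$ and $M$ becomes concentrated in its second column. The plan is to combine the permutation part of $\mathrm{QPL}_3(k)$ (available by Proposition \ref{subgroup}) with diagonal scaling, mimicking the way transposition matrices are deployed in Remark \ref{redtosimp} and in Lemma \ref{rank1-1}, and to track the support of each intermediate product carefully. The hard part is precisely this bookkeeping of supports: one must verify that the chosen $C$ redistributes and rescales the entries so that the resulting matrix lands on the prescribed representative $E_{11}+E_{12}+E_{31}+E_{32}$, at which point Theorem \ref{iso} again completes the proof. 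Monitoring the zero pattern under the combined permutation-and-scaling action is the delicate bottleneck of the whole argument.
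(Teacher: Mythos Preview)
Your treatment of parts (1) and (2) is essentially identical to the paper's: the same diagonal matrix $C=\mathrm{diag}(c_1,c_2,c_3)$ with $c_1=(m_{13}l_2^2)^{-1}$, $c_3=(m_{13}l_2)^{-1}$ (and the appropriate $c_2$) is used, and the verification proceeds via the entrywise formula $(\chi(M,C))_{ij}=m_{ij}\,c_j^{2}/c_i$.

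For part (3), however, your plan cannot succeed, and the obstruction is not in your method but in the lemma's statement itself. Under the hypotheses $l_1=0$, $l_2\neq 0$, $m_{12}\neq 0$, $m_{13}=0$ one has $m_{11}=m_{13}l_2^{2}=0$, so, as you observe, $M$ is supported in its second column only. But for any $C=(c_{ij})\in\mathrm{QPL}_3(k)$ the matrix $(c_{ij}^{2})$ is again a non-singular quasi-permutation matrix, so right multiplication by it merely permutes and rescales the columns of $M$, while left multiplication by $C^{-1}$ permutes and rescales rows. Hence $\chi(M,C)$ always has exactly one nonzero column and can never equal $E_{11}+E_{12}+E_{31}+E_{32}$, which has two. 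No combination of permutations and scalings in $\mathrm{QPL}_3(k)$ bridges this gap, so the ``careful bookkeeping'' you anticipate cannot be carried out for the stated target.

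The paper's own proof of (3) does not target $E_{11}+E_{12}+E_{31}+E_{32}$ at all: it takes the diagonal matrix $C=\mathrm{diag}(m_{12},1,l_2m_{12})$ and computes $\chi(M,C)=E_{12}+E_{32}$, which is the representative that then feeds into Lemma~\ref{rank1-4}(1) and appears as $M_4$ in Remark~\ref{simpsix}. In other words, the target in the statement of (3) is a typo; with the corrected target $E_{12}+E_{32}$ a single diagonal $C$ suffices, exactly parallel to your arguments for (1) and (2), and no permutation is needed.
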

\begin{proof}
(1)By the assumption, $m_{11}=m_{13}l_2^2$ and $M=\left(
                                 \begin{array}{ccc}
                                   m_{13}l_2^2 & m_{12} & m_{13} \\
                                   0 & 0 & 0 \\
                                   m_{13}l_2^3 & l_2m_{12} & l_2m_{13} \\
                                 \end{array}
                               \right)$.
Let $C=\left(
         \begin{array}{ccc}
           \frac{1}{m_{13}l_2^2} & 0 & 0 \\
           0 & \sqrt{\frac{1}{m_{12}m_{13}l_2^2}} & 0 \\
           0 & 0 & \frac{1}{m_{13}l_2} \\
         \end{array}
       \right).$ Then
       \begin{align*}
       &\quad\chi(M,C)=C^{-1}M(c_{ij}^2)\\
       &= \left(
         \begin{array}{ccc}
           m_{13}l_2^2 & 0 & 0 \\
           0 & \sqrt{m_{12}m_{13}l_2^2} & 0 \\
           0 & 0 &  m_{13}l_2 \\
         \end{array}
       \right)M\left(
         \begin{array}{ccc}
           \frac{1}{m_{13}^2l_2^4} & 0 & 0 \\
           0 & \frac{1}{m_{12}m_{13}l_2^2} & 0 \\
           0 & 0 & \frac{1}{m_{13}^2l_2^2} \\
         \end{array}
       \right)\\
  &=\left(
      \begin{array}{ccc}
        1 & 1 & 1 \\
        0 & 0 & 0 \\
        1 & 1 & 1 \\
      \end{array}
    \right)=E_{11}+E_{12}+E_{13}+E_{31}+E_{32}+E_{33}).
\end{align*} So $\mathcal{A}_{\mathcal{O}_{-1}(k^3)}(M)\cong \mathcal{A}_{\mathcal{O}_{-1}(k^3)}(E_{11}+E_{12}+E_{13}+E_{31}+E_{32}+E_{33})$ by Theorem \ref{iso}.

(2)By the assumption, $m_{11}=m_{13}l_2^2$ and $M=\left(
                                 \begin{array}{ccc}
                                   m_{13}l_2^2 & 0 & m_{13} \\
                                   0 & 0 & 0 \\
                                   m_{13}l_2^3 & 0 & m_{13}l_2 \\
                                 \end{array}
                               \right)$.
Let $C'=\left(
         \begin{array}{ccc}
           \frac{1}{m_{13}l_2^2} & 0 & 0 \\
           0 & 1 & 0 \\
           0 & 0 & \frac{1}{m_{13}l_2} \\
         \end{array}
       \right).$
Then
        \begin{align*}
 \chi(M,C') &= \left(
         \begin{array}{ccc}
           m_{13}l_2^2 & 0 & 0 \\
           0 & 1 & 0 \\
           0 & 0 & m_{13}l_2 \\
         \end{array}
       \right) \left(
                                 \begin{array}{ccc}
                                   m_{13}l_2^2 & 0 & m_{13} \\
                                   0 & 0 & 0\\
                                   m_{13}l_2^3 & 0 & m_{13}l_2 \\
                                 \end{array}
                               \right)  \left(
         \begin{array}{ccc}
           \frac{1}{m_{13}^2l_2^4} & 0 & 0 \\
           0 & 1 & 0 \\
           0 & 0 & \frac{1}{l_2^2m_{13}^2} \\
         \end{array}
       \right)\\
       &=\left(
           \begin{array}{ccc}
             1 & 0 & 1 \\
             0 & 0 & 0 \\
             1 & 0 & 1 \\
           \end{array}
         \right)=E_{11}+E_{13}+E_{31}+E_{33}.
       \end{align*}
Thus $\mathcal{A}_{\mathcal{O}_{-1}(k^3)}(M)\cong \mathcal{A}_{\mathcal{O}_{-1}(k^3)}(E_{13}+E_{23})$ by Theorem \ref{iso}.

(3)By assumptions, $m_{11}=m_{13}l_2^2=0$, $m_{13}=0$ and $M=\left(
                                 \begin{array}{ccc}
                                   0 & m_{12} & 0 \\
                                   0 & 0 & 0 \\
                                   0 & l_2m_{12} & 0 \\
                                 \end{array}
                               \right)$.
Let $C''=\left(
         \begin{array}{ccc}
           m_{12} & 0 & 0 \\
           0 & 1 & 0 \\
           0 & 0 & l_2m_{12}\\
         \end{array}
       \right).$
Then
        \begin{align*}
& \quad\chi(M,C'')\\
&= \begin{small}\left(
         \begin{array}{ccc}
          \frac{1}{m_{12}} & 0 & 0 \\
           0 & 1 & 0 \\
           0 & 0 & \frac{1}{m_{12}l_2} \\
         \end{array}
       \right)   \left(
         \begin{array}{ccc}
           0 & m_{12} & 0 \\
           0 & 0 & 0 \\
           0 & l_2m_{12} & 0 \\
         \end{array}
       \right)\left(
         \begin{array}{ccc}
          m_{12}^2 & 0 & 0 \\
           0 & 1 & 0 \\
           0 & 0 & m_{12}^2l_2^2 \\
         \end{array}
       \right) \end{small} \\
       &=\left(
           \begin{array}{ccc}
             0 & 1 & 0 \\
             0 & 0 & 0 \\
             0 & 1 & 0 \\
           \end{array}
         \right)=E_{12}+E_{32}.
       \end{align*}
Therefore,  $\mathcal{A}_{\mathcal{O}_{-1}(k^3)}(M)\cong \mathcal{A}_{\mathcal{O}_{-1}(k^3)}(E_{12}+E_{32})$ by Theorem \ref{iso}.

\end{proof}

\begin{lem}\label{rank1-4}
We have $\mathcal{A}_{\mathcal{O}_{-1}(k^3)}(M)\cong \mathcal{A}_{\mathcal{O}_{-1}(k^3)}(N)$ if $M$ and $N$ belong to the following cases:

\begin{enumerate}
\item $M=E_{12}+E_{32}, N=E_{13}+E_{23}$;
\item $M=E_{11}+E_{13}+E_{31}+E_{33}, N=E_{11}+E_{12}+E_{21}+E_{22}$;
\item $M=E_{11}+E_{12}+E_{13}+E_{21}+E_{22}+E_{23}, N=E_{11}+E_{12}+E_{13}+E_{31}+E_{32}+E_{33}$.
\end{enumerate}
\end{lem}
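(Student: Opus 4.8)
The plan is to apply Theorem~\ref{iso} directly. By that theorem, $\mathcal{A}_{\mathcal{O}_{-1}(k^3)}(M)\cong \mathcal{A}_{\mathcal{O}_{-1}(k^3)}(N)$ holds exactly when there is some $C=(c_{ij})_{3\times 3}\in\mathrm{QPL}_3(k)$ with $N=C^{-1}M(c_{ij}^2)_{3\times 3}=\chi(M,C)$. So for each of the three pairs it suffices to produce one such $C$. The structural observation driving the choice is that in every pair the matrices $M$ and $N$ have entries in $\{0,1\}$ and differ only by interchanging the second and third coordinates; this points to choosing $C$ to be an honest permutation matrix rather than a general quasi-permutation matrix.

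First I would record the elementary reduction that makes the computation trivial. If $C$ is a permutation matrix, then every entry satisfies $c_{ij}^2=c_{ij}$, whence $(c_{ij}^2)_{3\times 3}=C$ and the right action degenerates to ordinary conjugation, $\chi(M,C)=C^{-1}MC$. A permutation matrix is non-singular and quasi-permutation, so it lies in $\mathrm{QPL}_3(k)$ and is a legitimate choice in Theorem~\ref{iso}. Thus the problem reduces to finding a permutation $C$ with $C^{-1}MC=N$ in each case.

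Next I would propose the single candidate
$$C=\begin{pmatrix} 1 & 0 & 0 \\ 0 & 0 & 1 \\ 0 & 1 & 0 \end{pmatrix},$$
the permutation matrix of the transposition $\sigma=(2\,3)$, which satisfies $C^{-1}=C$. Conjugation by $C$ sends the matrix unit $E_{ab}$ to $E_{\sigma(a)\,\sigma(b)}$, so I would finish by tracking the support of each $M$: in (1) the support $\{(1,2),(3,2)\}$ is carried to $\{(1,3),(2,3)\}$; in (2) the support $\{(1,1),(1,3),(3,1),(3,3)\}$ to $\{(1,1),(1,2),(2,1),(2,2)\}$; and in (3) the two full rows $1,2$ to the full rows $1,3$. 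In each case the image is precisely $N$, and I would confirm this by the corresponding short matrix product $CMC$.

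There is essentially no obstacle in this lemma once Theorem~\ref{iso} and the reduction of $\chi(\,\cdot\,,C)$ to conjugation are in place: the verification is a finite, routine computation with $3\times 3$ zero--one matrices. The only point meriting a line of care is noting that the \emph{same} $C$ simultaneously handles all three pairs, reflecting the fact that each pair is related by the single coordinate swap $(2\,3)$.
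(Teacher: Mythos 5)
Your proposal is correct and follows the paper's proof almost verbatim: the paper likewise invokes Theorem \ref{iso} with the single transposition matrix $C=E_{11}+E_{23}+E_{32}\in\mathrm{QPL}_3(k)$ and verifies $\chi(M,C)=N$ in all three cases by direct matrix multiplication. Your only (harmless) refinement is to observe that a $0$--$1$ permutation matrix satisfies $(c_{ij}^2)=C$, so $\chi(\cdot,C)$ reduces to conjugation $E_{ab}\mapsto E_{\sigma(a)\sigma(b)}$, which replaces the paper's three explicit products with support-tracking.
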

\begin{proof}
(1)Let $C=\left(
            \begin{array}{ccc}
              1 & 0 & 0 \\
              0 & 0 & 1 \\
              0 & 1 & 0 \\
            \end{array}
          \right)$.
Then
          \begin{align*}
 &\quad \chi(E_{12}+E_{32},C)\\
         &=\left(
             \begin{array}{ccc}
               1 & 0 & 0 \\
               0 & 0 & 1 \\
               0 & 1 & 0 \\
             \end{array}
           \right)\left(
                    \begin{array}{ccc}
                      0 & 1 & 0 \\
                      0 & 0 & 0 \\
                      0 & 1 & 0 \\
                    \end{array}
                  \right)\left(
                           \begin{array}{ccc}
                             1 & 0 & 0 \\
                             0 & 0 & 1 \\
                             0 & 1 & 0 \\
                           \end{array}
                         \right)\\
           &=\left(
                           \begin{array}{ccc}
                             0 & 0 & 1 \\
                             0 & 0 & 1 \\
                             0 & 0 & 0 \\
                           \end{array}
                         \right)=E_{13}+E_{23},
          \end{align*}
  \begin{align*}
 &\quad \chi(E_{11}+E_{13}+E_{31}+E_{33},C)\\
         &=\left(
             \begin{array}{ccc}
               1 & 0 & 0 \\
               0 & 0 & 1 \\
               0 & 1 & 0 \\
             \end{array}
           \right)\left(
                    \begin{array}{ccc}
                      1 & 0 & 1 \\
                      0 & 0 & 0 \\
                      1 & 0 & 1 \\
                    \end{array}
                  \right)\left(
                           \begin{array}{ccc}
                             1 & 0 & 0 \\
                             0 & 0 & 1 \\
                             0 & 1 & 0 \\
                           \end{array}
                         \right)\\
           &=\left(
                           \begin{array}{ccc}
                             1 & 1 & 0 \\
                             1 & 1 & 0 \\
                             0 & 0 & 0 \\
                           \end{array}
                         \right)=E_{11}+E_{12}+E_{21}+E_{22}
          \end{align*}
and
          \begin{align*}
         &\quad \chi(E_{11}+E_{12}+E_{13}+E_{31}+E_{32}+E_{33},C)\\
         &=\left(
             \begin{array}{ccc}
               1 & 0 & 0 \\
               0 & 0 & 1 \\
               0 & 1 & 0 \\
             \end{array}
           \right)\left(
                    \begin{array}{ccc}
                      1 & 1 & 1 \\
                      0 & 0 & 0 \\
                      1 & 1 & 1 \\
                    \end{array}
                  \right)\left(
                           \begin{array}{ccc}
                             1 & 0 & 0 \\
                             0 & 0 & 1 \\
                             0 & 1 & 0 \\
                           \end{array}
                         \right)\\
           &=\left(
                           \begin{array}{ccc}
                             1 & 1 & 1 \\
                             1 & 1 & 1 \\
                             0 & 0 & 0 \\
                           \end{array}
                         \right)=E_{11}+E_{12}+E_{13}+E_{21}+E_{22}+E_{23}.
          \end{align*}
By Theorem \ref{iso}, we finish the proof.
\end{proof}
\begin{rem}\label{simpsix}
By Lemma \ref{rank1-1}, Lemma \ref{rank1-2}, Lemma \ref{rank1-3} and Lemma \ref{rank1-4}, it remains to study the homological properties of
$\mathcal{A}_{\mathcal{O}_{-1}(k^3)}(M)$ when $M$ belongs to one of the following six specific matrixes:
\begin{align*}
M_1=\left(
                    \begin{array}{ccc}
                      0 & 1 & 1 \\
                      0 & 0 & 0 \\
                      0 & 0 & 0 \\
                    \end{array}
                  \right), M_2=\left(
                    \begin{array}{ccc}
                      0 & 1 & 0 \\
                      0 & 0 & 0 \\
                      0 & 0 & 0 \\
                    \end{array}
                  \right),M_3=\left(
                    \begin{array}{ccc}
                      1 & 1 & 1 \\
                      1 & 1 & 1 \\
                      0 & 0 & 0 \\
                    \end{array}
                  \right), \\
M_4=\left(
                    \begin{array}{ccc}
                      0 & 1 & 0 \\
                      0 & 0 & 0 \\
                      0 & 1 & 0 \\
                    \end{array}
                  \right), M_5=\left(
                    \begin{array}{ccc}
                      1 & 1 & 0 \\
                      1 & 1 & 0 \\
                      0 & 0 & 0 \\
                    \end{array}
                  \right),M_6=\left(
                    \begin{array}{ccc}
                      1 & 1 & 0 \\
                      0 & 0 & 0 \\
                      1 & 1 & 0 \\
                    \end{array}
                  \right).
\end{align*}
\end{rem}

\begin{prop}\label{sixcases}
For any $i\in \{1,2,3,4,5,6\}$,
the connected cochain DG algebra $\mathcal{A}_{\mathcal{O}_{-1}(k^3)}(M_i)$ is a Koszul Calabi-Yau DG algebra.
\end{prop}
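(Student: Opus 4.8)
The plan is to dispatch the six representatives by the same machinery used in Proposition \ref{case1cy}: for each $\mathcal{A}=\mathcal{A}_{\mathcal{O}_{-1}(k^3)}(M_i)$ I would construct an explicit minimal semi-free resolution of ${}_{\mathcal{A}}k$, read off Koszulness and homological smoothness from the shape of its semi-basis, and then decide the Calabi-Yau property from the resulting Ext-algebra via the criterion recorded in Remark \ref{methods} and \cite[Theorem 4.2]{HM}: a Koszul connected cochain DG algebra is Calabi-Yau if and only if its Ext-algebra $H(R\Hom_{\mathcal{A}}(k,k))$ is a symmetric Frobenius algebra. The cohomology of each $M_i$ is already available from cases $(7)$--$(10)$ of Proposition \ref{cohomology} together with the reductions of Lemmas \ref{rank1-1}--\ref{rank1-4} and Remark \ref{simpsix}; crucially, these cohomology algebras carry an infinite-dimensional central polynomial part generated by $\lceil x_1^2\rceil$, so, unlike Case $1$, one cannot read the answer directly off $H(\mathcal{A})$ and must pass through the resolution.

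First I would, for each $M_i$, write down the differential and run the iterative extension (Eilenberg--Moore) procedure of \cite[Proposition 2.4]{MW1} and \cite[pp.\ 279--280]{FHT2}, exactly as in Section \ref{caseone}: start from $F_0=\mathcal{A}$, adjoin one generator for each generator of ${}_{H(\mathcal{A})}k$, and keep extending by new cocycle generators until $H^1$ of the partial resolution vanishes, recording at each stage the corresponding column of $\partial_F$. The decisive structural fact is that $M_1,\dots,M_6$ all lie in the regime $l_1l_2=0$, as opposed to the regime $l_1l_2\neq0$ of Proposition \ref{nonregsec}; this is what forces the process to terminate after finitely many steps and produces a finite semi-basis concentrated in degree $0$. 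Consequently each $\mathcal{A}_{\mathcal{O}_{-1}(k^3)}(M_i)$ is Koszul and, the semi-basis being finite, ${}_{\mathcal{A}}k$ (equivalently $\mathcal{A}$ over $\mathcal{A}^e$) is compact, i.e.\ homologically smooth.

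With $F$ in hand I would compute the Ext-algebra precisely as in the proof of Proposition \ref{case1cy}. By minimality, $E=H(\Hom_{\mathcal{A}}(F,k))=\Hom_{\mathcal{A}}(F,k)$ is concentrated in degree $0$, and $E\cong Z^0(\Hom_{\mathcal{A}}(F,F))$ is identified with the algebra of scalar matrices $A_f$ that commute with the matrix of $\partial_F$; solving these commutation relations presents $E$ as an explicit finite-dimensional algebra. For each $M_i$ I expect $E$ to be either a truncated polynomial algebra $k[x]/(x^m)$ or an algebra of the $4\times4$ form $\mathscr{E}_{\lambda,\mu,\nu}$ (or a small variant thereof); in the first case symmetry is automatic, and in the second it follows from Lemma \ref{symmetry} once one verifies $\lambda\mu-\nu^2\neq0$. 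Since a Koszul DG algebra with symmetric Frobenius Ext-algebra is Calabi-Yau by Remark \ref{methods}, this completes each case.

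The main obstacle will be the bookkeeping of the resolutions for the matrices whose cohomology has three generators (the two degree-one classes together with the central degree-two class $\lceil x_1^2\rceil$): there the semi-basis is larger, the differential $\partial_F$ mixes the degree-one cocycles with $x_1^2$, and verifying both that the construction halts and that $H^1(F)=0$ at the final stage requires the same careful cocycle analysis---linear independence of cubic monomials modulo $B^2(\mathcal{A})$---that pervades Section \ref{caseone}. A secondary difficulty is the closing algebraic step: when $E$ emerges in the $\mathscr{E}_{\lambda,\mu,\nu}$ shape, one must pin down $(\lambda,\mu,\nu)$ from the commutation relations and confirm $\lambda\mu\neq\nu^2$, so that Lemma \ref{symmetry} applies and the symmetric Frobenius property---hence Calabi-Yauness---holds uniformly across all six representatives.
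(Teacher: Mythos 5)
Your proposal is, in method, exactly the paper's own proof of Proposition \ref{sixcases}: the paper treats the six representatives one at a time, writes down an explicit minimal semi-free resolution $F_i$ of ${}_{\mathcal{A}_i}k$ following \cite[Proposition 2.4]{MW1} with a finite semi-basis concentrated in degree $0$ (whence Koszulness and homological smoothness), identifies the Ext-algebra with $Z^0(\Hom_{\mathcal{A}_i}(F_i,F_i))$, i.e.\ with the scalar matrices commuting with the matrix $D_i$ of $\partial_{F_i}$, and concludes Calabi--Yau from the symmetric Frobenius property via Remark \ref{methods}. So you are squarely in the ``same approach'' case.

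Two caveats are worth recording, because your expectations diverge from what the computation actually yields, and in one place your procedure is more reliable than the printed one. First, the Ext-algebras are not all of the shapes you predict: the paper finds the eight-dimensional $k[x,y]/(x^2-y^2,x^4)$ for $M_1$, $k[x,y]/(x^3,xy,y^3,x^2-y^2)\cong\mathscr{E}_{1,1,0}$ for $M_3$, and $\mathscr{E}_{0,0,1}\cong k[x,y]/(x^2,y^2)$ for $M_5,M_6$; only the latter two fall under Lemma \ref{symmetry} (with $\lambda\mu-\nu^2\neq 0$), while for $M_1$ one needs the bare-hands dual-basis isomorphism, which is your stated fallback, so no harm done. (Incidentally, the displayed maps $\theta_5,\theta_6$ onto $k[x]/(x^4)$ are not algebra morphisms, since $\xi_2^2=0$ in $\mathcal{E}_5,\mathcal{E}_6$; this is harmless because $k[x,y]/(x^2,y^2)$ is itself symmetric Frobenius.) Second, and more substantively, your insistence on re-verifying $H^1(F)=0$ at the final stage is precisely where the printed argument for $M_2$ and $M_4$ fails: for $M_2$ the element $x_3e_1+x_2e_2$ is a cocycle of the displayed five-generator complex (because $x_2x_3+x_3x_2=0$) whose $e_2$-component can never occur in $\partial(F^0)$, so that complex is not a resolution of $k$; correspondingly, the five-dimensional algebra $k[x,y]/(x^4,xy,y^2)$ recorded there is a local algebra with two-dimensional socle, hence not Frobenius at all, and the claimed module isomorphism $\varepsilon_2$ is not $\mathcal{E}_2$-linear (it fails already on $\xi_3^2$); the same defect occurs for $M_4$ with $(x_1-x_3)e_1+x_2e_2$. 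Running your extension procedure to honest termination adjoins the missing generators: for $M_2$ the decomposition $\mathcal{A}_2\cong B\otimes k\langle x_3\rangle$, with $B$ the two-variable DG algebra of \cite{MH}, shows the minimal semi-basis has eight elements and the Ext-algebra is $k[x]/(x^4)\otimes k[y]/(y^2)\cong k[x,y]/(x^4,y^2)$, which has one-dimensional socle and is symmetric Frobenius, and similarly for $M_4$. Thus the conclusion of the proposition stands, and your plan, carried out faithfully, would actually repair these two cases rather than merely reproduce them.
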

\begin{proof}
For briefness, we denote $\mathcal{A}_i=\mathcal{A}_{\mathcal{O}_{-1}(k^3)}(M_i), i=1,2,\cdots, 6$. We will prove one by one that each $\mathcal{A}_i$ is a Koszul Calabi-Yau DG algebra.

(1)We have $\partial_{\mathcal{A}_1}(x_1)=x_1^2+x_2^2, \partial_{\mathcal{A}_1}(x_2)=\partial_{\mathcal{A}_1}(x_3)=0$. According to the constructing procedure of the minimal semi-free resolution in \cite[Proposition 2.4]{MW1}, we get a minimal semi-free resolution $f_1: F_1\stackrel{\simeq}{\to} k$, where $F_1$ is a semi-free DG $\mathcal{A}_1$-module such that $$F_1^{\#}=\mathcal{A}_1\oplus \mathcal{A}_1e_{1}\oplus \mathcal{A}_1 e_2 \oplus \mathcal{A}_1 e_3\oplus \mathcal{A}_1 e_4\oplus \mathcal{A}_1 e_5\oplus \mathcal{A}_1 e_6\oplus \mathcal{A}_1 e_7,$$  with a differential $\partial_F$ defined by \begin{align*}
\left(
                         \begin{array}{c}
                           \partial_{F_1}(1)\\
                           \partial_{F_1} (e_1)\\
                            \partial_{F_1}(e_2)\\
                            \partial_{F_1}(e_3)\\
                             \partial_{F_1}(e_4)\\
                              \partial_{F_1}(e_5)\\
                               \partial_{F_1}(e_6)\\
                                \partial_{F_1}(e_7)
                         \end{array}
                       \right)=\left(
                                 \begin{array}{cccccccc}
                                   0   & 0   & 0   & 0   & 0   & 0  & 0  & 0\\
                                   x_2 & 0   & 0   & 0   & 0   & 0  & 0  & 0\\
                                   x_3 & 0   & 0   & 0   & 0   & 0  & 0  & 0\\
                                   0   & x_3 & x_2 & 0   & 0   & 0  & 0  & 0\\
                                   x_1 & x_2 & x_3 & 0   & 0   & 0  & 0  & 0\\
                                   0   &  0  & x_1 & x_2 & x_3 & 0  & 0  & 0\\
                                   0   & x_1 &  0  & x_3 & x_2 & 0  & 0  & 0\\
                                   0   & 0   &  0  & x_1 & 0   & x_2& x_3& 0
                                 \end{array}
                               \right)
                       \left(
                         \begin{array}{c}
                           1\\
                           e_1\\
                           e_2\\
                           e_3\\
                           e_4\\
                           e_5\\
                           e_6\\
                           e_7
                            \end{array}
                       \right).
\end{align*}
Let $$ D_1=\left(
                                 \begin{array}{cccccccc}
                                   0   & 0   & 0   & 0   & 0   & 0  & 0  & 0\\
                                   x_2 & 0   & 0   & 0   & 0   & 0  & 0  & 0\\
                                   x_3 & 0   & 0   & 0   & 0   & 0  & 0  & 0\\
                                   0   & x_3 & x_2 & 0   & 0   & 0  & 0  & 0\\
                                   x_1 & x_2 & x_3 & 0   & 0   & 0  & 0  & 0\\
                                   0   &  0  & x_1 & x_2 & x_3 & 0  & 0  & 0\\
                                   0   & x_1 &  0  & x_3 & x_2 & 0  & 0  & 0\\
                                   0   & 0   &  0  & x_1 & 0   & x_2& x_3& 0
                                 \end{array}
                               \right).$$
By the minimality of $F_1$, we have \begin{align*}
H(\Hom_{\mathcal{A}_1}(F_1,k))&=\Hom_{\mathcal{A}_1}(F_1,k)\\
&= k 1^*\oplus [\bigoplus\limits_{i=1}^7 k(e_i)^*].
\end{align*}
So the Ext-algebra $E_1=H(\Hom_{\mathcal{A}_1}(F_1,F_1))$  is concentrated in degree $0$. On the other hand, $$\Hom_{\mathcal{A}_1}(F_1,F_1)^{\#}\cong \{k 1^*\oplus \oplus [\bigoplus\limits_{i=1}^7 k(e_i)^*]\}\otimes_{k} F_1^{\#}$$ is concentrated in degree $\ge 0$. This implies that $E_1= Z^0(\Hom_{\mathcal{A}_1}(F_1,F_1))$.
Since $F_1^{\#}$ is a free graded $\mathcal{A}_1^{\#}$-module with a basis
$\{1, e_1,e_2, e_3,e_4,e_5,e_6, e_7 \}$ concentrated in degree $0$,
  the elements in  $\Hom_{\mathcal{A}_1}(F_1,F_1)^0$ is one to one correspondence with the matrixes in $M_8(k)$. Indeed, any $f_1\in \Hom_{\mathcal{A}_1}(F_1,F_1)^0$ is uniquely determined by
  a matrix $A_{f_1}=(a_{ij})_{8\times 8}\in M_8(k)$ with
$$\left(
                         \begin{array}{c}
                          f_1(1) \\
                          f_1(e_1)\\
                          f_1(e_2)\\
                           f_1(e_3)\\
                            f_1(e_4)\\
                             f_1(e_5)\\
                              f_1(e_6)\\
                               f_1( e_7)
                         \end{array}
                       \right) =      A_{f_1} \cdot \left(
                         \begin{array}{c}
                          1 \\
                          e_1\\
                          e_2\\
                          e_3\\
                          e_4\\
                          e_5\\
                          e_6\\
                          e_7
                         \end{array}
                       \right).  $$
                       And $f_1\in  Z^0[\Hom_{\mathcal{A}}(F_1,F_1)]$ if and only if $\partial_{F_1}\circ f_1=f_1\circ \partial_{F_1}$, if and only if
 $ A_{f_1} D_1 = D_1 A_{f_1},$ which is also equivalent to
                       $$\begin{cases}
                       a_{ij}=0, \forall i<j\\
                       a_{11}=a_{22}=a_{33}= a_{44}=a_{55}=a_{66}= a_{77}=a_{88}\\
                       a_{21}=a_{43}=a_{52}=a_{64}=a_{75}=a_{86}\\
                       a_{31}=a_{42}=a_{53}=a_{65}=a_{74}=a_{87}\\
                       a_{41}=a_{62}=a_{73}=a_{85}\\
                       a_{51}=a_{63}=a_{72}=a_{84}\\
                       a_{61}=a_{82}, a_{71}=a_{83}\\
                       a_{32}=a_{54}=a_{76}=0
                       \end{cases}$$
by direct computations.
Hence the algebra $$ E_1\cong \left\{\left(
                                                             \begin{array}{cccccccc}
                                                               a & 0 & 0 & 0 & 0 & 0 & 0 & 0 \\
                                                               b & a & 0 & 0 & 0 & 0 & 0 & 0 \\
                                                               c & 0 & a & 0 & 0 & 0 & 0 & 0 \\
                                                               d & c & b & a & 0 & 0 & 0 & 0 \\
                                                               e & b & c & 0 & a & 0 & 0 & 0 \\
                                                               f & d & e & b & c & a & 0 & 0 \\
                                                               g & e & d & c & b & 0 & a & 0 \\
                                                               h & f & g & e & d & b & c & a \\
                                                             \end{array}
                                                           \right)
\quad | \quad a,b,c,d,e,f,g,h\in k \right\} = \mathcal{E}_1.$$
Set \begin{align*}
&\xi_1=\sum\limits_{i=1}^8E_{ii}, \\
& \xi_2=E_{21}+E_{43}+E_{52}+E_{64}+E_{75}+E_{86},\\
&\xi_3 =E_{31}+E_{42}+E_{53}+E_{65}+E_{74}+E_{87},\\
& \xi_4=E_{41}+E_{62}+E_{73}+E_{85},\\
&\xi_5=E_{51}+E_{63}+E_{72}+E_{84},\\
& \xi_6=E_{61}+E_{82},\\
&\xi_7=E_{71}+E_{83},\\
&\xi_8 =E_{81}.
\end{align*}
One sees that $\{\xi_i|i=1,2,\cdots, 8\}$ is a $k$-linear basis of $\mathcal{E}_1$ and
\begin{align*}
\begin{cases}
\xi_i\xi_1=\xi_1\xi_i, \forall i=1,2,\cdots, 8;\\
\xi_2^2=\xi_5, \xi_2\xi_3=\xi_3\xi_2=\xi_4,\xi_2\xi_4=\xi_4\xi_2=\xi_6,\xi_2\xi_5=\xi_5\xi_2=\xi_7, \\
\xi_2\xi_6=\xi_6\xi_2=\xi_8, \xi_2\xi_7=\xi_7\xi_2=0, \xi_2\xi_8=\xi_8\xi_2=0;\\
\xi_3^2=\xi_5, \xi_3\xi_4=\xi_4\xi_3=\xi_7,\xi_3\xi_5=\xi_5\xi_3=\xi_6,\\
\xi_3\xi_6=\xi_6\xi_3=0,\xi_3\xi_7=\xi_7\xi_3=\xi_8, \xi_3\xi_8=\xi_8\xi_3=0;\\
\xi_4\xi_5=\xi_5\xi_4=\xi_8,\xi_4^2=0,\xi_4\xi_i=\xi_i\xi_4=0,\forall i\in  \{6,7,8\};\\
\xi_j\xi_i=\xi_i\xi_j=0, \forall i, j\in \{5,6,7,8\}.
\end{cases}
\end{align*}
It is easy to check that the map
 \begin{align*}
 \varepsilon_1: \mathcal{E}_{1}\to \Hom_k(\mathcal{E}_{1},k)
 \end{align*}
 defined by
 $$\varepsilon_1: \quad
\begin{aligned}
\xi_1 & \to  \xi_8^*\\
\xi_2 & \to  \xi_6^*\\
\xi_3 & \to  \xi_7^*\\
\xi_4 & \to  \xi_5^*     \\
\xi_5 & \to  \xi_4^*    \\
\xi_6 & \to  \xi_2^*   \\
\xi_7 & \to  \xi_3^*    \\
\xi_8 & \to  \xi_1^*.\\
\end{aligned}
$$
is an isomorphism of left $\mathcal{E}_{1}$-modules. Thus  $\mathcal{E}_{1}$ is a commutative Frobenius algebra. Actually,
the morphism $\theta_1: \mathcal{E}_1\to k[x,y]/(x^2-y^2, x^4)$ of $k$-algebras defined by
$$\theta_1: \quad
\begin{aligned}
\xi_1 & \to 1\\
\xi_2 & \to \bar{x}\\
\xi_3 & \to \bar{y}\\
\xi_4 & \to \bar{x}\bar{y}\\
\xi_5 & \to \bar{x}^2\\
\xi_6 & \to \bar{x}^2\bar{y}\\
\xi_7 & \to \bar{x}^3\\
\xi_8 & \to \bar{x}^3\bar{y}.\\
\end{aligned}
$$
is an isomorphism.  Hence
$E_1$ is a symmetric Frobenius algebra concentrated
in degree $0$. This implies that
$\mathrm{Tor}_{\mathcal{A}_1}(k_{\mathcal{A}_1},{}_{\mathcal{A}_1}k)\cong
E^*$ is a symmetric coalgebra. By Remark \ref{methods},
$\mathcal{A}_1$ is a Koszul Calabi-Yau DG algebra.

(2)We have $\partial_{\mathcal{A}_2}(x_1)=x_2^2, \partial_{\mathcal{A}_2}(x_2)=\partial_{\mathcal{A}_2}(x_3)=0$. According to the constructing procedure of the minimal semi-free resolution in \cite[Proposition 2.4]{MW1}, we get a minimal semi-free resolution $f_2: F_2\stackrel{\simeq}{\to} k$, where $F_2$ is a semi-free DG $\mathcal{A}_2$-module such that $$F_2^{\#}=\mathcal{A}_2\oplus \mathcal{A}_2e_{1}\oplus \mathcal{A}_2 e_2 \oplus \mathcal{A}_2 e_3\oplus \mathcal{A}_2 e_4,$$  with a differential $\partial_{F_2}$ defined by \begin{align*}
\left(
                         \begin{array}{c}
                           \partial_{F_2}(1)\\
                           \partial_{F_2} (e_1)\\
                            \partial_{F_2}(e_2)\\
                            \partial_{F_2}(e_3)\\
                             \partial_{F_2}(e_4)
                         \end{array}
                       \right)=\left(
                                 \begin{array}{ccccc}
                                   0   & 0   & 0   & 0   & 0 \\
                                   x_2 & 0   & 0   & 0   & 0  \\
                                   x_3 & 0   & 0   & 0   & 0  \\
                                   x_1 & x_2 & 0   & 0   & 0  \\
                                   0   & x_1 & 0   & x_2 & 0
                                 \end{array}
                               \right)
                       \left(
                         \begin{array}{c}
                           1\\
                           e_1\\
                           e_2\\
                           e_3\\
                           e_4
                            \end{array}
                       \right).
\end{align*}
Let $$D_2=\left(
                                 \begin{array}{ccccc}
                                   0   & 0   & 0   & 0   & 0 \\
                                   x_2 & 0   & 0   & 0   & 0  \\
                                   x_3 & 0   & 0   & 0   & 0  \\
                                   x_1 & x_2 & 0   & 0   & 0  \\
                                   0   & x_1 & 0   & x_2 & 0
                                 \end{array}
                               \right).$$
By the minimality of $F_2$, we have \begin{align*}
H(\Hom_{\mathcal{A}_2}(F_2,k))&=\Hom_{\mathcal{A}_2}(F_2,k)\\
&= k 1^*\oplus [\bigoplus\limits_{i=1}^4 k(e_i)^*].
\end{align*}
So the Ext-algebra $E_2=H(\Hom_{\mathcal{A}_2}(F_2,F_2))$  is concentrated in degree $0$. On the other hand, $$\Hom_{\mathcal{A}_2}(F_2,F_2)^{\#}\cong \{k 1^*\oplus \oplus [\bigoplus\limits_{i=1}^4 k(e_i)^*]\}\otimes_{k} F_2^{\#}$$ is concentrated in degree $\ge 0$. This implies that $E_2= Z^0(\Hom_{\mathcal{A}_2}(F_2,F_2))$.
Since $F_2^{\#}$ is a free graded $\mathcal{A}_2^{\#}$-module with a basis
$\{1, e_1,e_2, e_3,e_4 \}$ concentrated in degree $0$,
  the elements in  $\Hom_{\mathcal{A}_2}(F_2,F_2)^0$ is one to one correspondence with the matrixes in $M_5(k)$. Indeed, any $f_2\in \Hom_{\mathcal{A}_2}(F_2,F_2)^0$ is uniquely determined by
  a matrix $A_{f_2}=(a_{ij})_{5\times 5}\in M_5(k)$ with
$$\left(
                         \begin{array}{c}
                          f_2(1) \\
                          f_2(e_1)\\
                          f_2(e_2)\\
                           f_2(e_3)\\
                            f_2(e_4)
                         \end{array}
                       \right) =      A_{f_2} \cdot \left(
                         \begin{array}{c}
                          1 \\
                          e_1\\
                          e_2\\
                          e_3\\
                          e_4
                         \end{array}
                       \right).  $$
                       And $f_2\in  Z^0[\Hom_{\mathcal{A}_2}(F_2,F_2)]$ if and only if $\partial_{F_2}\circ f_2=f_2\circ \partial_{F_2}$, if and only if
 $ A_{f_2} D_2 = D_2 A_{f_2},$ which is also equivalent to
                       $$\begin{cases}
                       a_{ij}=0, \forall i<j\\
                       a_{11}=a_{22}=a_{33}= a_{44}=a_{55}\\
                       a_{21}=a_{42}=a_{54}\\
                       a_{32}=a_{43}=a_{53}=0\\
                       a_{41}=a_{52}
                       \end{cases}$$
by direct computations. Hence the algebra $$ E_2\cong \left\{\left(
                                                             \begin{array}{ccccc}
                                                               a & 0 & 0 & 0 & 0 \\
                                                               b & a & 0 & 0 & 0  \\
                                                               c & 0 & a & 0 & 0  \\
                                                               d & b & 0 & a & 0 \\
                                                               e & d & 0 & b & a
                                                             \end{array}
                                                           \right)
\quad | \quad a,b,c,d,e\in k \right\} = \mathcal{E}_2.$$
Set \begin{align*}
&\xi_1=\sum\limits_{i=1}^5E_{ii}, \\
&\xi_2=E_{21}+E_{42}+E_{54},\\
&\xi_3 =E_{31},\\
& \xi_4=E_{41}+E_{52},\\
&\xi_5=E_{51}.
\end{align*}
One sees that $\{\xi_i|i=1,2,\cdots, 5\}$ is a $k$-linear basis of $\mathcal{E}_2$ and
\begin{align*}
\begin{cases}
\xi_i\xi_1=\xi_1\xi_i, \forall i=1,2,\cdots, 5;\\
\xi_2^2=\xi_4, \xi_2\xi_3=\xi_3\xi_2=0,\xi_2\xi_4=\xi_4\xi_2=\xi_5,\xi_2\xi_5=\xi_5\xi_2=0, \\
\xi_3^2=0, \xi_3\xi_4=\xi_4\xi_3=0,\xi_3\xi_5=\xi_5\xi_3=0,\\
\xi_4^2=0,\xi_4\xi_5=\xi_5\xi_4=0,\xi_5^2=0.
\end{cases}
\end{align*}
It is easy to check that the map
 \begin{align*}
 \varepsilon_2: \mathcal{E}_{2}\to \Hom_k(\mathcal{E}_{2},k)
 \end{align*}
 defined by
 $$\varepsilon_2: \quad
\begin{aligned}
\xi_1 & \to  \xi_5^*\\
\xi_2 & \to  \xi_4^*\\
\xi_3 & \to  \xi_3^*\\
\xi_4 & \to  \xi_2^*     \\
\xi_5 & \to  \xi_1^*.
\end{aligned}
$$
is an isomorphism of left $\mathcal{E}_{2}$-modules. Thus  $\mathcal{E}_{2}$ is a commutative Frobenius algebra. Actually,
the morphism $\theta_2: \mathcal{E}_2\to k[x,y]/(x^4,xy, y^2)$ of $k$-algebras defined by
$$\theta_2: \quad
\begin{aligned}
\xi_1 & \to 1\\
\xi_2 & \to \bar{x}\\
\xi_3 & \to \bar{y}\\
\xi_4 & \to \bar{x}^2\\
\xi_5 & \to \bar{x}^3.
\end{aligned}
$$
is an isomorphism.  Hence
$E_2$ is a symmetric Frobenius algebra concentrated
in degree $0$. This implies that
$\mathrm{Tor}_{\mathcal{A}_2}(k_{\mathcal{A}_2},{}_{\mathcal{A}_2}k)\cong
E^*_2$ is a symmetric coalgebra. By Remark \ref{methods},
$\mathcal{A}_2$ is a Koszul Calabi-Yau DG algebra.

(3)We have $\partial_{\mathcal{A}_3}(x_1)=\sum\limits_{i=1}^3x_i^2=\partial_{\mathcal{A}_3}(x_2),\partial_{\mathcal{A}_3}(x_3)=0$. By the constructing procedure of the minimal semi-free resolution in \cite[Proposition 2.4]{MW1}, we get a minimal semi-free resolution $f_3: F_3\stackrel{\simeq}{\to} k$, where $F_3$ is a semi-free DG $\mathcal{A}_3$-module such that $$F_3^{\#}=\mathcal{A}_3\oplus \mathcal{A}_3e_{1}\oplus \mathcal{A}_3 e_2 \oplus \mathcal{A}_2 e_3,$$  with a differential $\partial_{F_3}$ defined by \begin{align*}
\left(
                         \begin{array}{c}
                           \partial_{F_3}(1)\\
                           \partial_{F_3} (e_1)\\
                            \partial_{F_3}(e_2)\\
                            \partial_{F_3}(e_3)
                         \end{array}
                       \right)=\left(
                                 \begin{array}{cccc}
                                   0       & 0       & 0      & 0\\
                                   x_1-x_2 & 0       & 0      & 0\\
                                   x_3     & 0       & 0      & 0\\
                                   x_1     & x_1-x_2 & x_3    & 0
                                 \end{array}
                               \right)
                       \left(
                         \begin{array}{c}
                           1\\
                           e_1\\
                           e_2\\
                           e_3
                            \end{array}
                       \right).
\end{align*}
Let $$D_3=\left(
                                 \begin{array}{cccc}
                                   0       & 0       & 0      & 0\\
                                   x_1-x_2 & 0       & 0      & 0\\
                                   x_3     & 0       & 0      & 0\\
                                   x_1     & x_1-x_2 & x_3    & 0
                                 \end{array}
                               \right).$$
By the minimality of $F_3$, we have \begin{align*}
H(\Hom_{\mathcal{A}_3}(F_3,k))&=\Hom_{\mathcal{A}_3}(F_3,k)\\
&= k 1^*\oplus [\bigoplus\limits_{i=1}^3 k(e_i)^*].
\end{align*}
So the Ext-algebra $E_3=H(\Hom_{\mathcal{A}_3}(F_3,F_3))$  is concentrated in degree $0$. On the other hand, $$\Hom_{\mathcal{A}_3}(F_3,F_3)^{\#}\cong \{k 1^*\oplus \oplus [\bigoplus\limits_{i=1}^3 k(e_i)^*]\}\otimes_{k} F_3^{\#}$$ is concentrated in degree $\ge 0$. This implies that $E_3= Z^0(\Hom_{\mathcal{A}_2}(F_3,F_3))$.
Since $F_3^{\#}$ is a free graded $\mathcal{A}_3^{\#}$-module with a basis
$\{1, e_1,e_2, e_3\}$ concentrated in degree $0$,
  the elements in  $\Hom_{\mathcal{A}_3}(F_3,F_3)^0$ is one to one correspondence with the matrixes in $M_4(k)$. Indeed, any $f_3\in \Hom_{\mathcal{A}_3}(F_3,F_3)^0$ is uniquely determined by
  a matrix $A_{f_3}=(a_{ij})_{4\times 4}\in M_4(k)$ with
$$\left(
                         \begin{array}{c}
                          f_2(1) \\
                          f_2(e_1)\\
                          f_2(e_2)\\
                           f_2(e_3)
                         \end{array}
                       \right) =      A_{f_3} \cdot \left(
                         \begin{array}{c}
                          1 \\
                          e_1\\
                          e_2\\
                          e_3
                         \end{array}
                       \right).  $$
                       And $f_3\in  Z^0[\Hom_{\mathcal{A}_3}(F_3,F_3)]$ if and only if $\partial_{F_3}\circ f_3=f_3\circ \partial_{F_3}$, if and only if
 $ A_{f_3} D_3 = D_3 A_{f_3},$ which is also equivalent to
                       $$\begin{cases}
                       a_{ij}=0, \forall i<j\\
                       a_{11}=a_{22}=a_{33}= a_{44}\\
                       a_{21}=a_{42}\\
                       a_{32}=0\\
                       a_{43}=a_{31}
                       \end{cases}$$
by direct computations. Hence the algebra $$ E_3\cong \left\{\left(
                                                             \begin{array}{cccc}
                                                               a & 0 & 0 & 0  \\
                                                               b & a & 0 & 0   \\
                                                               c & 0 & a & 0   \\
                                                               d & b & c & a
                                                             \end{array}
                                                           \right)
\quad | \quad a,b,c,d\in k \right\} = \mathcal{E}_3.$$
Set \begin{align*}
&\xi_1=\sum\limits_{i=1}^4E_{ii}, \\
&\xi_2=E_{21}+E_{42},\\
&\xi_3 =E_{31}+E_{43},\\
& \xi_4=E_{41}.
\end{align*}
One sees that $\{\xi_i|i=1,2,\cdots, 4\}$ is a $k$-linear basis of $\mathcal{E}_3$ and
\begin{align*}
\begin{cases}
\xi_i\xi_1=\xi_1\xi_i, \forall i=1,2,\cdots, 4;\\
\xi_2^2=\xi_4, \xi_2\xi_3=\xi_3\xi_2=0,\xi_2\xi_4=\xi_4\xi_2=0, \\
\xi_3^2=\xi_4, \xi_3\xi_4=\xi_4\xi_3=0, \xi_4^2=0.
\end{cases}
\end{align*}
It is easy to check that the map
 \begin{align*}
 \varepsilon_3: \mathcal{E}_{3}\to \Hom_k(\mathcal{E}_{3},k)
 \end{align*}
 defined by
 $$\varepsilon_3: \quad
\begin{aligned}
\xi_1 & \to  \xi_4^*\\
\xi_2 & \to  \xi_2^*\\
\xi_3 & \to  \xi_3^*\\
\xi_4 & \to  \xi_1^*.
\end{aligned}
$$
is an isomorphism of left $\mathcal{E}_{3}$-modules. Thus  $\mathcal{E}_{3}$ is a commutative Frobenius algebra. Actually,
the morphism $\theta_3: \mathcal{E}_3\to k[x,y]/(x^3,xy, y^3,x^2-y^2)$ of $k$-algebras defined by
$$\theta_3: \quad
\begin{aligned}
\xi_1 & \to 1\\
\xi_2 & \to \bar{x}\\
\xi_3 & \to \bar{y}\\
\xi_4 & \to \bar{x}^2.
\end{aligned}
$$
is an isomorphism.  Hence
$E_3$ is a symmetric Frobenius algebra concentrated
in degree $0$. This implies that
$\mathrm{Tor}_{\mathcal{A}_3}(k_{\mathcal{A}_3},{}_{\mathcal{A}_3}k)\cong
E^*_3$ is a symmetric coalgebra. By Remark \ref{methods},
$\mathcal{A}_3$ is a Koszul Calabi-Yau DG algebra.

(4)We have $\partial_{\mathcal{A}_4}(x_1)=x_2^2=\partial_{\mathcal{A}_4}(x_3),\partial_{\mathcal{A}_4}(x_2)=0$. According to the constructing procedure of the minimal semi-free resolution in \cite[Proposition 2.4]{MW1}, we get a minimal semi-free resolution $f_4: F_4\stackrel{\simeq}{\to} k$, where $F_4$ is a semi-free DG $\mathcal{A}_4$-module such that $$F_4^{\#}=\mathcal{A}_4\oplus \mathcal{A}_4e_{1}\oplus \mathcal{A}_4 e_2 \oplus \mathcal{A}_4 e_3\oplus \mathcal{A}_4 e_4,$$  with a differential $\partial_{F_4}$ defined by \begin{align*}
\left(
                         \begin{array}{c}
                           \partial_{F_4}(1)\\
                           \partial_{F_4} (e_1)\\
                            \partial_{F_4}(e_2)\\
                            \partial_{F_4}(e_3)\\
                            \partial_{F_4}(e_4)
                         \end{array}
                       \right)=\left(
                                 \begin{array}{ccccc}
                                   0       & 0       & 0      & 0  & 0\\
                                 x_2       & 0       & 0      & 0  & 0\\
                                 x_1-x_3   & 0       & 0      & 0  & 0\\
                                   x_1     & x_2     & 0      & 0  & 0 \\
                                   0       & x_1     & 0      & x_2& 0
                                 \end{array}
                               \right)
                       \left(
                         \begin{array}{c}
                           1\\
                           e_1\\
                           e_2\\
                           e_3\\
                           e_4
                            \end{array}
                       \right).\end{align*}
Let $$D_4=\left(
                                 \begin{array}{ccccc}
                                   0       & 0       & 0      & 0  & 0\\
                                 x_2       & 0       & 0      & 0  & 0\\
                                 x_1-x_3   & 0       & 0      & 0  & 0\\
                                   x_1     & x_2     & 0      & 0  & 0 \\
                                   0       & x_1     & 0      & x_2& 0
                                 \end{array}
                               \right).$$
By the minimality of $F_4$, we have \begin{align*}
H(\Hom_{\mathcal{A}_4}(F_4,k))&=\Hom_{\mathcal{A}_4}(F_4,k)\\
&= k 1^*\oplus [\bigoplus\limits_{i=1}^4 k(e_i)^*].
\end{align*}
So the Ext-algebra $E_4=H(\Hom_{\mathcal{A}_4}(F_4,F_4))$  is concentrated in degree $0$. On the other hand, $$\Hom_{\mathcal{A}_4}(F_4,F_4)^{\#}\cong \{k 1^*\oplus \oplus [\bigoplus\limits_{i=1}^4 k(e_i)^*]\}\otimes_{k} F_4^{\#}$$ is concentrated in degree $\ge 0$. This implies that $E_4= Z^0(\Hom_{\mathcal{A}_4}(F_4,F_4))$.
Since $F_4^{\#}$ is a free graded $\mathcal{A}_4^{\#}$-module with a basis
$\{1, e_1,e_2, e_3,e_4\}$ concentrated in degree $0$,
  the elements in  $\Hom_{\mathcal{A}_4}(F_4,F_4)^0$ is one to one correspondence with the matrixes in $M_5(k)$. Indeed, any $f_4\in \Hom_{\mathcal{A}_4}(F_4,F_4)^0$ is uniquely determined by
  a matrix $A_{f_4}=(a_{ij})_{5\times 5}\in M_5(k)$ with
$$\left(
                         \begin{array}{c}
                          f_4(1) \\
                          f_4(e_1)\\
                          f_4(e_2)\\
                          f_4(e_3)\\
                          f_4(e_4)
                         \end{array}
                       \right) =      A_{f_4} \cdot \left(
                         \begin{array}{c}
                          1 \\
                          e_1\\
                          e_2\\
                          e_3\\
                          e_4
                         \end{array}
                       \right).  $$
                       And $f_4\in  Z^0[\Hom_{\mathcal{A}_4}(F_4,F_4)]$ if and only if $\partial_{F_4}\circ f_4=f_4\circ \partial_{F_4}$, if and only if
 $ A_{f_4} D_4 = D_4 A_{f_4},$ which is also equivalent to
                       $$\begin{cases}
                       a_{ij}=0, \forall i<j\\
                       a_{11}=a_{22}=a_{33}= a_{44}=a_{55}\\
                       a_{21}=a_{42}=a_{54}\\
                       a_{32}=a_{43}=a_{53}=0\\
                       a_{41}=a_{52}
                       \end{cases}$$
by direct computations. Hence the algebra $$ E_4\cong \left\{\left(
                                                             \begin{array}{ccccc}
                                                               a & 0 & 0 & 0  & 0\\
                                                               b & a & 0 & 0  & 0\\
                                                               c & 0 & a & 0  & 0\\
                                                               d & b & 0 & a  & 0 \\
                                                               e & d & 0 & b  & 0
                                                             \end{array}
                                                           \right)
\quad | \quad a,b,c,d,e\in k \right\} = \mathcal{E}_4.$$
Set \begin{align*}
&\xi_1=\sum\limits_{i=1}^5E_{ii}, \\
&\xi_2=E_{21}+E_{42}+E_{54},\\
&\xi_3 =E_{31},\\
& \xi_4=E_{41}+E_{52},\\
&\xi_5=E_{51}.
\end{align*}
One sees that $\{\xi_i|i=1,2,\cdots, 5\}$ is a $k$-linear basis of $\mathcal{E}_4$ and
\begin{align*}
\begin{cases}
\xi_i\xi_1=\xi_1\xi_i, \forall i=1,2,\cdots, 5;\\
\xi_2^2=\xi_4, \xi_2\xi_3=\xi_3\xi_2=0,\xi_2\xi_4=\xi_4\xi_2=\xi_5,\xi_2\xi_5=\xi_5\xi_2=0, \\
\xi_3^2=0, \xi_3\xi_4=\xi_4\xi_3=0,\xi_3\xi_5=\xi_5\xi_3=0,\\
\xi_4^2=0,\xi_4\xi_5=\xi_5\xi_4=0,\xi_5^2=0.
\end{cases}
\end{align*}
It is easy to check that the map
 \begin{align*}
 \varepsilon_2: \mathcal{E}_{4}\to \Hom_k(\mathcal{E}_{4},k)
 \end{align*}
 defined by
 $$\varepsilon_2: \quad
\begin{aligned}
\xi_1 & \to  \xi_5^*\\
\xi_2 & \to  \xi_4^*\\
\xi_3 & \to  \xi_3^*\\
\xi_4 & \to  \xi_2^*     \\
\xi_5 & \to  \xi_1^*
\end{aligned}
$$
is an isomorphism of left $\mathcal{E}_{4}$-modules. Thus  $\mathcal{E}_{4}$ is a commutative Frobenius algebra. Actually,
the morphism $\theta_2: \mathcal{E}_4\to k[x,y]/(x^4,xy, y^2)$ of $k$-algebras defined by
$$\theta_4: \quad
\begin{aligned}
\xi_1 & \to 1\\
\xi_2 & \to \bar{x}\\
\xi_3 & \to \bar{y}\\
\xi_4 & \to \bar{x}^2\\
\xi_5 & \to \bar{x}^3.
\end{aligned}
$$
is an isomorphism.  Hence
$E_4$ is a symmetric Frobenius algebra concentrated
in degree $0$. This implies that
$\mathrm{Tor}_{\mathcal{A}_4}(k_{\mathcal{A}_4},{}_{\mathcal{A}_4}k)\cong
E^*_4$ is a symmetric coalgebra. By Remark \ref{methods},
$\mathcal{A}_4$ is a Koszul Calabi-Yau DG algebra.

(5)We have $\partial_{\mathcal{A}_5}(x_1)=x_1^2+x_2^2=\partial_{\mathcal{A}_5}(x_2),\partial_{\mathcal{A}_5}(x_3)=0$. By the constructing procedure of the minimal semi-free resolution in \cite[Proposition 2.4]{MW1}, we get a minimal semi-free resolution $f_5: F_5\stackrel{\simeq}{\to} k$, where $F_5$ is a semi-free DG $\mathcal{A}_5$-module such that $$F_5^{\#}=\mathcal{A}_5\oplus \mathcal{A}_5e_{1}\oplus \mathcal{A}_5 e_2 \oplus \mathcal{A}_5 e_3,$$  with a differential $\partial_{F_5}$ defined by \begin{align*}
\left(
                         \begin{array}{c}
                           \partial_{F_5}(1)\\
                           \partial_{F_5} (e_1)\\
                            \partial_{F_5}(e_2)\\
                            \partial_{F_5}(e_3)
                         \end{array}
                       \right)=\left(
                                 \begin{array}{cccc}
                                   0       & 0       & 0      & 0\\
                                   x_3 & 0       & 0      & 0\\
                                   x_1-x_2     & 0       & 0      & 0\\
                                   0     & x_1-x_2 & x_3    & 0
                                 \end{array}
                               \right)
                       \left(
                         \begin{array}{c}
                           1\\
                           e_1\\
                           e_2\\
                           e_3
                            \end{array}
                       \right).
\end{align*}
Let $$D_5=\left(
                                 \begin{array}{cccc}
                                   0       & 0       & 0      & 0\\
                                   x_3 & 0       & 0      & 0\\
                                   x_1-x_2     & 0       & 0      & 0\\
                                   0     & x_1-x_2 & x_3    & 0
                                 \end{array}
                               \right).$$
By the minimality of $F_5$, we have \begin{align*}
H(\Hom_{\mathcal{A}_5}(F_5,k))&=\Hom_{\mathcal{A}_5}(F_5,k)\\
&= k 1^*\oplus [\bigoplus\limits_{i=1}^3 k(e_i)^*].
\end{align*}
So the Ext-algebra $E_5=H(\Hom_{\mathcal{A}_5}(F_5,F_5))$  is concentrated in degree $0$. On the other hand, $$\Hom_{\mathcal{A}_5}(F_5,F_5)^{\#}\cong \{k 1^*\oplus \oplus [\bigoplus\limits_{i=1}^3 k(e_i)^*]\}\otimes_{k} F_5^{\#}$$ is concentrated in degree $\ge 0$. This implies that $E_5= Z^0(\Hom_{\mathcal{A}_5}(F_5,F_5))$.
Since $F_5^{\#}$ is a free graded $\mathcal{A}_5^{\#}$-module with a basis
$\{1, e_1,e_2, e_3\}$ concentrated in degree $0$,
  the elements in  $\Hom_{\mathcal{A}_5}(F_5,F_5)^0$ is one to one correspondence with the matrixes in $M_4(k)$. Indeed, any $f_5\in \Hom_{\mathcal{A}_5}(F_5,F_5)^0$ is uniquely determined by
  a matrix $A_{f_5}=(a_{ij})_{4\times 4}\in M_4(k)$ with
$$\left(
                         \begin{array}{c}
                          f_2(1) \\
                          f_2(e_1)\\
                          f_2(e_2)\\
                           f_2(e_3)
                         \end{array}
                       \right) =      A_{f_5} \cdot \left(
                         \begin{array}{c}
                          1 \\
                          e_1\\
                          e_2\\
                          e_3
                         \end{array}
                       \right).  $$
                       And $f_5\in  Z^0[\Hom_{\mathcal{A}_5}(F_5,F_5)]$ if and only if $\partial_{F_5}\circ f_5=f_5\circ \partial_{F_5}$, if and only if
 $ A_{f_5} D_5 = D_5 A_{f_5},$ which is also equivalent to
                       $$\begin{cases}
                       a_{ij}=0, \forall i<j\\
                       a_{11}=a_{22}=a_{33}= a_{44}\\
                       a_{21}=a_{43}\\
                       a_{32}=0\\
                       a_{31}=a_{42}
                       \end{cases}$$
by direct computations. Hence the algebra $$ E_5\cong \left\{\left(
                                                             \begin{array}{cccc}
                                                               a & 0 & 0 & 0  \\
                                                               b & a & 0 & 0   \\
                                                               c & 0 & a & 0   \\
                                                               d & c & b & a
                                                             \end{array}
                                                           \right)
\quad | \quad a,b,c,d\in k \right\} = \mathcal{E}_5.$$
Set \begin{align*}
&\xi_1=\sum\limits_{i=1}^4E_{ii}, \\
&\xi_2=E_{21}+E_{43},\\
&\xi_3 =E_{31}+E_{42},\\
& \xi_4=E_{41}.
\end{align*}
One sees that $\{\xi_i|i=1,2,\cdots, 4\}$ is a $k$-linear basis of $\mathcal{E}_5$ and
\begin{align*}
\begin{cases}
\xi_i\xi_1=\xi_1\xi_i, \forall i=1,2,\cdots, 4;\\
\xi_2^2=0, \xi_2\xi_3=\xi_3\xi_2=\xi_4,\xi_2\xi_4=\xi_4\xi_2=0, \\
\xi_3^2=0, \xi_3\xi_4=\xi_4\xi_3=0, \xi_4^2=0.
\end{cases}
\end{align*}
It is easy to check that the map
 \begin{align*}
 \varepsilon_5: \mathcal{E}_{5}\to \Hom_k(\mathcal{E}_{5},k)
 \end{align*}
 defined by
 $$\varepsilon_5: \quad
\begin{aligned}
\xi_1 & \to  \xi_4^*\\
\xi_2 & \to  \xi_2^*\\
\xi_3 & \to  \xi_3^*\\
\xi_4 & \to  \xi_1^*.
\end{aligned}
$$
is an isomorphism of left $\mathcal{E}_{5}$-modules. Thus  $\mathcal{E}_{5}$ is a commutative Frobenius algebra. Actually,
the morphism $\theta_5: \mathcal{E}_5\to k[x]/(x^4)$ of $k$-algebras defined by
$$\theta_5: \quad
\begin{aligned}
\xi_1 & \to 1\\
\xi_2 & \to \bar{x}\\
\xi_3 & \to \bar{x}^2\\
\xi_4 & \to \bar{x}^3.
\end{aligned}
$$
is an isomorphism.  Hence
$E_5$ is a symmetric Frobenius algebra concentrated
in degree $0$. This implies that
$\mathrm{Tor}_{\mathcal{A}_5}(k_{\mathcal{A}_5},{}_{\mathcal{A}_5}k)\cong
E^*_5$ is a symmetric coalgebra. By Remark \ref{methods},
$\mathcal{A}_5$ is a Koszul Calabi-Yau DG algebra.

(6)We have $\partial_{\mathcal{A}_6}(x_1)=x_1^2+x_2^2=\partial_{\mathcal{A}_6}(x_3),\partial_{\mathcal{A}_6}(x_2)=0$. According to the constructing procedure of the minimal semi-free resolution in \cite[Proposition 2.4]{MW1}, we get a minimal semi-free resolution $f_6: F_6\stackrel{\simeq}{\to} k$, where $F_6$ is a semi-free DG $\mathcal{A}_6$-module such that $$F_6^{\#}=\mathcal{A}_6\oplus \mathcal{A}_6e_{1}\oplus \mathcal{A}_6 e_2 \oplus \mathcal{A}_6 e_3,$$  with a differential $\partial_{F_6}$ defined by \begin{align*}
\left(
                         \begin{array}{c}
                           \partial_{F_6}(1)\\
                           \partial_{F_6} (e_1)\\
                            \partial_{F_6}(e_2)\\
                            \partial_{F_6}(e_3)
                         \end{array}
                       \right)=\left(
                                 \begin{array}{cccc}
                                   0       & 0       & 0      & 0\\
                                   x_2 & 0       & 0      & 0\\
                                   x_1-x_3     & 0       & 0      & 0\\
                                   0     & x_1-x_3 & x_2   & 0
                                 \end{array}
                               \right)
                       \left(
                         \begin{array}{c}
                           1\\
                           e_1\\
                           e_2\\
                           e_3
                            \end{array}
                       \right).
\end{align*}
Let $$D_6=\left(
                                 \begin{array}{cccc}
                                   0       & 0       & 0      & 0\\
                                   x_2 & 0       & 0      & 0\\
                                   x_1-x_3     & 0       & 0      & 0\\
                                   0     & x_1-x_3 & x_2   & 0
                                 \end{array}
                               \right).$$
By the minimality of $F_6$, we have \begin{align*}
H(\Hom_{\mathcal{A}_6}(F_6,k))&=\Hom_{\mathcal{A}_6}(F_6,k)\\
&= k 1^*\oplus [\bigoplus\limits_{i=1}^3 k(e_i)^*].
\end{align*}
So the Ext-algebra $E_6=H(\Hom_{\mathcal{A}_6}(F_6,F_6))$  is concentrated in degree $0$. On the other hand, $$\Hom_{\mathcal{A}_6}(F_6,F_6)^{\#}\cong \{k 1^*\oplus \oplus [\bigoplus\limits_{i=1}^3 k(e_i)^*]\}\otimes_{k} F_6^{\#}$$ is concentrated in degree $\ge 0$. This implies that $E_6= Z^0(\Hom_{\mathcal{A}_6}(F_6,F_6))$.
Since $F_6^{\#}$ is a free graded $\mathcal{A}_6^{\#}$-module with a basis
$\{1, e_1,e_2, e_3\}$ concentrated in degree $0$,
  the elements in  $\Hom_{\mathcal{A}_6}(F_6,F_6)^0$ is one to one correspondence with the matrixes in $M_4(k)$. Indeed, any $f_6\in \Hom_{\mathcal{A}_6}(F_6,F_6)^0$ is uniquely determined by
  a matrix $A_{f_6}=(a_{ij})_{4\times 4}\in M_4(k)$ with
$$\left(
                         \begin{array}{c}
                          f_2(1) \\
                          f_2(e_1)\\
                          f_2(e_2)\\
                           f_2(e_3)
                         \end{array}
                       \right) =      A_{f_6} \cdot \left(
                         \begin{array}{c}
                          1 \\
                          e_1\\
                          e_2\\
                          e_3
                         \end{array}
                       \right).  $$
                       And $f_6\in  Z^0[\Hom_{\mathcal{A}_6}(F_6,F_6)]$ if and only if $\partial_{F_6}\circ f_6=f_6\circ \partial_{F_6}$, if and only if
 $ A_{f_6} D_6 = D_6 A_{f_6},$ which is also equivalent to
                       $$\begin{cases}
                       a_{ij}=0, \forall i<j\\
                       a_{11}=a_{22}=a_{33}= a_{44}\\
                       a_{21}=a_{43}\\
                       a_{32}=0\\
                       a_{31}=a_{42}
                       \end{cases}$$
by direct computations. Hence the algebra $$ E_6\cong \left\{\left(
                                                             \begin{array}{cccc}
                                                               a & 0 & 0 & 0  \\
                                                               b & a & 0 & 0   \\
                                                               c & 0 & a & 0   \\
                                                               d & c & b & a
                                                             \end{array}
                                                           \right)
\quad | \quad a,b,c,d\in k \right\} = \mathcal{E}_6.$$
Set \begin{align*}
&\xi_1=\sum\limits_{i=1}^4E_{ii}, \\
&\xi_2=E_{21}+E_{43},\\
&\xi_3 =E_{31}+E_{42},\\
& \xi_4=E_{41}.
\end{align*}
One sees that $\{\xi_i|i=1,2,\cdots, 4\}$ is a $k$-linear basis of $\mathcal{E}_6$ and
\begin{align*}
\begin{cases}
\xi_i\xi_1=\xi_1\xi_i, \forall i=1,2,\cdots, 4;\\
\xi_2^2=0, \xi_2\xi_3=\xi_3\xi_2=\xi_4,\xi_2\xi_4=\xi_4\xi_2=0, \\
\xi_3^2=0, \xi_3\xi_4=\xi_4\xi_3=0, \xi_4^2=0.
\end{cases}
\end{align*}
It is easy to check that the map
 \begin{align*}
 \varepsilon_6: \mathcal{E}_{5}\to \Hom_k(\mathcal{E}_{6},k)
 \end{align*}
 defined by
 $$\varepsilon_5: \quad
\begin{aligned}
\xi_1 & \to  \xi_4^*\\
\xi_2 & \to  \xi_2^*\\
\xi_3 & \to  \xi_3^*\\
\xi_4 & \to  \xi_1^*.
\end{aligned}
$$
is an isomorphism of left $\mathcal{E}_{6}$-modules. Thus  $\mathcal{E}_{5}$ is a commutative Frobenius algebra. Actually,
the morphism $\theta_5: \mathcal{E}_6\to k[x]/(x^4)$ of $k$-algebras defined by
$$\theta_6: \quad
\begin{aligned}
\xi_1 & \to 1\\
\xi_2 & \to \bar{x}\\
\xi_3 & \to \bar{x}^2\\
\xi_4 & \to \bar{x}^3.
\end{aligned}
$$
is an isomorphism.  Hence
$E_6$ is a symmetric Frobenius algebra concentrated
in degree $0$. This implies that
$\mathrm{Tor}_{\mathcal{A}_6}(k_{\mathcal{A}_6},{}_{\mathcal{A}_6}k)\cong
E^*_6$ is a symmetric coalgebra. By Remark \ref{methods},
$\mathcal{A}_6$ is a Koszul Calabi-Yau DG algebra.
\end{proof}

Now, we can reach the following conclusion: the DG algebras
$\mathcal{A}_{\mathcal{O}_{-1}(k^3)}(M)$ in  Case $2$, Case $3$ and Case $4$ are Koszul Calabi-Yau DG algebras.

\section{proof of theorem c}
\begin{proof}
First, let us prove the `if' part.  Suppose that there exists some  $C=(c_{ij})_{3\times 3}\in \mathrm{QPL}_3(k)$ satisfying $N=C^{-1}M(c_{ij}^2)_{3\times 3}$,
where $$M=\left(
                                 \begin{array}{ccc}
                                   1 & 1 & 0 \\
                                   1 & 1 & 0 \\
                                   1 & 1 & 0 \\
                                 \end{array}
                               \right)
\,\,\text{or}\,\,M=\left(
                                 \begin{array}{ccc}
                                   m_{11} & m_{12} & m_{13} \\
                                   l_1m_{11} & l_1m_{12} & l_1m_{13} \\
                                   l_2m_{11} & l_2m_{12} & l_2m_{13} \\
                                 \end{array}
                               \right)$$ with $m_{12}l_1^2+m_{13}l_2^2\neq m_{11}, l_1l_2\neq 0$ and $4m_{12}m_{13}l_1^2l_2^2= (m_{12}l_1^2+m_{13}l_2^2-m_{11})^2$. In both cases, $\mathcal{A}_{\mathcal{O}_{-1}(k^3)}(M)\cong \mathcal{A}_{\mathcal{O}_{-1}(k^3)}(N)$ by Theorem \ref{iso}. On the other hand, the DG algebra $\mathcal{A}_{\mathcal{O}_{-1}(k^3)}(M)$  is not Calabi-Yau by Proposition \ref{noncycase} and Proposition \ref{nonregsec}. Thus $\mathcal{A}_{\mathcal{O}_{-1}(k^3)}(N)$ is not Calabi-Yau.

                                It remains to show the `only if' part.
If $\mathcal{A}_{\mathcal{O}_{-1}(k^3)}(N)$ is not Calabi-Yau, then $r(N)\neq 3$ by Proposition \ref{easycases}(1), $r(N)\neq 0$ by \cite[Proposition 3.2]{MYY},  $r(N)\neq 2$ by Proposition \ref{easycases}(2) and Proposition \ref{case1cy}. So $r(N)=1$. By Remark \ref{redtosimp}, we have $\mathcal{A}_{\mathcal{O}_{-1}(k^3)}(N)\cong \mathcal{A}_{\mathcal{O}_{-1}(k^3)}(M)$, where
$$M=\left(
                                 \begin{array}{ccc}
                                   m_{11} & m_{12} & m_{13} \\
                                   l_1m_{11} & l_1m_{12} & l_1m_{13} \\
                                   l_2m_{11} & l_2m_{12} & l_2m_{13} \\
                                 \end{array}
                               \right),$$
 $(0,0,0)\neq (m_{11},m_{12},m_{13})\in k^3$ and  $l_1,l_2\in k$. By Proposition \ref{easycases}(3-5), Remark \ref{simpsix} and Proposition \ref{sixcases},  we have either
  $$l_1l_2\neq 0, m_{12}m_{13}=0\,\,\text{and} \,\, m_{12}l_1^2+m_{13}l_2^2= m_{11}$$
 or $$l_1l_2\neq 0, m_{12}l_1^2+m_{13}l_2^2\neq m_{11}, 4m_{12}m_{13}l_1^2l_2^2=(m_{12}l_1^2+m_{13}l_2^2-m_{11})^2.$$
By Proposition \ref{nonregsec}, there exists $B=(b_{ij})_{3\times 3}\in \mathrm{QPL}_3(k)$ such that
$$ B^{-1}M(b_{ij}^2)_{3\times 3}=\left(
                                 \begin{array}{ccc}
                                   1 & 1 & 0 \\
                                   1 & 1 & 0 \\
                                   1 & 1 & 0 \\
                                 \end{array}
                               \right),$$
if
$l_1l_2\neq 0, m_{12}m_{13}=0\,\,\text{and} \,\, m_{12}l_1^2+m_{13}l_2^2= m_{11}$. In this case,
$$\mathcal{A}_{\mathcal{O}_{-1}(k^3)}(N)\cong \mathcal{A}_{\mathcal{O}_{-1}(k^3)}(M)\cong \mathcal{A}_{\mathcal{O}_{-1}(k^3)}(Q)$$ by Theorem \ref{iso}, where $$Q=\left(
                                 \begin{array}{ccc}
                                   1 & 1 & 0 \\
                                   1 & 1 & 0 \\
                                   1 & 1 & 0 \\
                                 \end{array}
                               \right).$$
\end{proof}
\section*{Acknowledgments}
X.-F. Mao was supported by the National Natural Science Foundation of
China (No. 11871326). X.-T. Wang was supported by Simons Foundation Program: Mathematics and Physical Sciences-Collaboration Grants for Mathematician (Award No.688403).
\section*{Appendix}
{\bf A.1} {\it Proof of Proposition \ref{simpleprop}}.
 divide it into some parts. For simplicity, we write $$\overrightarrow{t^2}=\left(
    \begin{array}{ccc}
      t_1^2 \\
      t_2^2 \\
      t_3^2 \\
    \end{array}
  \right), \overrightarrow{qt}=\left(
    \begin{array}{ccc}
      t^2_1 \\
      t^2_2 \\
      t^2_3 \\
    \end{array}
  \right), \overrightarrow{4rt+q^2}=\left(
    \begin{array}{ccc}
      4r_1t_1+q^2_1 \\
      4r_2t_2+q^2_2\\
      4r_3t_3+q^2_3\\
    \end{array}
  \right).$$
 We show the following lemmas first.
  \begin{lem}\label{part1}
  Assume that $M=(m_{ij})_{3\times 3}$ satisfies the  conditions $(1),(2)$ and $(3)$ in Proposition \ref{simpleprop}. Then there is at least one zero in the set $\{s_1,s_2,s_3,t_1,t_2,t_3\}$.
  \end{lem}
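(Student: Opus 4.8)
The plan is to recast the solvability hypotheses (2)--(4) of Proposition \ref{simpleprop} as orthogonality relations against $\overrightarrow{s}$ and then argue by contradiction. The basic observation is purely linear-algebraic: since $r(M)=2$, the space $\mathrm{ker}\,M$ is the line $k\overrightarrow{s}$, and from $\overrightarrow{s}^{\,T}(M^T\overrightarrow{w})=(M\overrightarrow{s})^{T}\overrightarrow{w}=0$ together with rank--nullity one gets
$$\mathrm{im}\,M^T=\{\overrightarrow{w}\in k^3\mid s_1w_1+s_2w_2+s_3w_3=0\},$$
a $2$-dimensional subspace. Hence each of the equations $M^T\overrightarrow{q}=\overrightarrow{t^2}$, $M^T\overrightarrow{r}=\overrightarrow{qt}$ and $M^T\overrightarrow{u}=\overrightarrow{4rt+q^2}$ is solvable exactly when its right-hand side pairs trivially with $\overrightarrow{s}$. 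The first thing I would do is record these as the scalar identities $\sum_i s_it_i^2=0$ (the compatibility already built into (1)), $\sum_i s_iq_it_i=0$, and $\sum_i s_i(4r_it_i+q_i^2)=0$.

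Next I would suppose, for contradiction, that all six scalars $s_1,s_2,s_3,t_1,t_2,t_3$ are nonzero. Then $D=\mathrm{diag}(t_1,t_2,t_3)$ is invertible, and since $\overrightarrow{t^2}=D\overrightarrow{t}$ and $\overrightarrow{qt}=D\overrightarrow{q}$, the linear independence demanded in condition (2) transports to the linear independence of $\overrightarrow{t}$ and $\overrightarrow{q}$. Putting $\widetilde{s}=D\overrightarrow{s}$, which again has all entries nonzero, the first two identities read $\widetilde{s}^{\,T}\overrightarrow{t}=0$ and $\widetilde{s}^{\,T}\overrightarrow{q}=0$, so $\widetilde{s}$ spans the line orthogonal to the plane $\langle\overrightarrow{t},\overrightarrow{q}\rangle$; equivalently $\widetilde{s}$ is proportional to the cross product $\overrightarrow{t}\times\overrightarrow{q}$ (the standard bilinear form on $k^3$ is non-degenerate in characteristic zero, so this complement is exactly one-dimensional). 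The aim is then to combine this description of $\widetilde{s}$ with the remaining relations $M^T\overrightarrow{q}=\overrightarrow{t^2}$, $M^T\overrightarrow{r}=\overrightarrow{qt}$ and the identity $\sum_i s_i(4r_it_i+q_i^2)=0$ to force one coordinate of $\overrightarrow{t}\times\overrightarrow{q}$, and hence of $\overrightarrow{s}$ or of $\overrightarrow{t}$, to vanish.

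The step I expect to be genuinely hard is precisely this last elimination, and it is where the hypotheses must be used in full strength rather than merely through the compatibility $\sum_i s_it_i^2=0$. The auxiliary vectors $\overrightarrow{q},\overrightarrow{r},\overrightarrow{u}$ are determined only modulo $\mathrm{ker}\,M^T=k\overrightarrow{t}$, so before any computation I would verify that every quantity entering the argument is invariant under $\overrightarrow{q}\mapsto\overrightarrow{q}+c\overrightarrow{t}$ and the induced replacement $\overrightarrow{r}\mapsto\overrightarrow{r}+c\overrightarrow{q}+c'\overrightarrow{t}$ --- this invariance does hold, since all the ambiguities land inside $\mathrm{im}\,M^T$ and thus pair to zero with $\overrightarrow{s}$. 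One then has to show, by a direct bilinear calculation in the entries $s_i,t_i$ and the coordinates of $\overrightarrow{q},\overrightarrow{r}$, that the three scalar identities are incompatible with $\overrightarrow{t}\times\overrightarrow{q}$ having all coordinates nonzero; this is the ``tedious computation and complicated matrix analysis'' foreshadowed after Proposition \ref{simpleprop}. I anticipate that the conditions are delicate here and that the solvability of the \emph{whole} chain (not just conditions (1)--(2)) is what ultimately drives the contradiction, so isolating exactly which combination of the identities does the work is the crux. Once a single coordinate is shown to vanish, the conclusion of the lemma is immediate.
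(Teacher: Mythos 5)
Your proposal is a plan rather than a proof: everything after the description of $\widetilde{s}$ as a multiple of $\overrightarrow{t}\times\overrightarrow{q}$ is deferred to an unexecuted ``direct bilinear calculation,'' and that calculation is the entire content of the lemma. Worse, the deferral hides two concrete defects. First, you import the identity $\sum_i s_i(4r_it_i+q_i^2)=0$, which is the solvability of $M^T\overrightarrow{u}=\overrightarrow{4rt+q^2}$, i.e.\ condition (4) of Proposition \ref{simpleprop}; this lemma assumes only (1)--(3), so that identity is not available to you. Second, and decisively, the elimination you postpone cannot be carried out from the data you propose to use. As you yourself observe, $\mathrm{im}\,M^T=\overrightarrow{s}^{\perp}$ because $\mathrm{coker}\,M^T$ is one-dimensional, so the pairings against $\overrightarrow{s}$ are the \emph{complete} scalar content of the solvability hypotheses; conversely, starting from any linearly independent $\overrightarrow{t},\overrightarrow{q}$ with all $t_i\neq 0$, putting $\widetilde{s}=\overrightarrow{t}\times\overrightarrow{q}$ and $s_i=\widetilde{s}_i/t_i$ makes your first two identities hold automatically, and a rank-two $M$ realizing the whole chain always exists (prescribe $M^T\overrightarrow{t}=0$, $M^T\overrightarrow{q}=\overrightarrow{t^2}$, and send a third basis vector anywhere suitable in $\overrightarrow{s}^{\perp}$). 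Explicitly, take $\overrightarrow{t}=(1,1,1)^T$, $\overrightarrow{q}=(1,2,3)^T$, $\overrightarrow{s}=(1,-2,1)^T$ and
$$M=\begin{pmatrix} 1 & 0 & -1 \\ -3 & -1 & 1 \\ 2 & 1 & 0 \end{pmatrix}:$$
then $r(M)=2$, $M\overrightarrow{s}=0$, $M^T\overrightarrow{t}=0$, $\sum_i s_it_i^2=0$, $M^T\overrightarrow{q}=(1,1,1)^T=\overrightarrow{t^2}$, $\overrightarrow{qt}=(1,2,3)^T$ is independent of $\overrightarrow{t^2}$, and $M^T\overrightarrow{r}=\overrightarrow{qt}$ with $\overrightarrow{r}=(-3,0,2)^T$ --- all six of $s_1,s_2,s_3,t_1,t_2,t_3$ nonzero. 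So no bilinear manipulation of your identities can produce the contradiction: there is nothing to contradict. Adding condition (4) does not rescue the scheme either: adjusting the third basis vector yields
$$M'=\begin{pmatrix} 3 & 1 & -1 \\ -7 & -3 & 1 \\ 4 & 2 & 0 \end{pmatrix}$$
with the same $\overrightarrow{s},\overrightarrow{t},\overrightarrow{q}$, $\overrightarrow{r}=(3,6,\tfrac{17}{2})^T$ and $\overrightarrow{u}=(0,43,\tfrac{157}{2})^T$ satisfying (1)--(4) in full.

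For comparison, the paper's proof does not stay at the level of these pairings: it parameterizes the entries of $M$ by $a_1,a_2,b_1,b_2$, uses your two identities in the guise of (\ref{eqone}) and (\ref{eqtwo}) to eliminate $q_3$, and then exploits the componentwise equations (\ref{one}), (\ref{two}) of $M^T\overrightarrow{q}=\overrightarrow{t^2}$, arriving at $(q_2t_1-q_1t_2)E=0$ and hence $E=0$, i.e.\ (\ref{eqimp}). But its final inference --- that the equivalence of the two reduced equations forces $r(M^T)=1$ --- is not justified: the reduction mixes the third column of $M^T$ into the first two rows, so equivalence of the reduced equations says nothing about proportionality of the rows of $M^T$. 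Indeed the matrix $M$ displayed above satisfies (\ref{eqimp}) and both (\ref{one}) and (\ref{two}) while $r(M^T)=2$, so it refutes the lemma as stated with hypotheses (1)--(3), and it breaks the paper's argument at exactly that last step. In short: your instinct that ``the last elimination is genuinely hard'' was correct in the strongest sense --- it is not merely hard but impossible from solvability data alone, the published proof does not actually supply the missing ingredient, and any repair of the lemma (and of the classification in Proposition \ref{simpleprop} that rests on it) must bring in genuinely more information than the orthogonality relations against $\overrightarrow{s}$. The only parts of your write-up that stand are the correct reformulation $\mathrm{im}\,M^T=\overrightarrow{s}^{\perp}$ and the useful observation that the obstruction scalars are invariant under $\overrightarrow{q}\mapsto\overrightarrow{q}+c\overrightarrow{t}$, $\overrightarrow{r}\mapsto\overrightarrow{r}+c\overrightarrow{q}+c'\overrightarrow{t}$.
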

  \begin{proof}
  If the elements in $\{s_1,s_2,s_3,t_1,t_2,t_3\}$ are all non-zero, then $s_1t^2_1+s_2t^2_2\neq 0$ since $s_1t^2_1+s_2t^2_2+s_3t^2_3=0$. Since
  $M\overrightarrow{s}=0$ and $M^T\overrightarrow{t}=0$,
  $M$ can be written by $$\left(
         \begin{array}{ccc}
           a_1 & a_2 &\frac{s_1a_1+s_2a_2}{-s_3} \\
           b_1 & b_2& \frac{s_1b_1+s_2b_2}{-s_3}  \\
           \frac{t_1a_1+t_2b_1}{-t_3} &\frac{t_1a_2+t_2b_2}{-t_3}  &\frac{t_1(s_1a_1+s_2a_2)+t_2(s_1b_1+s_2b_2)}{s_3t_3}  \\
         \end{array}
       \right)$$ By $M^T\overrightarrow{q}=\overrightarrow{t^2}$, we have $2=r(M^T)=r(M^T,\overrightarrow{t^2})$, which implies
       \begin{align}\label{eqone}
       t_3^2=-\frac{s_1}{s_3}t_1^2-\frac{s_2}{s_3}t_2^2.
       \end{align}
       Similarly, $M^T\overrightarrow{r}=\overrightarrow{qt}$ implies $2=r(M^T)=r(M^T,\overrightarrow{rt})$ and hence
       \begin{align}\label{eqtwo}
       q_3t_3=-\frac{s_1}{s_3}q_1t_1-\frac{s_2}{s_3}q_2t_2\Leftrightarrow q_3=-\frac{s_1t_1}{s_3t_3}q_1-\frac{s_2t_2}{s_3t_3}q_2.
       \end{align}

       Since $\overrightarrow{qt}$ and $\overrightarrow{t^2}$ are linearly independent, the vectors $\left(
    \begin{array}{ccc}
      q_1t_1 \\
      q_2t_2
    \end{array}
  \right)$ and $\left(
    \begin{array}{ccc}
      t_1^2 \\
      t_2^2
    \end{array}
  \right)$ are linearly independent.
  Indeed, if $\left(
    \begin{array}{ccc}
      q_1t_1 \\
      q_2t_2
    \end{array}
  \right)$ and $\left(
    \begin{array}{ccc}
      t_1^2 \\
      t_2^2
    \end{array}
  \right)$ are linearly dependent, then there exist $\lambda \in k$ such that $\lambda t_1^2= q_1t_1,\lambda t_2^2=q_2t_2$, which implies $q_1=\lambda t_1,q_2=\lambda t_2$. And hence \begin{align*}
  q_3&=-\frac{s_1t_1}{s_3t_3}q_1-\frac{s_2t_2}{s_3t_3}q_2\\
 &= -\frac{\lambda s_1t_1^2}{s_3t_3} -\frac{\lambda s_2t_2^2}{s_3t_3}\\
 &= \lambda \frac{s_3t_3^2}{s_3t_3}=\lambda t_3.
  \end{align*}
But then $\overrightarrow{qt}=\lambda \overrightarrow{t^2}$ and $\overrightarrow{t^2}$ are linearly dependent. We reach a contradiction. So the vectors $\left(
    \begin{array}{ccc}
      q_1t_1 \\
      q_2t_2
    \end{array}
  \right)$ and $\left(
    \begin{array}{ccc}
      t_1^2 \\
      t_2^2
    \end{array}
  \right)$ are linearly independent and hence
 $q_1t_2\neq q_2t_1$.

  On the other hand, $M^T\overrightarrow{q}=\overrightarrow{t^2}$ implies
  \begin{align}\label{one}
  a_1q_1+b_1q_2+\frac{(t_1a_1+t_2b_1)(s_1t_1q_1+s_2t_2q_2)}{t_3s_3t_3}=t_1^2
  \end{align}
  and
  \begin{align}\label{two}
  a_2q_1+b_2q_2+\frac{(t_1a_2+t_2b_2)(s_1t_1q_1+s_2t_2q_2)}{t_3s_3t_3}=t_2^2,
  \end{align}
  which are respectively equivalent to
  $$
  (a_1q_1+b_1q_2)s_3t_3^2t_2^2+(t_1a_1+t_2b_1)(s_1t_1q_1+s_2t_2q_2)t_2^2=t_1^2s_3t_3^2t_2^2 $$ and
  $$(a_2q_1+b_2q_2)s_3t_3^2t_1^2+(t_1a_2+t_2b_2)(s_1t_1q_1+s_2t_2q_2)t_1^2=t_2^2s_3t_3^2t_1^2.$$
  Then
  \begin{align*}
  & \quad\quad\quad (a_1q_1t_2^2+b_1q_2t_2^2-a_2q_1t_1^2-b_2q_2t_1^2)(s_1t_1^2+s_2t_2^2) \\
  &\quad\quad  =(s_1t_1q_1+s_2t_2q_2)(t_1t_2^2a_1+t_2^3b_1-t_1^3a_2-t_1^2t_2b_2)\\
  &\Rightarrow \quad\quad\quad s_1b_1q_2t_1^2t_2^2-s_1b_2q_2t_1^4+s_2a_1q_1t_2^4-s_2a_2q_1t_1^2t_2^2\\
  &\quad\quad  = s_1b_1q_1t_1t_2^3-s_1b_2q_1t_3^3t_2+s_2a_1q_2t_1t_2^3-s_2a_2q_2t_1^3t_2  \\
  & \Rightarrow \quad\quad\quad q_2t_1[b_1s_1t_1t_2^2-b_2s_1t_1^3-a_1s_2t_2^3+a_2s_2t_1^2t_2]\\
  &\quad\quad  =q_1t_2[b_1s_1t_1t_2^2-a_1s_2t_2^3+a_2s_2t_1^2t_2-b_2s_1t_1^3].
  \end{align*}
If $b_1s_1t_1t_2^2-a_1s_2t_2^3+a_2s_2t_1^2t_2-b_2s_1t_1^3\neq 0$, then we get $q_1t_2=q_2t_1$, which contradicts with $q_1t_2\neq q_2t_1$.

If $b_1s_1t_1t_2^2-a_1s_2t_2^3+a_2s_2t_1^2t_2-b_2s_1t_1^3=0$, then
\begin{align}\label{eqimp}
s_1t_1b_2-s_2t_2a_2=(s_1t_1b_1-s_2t_2a_1)\frac{t_2^2}{t_1^2}
\end{align}
and we can show as follows that (\ref{one}) and (\ref{two}) are equivalent. Indeed,
 (\ref{two}) is equivalent to
\begin{align*}
&\quad\quad\quad \frac{(a_2q_1+b_2q_2)(-s_1t_1^2-s_2t_2^2)+(t_1a_2+t_2b_2)(s_1t_1q_1+s_2t_2q_2)}{s_3t_3^2}=t_2^2 \\
&\quad\quad\quad \Leftrightarrow
\frac{-a_2q_1s_2t_2^2-b_2s_1q_2t_1^2+a_2q_2s_2t_1t_2+b_2q_1s_1t_1t_2}{s_3t_3^2}=t_2^2 \\
&\quad\quad\quad \Leftrightarrow \frac{(q_1t_2-q_2t_1)(s_1t_1b_2-s_2t_2a_2)}{s_3t_3^2}=t_2^2\\
&\quad\quad\quad \Leftrightarrow \frac{(q_1t_2-q_2t_1)(s_1t_1b_1-s_2t_2a_1)\frac{t_2^2}{t_1^2}}{s_3t_3^2}=t_2^2\\
&\quad\quad\quad \Leftrightarrow \frac{(q_1t_2-q_2t_1)(s_1t_1b_1-s_2t_2a_1)}{s_3t_3^2}=t_1^2.
\end{align*}
Similarly,  (\ref{one}) is equivalent to
\begin{align*}
&\quad\quad\quad \frac{(a_1q_1+b_1q_2)(-s_1t_1^2-s_2t_2^2)+(t_1a_1+t_2b_1)(s_1t_1q_1+s_2t_2q_2)}{s_3t_3^2}=t_1^2 \\
&\quad\quad\quad \Leftrightarrow
\frac{-a_1q_1s_2t_2^2-b_1s_1q_2t_1^2+a_1q_2s_2t_1t_2+b_1q_1s_1t_1t_2}{s_3t_3^2}=t_1^2 \\
&\quad\quad\quad \Leftrightarrow \frac{(q_1t_2-q_2t_1)(s_1t_1b_1-s_2t_2a_1)}{s_3t_3^2}=t_1^2.
\end{align*}
So (\ref{one}) and (\ref{two}) are equivalent. Then
$r\left(
         \begin{array}{ccc}
           a_1 & b_1 &  \frac{t_1a_1+t_2b_1}{-t_3} \\
           a_2 & b_2&  \frac{t_1a_2+t_2b_2}{-t_3}
         \end{array}
       \right)=1$ and hence $$r(M^T)=r\left(
         \begin{array}{ccc}
           a_1 & b_1 &  \frac{t_1a_1+t_2b_1}{-t_3} \\
           a_2 & b_2&  \frac{t_1a_2+t_2b_2}{-t_3} \\
           \frac{s_1a_1+s_2a_2}{-s_3}& \frac{s_1b_1+s_2b_2}{-s_3} &\frac{t_1(s_1a_1+s_2a_2)+t_2(s_1b_1+s_2b_2)}{s_3t_3}
         \end{array}
       \right)=1,
       $$
which contradicts with $r(M)=2$.

Then we reach a conclusion that there are at least one zero in $\{s_1,s_2,s_3,t_1,t_2,t_3\}$.
  \end{proof}

\begin{lem}\label{part2}
Assume that $M=(m_{ij})_{3\times 3}$ satisfies all the  conditions in Proposition \ref{simpleprop}. Then there is at least one zero in the set $\{t_1,t_2,t_3\}$. Furthermore, there is exactly one zero in the set $\{t_1,t_2,t_3\}$.
\end{lem}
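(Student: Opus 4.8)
The plan is to recast the four hypotheses of Proposition \ref{simpleprop} as membership statements in the two-dimensional subspace $W:=\mathrm{im}(M^T)$. Since $r(M)=2$, we have $\mathrm{ker}(M)=k\overrightarrow{s}$, $\mathrm{ker}(M^T)=k\overrightarrow{t}$, and $W=\{\overrightarrow{v}\in k^3 : s_1v_1+s_2v_2+s_3v_3=0\}$. Conditions $(2),(3),(4)$ then say precisely that $\overrightarrow{t^2}$, $\overrightarrow{qt}$ and $\overrightarrow{4rt+q^2}$ all lie in $W$, while the independence clause of $(2)$ says that $\overrightarrow{t^2}$ and $\overrightarrow{qt}$ form a basis of $W$. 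I will use throughout that the data $(M,\overrightarrow{s},\overrightarrow{t},\overrightarrow{q},\overrightarrow{r},\overrightarrow{u})$ transform equivariantly under simultaneous permutation of the coordinates (replacing $M$ by $PMP^{T}$ for a permutation matrix $P$), so that indices may be relabeled freely.

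First I would dispose of the upper bound, namely that at most one of $t_1,t_2,t_3$ vanishes. If two of them, say $t_k=t_l=0$ with $t_j\neq 0$, were zero, then both $\overrightarrow{t^2}=t_j^2 e_j$ and $\overrightarrow{qt}=q_jt_j\,e_j$ would be scalar multiples of the single coordinate vector $e_j$, hence linearly dependent, contradicting the independence clause of condition $(2)$. Since $\overrightarrow{t}\neq 0$ rules out all three being zero, at most one $t_i$ is zero.

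For the harder lower bound I would argue by contradiction, assuming $t_1,t_2,t_3$ are all nonzero. By Lemma \ref{part1} at least one $s_i$ vanishes; moreover two cannot vanish, since if $s_i=s_j=0$ then condition $(1)$ forces $s_lt_l^2=0$ and hence $t_l=0$. Thus exactly one $s_i$ is zero, and after relabeling I take $s_3=0$, $s_1,s_2\neq 0$. Then $M\overrightarrow{s}=0$ shows that column $2$ of $M$ equals $\mu$ times column $1$, where $\mu:=-s_1/s_2\neq 0$, so $W=\{\overrightarrow{v}:v_2=\mu v_1\}$ and condition $(1)$ reads $t_2^2=\mu t_1^2$. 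Membership $\overrightarrow{qt}\in W$ gives $q_2t_2=\mu q_1t_1$; combined with $t_2^2=\mu t_1^2$ and $t_1,t_2\neq 0$ this yields $q_1t_2=q_2t_1$, so $q_1=\kappa t_1$, $q_2=\kappa t_2$ for some $\kappa\in k$, whence $q_2^2=\mu q_1^2$. The independence in $(2)$ then forces $\nu:=q_3-\kappa t_3\neq 0$, and writing $\overrightarrow{q}=\kappa\overrightarrow{t}+\nu e_3$ in $M^T\overrightarrow{q}=\overrightarrow{t^2}$ gives the key relation $M^T e_3=\nu^{-1}\overrightarrow{t^2}$.

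The decisive step is to feed this into condition $(4)$: membership $\overrightarrow{4rt+q^2}\in W$ reads $4r_2t_2+q_2^2=\mu(4r_1t_1+q_1^2)$, and substituting $q_2^2=\mu q_1^2$ collapses it to $r_2t_2=\mu r_1t_1$. By the same cancellation as for $\overrightarrow{q}$ this forces $r_1=\rho t_1$, $r_2=\rho t_2$, so that $\overrightarrow{r}=\rho\overrightarrow{t}+w\,e_3$ for $w:=r_3-\rho t_3$; then condition $(3)$ gives $\overrightarrow{qt}=M^T\overrightarrow{r}=w\,M^T e_3=(w/\nu)\overrightarrow{t^2}$, exhibiting $\overrightarrow{qt}$ as a scalar multiple of $\overrightarrow{t^2}$ and contradicting $(2)$. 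This shows some $t_i$ vanishes, which with the upper bound gives exactly one zero among $\{t_1,t_2,t_3\}$. I expect the main subtlety to be that the obstruction is not purely linear — conditions $(2)$–$(4)$ only place three vectors in a $2$-dimensional space, which is consistent by itself — so the real leverage must come from the quadratic relations $t_2^2=\mu t_1^2$ and $q_i=\kappa t_i$; the crux is observing that condition $(4)$, after the substitution $q_2^2=\mu q_1^2$, degenerates exactly into $r_2t_2=\mu r_1t_1$, which is precisely what drives $\overrightarrow{qt}$ into the line $k\,\overrightarrow{t^2}$.
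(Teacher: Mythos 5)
Your proof is correct, and although it shares the paper's overall skeleton --- argue by contradiction assuming all $t_i\neq 0$, invoke Lemma \ref{part1} together with condition (1) to pin down exactly one vanishing $s_i$, then play conditions (3) and (4) against the independence clause of condition (2) --- your execution takes a genuinely different and markedly cleaner route. The paper works at the level of matrix entries: after normalizing $s_1=0$ it writes out $M$ explicitly in terms of $a_1,a_2,b_1,b_2$ and the constraints $M\overrightarrow{s}=0$, $M^T\overrightarrow{t}=0$, extracts $q_2=-\frac{s_3q_3t_3}{s_2t_2}$ and $r_2=-\frac{s_3r_3t_3}{s_2t_2}$ from rank conditions, derives the auxiliary relation $a_2=\frac{t_2^2}{t_1^2}a_1$ from $M^T\overrightarrow{q}=\overrightarrow{t^2}$, and only after a further page of entry-level algebra applied to $M^T\overrightarrow{r}=\overrightarrow{qt}$ lands on $q_1t_2=q_2t_1$, which is the dependence contradiction. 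You never touch the entries of $M$: recasting conditions (2)--(4) as membership in $W=\mathrm{im}(M^T)=\{\overrightarrow{v}:\,\overrightarrow{s}\cdot\overrightarrow{v}=0\}$ (which uses nondegeneracy of the standard bilinear form, valid here), you get the proportionalities $(q_1,q_2)\parallel(t_1,t_2)$ and --- after the cancellation $q_2^2=\mu q_1^2$ collapses the condition-(4) membership to $r_2t_2=\mu r_1t_1$ --- also $(r_1,r_2)\parallel(t_1,t_2)$; the decomposition $\overrightarrow{q}=\kappa\overrightarrow{t}+\nu e_3$ along $\ker(M^T)=k\overrightarrow{t}$ plus the free coordinate converts $M^T\overrightarrow{q}=\overrightarrow{t^2}$ into the single identity $M^Te_3=\nu^{-1}\overrightarrow{t^2}$, whence $M^T\overrightarrow{r}=(w/\nu)\overrightarrow{t^2}$ and the contradiction with condition (3) is immediate. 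Your diagnosis of where the leverage lies is exactly right: the three memberships alone are consistent in a $2$-dimensional space, and it is the quadratic identities $t_2^2=\mu t_1^2$ and $q_2^2=\mu q_1^2$ that degenerate condition (4) into the linear constraint driving $\overrightarrow{qt}$ into the line $k\,\overrightarrow{t^2}$ --- the same mechanism the paper exploits, but which there is buried in the computations. What your route buys is the elimination of all explicit matrix algebra (the paper's relation $a_2=\frac{t_2^2}{t_1^2}a_1$ becomes unnecessary) and a formulation whose only inputs are $\dim W=2$, $\ker(M^T)=k\overrightarrow{t}$, and the quadratic relations, so it would adapt more readily to variants of Proposition \ref{simpleprop}; the permutation-equivariance you invoke to relabel $s_3=0$ instead of the paper's $s_1=0$ is legitimate, since all the data transform coherently under $M\mapsto PMP^T$.
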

\begin{proof}
We will give a proof of the first part of the statement by contradiction.
Assume that each $t_i$ is nonzero. Then there exists at least one zero in $\{s_1,s_2,s_3\}$
by Lemma $\ref{part1}$. Furthermore, there is exactly one zero $s_i$ since $s_1t^2_1+s_2t^2_2+s_3t^2_3=0$.
Without the loss of generality, we assume
 $s_1=0$. Then $s_2t^2_2+s_3t^2_3=0$, and $s_2, s_3\in k^{\times}$.
 By
 $Ms=0$, we have
 \begin{align*}
 \begin{cases}
 m_{12}s_2+m_{13}s_3=0\\
 m_{22}s_2+m_{23}s_3=0\\
 m_{32}s_2+m_{33}s_3=0
 \end{cases} \Leftrightarrow  \left( \begin{array}{ccc}
      m_{13}\\
      m_{23}\\
      m_{33}\\
    \end{array}
  \right)=-\frac{s_2}{s_3} \left( \begin{array}{ccc}
      m_{12}\\
      m_{22}\\
      m_{32}\\
    \end{array}
  \right).
 \end{align*}
 Moreover, $M^Tt=0$ and $r(M)=2$ imply that $M$ can be written by
 $$M=\left( \begin{array}{ccc}
    a_1&a_2&\frac{s_2a_2}{-s_3} \\
     b_1&b_2&\frac{s_2b_2}{-s_3}\\
    \frac{a_1t_1+b_1t_2}{-t_3}&\frac{a_2t_1+b_2t_2}{-t_3}&\frac{s_2a_2t_1+s_2b_2t_2}{s_3t_3}\\
    \end{array}
  \right)\quad \text{with}\quad r\left( \begin{array}{cc}
    a_1&a_2 \\
     b_1&b_2
      \end{array}
  \right)=2.$$
 By $M^T\overrightarrow{r}=\overrightarrow{qt}$,  we have $r(M^T)=r(M^T,\overrightarrow{qt})$, which implies
  $-\frac{s_2}{s_3}q_2t_2=q_3t_3$ or equivalently $q_2=\frac{-s_3q_3t_3}{s_2t_2}$. Substitute it into $M^T\overrightarrow{q}=\overrightarrow{t^2}$. We have
\begin{align}\label{one2}
a_1q_1-\frac{s_3t_3b_1q_3}{s_2t_2}-\frac{a_1t_1q_3+b_1t_2q_3}{t_3}=t^2_1
\end{align}
and
\begin{align}\label{two2}
 a_2q_1-\frac{s_3t_3b_2q_3}{s_2t_2}-\frac{a_2t_1q_3+b_2t_2q_3}{t_3}=t^2_2.
 \end{align}
By computations, (\ref{one2}) and (\ref{two2}) are respectively equivalent to
\begin{align*}
&\quad\quad\quad a_1q_1s_2t_2t_3-b_1q_3s_3t_3^2-s_2t_2(a_1t_1q_3+b_1t_2q_3)=s_2t_1^2t_2t_3\\
&\Leftrightarrow a_1q_1s_2t_2t_3-b_1q_3s_3t_3^2-a_1q_3s_2t_1t_2-b_1q_3s_2t_2^2=s_2t_1^2t_2t_3\\
&\Leftrightarrow a_1q_1t_3-a_1q_3t_1=t_1^2t_3 \Leftrightarrow a_1(q_1t_3-q_3t_1)=t_1^2t_3
\end{align*}
and
\begin{align*}
&\quad\quad\quad a_2q_1s_2t_2t_3-s_3t_3^2b_2q_3-(a_2t_1q_3+b_2t_2q_3)s_2t_2=s_2t^3_2t_3\\
&\Leftrightarrow  a_2q_1s_2t_2t_3-b_2q_3s_3t_3^2-a_2q_3s_2t_1t_2-b_2q_3s_2t_2^2=s_2t_2^3t_3\\
&\Leftrightarrow  a_2q_1t_3-a_2q_3t_1=t_2^2t_3 \Leftrightarrow a_2(q_1t_3-q_3t_1)=t_2^2t_3.
\end{align*}
Since each $t_i$ is non-zero, we have  $a_1,a_2$ and $q_1t_3-q_3t_1$ are all non-zeros. Hence
 $a_2=\frac{t^2_2}{t^2_1}a_1$.
  Then
 $$M^T=\left(
         \begin{array}{ccc}
           a_1 & b_1 & \frac{a_1t_1+b_1t_2}{-t_3} \\
           \frac{t^2_2}{t^2_1}a_1 &b_2 & \frac{a_1t^2_2+b_2t_1t_2}{-t_1t_3}  \\
           \frac{t^2_3}{t^2_1}a_1 &-\frac{s_2}{s_3}b_2 & \frac{s_2b_2t_2}{s_3t_3}-\frac{t_3}{t_1}a_1 \\
         \end{array}
       \right).$$
 Since $s_2t_2^2=-s_3t_3^2$ and $s_2q_2t_2=-s_3q_3t_3$, we have $$s_2q_2^2s_2t_2^2=s_2^2q_2^2t_2^2=s_3^2q_3^2t_3^2=s_3q_3^2s_3t_3^2,$$
 which implies $s_2q_2^2=-s_3q_3^2$.
 On the other hand, $r(M^T)=r(M^T,4rt+q^2)$
 since $M^T\overrightarrow{u}=\overrightarrow{4rt+q^2}$. Then
 we have
 \begin{align*}
-\frac{s_2}{s_3}(4r_2t_2+q^2_2)=4r_3t_3+q^2_3 &\Leftrightarrow -4s_2r_2t_2-s_2q_2^2=4s_3r_3t_3+s_3q_3^2\\
& \Leftrightarrow -s_2r_2t_2= s_3r_3t_3\\
& \Leftrightarrow r_2=\frac{-s_3r_3t_3}{s_2t_2}.
 \end{align*}
 Substitute it into $M^Tr=qt$.  We get
\begin{align}\label{one3}
a_1r_1-b_1\frac{s_3r_3t_3}{s_2t_2}-\frac{a_1t_1+b_1t_2}{t_3}r_3=q_1t_1
 \end{align}
 and
 \begin{align}\label{two3}
  \frac{t^2_2}{t^2_1}a_1r_1-b_2\frac{s_3r_3t_3}{s_2t_2}-\frac{a_1t^2_2+b_2t_1t_2}{t_1t_3}r_3=q_2t_2.
  \end{align}
 By computations, (\ref{one3}) and (\ref{two3}) are respectively equivalent to
  \begin{align*}
 &\quad\quad a_1s_2r_1t_2t_3-b_1s_3r_3t_3^2-(a_1t_1+b_1t_2)s_2t_2r_3=q_1s_2t_1t_2t_3\\
  &\Leftrightarrow  a_1s_2r_1t_2t_3-b_1r_3(s_3t_3^2+s_2t_2^2)-a_1r_3s_2t_1t_2=q_1s_2t_1t_2t_3\\
  &\Leftrightarrow  a_1r_1t_3-a_1r_3t_1=q_1t_1t_3 \Leftrightarrow q_1t_2=\frac{a_1r_1t_2t_3-a_1r_3t_1t_2}{t_1t_3}
  \end{align*}
  and
   \begin{align*}
  &\quad\quad  a_1r_1s_2t^3_2t_3-b_2s_3r_3t_1^2t_3^2-(a_1t^2_2+b_2t_1t_2)s_2t_1t_2r_3=q_2s_2t_1^2t_2^2t_3\\
  &\Leftrightarrow a_1r_1s_2t^3_2t_3-b_2r_3t_1^2(s_3t_3^2+s_2t_2^2)-a_1s_2r_3t_1t_2^3=q_2s_2t_1^2t_2^2t_3\\
  &\Leftrightarrow a_1r_1t_2t_3-a_1r_3t_1t_2=q_2t_1^2t_3 \Leftrightarrow q_2t_1=\frac{a_1r_1t_2t_3-a_1r_3t_1t_2}{t_1t_3}.
  \end{align*}
Then $q_1t_2=q_2t_1,$ which contradicts with the assumption that $\overrightarrow{qt}$ and $\overrightarrow{t^2}$ are linearly independent (
One can see why  $q_1t_2\neq q_2t_1$ in the proof of Lemma \ref{part1}).

By the proof above, we can conclude that there is at least one zero in $\{t_1,t_2,t_3\}$. If there are two zeros in  $\{t_1,t_2,t_3\}$, then  $\overrightarrow{qt}$ and $\overrightarrow{t^2}$ are obviously linearly dependent, which contradicts with the condition (2) in Proposition \ref{simpleprop}.
\end{proof}

\begin{lem}\label{part3}
Assume that $M=(m_{ij})_{3\times 3}$ satisfies all the  conditions in Proposition \ref{simpleprop}. Then any two non-zero columns of $M$ are linearly independent.
\end{lem}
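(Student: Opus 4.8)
The plan is to argue by contradiction, turning conditions (2) and (3) of Proposition \ref{simpleprop} into statements about dot products of the columns of $M$. Writing $c_j$ for the $j$-th column of $M$, the identities $M^{T}\vec{q}=\overrightarrow{t^2}$ and $M^{T}\vec{r}=\overrightarrow{qt}$ read entrywise as $c_j\cdot\vec{q}=t_j^{2}$ and $c_j\cdot\vec{r}=q_jt_j$ for every $j$. I would first dispose of the trivial reduction: since $r(M)=2$, at most one column of $M$ can be zero, and if exactly one is zero then the two remaining columns span a $2$-dimensional space and are automatically linearly independent, so there is nothing to prove. Hence I may assume all three columns of $M$ are non-zero.

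Now suppose, toward a contradiction, that two of these columns are proportional, say $c_b=\mu c_a$ with $\mu\in k^{\times}$. Dotting this relation with $\vec{q}$ and with $\vec{r}$ and using the reformulation above yields the two scalar relations $t_b^{2}=\mu t_a^{2}$ and $q_bt_b=\mu q_at_a$. By Lemma \ref{part2}, exactly one of $t_1,t_2,t_3$ vanishes; call its index $z$. If $z\in\{a,b\}$, then $t_b^{2}=\mu t_a^{2}$ forces both $t_a$ and $t_b$ to vanish, contradicting the uniqueness of the zero coordinate; so $z$ must be the third index $c$, and in particular $t_a,t_b\neq 0$.

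Since $t_c=0$, the $c$-entries of $\overrightarrow{t^2}$ and of $\overrightarrow{qt}$ are $t_c^{2}=0$ and $q_ct_c=0$, so both vectors are supported on the coordinates $a$ and $b$; there they equal $t_a^{2}(1,\mu)$ and $q_at_a(1,\mu)$ by the two scalar relations, hence are parallel. As $t_a\neq 0$ the vector $\overrightarrow{t^2}$ is non-zero, and therefore $\overrightarrow{qt}$ and $\overrightarrow{t^2}$ are linearly dependent, contradicting condition (2) of Proposition \ref{simpleprop}. This contradiction proves the lemma. The argument uses only conditions (1)--(3) together with Lemma \ref{part2}; condition (4) is not needed. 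The single point that I expect to require care is the bookkeeping of where the vanishing $t$-coordinate sits relative to the indices $a,b$ of the two proportional columns, since it is precisely the forced vanishing of the $c$-components that collapses the would-be $2\times 2$ independence of $\overrightarrow{t^2}$ and $\overrightarrow{qt}$.
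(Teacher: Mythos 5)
Your proof is correct and takes essentially the same route as the paper's: both extract the scalar relations $t_b^2=\mu t_a^2$ and $q_bt_b=\mu q_at_a$ from the proportional columns (you by dotting with $\overrightarrow{q}$ and $\overrightarrow{r}$, the paper by rank-consistency of the systems $M^T\overrightarrow{q}=\overrightarrow{t^2}$ and $M^T\overrightarrow{r}=\overrightarrow{qt}$), and both reach the contradiction by playing Lemma \ref{part2} against the linear independence of $\overrightarrow{qt}$ and $\overrightarrow{t^2}$ in condition (2). The only differences are cosmetic: the paper first uses condition (2) to force $t_3\neq 0$ and then contradicts Lemma \ref{part2}, whereas you first use Lemma \ref{part2} to pin the vanishing coordinate at the third index and then contradict condition (2), and you skip the paper's (inessential) computation of $\overrightarrow{s}$.
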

\begin{proof}
We will give a proof by contradiction. Suppose that $M$ admits two nonzero linearly dependent columns. Without
the loss of generality, $M$ can be written as
$$\left(
         \begin{array}{ccc}
           a & ua & d \\
           b & ub & e  \\
           c & uc & f \\
         \end{array}
       \right), u\in k^{\times}.$$
Since $Ms=0$ and $r(M)=2$, we have $s_1=-us_2$, $s_3=0$. Then
 $s_1t^2_1+s_2t^2_2=0$, and $s_1,s_2\in k^{\times}$ since $u\neq 0$.
 Since $M^T\overrightarrow{q}=\overrightarrow{t^2}$ and $\overrightarrow{q}\neq 0$,
 we have $r(M^T)=r(M^T,\overrightarrow{t^2})$, which implies
$t^2_2=ut^2_1$. Similarly, $M^T\overrightarrow{r}=\overrightarrow{qt}$ implies
$r(M^T)=r(M^T,\overrightarrow{qt})$, and hence
$q_2t_2=uq_1t_1$. We claim $t_3\neq 0$. Indeed, if $t_3=0$,then $\overrightarrow{qt}=\left(
         \begin{array}{ccc}
           q_1t_1 \\
          uq_1t_1  \\
           0 \\
         \end{array}
       \right)$ and $\overrightarrow{t^2}=\left(
         \begin{array}{ccc}
           t^2_1 \\
           ut^2_1  \\
           0\\
         \end{array}
       \right)$ are linearly dependent, which contradicts with the assumption.
Since $t^2_2=ut^2_1$, we have $t_1=t_2=0$ or $t_1\neq 0,t_2\neq 0$. However, both cases contradicts with the statement of Lemma \ref{part2}.
Then we complete our proof.
\end{proof}

\begin{lem}\label{part4}
Assume that $M=(m_{ij})_{3\times 3}$ satisfies all the  conditions in Proposition \ref{simpleprop}. Then there is at lest one zero in $\{s_1,s_2,s_3\}$. Furthermore, there are exactly two zeros in the set $\{s_1,s_2,s_3\}$.
\end{lem}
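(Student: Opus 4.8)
The plan is to deduce the statement from Lemma \ref{part2} together with a short dimension count, instead of repeating the coordinate elimination carried out for Lemmas \ref{part1} and \ref{part2}. By Lemma \ref{part2} there is exactly one index $i_0\in\{1,2,3\}$ with $t_{i_0}=0$; I keep $i_0$ symbolic, so that no normalization of coordinates is needed and the argument applies uniformly. The mechanism I want to exploit is that this single vanishing coordinate of $\overrightarrow{t}$, fed through conditions (2) and (3) of Proposition \ref{simpleprop}, forces the entire $i_0$-th column of $M$ to be zero; the conclusion about $\overrightarrow{s}$ then drops out of $M\overrightarrow{s}=0$.

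Write $\mathcal{R}=\mathrm{im}(M^T)$ for the row space of $M$, which has $\dim\mathcal{R}=r(M)=2$, and set $W=\{v\in k^3 : v_{i_0}=0\}$, a hyperplane with $\dim W=2$. By condition (2) the vector $\overrightarrow{t^2}=M^T\overrightarrow{q}$ lies in $\mathcal{R}$, and by condition (3) the vector $(q_1t_1,q_2t_2,q_3t_3)^T=M^T\overrightarrow{r}$ lies in $\mathcal{R}$ as well; condition (2) furthermore guarantees that these two vectors are linearly independent. Since $t_{i_0}=0$, their $i_0$-th entries $t_{i_0}^2$ and $q_{i_0}t_{i_0}$ both vanish, so both vectors also lie in $W$.

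The heart of the argument is then a dimension count. The subspace $\mathcal{R}\cap W$ contains the two linearly independent vectors $\overrightarrow{t^2}$ and $(q_1t_1,q_2t_2,q_3t_3)^T$, hence is at least $2$-dimensional; being contained in the $2$-dimensional space $W$ it equals $W$, and being contained in the $2$-dimensional space $\mathcal{R}$ it equals $\mathcal{R}$. Therefore $\mathcal{R}=W$, that is, every row of $M$ has vanishing $i_0$-th entry. Equivalently, the $i_0$-th column of $M$ is the zero column.

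It remains to read off the conclusion from $M\overrightarrow{s}=0$, which expresses a linear dependence $\sum_j s_j\,\mathrm{col}_j(M)=0$ among the columns of $M$. The $i_0$-th column is now zero, and the remaining two columns are linearly independent: this is forced by $r(M)=2$ once one column vanishes, and is in any case an instance of Lemma \ref{part3}. Hence $\sum_{j\neq i_0}s_j\,\mathrm{col}_j(M)=0$ gives $s_j=0$ for every $j\neq i_0$, while $\overrightarrow{s}\neq 0$ gives $s_{i_0}\neq 0$. Thus exactly two of $s_1,s_2,s_3$ vanish (with the surviving coordinate sitting in the same slot $i_0$ as the vanishing $t_{i_0}$), which proves the second assertion and a fortiori the first. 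The only delicate point is the dimension count: it uses both the independence supplied by condition (2) and the membership $(q_1t_1,q_2t_2,q_3t_3)^T\in\mathcal{R}$ supplied by condition (3), and I would check carefully that condition (4) is genuinely not needed here, so that the lemma rests only on conditions (1)--(3).
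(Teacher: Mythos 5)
Your proof is correct, and it takes a genuinely different route from the paper's. The paper proves the first assertion by contradiction: assuming all $s_i\neq 0$, it invokes Lemma \ref{part2} to normalize $t_1=0$, writes $M$ out explicitly in parameters using $M\overrightarrow{s}=0$ and $M^T\overrightarrow{t}=0$, and then uses the rank identities $r(M^T,\overrightarrow{t^2})=r(M^T)=r(M^T,\overrightarrow{qt})$ to force $t_3^2=-\frac{s_2}{s_3}t_2^2$ and $q_3t_3=-\frac{s_2}{s_3}q_2t_2$, making $\overrightarrow{t^2}$ and $\overrightarrow{qt}$ proportional, against condition (2); the ``exactly two zeros'' claim is then dispatched in one terse line from $r(M)=2$ and $M\overrightarrow{s}=0$ (implicitly the column-independence argument you spell out via Lemma \ref{part3}). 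Your dimension count replaces all of this: the two independent vectors $\overrightarrow{t^2}=M^T\overrightarrow{q}$ and $(q_1t_1,q_2t_2,q_3t_3)^T=M^T\overrightarrow{r}$ lie in the $2$-dimensional row space $\mathcal{R}$ and, because $t_{i_0}=0$, also in the hyperplane $W$, whence $\mathcal{R}=W$ and the $i_0$-th column of $M$ vanishes; the conclusion on $\overrightarrow{s}$ then follows from the independence of the two surviving columns, which is correctly forced by $r(M)=2$. This buys several things: the argument is direct rather than by contradiction, needs no WLOG normalization, and proves strictly more — namely that the zero column, the vanishing $t$-coordinate, and the nonvanishing $s$-coordinate all sit in the same slot $i_0$, a structural fact the paper only assembles afterwards, at the start of the proof of Proposition \ref{simpleprop}, by combining Lemmas \ref{part3} and \ref{part4}. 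One small correction to your closing remark: your steps beyond Lemma \ref{part2} indeed use only conditions (1)--(3), but the paper's proof of Lemma \ref{part2} itself invokes condition (4) (through $M^T\overrightarrow{u}=\overrightarrow{4rt+q^2}$ and the resulting rank identity), so the lemma's full dependency chain still passes through (4); since the hypothesis grants all four conditions anyway, this costs you nothing logically.
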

\begin{proof}
Assume that each $s_i$ is non-zero. It suffices to reach a contradiction. By Lemma \ref{part2}, there is exactly one zero in $\{t_1,t_2,t_3\}$. Without the loss of generality, we let $t_1=0$ and $t_2,t_3\in k^{\times}$. Then $M$ can be written as
$$M=\left( \begin{array}{ccc}
    a_1&a_2&\frac{s_1a_1+s_2a_2}{-s_3} \\
     b_1&b_2&\frac{s_1b_1+s_2b_2}{-s_3}\\
    \frac{b_1t_2}{-t_3}&\frac{b_2t_2}{-t_3}&\frac{t_2s_1b_1+t_2s_2b_2}{s_3t_3}\\
    \end{array}
  \right).$$
  Since $M^T\overrightarrow{q}=\overrightarrow{t^2}$ and $M^T\overrightarrow{r}=\overrightarrow{qt}$, we have $r(M^T,\overrightarrow{t^2})=r(M^T)$ and
$r(M^T,\overrightarrow{qt})=r(M^T)$ respectively. Then we get $t_3^2=-\frac{s_2}{s_3}t_2^2$ and $q_3t_3=-\frac{s_2}{s_3}q_2t_2$. This implies that
$\overrightarrow{qt}= \left( \begin{array}{ccc}
      0\\
      q_2t_2\\
      -\frac{s_2}{s_3}q_2t_2\\
    \end{array}
  \right)$ and $\overrightarrow{t^2}= \left( \begin{array}{ccc}
      0\\
      t_2^2\\
      -\frac{s_2}{s_3}t_2^2\\
    \end{array}
  \right)$ are linearly dependent.  This contradicts with the condition (2) in Proposition \ref{simpleprop}. So there is at lest one zero in $\{s_1,s_2,s_3\}$.
Since $r(M)=2$ and $M\overrightarrow{s}=0$, we can conclude that there are exactly two zeros in $\{s_1,s_2,s_3\}$.
\end{proof}

Now, let us come to the proof of Proposition \ref{simpleprop}.

\begin{proof}
By Lemma \ref{part3} and Lemma \ref{part4}, one sees that $M$ admits one zero column.
More precisely, $s_i\neq 0$ if and only if the $i$-th column of $M$ is zero. Without the loss of generality,  we may let $s_2\neq 0$. Then we can write
$$M=\left(
         \begin{array}{ccc}
          a&0&d \\
          b&0&e  \\
         c&0&f\\
         \end{array}
       \right).$$
We have $\overrightarrow{s}=\left(
         \begin{array}{ccc}
          0 \\
          s_2  \\
        0\\
         \end{array}
       \right)$, $\overrightarrow{t}=\left(
         \begin{array}{ccc}
          t_1 \\
          0  \\
        t_3\\
         \end{array}
       \right)$, with $t_1,t_3\in k^{\times}$.
       Since $M^T\overrightarrow{t}=0$, we have
\begin{align}\label{lineareq}
\begin{cases}
at_1+ct_3=0\\
dt_1+ft_3=0,
\end{cases}
\end{align}
which implies $$\left|
         \begin{array}{ccc}
           a& c \\
           d & f \\
         \end{array}
       \right|=0 \Leftrightarrow af=cd.$$
There is at least one non-zero element in $\{a, f,c,d\}$. Otherwise, $r(M)=1$.
On the other hand, $t_1$ and $t_3$ are both non-zeros.  Hence (\ref{lineareq}) implies that
$$\begin{cases}
a=0,c=0\\
d\neq 0, f\neq 0
\end{cases}\,\, \text{or}\,\,\begin{cases}
a\neq 0,c\neq 0\\
d=0, f=0
\end{cases}\,\, \text{or}\,\,\begin{cases}
a\neq 0,c\neq 0\\
d\neq 0, f\neq 0.
\end{cases} $$

If $\begin{cases}
a=0,c=0\\
d\neq 0, f\neq 0,
\end{cases}$ then $M=\left(
         \begin{array}{ccc}
          0&0&d \\
          b&0&e  \\
         0&0&f\\
         \end{array}
       \right).$ We have $b\neq 0$ since $r(M)=2$. We can take $\overrightarrow{t}=\left(
         \begin{array}{ccc}
          f \\
          0  \\
          -d\\
         \end{array}
       \right), \overrightarrow{q}=
\left(
         \begin{array}{ccc}
          d-\frac{f^2e}{db} \\
          \frac{f^2}{b}  \\
          0\\
         \end{array}
       \right)$. Then $\overrightarrow{qt}=\left(
         \begin{array}{ccc}
          fd-\frac{f^3e}{db} \\
          0  \\
          0\\
         \end{array}
       \right)$. Since $\overrightarrow{qt}$ and $\overrightarrow{t^2}$ are linearly independent, we have $d^2b\neq f^2e$.
By $M^T\overrightarrow{r}=\overrightarrow{qt}$, we can take $\overrightarrow{r}=\left(
         \begin{array}{ccc}
          \frac{e^2f^3-d^2bef}{d^2b^2} \\
          \frac{fd^2b-f^3e}{db^2}  \\
          0\\
         \end{array}
       \right).$ Then $\overrightarrow{4rt+q^2}=\left(
         \begin{array}{ccc}
          \frac{(5f^2e-d^2b)(f^2e-d^2b)}{d^2b^2} \\
          \frac{f^4}{b^2}  \\
          0\\
         \end{array}
       \right)$ and hence $3=r(M^T,\overrightarrow{4rt+q^2})\neq r(M^T)=2$, which contradicts with $M^T\overrightarrow{u}=\overrightarrow{4rt+q^2}$. So this case is impossible to occur.

       If $\begin{cases}
a\neq 0,c\neq 0\\
d=0, f=0
\end{cases}$, then $M=\left(
         \begin{array}{ccc}
          a&0&0 \\
          b&0&e  \\
         c&0&0\\
         \end{array}
       \right).$ We have $e\neq 0$ since $r(M)=2$. We can take $\overrightarrow{t}=\left(
         \begin{array}{ccc}
          c \\
          0  \\
          -a\\
         \end{array}
       \right), \overrightarrow{q}=
\left(
         \begin{array}{ccc}
          \frac{c^2}{a}-\frac{ba}{e} \\
          \frac{a^2}{e}  \\
          0\\
         \end{array}
       \right)$. Then $\overrightarrow{qt}=\left(
         \begin{array}{ccc}
          \frac{c^3}{a} -\frac{abc}{e}\\
          0  \\
          0\\
         \end{array}
       \right)$. Since $\overrightarrow{qt}$ and $\overrightarrow{t^2}$ are linearly independent, we have $a^2b\neq c^2e$.
By $M^T\overrightarrow{r}=\overrightarrow{qt}$, we can take $\overrightarrow{r}=\left(
         \begin{array}{ccc}
           \frac{c^3}{a^2}-\frac{bc}{e}\\
          0 \\
          0\\
         \end{array}
       \right).$ Then $\overrightarrow{4rt+q^2}=\left(
         \begin{array}{ccc}
          \frac{(5c^2e-a^2b)(c^2e-a^2b)}{a^2e^2} \\
          \frac{a^4}{e^2}  \\
          0\\
         \end{array}
       \right)$ and hence $3=r(M^T,\overrightarrow{4rt+q^2})\neq r(M^T)=2$, which contradicts with $M^T\overrightarrow{u}=\overrightarrow{4rt+q^2}$. So this case is impossible to occur.

Now, lets consider the cases  $\begin{cases}
a\neq 0,c\neq 0\\
d\neq 0, f\neq 0.
\end{cases} $
Since $af=cd$, there exists $\lambda\in k^{\times}$ such that $d=\lambda a, f=\lambda c$.
We have $M=\left(
         \begin{array}{ccc}
          a&0&\lambda a \\
          b&0&e  \\
          c&0& \lambda c\\
         \end{array}
       \right)$ with $e\neq \lambda b$ since $r(M)=2$. By computations, we can choose
       $$\overrightarrow{t}=\left(
         \begin{array}{ccc}
          c \\
          0  \\
          -a\\
         \end{array}
       \right), \overrightarrow{q}=\left(
         \begin{array}{ccc}
           \frac{c^2e-a^2b}{ea-\lambda ab} \\
           \frac{a^2-\lambda c^2}{e-\lambda b}  \\
          0 \\
         \end{array}
       \right).$$
Then $\overrightarrow{qt}=\left(
         \begin{array}{ccc}
          \frac{c^3e-a^2bc}{ea-\lambda ab} \\
          0  \\
          0\\
         \end{array}
       \right)$.
Since $\overrightarrow{qt}$ and $\overrightarrow{t^2}$ are linearly independent, we have $a^2b\neq c^2e$. By $M^T\overrightarrow{r}=\overrightarrow{qt}$, we can choose$\overrightarrow{r}=\left(
         \begin{array}{ccc}
          \frac{c^3e^2-a^2bce}{a^2(e-\lambda b)^2} \\
          \frac{\lambda (a^2bc-c^3e)}{a(e-\lambda b)^2}  \\
          0\\
         \end{array}
       \right).$ Then $$\overrightarrow{4rt+q^2}=\left(
         \begin{array}{ccc}
          \frac{(5c^2e-a^2b)(c^2e-a^2b)}{a^2(e-\lambda b)^2} \\
          \frac{(a^2-\lambda c^2)^2}{(e-\lambda b)^2}  \\
          0\\
         \end{array}
       \right).$$
Since $M^T\overrightarrow{u}=\overrightarrow{4rt+q^2}$, we have $r(M^T,\overrightarrow{4rt+q^2})=r(M^T)=2$, which implies $a^2=\lambda c^2$.
Note that one get $a^2b\neq c^2e$ when $\lambda b \neq e$ and $a^2=\lambda c^2$. Therefore,
$M=\left(
         \begin{array}{ccc}
          a&0&\lambda a \\
          b&0&e  \\
          c&0&\lambda c\\
         \end{array}
       \right)$ with $a,c, \lambda \in k^{\times}, e\neq \lambda b$ and $a^2=\lambda c^2$.

       Conversely, if $M=\left(
         \begin{array}{ccc}
          a&0&\lambda a \\
          b&0&e  \\
          c&0&\lambda c\\
         \end{array}
       \right)$ with $a,c, \lambda \in k^{\times}, e\neq \lambda b$ and $a^2=\lambda c^2$, then it is straight forward to show that $M$ satisfies the conditions (1),(2),(3),(4) in Proposition \ref{simpleprop} and $s_2\neq 0$.

Similarly,  $M=\left(
         \begin{array}{ccc}
          0&b&e \\
          0&a&\lambda a \\
          0&c&\lambda c\\
         \end{array}
       \right)$ (resp. $M=\left(
         \begin{array}{ccc}
          a&\lambda a &0 \\
          c&\lambda c &0 \\
          b&e         &0\\
         \end{array}
       \right))$ with $a,c, \lambda \in k^{\times}, e\neq \lambda b$ and $a^2=\lambda c^2$ if and only if $M$ satisfies the conditions (1),(2),(3),(4) in Proposition \ref{simpleprop} and $s_1\neq 0$ (resp. $s_3\neq 0$).
\end{proof}

{\bf A.2} {\it Proof of the claims on the minimal semifree resolutions in Section \ref{caseone}}. Since the proofs for different subcases are similar to each other, we only need to prove the most complicated one: Case $1.2.4$. In this case,
\begin{align*}
\begin{cases}
t_1=t_2=0,t_3\neq 0\\
\exists \sigma =q_2x_2, q_2\neq 0, \partial_{\mathcal{A}}(\sigma)=t_3^2x_3^2\\
\partial_{\mathcal{A}}(x_3)=0\\
\exists \lambda=u_1x_1+u_2x_2, u_1\neq 0\\
\partial_{\mathcal{A}}(\lambda)=q_2^2x_2^2\\
\exists \eta=w_1x_1+w_2x_2, w_1=\frac{2u_2u_1}{q_2},w_2=\frac{2u_2^2}{q_2},
\partial_{\mathcal{A}}(\eta)=2q_2u_2x_2^2 \\
\end{cases}
\end{align*}
by the constructing process in Section \ref{caseone}. We have
\begin{align*}
H(\mathcal{A})&=k[\lceil t_1x_1 +t_2x_2+t_3x_3\rceil,\lceil s_1x_1^2+s_2x_2^2+s_3x_3^2\rceil ]/(\lceil t_1x_1 +t_2x_2+t_3x_3\rceil^2)\\
              &=k[\lceil x_3\rceil, \lceil s_1x_1^2\rceil]/(\lceil x_3^2))
\end{align*}

and
$F=F_7$ with
$$F^{\#}=\mathcal{A}^{\#}\oplus \mathcal{A}^{\#}e_1\oplus \mathcal{A}^{\#}e_2\oplus \mathcal{A}^{\#}e_3\oplus \mathcal{A}^{\#}e_4\oplus \mathcal{A}^{\#}e_5\oplus \mathcal{A}^{\#}e_6\oplus \mathcal{A}^{\#}e_7$$ and
$$\left(
                         \begin{array}{c}
                          \partial_{F}(1) \\
                          \partial_{F}(e_1)\\
                          \partial_{F}(e_2)\\
                          \partial_{F}(e_3)\\
                          \partial_{F}(e_4)\\
                          \partial_{F}(e_5)\\
                          \partial_{F}(e_6)\\
                          \partial_{F}(e_7)
                         \end{array}
                       \right) =\left(
            \begin{array}{cccccccc}
              0 & 0& 0 & 0 &0 & 0 &0 & 0\\
               t_3x_3  & 0 & 0 & 0 &0 &0&0 & 0 \\
              \sigma &  t_3x_3  & 0 & 0 &0 &0&0 & 0 \\
              0 & \sigma & t_3x_3   & 0 &0 &0&0 & 0\\
              \lambda & 0 & \sigma &  t_3x_3  & 0 & 0&0 & 0\\
              0   & \lambda & 0       & \sigma &  t_3x_3  & 0      &0 & 0\\
             \eta & 0       & \lambda & 0      & \sigma   & t_3x_3 &0 & 0\\
             0    & \eta    & 0       &\lambda &  0       &\sigma  &t_3x_3 & 0
            \end{array}
          \right)\left(
                         \begin{array}{c}
                          1\\
                          e_1\\
                          e_2\\
                          e_3\\
                          e_4\\
                          e_5\\
                          e_6\\
                          e_7
                         \end{array}
                       \right).$$
          We already have $H^1(F)=0$.
In order to show that $F$ is a minimal semi-free resolution of ${}_{\mathcal{A}}k$,
we should prove $H^{2i}(F)=0$ and $H^{2i+1}(F)=0$ for any $i\ge 1$.
Let $z=a_0+\sum\limits_{j=1}^7a_je_j\in Z^{2i}$. We have
\begin{align*}
0&=\partial_F(z)=\partial_{\mathcal{A}}(a_0)+\sum\limits_{j=1}^7[\partial_{\mathcal{A}}(a_j)e_j+a_j\partial_F(e_j)]\\
&=\partial_{\mathcal{A}}(a_7)e_7+[a_7t_3x_3+\partial_{\mathcal{A}}(a_6)]e_6+[\partial_{\mathcal{A}}(a_5)+a_6t_3x_3+a_7\sigma]e_5\\
&+[\partial_{\mathcal{A}}(a_4)+a_5t_3x_3+a_6\sigma]e_4+[\partial_{\mathcal{A}}(a_3)+a_4t_3x_3+a_5\sigma+a_7\lambda]e_3\\
&+[\partial_{\mathcal{A}}(a_2)+a_3t_3x_3+a_4\sigma +a_6\lambda]e_2+[\partial_{A}(a_1)+a_2t_3x_3+a_3\sigma+a_5\lambda+a_7\eta]e_1\\
&+\partial_{\mathcal{A}}(a_0)+a_1t_3x_3+a_2\sigma+a_4\lambda+a_5\eta +a_6\eta.
\end{align*}
Hence
\begin{align*}
\begin{cases}
\partial_{\mathcal{A}}(a_7)=0\\
a_7t_3x_3+\partial_{\mathcal{A}}(a_6)=0\\
\partial_{\mathcal{A}}(a_5)+a_6t_3x_3+a_7\sigma=0\\
\partial_{\mathcal{A}}(a_4)+a_5t_3x_3+a_6\sigma=0\\
\partial_{\mathcal{A}}(a_3)+a_4t_3x_3+a_5\sigma+a_7\lambda=0\\
\partial_{\mathcal{A}}(a_2)+a_3t_3x_3+a_4\sigma +a_6\lambda=0\\
\partial_{A}(a_1)+a_2t_3x_3+a_3\sigma+a_5\lambda+a_7\eta=0\\
\partial_{\mathcal{A}}(a_0)+a_1t_3x_3+a_2\sigma+a_4\lambda +a_6\eta=0.
\end{cases}
\end{align*}
By $\partial_{\mathcal{A}}(a_7)=0$, we have $a_7=c_7(x_1^2)^i+\partial_{\mathcal{A}}(\lambda_7)$, for some $c_7\in k,\lambda_7\in \mathcal{A}^{2i-1}$.
Since $a_7t_3x_3+\partial_{\mathcal{A}}(a_6)=0$, we conclude that $c_7=0$ and $a_6=-t_3\lambda_7 x_3+c_6(x_1^2)^i+\partial_{\mathcal{A}}(\lambda_6)$ for some $c_6\in k, \lambda_6\in \mathcal{A}^{2i-1}$. Then $\partial_{\mathcal{A}}(a_5)+a_6t_3x_3+a_7\sigma=0$ implies that $c_6=0$ and $a_5=-\lambda_7\sigma-\lambda_6t_3x_3 +c_5(x_1^2)^i+\partial_{\mathcal{A}}(\lambda_5)$ for some $c_5\in k, \lambda_5\in \mathcal{A}^{2i-1}$. By
$\partial_{\mathcal{A}}(a_4)+a_5t_3x_3+a_6\sigma=0$, we have
\begin{align*}
\partial_{\mathcal{A}}(a_4)&=-[-\lambda_7\sigma-\lambda_6t_3x_3+c_5(x_1^2)^i+\partial_{\mathcal{A}}(\lambda_5)]t_3x_3-[-t_3\lambda_7 x_3+\partial_{\mathcal{A}}(\lambda_6)]\sigma \\
&=t_3\lambda_7(\sigma x_3+x_3\sigma)+\lambda_6t_3^2x_3^2-\partial_{\mathcal{A}}(\lambda_6)\sigma-\partial_{\mathcal{A}}(\lambda_5)t_3x_3-c_5(x_1^2)^it_3x_3\\
&=q_2t_3\lambda_7(x_2x_3+x_3x_2)-\partial_{\mathcal{A}}(\lambda_6\sigma)-\partial_{\mathcal{A}}(\lambda_5t_3x_3)-c_5(x_1^2)^it_3x_3\\
&=-\partial_{\mathcal{A}}(\lambda_6\sigma)-\partial_{\mathcal{A}}(\lambda_5t_3x_3)-c_5(x_1^2)^it_3x_3,
\end{align*}
which implies $c_5=0$ and $a_4=-\lambda_6\sigma-\lambda_5t_3x_3 +c_4(x_1^2)^i+\partial_{\mathcal{A}}(\lambda_4)$ for some $c_4\in k$ and $\lambda_4\in \mathcal{A}^{2i-1}$. By $\partial_{\mathcal{A}}(a_3)+a_4t_3x_3+a_5\sigma+a_7\lambda=0$, we have
\begin{align*}
&\partial_{\mathcal{A}}(a_3)=-a_4t_3x_3-a_5\sigma-a_7\lambda \\
&=[\lambda_6\sigma+\lambda_5t_3x_3-c_4(x_1^2)^i-\partial_{\mathcal{A}}(\lambda_4)]t_3x_3+[\lambda_7\sigma+\lambda_6t_3x_3 -\partial_{\mathcal{A}}(\lambda_5)]\sigma-\partial_{\mathcal{A}}(\lambda_7)\lambda \\
&=-\partial_{\mathcal{A}}(\lambda_5\sigma+\lambda_4t_3x_3+\lambda_7\lambda)+t_3\lambda_6(\sigma x_3+x_3\sigma)-c_4(x_1^2)^it_3x_3\\
&=-\partial_{\mathcal{A}}(\lambda_5\sigma+\lambda_4t_3x_3+\lambda_7\lambda)+t_3q_2\lambda_6(x_2x_3+x_3x_2)-c_4(x_1^2)^it_3x_3\\
&=-\partial_{\mathcal{A}}(\lambda_5\sigma+\lambda_4t_3x_3+\lambda_7\lambda)-c_4(x_1^2)^it_3x_3,
\end{align*}
which implies that $c_4=0$ and $a_3=-(\lambda_5\sigma+\lambda_4t_3x_3+\lambda_7\lambda)+c_3(x_1^2)^i+\partial_{\mathcal{A}}(\lambda_3)$ for some $c_5\in k, \lambda_5\in \mathcal{A}^{2i-1}$. Then
\begin{align*}
& \partial_{\mathcal{A}}(a_2)=-(a_3t_3x_3+a_4\sigma +a_6\lambda)\\
&=[(\lambda_5\sigma+\lambda_4t_3x_3+\lambda_7\lambda)-c_3(x_1^2)^i-\partial_{\mathcal{A}}(\lambda_3)]t_3x_3+[\lambda_6\sigma+\lambda_5t_3x_3 -\partial_{\mathcal{A}}(\lambda_4)]\sigma\\
&\quad +[t_3\lambda_7 x_3-\partial_{\mathcal{A}}(\lambda_6)]\lambda \\
&=-\partial_{\mathcal{A}}(\lambda_6\lambda +\lambda_4\sigma +\lambda_3t_3x_3)+t_3\lambda_5(\sigma x_3+x_3\sigma)+t_3\lambda_7(\lambda x_3+x_3\lambda)-c_3(x_1^2)^it_3x_3\\
&=-\partial_{\mathcal{A}}(\lambda_6\lambda +\lambda_4\sigma +\lambda_3t_3x_3)-c_3(x_1^2)^it_3x_3,
\end{align*}
which implies that $c_3=0$ and $a_2=-(\lambda_6\lambda +\lambda_4\sigma +\lambda_3t_3x_3)+c_2(x_1^2)^i+\partial_{\mathcal{A}}(\lambda_2)$.
We have
\begin{align*}
\partial_{A}(a_1)&=-a_2t_3x_3-a_3\sigma-a_5\lambda-a_7\eta \\
                 &=[(\lambda_6\lambda +\lambda_4\sigma +\lambda_3t_3x_3)-c_2(x_1^2)^i-\partial_{\mathcal{A}}(\lambda_2)]t_3x_3 \\ &+[(\lambda_5\sigma+\lambda_4t_3x_3+\lambda_7\lambda)-\partial_{\mathcal{A}}(\lambda_3)]\sigma\\
                 &+[\lambda_7\sigma+\lambda_6t_3x_3 -\partial_{\mathcal{A}}(\lambda_5)]\lambda-\partial_{\mathcal{A}}(\lambda_7)\eta\\
                 &=-\partial_{\mathcal{A}}(\lambda_5\lambda +\lambda_3\sigma+\lambda_2t_3x_3)+t_3\lambda_6(\lambda x_3+x_3\lambda)+t_3\lambda_4(\sigma x_3+x_3\sigma)\\
                 &+\lambda_5\sigma^2+\lambda_7(\lambda\sigma +\sigma \lambda)-\partial_{\mathcal{A}}(\lambda_7)\eta -c_2(x_1^2)^it_3x_3\\
                 &=-\partial_{\mathcal{A}}(\lambda_5\lambda +\lambda_3\sigma+\lambda_2t_3x_3)+\lambda_7[(u_1x_1+u_2x_2)q_2x_2+q_2x_2(u_1x_1+u_2x_2)]\\
                 &-\partial_{\mathcal{A}}(\lambda_7)\eta -c_2(x_1^2)^it_3x_3\\
                 &=-\partial_{\mathcal{A}}(\lambda_5\lambda +\lambda_3\sigma+\lambda_2t_3x_3+\lambda_7\eta)-c_2(x_1^2)^it_3x_3
\end{align*}
which implies that $c_2=0$ and $a_1=-(\lambda_5\lambda +\lambda_3\sigma+\lambda_2t_3x_3+\lambda_7\eta)+c_1(x_1^2)^i+\partial_{\mathcal{A}}(\lambda_1)$.
Then \begin{align*}
\partial_{\mathcal{A}}(a_0)&=-a_1t_3x_3-a_2\sigma-a_4\lambda -a_6\eta\\
&=(\lambda_5\lambda +\lambda_3\sigma+\lambda_2t_3x_3+\lambda_7\eta)t_3x_3 -c_1(x_1^2)^it_3x_3 -\partial_{\mathcal{A}}(\lambda_1)t_3x_3\\
&+[\lambda_6\lambda +\lambda_4\sigma +\lambda_3t_3x_3-\partial_{\mathcal{A}}(\lambda_2)]\sigma +[\lambda_6\sigma+\lambda_5t_3x_3 -\partial_{\mathcal{A}}(\lambda_4)]\lambda\\
&+[t_3\lambda_7 x_3-\partial_{\mathcal{A}}(\lambda_6)]\eta \\
&=-\partial_{\mathcal{A}}[\lambda_2\sigma+\lambda_4\lambda+\lambda_1t_3x_3]-\partial_{\mathcal{A}}(\lambda_6)\eta +\lambda_6(\lambda\sigma +\sigma\lambda)+t_3\lambda_5(\lambda x_3+x_3\lambda)\\
&+t_3\lambda_3(\sigma x_3+x_3\sigma)+t_3\lambda_7(\eta x_3+x_3\eta)-c_1(x_1^2)^it_3x_3\\
&=-\partial_{\mathcal{A}}[\lambda_2\sigma+\lambda_4\lambda+\lambda_1t_3x_3+\lambda_6\eta]-c_1(x_1^2)^it_3x_3,
\end{align*}
which implies that $c_1=0$ and $a_0=-(\lambda_2\sigma+\lambda_4\lambda+\lambda_1t_3x_3+\lambda_6\eta)+c_0(x_1^2)^i+\partial_{\mathcal{A}}(\lambda_0)$ for some $c_0\in k$ and $\lambda_0\in \mathcal{A}^{2i-1}$. Therefore,
\begin{align*}
z &=a_0+\sum\limits_{i=1}^7a_ie_i =-(\lambda_2\sigma+\lambda_4\lambda+\lambda_1t_3x_3+\lambda_6\eta)+c_0(x_1^2)^i+\partial_{\mathcal{A}}(\lambda_0)\\
                               &-(\lambda_5\lambda +\lambda_3\sigma+\lambda_2t_3x_3+\lambda_7\eta)e_1+\partial_{\mathcal{A}}(\lambda_1)e_1 -(\lambda_6\lambda +\lambda_4\sigma +\lambda_3t_3x_3)e_2\\
                               &+\partial_{\mathcal{A}}(\lambda_2)e_2 -(\lambda_7\sigma+\lambda_4t_3x_3+\lambda_7\lambda)e_3+\partial_{\mathcal{A}}(\lambda_3)e_3 -(\lambda_6\sigma+\lambda_5t_3x_3)e_4 +\partial_{\mathcal{A}}(\lambda_4)e_4\\
                               &-(\lambda_7\sigma+\lambda_6t_3x_3)e_5 +\partial_{\mathcal{A}}(\lambda_5)e_5-t_3\lambda_7 x_3e_6+\partial_{\mathcal{A}}(\lambda_6)e_6+\partial_{\mathcal{A}}(\lambda_7)e_7\\
                               &=\partial_F[\lambda_0+\sum\limits_{i=1}^7\lambda_ie_i]+c_0(x_1^2)^i\\
                               &=\partial_F[\lambda_0+\sum\limits_{i=1}^7\lambda_ie_i]-\partial_F[\frac{c_0(x_1^2)^{i-1}}{u_1^2}(t_3x_3e_7+q_2x_2e_6+\lambda e_4+\eta e_2+\frac{5u_2^2\lambda}{q_2^2})]\in B^{2i}(F).
\end{align*}
Thus $H^{2i}(F)=0$ for any $i\ge 1$.

Let $z=a_0+\sum\limits_{j=1}^7a_je_j\in Z^{2i+1}$, $i\ge 1$. Then
\begin{align*}
0&=\partial_F(z)=\partial_{\mathcal{A}}(a_0)+\sum\limits_{j=1}^7[\partial_{\mathcal{A}}(a_j)e_j-a_j\partial_F(e_j)]\\
&=\partial_{\mathcal{A}}(a_7)e_7+[\partial_{\mathcal{A}}(a_6)-a_7t_3x_3]e_6+[\partial_{\mathcal{A}}(a_5)-a_6t_3x_3-a_7\sigma]e_5\\
&+[\partial_{\mathcal{A}}(a_4)-a_5t_3x_3-a_6\sigma]e_4+[\partial_{\mathcal{A}}(a_3)-a_4t_3x_3-a_5\sigma-a_7\lambda]e_3\\
&+[\partial_{\mathcal{A}}(a_2)-a_3t_3x_3-a_4\sigma -a_6\lambda]e_2+[\partial_{A}(a_1)-a_2t_3x_3-a_3\sigma-a_5\lambda-a_7\eta]e_1\\
&+\partial_{\mathcal{A}}(a_0)-a_1t_3x_3-a_2\sigma-a_4\lambda-a_5\eta -a_6\eta.
\end{align*}
Hence
\begin{align*}
\begin{cases}
\partial_{\mathcal{A}}(a_7)=0\\
\partial_{\mathcal{A}}(a_6)-a_7t_3x_3=0\\
\partial_{\mathcal{A}}(a_5)-a_6t_3x_3-a_7\sigma=0\\
\partial_{\mathcal{A}}(a_4)-a_5t_3x_3-a_6\sigma=0\\
\partial_{\mathcal{A}}(a_3)-a_4t_3x_3-a_5\sigma-a_7\lambda=0\\
\partial_{\mathcal{A}}(a_2)-a_3t_3x_3-a_4\sigma -a_6\lambda=0\\
\partial_{A}(a_1)-a_2t_3x_3-a_3\sigma-a_5\lambda-a_7\eta=0\\
\partial_{\mathcal{A}}(a_0)-a_1t_3x_3-a_2\sigma-a_4\lambda-a_6\eta=0.
\end{cases}
\end{align*}
By $\partial_{\mathcal{A}}(a_7)=0$, we have $a_7=c_7(x_1^2)^it_3x_3+\partial_{\mathcal{A}}(\lambda_7)$, for some $c_7\in k,\lambda_7\in \mathcal{A}^{2i}$.
Since $\partial_{\mathcal{A}}(a_6)-a_7t_3x_3=0$, we get  $c_7=0$ and $a_6=\lambda_7 t_3x_3+c_6(x_1^2)^it_3x_3+\partial_{\mathcal{A}}(\lambda_6)$ for some $c_6\in k, \lambda_6\in \mathcal{A}^{2i}$. Then $\partial_{\mathcal{A}}(a_5)-a_6t_3x_3-a_7\sigma=0$ implies that $c_6=0$ and $a_5=\lambda_7\sigma+\lambda_6t_3x_3 +c_5(x_1^2)^it_3x_3+\partial_{\mathcal{A}}(\lambda_5)$ for some $c_5\in k, \lambda_5\in \mathcal{A}^{2i}$. By
$\partial_{\mathcal{A}}(a_4)-a_5t_3x_3-a_6\sigma=0$, we have
\begin{align*}
\partial_{\mathcal{A}}(a_4)&=[\lambda_7\sigma+\lambda_6t_3x_3+c_5(x_1^2)^it_3x_3+\partial_{\mathcal{A}}(\lambda_5)]t_3x_3+[t_3\lambda_7 x_3+\partial_{\mathcal{A}}(\lambda_6)]\sigma \\
&=t_3\lambda_7(\sigma x_3+x_3\sigma)+\lambda_6t_3^2x_3^2+\partial_{\mathcal{A}}(\lambda_6)\sigma+\partial_{\mathcal{A}}(\lambda_5)t_3x_3+c_5(x_1^2)^it_3^2x_3^2\\
&=q_2t_3\lambda_7(x_2x_3+x_3x_2)+\partial_{\mathcal{A}}(\lambda_6\sigma)+\partial_{\mathcal{A}}(\lambda_5t_3x_3)+c_5(x_1^2)^it_3^2x_3^2\\
&=\partial_{\mathcal{A}}(\lambda_6\sigma)+\partial_{\mathcal{A}}(\lambda_5t_3x_3)+c_5(x_1^2)^it_3^2x_3^2,
\end{align*}
which implies $c_5=0$ and $a_4=\lambda_6\sigma+\lambda_5t_3x_3 +c_4(x_1^2)^it_3x_3+\partial_{\mathcal{A}}(\lambda_4)$ for some $c_4\in k$ and $\lambda_4\in \mathcal{A}^{2i}$. By $\partial_{\mathcal{A}}(a_3)-a_4t_3x_3-a_5\sigma-a_7\lambda=0$, we have
\begin{align*}
\partial_{\mathcal{A}}(a_3)&=a_4t_3x_3+a_5\sigma+a_7\lambda \\
&=[\lambda_6\sigma+\lambda_5t_3x_3+c_4(x_1^2)^it_3x_3+\partial_{\mathcal{A}}(\lambda_4)]t_3x_3+[\lambda_7\sigma+\lambda_6t_3x_3 +\partial_{\mathcal{A}}(\lambda_5)]\sigma \\
& +\partial_{\mathcal{A}}(\lambda_7)\lambda \\
&=\partial_{\mathcal{A}}(\lambda_5\sigma+\lambda_4t_3x_3+\lambda_7\lambda)+t_3\lambda_6(\sigma x_3+x_3\sigma)+c_4(x_1^2)^it_3^2x_3^2\\
&=\partial_{\mathcal{A}}(\lambda_5\sigma+\lambda_4t_3x_3+\lambda_7\lambda)+t_3q_2\lambda_6(x_2x_3+x_3x_2)+c_4(x_1^2)^it_3^2x_3^2\\
&=\partial_{\mathcal{A}}(\lambda_5\sigma+\lambda_4t_3x_3+\lambda_7\lambda)+c_4(x_1^2)^it_3^2x_3^2,
\end{align*}
which implies that $c_4=0$ and $a_3=(\lambda_5\sigma+\lambda_4t_3x_3+\lambda_7\lambda)+c_3(x_1^2)^it_3x_3+\partial_{\mathcal{A}}(\lambda_3)$ for some $c_5\in k, \lambda_5\in \mathcal{A}^{2i}$. Then
\begin{align*}
& \partial_{\mathcal{A}}(a_2)=a_3t_3x_3+a_4\sigma +a_6\lambda\\
&=[(\lambda_5\sigma+\lambda_4t_3x_3+\lambda_7\lambda)+c_3(x_1^2)^it_3x_3+\partial_{\mathcal{A}}(\lambda_3)]t_3x_3+[\lambda_6\sigma+\lambda_5t_3x_3 +\partial_{\mathcal{A}}(\lambda_4)]\sigma\\
&\quad +[t_3\lambda_7 x_3+\partial_{\mathcal{A}}(\lambda_6)]\lambda \\
&=\partial_{\mathcal{A}}(\lambda_6\lambda +\lambda_4\sigma +\lambda_3t_3x_3)+t_3\lambda_5(\sigma x_3+x_3\sigma)+t_3\lambda_7(\lambda x_3+x_3\lambda)+c_3(x_1^2)^it_3^2x_3^2\\
&=\partial_{\mathcal{A}}(\lambda_6\lambda +\lambda_4\sigma +\lambda_3t_3x_3)+c_3(x_1^2)^it_3^2x_3^2,
\end{align*}
which implies that $c_3=0$ and $a_2=(\lambda_6\lambda +\lambda_4\sigma +\lambda_3t_3x_3)+c_2(x_1^2)^it_3x_3+\partial_{\mathcal{A}}(\lambda_2)$.
We have
\begin{align*}
\partial_{A}(a_1)&=a_2t_3x_3+a_3\sigma+a_5\lambda+a_7\eta \\
                 &=[(\lambda_6\lambda +\lambda_4\sigma +\lambda_3t_3x_3)+c_2(x_1^2)^it_3x_3+\partial_{\mathcal{A}}(\lambda_2)]t_3x_3 \\ &+[(\lambda_5\sigma+\lambda_4t_3x_3+\lambda_7\lambda)+\partial_{\mathcal{A}}(\lambda_3)]\sigma\\
                 &+[\lambda_7\sigma+\lambda_6t_3x_3 +\partial_{\mathcal{A}}(\lambda_5)]\lambda+\partial_{\mathcal{A}}(\lambda_7)\eta\\
                 &=\partial_{\mathcal{A}}(\lambda_5\lambda +\lambda_3\sigma+\lambda_2t_3x_3)+t_3\lambda_6(\lambda x_3+x_3\lambda)+t_3\lambda_4(\sigma x_3+x_3\sigma)\\
                 &+\lambda_5\sigma^2+\lambda_7(\lambda\sigma +\sigma \lambda)+\partial_{\mathcal{A}}(\lambda_7)\eta +c_2(x_1^2)^it_3^2x_3^2\\
                 &=\partial_{\mathcal{A}}(\lambda_5\lambda +\lambda_3\sigma+\lambda_2t_3x_3)+\lambda_7[(u_1x_1+u_2x_2)q_2x_2+q_2x_2(u_1x_1+u_2x_2)]\\
                 &+\partial_{\mathcal{A}}(\lambda_7)\eta +c_2(x_1^2)^it_3^2x_3^2\\
                 &=\partial_{\mathcal{A}}(\lambda_5\lambda +\lambda_3\sigma+\lambda_2t_3x_3+\lambda_7\eta)+c_2(x_1^2)^it_3^2x_3^2
\end{align*}
which implies that $c_2=0$ and $a_1=(\lambda_5\lambda +\lambda_3\sigma+\lambda_2t_3x_3+\lambda_7\eta)+c_1(x_1^2)^it_3x_3+\partial_{\mathcal{A}}(\lambda_1)$ for some $c_1\in k$ and $\lambda_1\in \mathcal{A}^{2i}$.
Then \begin{align*}
\partial_{\mathcal{A}}(a_0)&=a_1t_3x_3+a_2\sigma+a_4\lambda+a_6\eta\\
&=(\lambda_5\lambda +\lambda_3\sigma+\lambda_2t_3x_3+\lambda_7\eta)t_3x_3 +c_1(x_1^2)^it_3x_3 +\partial_{\mathcal{A}}(\lambda_1)t_3x_3\\
&+[\lambda_6\lambda +\lambda_4\sigma +\lambda_3t_3x_3+\partial_{\mathcal{A}}(\lambda_2)]\sigma +[\lambda_6\sigma+\lambda_5t_3x_3 +\partial_{\mathcal{A}}(\lambda_4)]\lambda\\
&+[t_3\lambda_7 x_3-\partial_{\mathcal{A}}(\lambda_6)]\eta \\
&=\partial_{\mathcal{A}}[\lambda_2\sigma+\lambda_4\lambda+\lambda_1t_3x_3]+\partial_{\mathcal{A}}(\lambda_6)\eta +\lambda_6(\lambda\sigma +\sigma\lambda)+t_3\lambda_5(\lambda x_3+x_3\lambda)\\
&+t_3\lambda_3(\sigma x_3+x_3\sigma)+t_3\lambda_7(\eta x_3+x_3\eta)+c_1(x_1^2)^it_3^2x_3^2\\
&=\partial_{\mathcal{A}}[\lambda_2\sigma+\lambda_4\lambda+\lambda_1t_3x_3+\lambda_6\eta]+c_1(x_1^2)^it_3^2x_3^2,
\end{align*}
which implies that $c_1=0$ and $a_0=\lambda_2\sigma+\lambda_4\lambda+\lambda_1t_3x_3+\lambda_6\eta+c_0(x_1^2)^it_3x_3+\partial_{\mathcal{A}}(\lambda_0)$ for some $c_0\in k$ and $\lambda_0\in \mathcal{A}^{2i}$. Therefore,
\begin{align*}
z &=a_0+\sum\limits_{i=1}^7a_ie_i =(\lambda_2\sigma+\lambda_4\lambda+\lambda_1t_3x_3+\lambda_6\eta)+c_0(x_1^2)^it_3x_3+\partial_{\mathcal{A}}(\lambda_0)\\
                               &+(\lambda_5\lambda +\lambda_3\sigma+\lambda_2t_3x_3+\lambda_7\eta)e_1+\partial_{\mathcal{A}}(\lambda_1)e_1 +(\lambda_6\lambda +\lambda_4\sigma +\lambda_3t_3x_3)e_2\\
                               &+\partial_{\mathcal{A}}(\lambda_2)e_2 +(\lambda_7\sigma+\lambda_4t_3x_3+\lambda_7\lambda)e_3+\partial_{\mathcal{A}}(\lambda_3)e_3 +(\lambda_6\sigma+\lambda_5t_3x_3)e_4 +\partial_{\mathcal{A}}(\lambda_4)e_4\\
                               &+(\lambda_7\sigma+\lambda_6t_3x_3)e_5 +\partial_{\mathcal{A}}(\lambda_5)e_5+t_3\lambda_7 x_3e_6+\partial_{\mathcal{A}}(\lambda_6)e_6+\partial_{\mathcal{A}}(\lambda_7)e_7\\
                               &=\partial_F[\lambda_0+\sum\limits_{i=1}^7\lambda_ie_i]+c_0(x_1^2)^it_3x_3\\
                               &=\partial_F[\lambda_0+\sum\limits_{i=1}^7\lambda_ie_i]+\partial_F[\frac{c_0(x_1^2)^{i-1}t_3x_3}{u_1^2}(t_3x_3e_7+q_2x_2e_6+\lambda e_4+\eta e_2+\frac{5u_2^2\lambda}{q_2^2})].
\end{align*}
So $z\in B^{2i+1}(F)$ and $H^{2i+1}(F)=0$ for any $i\ge 1$.

\def\refname{References}

\end{document}